\definecolor{myurlcolor}{rgb}{0,0,0.4}
\definecolor{mycitecolor}{rgb}{0,0.5,0}
\definecolor{myrefcolor}{rgb}{0.5,0,0}
\newtheorem{theorem}{Theorem}[section]
\newcommand\nuovothm[3]{
  \newaliascnt{#1}{theorem}
  \newtheorem{#1}[#1]{#2}
  \aliascntresetthe{#1}
  \crefname{#1}{#2}{#3}
}
\newcommand{\be}{\begin{equation}}
\newcommand{\ee}{\end{equation}}
\newcommand{\dd}{{\rm d}}
\newcommand{\de}{\partial}
\DeclareMathOperator\T{\textup{\textbf{T}}}
\newcommand{\Lie}{\pounds}
\renewcommand{\Im}{\operatorname{Im}}
\title{A zoo of coisotropic embeddings}
\author{
    \texorpdfstring{M. De Le\'on$^{1,4,5}$ \href{https://orcid.org/0000-0002-8028-2348}{\includegraphics[scale=0.7]{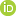}}}{M. De Leon},
    \texorpdfstring{R. Izquierdo-L\'opez$^{2,6}$ \href{https://orcid.org/0009-0007-8747-344X}{\includegraphics[scale=0.7]{ORCID.png}}}{R. Izquierdo-Lopez},
    \texorpdfstring{L. Schiavone$^{3,7}$ \href{https://orcid.org/0000-0002-1817-5752}{\includegraphics[scale=0.7]{ORCID.png}}}{L. Schiavone},
    \texorpdfstring{P. Soto$^{1,8}$ \href{https://orcid.org/0009-0001-6p58-3182}{\includegraphics[scale=0.7]{ORCID.png}}}{P. Soto} \\
    \footnotesize{$^{1}$\textit{Instituto de Ciencias Matemáticas, Campus Cantoblanco, Consejo Superior de Investigaciones Científicas,}} \\
    \footnotesize{\textit{Calle Nicolás Cabrera, 13–15, 28049, Madrid, Spain}} \\
    \footnotesize{$^{2}$\textit{Escuela Superior de Ingeniería y Tecnología, Universidad Internacional de La Rioja,}} \\
    \footnotesize{\textit{Calle Luis de Ulloa, 4, 26004, Logroño, Spain}} \\
    \footnotesize{$^{3}$\textit{Dipartimento di Matematica e Applicazioni Renato Caccioppoli, Università degli Studi di Napoli Federico II,}} \\
    \footnotesize{\textit{Via Cintia, Monte S. Angelo I, 80126, Napoli, Italy}} \\
    \footnotesize{$^{4}$\textit{Real Academia de Ciencias Exactas, Físicas y Naturales de España, C/Valverde, 22, Madrid 28004, Spain}}\\
    \footnotesize{$^{5}$\textit{e-mail: \texorpdfstring{\texttt{mdeleon@icmat.es}}{mdeleon@icmat.es}}} \\
    \footnotesize{$^{6}$\textit{e-mail: \texorpdfstring{\texttt{ruben.izquierdo@unir.net}}{ruben.izquierdo@unir.net}}} \\
    \footnotesize{$^{7}$\textit{e-mail: \texorpdfstring{\texttt{luca.schiavone@unina.it}}{luca.schiavone@unina.it}}} \\
    \footnotesize{$^{8}$\textit{e-mail: \texorpdfstring{\texttt{pablo@soto.es}}{pablo@soto.es}}}\\
}
\begin{document}

\maketitle

\begin{abstract}
The aim of this paper is to extend the coisotropic embedding theorem obtained by \textit{M. J. Gotay} for pre-symplectic manifolds to more general geometric settings: cosymplectic, contact, cocontact, $k$-symplectic, $k$-cosymplectic, $k$-contact, and multisymplectic manifolds. The results are obtained by applying a generic methodology, which gives more relevance to the potential applications. In that sense, this paper gives the fundamental basis to be able to apply the results to the so-called regularization problem of singular Lagrangian systems, both in mechanics and in classical field theories.
\end{abstract}

{\bf Keywords}:
Coisotropic embedding, pre-symplectic geometry, pre-cosymplectic geometry, pre-contact geometry, pre-multisymplectic geometry, regularization problem.

\tableofcontents

\section{Introduction}

Symplectic geometry is the natural framework for developing Hamiltonian mechanics. Indeed, the phase space of a classical Hamiltonian system is the cotangent bundle $\T^\ast Q$ of the configuration manifold $Q$, which is endowed with a canonical symplectic structure $\omega_Q$. The solutions of the Hamilton equations are then the integral curves of the Hamiltonian vector field $X_H$ obtained from the Hamiltonian energy $H$ using the canonical symplectic form $\omega_Q$. In the case of Lagrangian mechanics, the velocity space is the tangent bundle $\T Q$ of the configuration manifold. In this case there is no canonical symplectic structure, but we can construct a $2$-form usually denoted by $\omega_L$ from the  Lagrangian function $L$ and using the so-called almost-tangent geometry of the tangent bundle. In the latter case, if the Lagrangian is regular (which is usual for Lagrangians appearing in mechanics), then $\omega_L$ is symplectic and the corresponding Hamiltonian vector field for the energy of $L$ is a Second Order Differential Equation (SODE) whose solutions are just the ones of the Euler-Lagrange equations determined by the Lagrangian (see \cite{abraham,dLR89,godbillon1969geometrie}).

\noindent This geometrical description of mechanics is valid when the Hamiltonian or the Lagrangian does not explicitly depend on time; if it does, we must consider the spaces $\mathbb{R} \times \T^*Q$ and $\mathbb{R} \times \T Q$, respectively. Now, the geometric structures we must use to obtain the dynamics are the so-called cosymplectic structures \cite{albert,cantrijn,dLR89,cosimpl}. A cosymplectic structure on a manifold $M$ is a pair $(\eta,\,\omega)$ where $\eta$ is a closed $1$-form and $\omega$ is a closed 2-form, such that $\eta \wedge \omega^n \neq 0$, where the dimension of $M$ is precisely $2n+1$.

\noindent Other interesting systems are the so-called Lagrangian systems with Lagrangian functions depending on the action. To obtain the equations of motion, one needs to extend the usual Hamilton principle to the so-called Herglotz principle \cite{bravetti1,bravetti,mlvmdlmml}. The geometric versions are developed on $\T^*Q \times \mathbb{R}$ and $\T Q \times \mathbb{R}$ for the Hamiltonian and Lagrangian cases, respectively. 
The geometry behind these systems is provided by a contact form, say, a $1$-form $\eta$ such that $\eta \wedge \dd \eta^n \neq 0$ \cite{mlvmdl1} (see also \cite{multicontact,chicos} for an extension to classical field theories). 

\noindent Finally, another step is the geometric description of classical field theories, based on the so-called multisymplectic geometry \cite{dLR2,gimmsy,GotayMultisymplecticFramework,binz,narciso,aitor}. Multisymplectic geometry has emerged as a fundamental framework for the geometric description of Field Theories, extending the role that symplectic geometry plays in Classical Mechanics. Alternative approaches to classical Field Theories uses completely different geometric structures, namely, $k$-symplectic, $k$-cosymplectic and $k$-contact structures \cite{silvia2,cata_k_resto,narciso_nuevo}).

\noindent As well as symplectic geometry has allowed us to obtain new results in mechanics, such as symplectic reduction, the various generalizations of Noether's theorem, study of stability, development of geometric integrators, coisotropic reduction, coisotropic regularization, among many others, something similar occurs in the other geometric scenarios mentioned above, although the extensions are in many cases not as direct as they might seem at first glance. 

\noindent A very useful tool in symplectic geometry, which is crucial for the problem of coisotropic regularization mentioned above, is the so-called \textit{coisotropic embedding theorem}. 
If $N$ is a submanifold of a symplectic manifold $(M, \omega)$, we say that it is coisotropic if $\T N^\perp \subset \T N$, where $\T N^\perp$ is the symplectic complement of $\T N$ in the tangent bundle $\T M$. On the other hand, a pre-symplectic manifold is a pair $(M, \omega)$ where $\omega$ is closed but not of maximal rank (if the rank is maximal, then it is a symplectic manifold). 
The coisotropic embedding theorem, proved by \textit{M. J. Gotay} in \cite{gotay}, states that any pre-symplectic manifold can be coisotropically embedded in a symplectic manifold (see also \cite{marle,zambon}). The existence is unique up to symplectomorphisms of a neighborhood of the embedded submanifold (this result lies on previous ones by \textit{A. Weinstein} \cite{weinstein1977lectures,weinstein71symplectic}).

\noindent The coisotropic embedding theorem, beyond its intrinsic mathematical interest, has proven to be a powerful tool with numerous applications in mathematical physics. For instance, it is crucial for extending the celebrated Noether's theorem to constrained mechanical systems. In this context, it allows for the formalization of a one-to-one correspondence between the system's symmetries and its constants of motion, even in the presence of a pre-symplectic structure \cite{Ciaglia-DiCosmo-Ibort-Marmo-Schiavone-Zampini-Symmetry-2022}. 

\noindent Another area of great relevance is the study of field theories, where the transition from a pre-symplectic to a symplectic space of solutions—guaranteed by an infinite-dimensional version of the theorem—is a fundamental step in defining a well-posed Poisson structure. 
This approach has been successfully applied to various theories, including gauge theories and the Palatini formulation of General Relativity \cite{Ciaglia-DiCosmo-Ibort-Marmo-Schiavone-Zampini-Peierls1-2024,Ciaglia-DiCosmo-Ibort-Marmo-Schiavone-Zampini-Peierls2-2022,Ciaglia-DiCosmo-Ibort-Marmo-Schiavone-Zampini-Peierls3-2022}. 

\noindent The theorem has also been used by one of the authors of the present paper for tackling the inverse problem of the calculus of variations. Indeed, it has been employed to construct variational principles for classes of implicit differential equations for which a Lagrangian formalism is not immediately apparent \cite{Schiavone-InverseProblemElectrodynamics-2024,Schiavone-InverseProblemImplicit-2024}. 

\noindent The theorem also plays an essential role in the Geometric Quantization program. 
Since the quantization procedure requires a symplectic manifold as a starting point, the embedding of a pre-symplectic manifold (which typically describes a classical system with constraints) into a larger symplectic one is an indispensable prerequisite for quantizing such systems \cite{Gotay-Sniatycki-CoisotropicEmbeddings-1981}.

\noindent As a last application of the theorem, we mention the so-called problem of regularization. 
Indeed, if a Lagrangian function is not regular (this happens in several physical theories), the corresponding 2-form $\omega_L$ is pre-symplectic (assuming some regularity condition on the rank). For this kind of Lagrangian functions, \textit{P. A. M. Dirac} and \textit{P. G. Bergmann} developed a constraint algorithm (the so-called Dirac-Bergmann algorithm \cite{Anderson-Bergmann-Constraints-1951,dirac}) settled respectively in the Hamiltonian and the Lagrangian formalisms, and that has been geometrized by \textit{M. J. Gotay}, \textit{J. N. Nester} and \textit{G. Hinds} \cite{gotay1,GotaySingularLagrangians1,GotaySingularLagrangians2}. The algorithm produces a sequence of constraint submanifolds just to find a final constraint submanifold that, for theories admitting gauge symmetries, is a pre-symplectic manifold. At this stage, one can use the coisotropic embedding theorem to embed the final constraint manifold in a larger symplectic one in such a way that the corresponding dynamics are conveniently related (see \textit{A. Ibort} and \textit{J. Mar{\'i}n-Solano} \cite{Ibort-Solano}).

\noindent With this last application of the coisotropic embedding theorem in mind, it is clear that, extending both Dirac-Bergmann algorithm and the coisotropic embedding theorem would be crucial to address the problem of classifying and regularizing singular Lagrangians in all the geometric frameworks described in this introduction.

\noindent An extension of the Dirac-Bergmann constraint algorithm for singular time-dependent Lagrangian systems exists and has been developed by \textit{D. Chinea}, \textit{M. de Le\'on} and \textit{J. C. Marrero} \cite{chinea}. To be able to achieve a regularization for singular Lagrangians depending on time of the type of \cite{Ibort-Solano}, it is clear that the first, obvious, step should be extending the coisotropic embedding theorem to the cosymplectic scenario. This is one of the generalizations achieved in the present paper. 
The application of the above results to the case of singular time-dependent Lagrangian systems is the subject of a forthcoming paper in elaboration. 

\noindent On the other hand, singular classical field theories have been recently considered in the multisymplectic setting (see \cite{aitor,singularfields1,arturonarciso} and references therein). 
We plan to obtain a classification of Lagrangians on the space of 1-jets of the configuration bundle, similar to the one obtained in \cite{Ibort-Solano}, using the operator introduced by \textit{D. Saunders} \cite{saunders} from a chosen volume
form on the space-time manifold (the base of the configuration bundle), and to study the regularization problem for singular classical field theories in a forthcoming paper. 

\noindent A similar problem arises for singular classical field theories described by means of the so-called $k$-symplectic \cite{ksympsingular}, $k$-cosymplectic \cite{kcosympsingular}, and $k$-contact \cite{skinnerrusk} formulations.

\noindent In the present paper, we make one of the steps described above that are necessary to classify and regularize singular Lagrangians in all the geometric pictures mentioned, namely we extend the approach introduced in \cite{luca2} (obtained building upon the alternative proof of the classical coisotropic embedding theorem of \cite{luca1}) for the construction of multisymplectic thickenings of pre-multisymplectic manifolds, to all the other geometric scenarios mentioned above. 
This approach uses the notion of almost product structure (a decomposition of the tangent bundle on complementary distributions) and provides a systematic way to discuss both the existence and the uniqueness of coisotropic embeddings in all the cases mentioned.

\noindent In the multisymplectic case, the opposite direction, namely the study of coisotropic reduction has been considered in \cite{coisored}.

\noindent This paper is structured as follows. 
In \cref{Sec: Preliminaries} we develop all the mathematical background on the geometric notions that are necessary to develop the theory, namely, distributions, Frobenius Theorem and almost product structures. 
In \cref{Sec: Pre-symplectic manifolds} we recall the main results in the case of pre-symplectic manifolds, but the proofs follow the methodology of \cite{luca2} that we will adopt throughout the whole paper.
The geometries characterized by the existence of Reeb vector fields, namely, pre-cosymplectic manifolds, pre-contact manifolds, and pre-cocontact manifolds, are discussed in \cref{Sec: cosymplectic manifolds}, \cref{Sec: contact manifolds} and \cref{Sec: cocontact manifolds}.
The results in \cref{Sec: Pre-symplectic manifolds}, \cref{Sec: cosymplectic manifolds}, and \cref{Sec: contact manifolds} are then generalized to the $k$-pre-symplectic manifolds, $k$-pre-cosymplectic manifolds, and $k$-pre-contact manifolds in \cref{Sec: k-symplectic manifolds}, \cref{Sec: k-cosymplectic manifolds}, and \cref{Sec: k-contact manifolds}. 
Finally, in \cref{Sec: multisymplectic manifolds} we recall the case of multisymplectic geometry already analyzed in \cite{luca2}, now enriched with a discussion on uniqueness. 

\noindent The order of presentation of the various sections has been guided by the increasing difficulty in proving the uniqueness of coisotropic embeddings.
Indeed, uniqueness only holds in the pre-symplectic case; nevertheless, for geometries modelling classical mechanics (time-dependent, dissipative, or both), namely, pre-cosymplectic, pre-contact, and pre-cocontact geometry, we will be able to prove that coisotropic embeddings must have fixed topology (so that any two such embeddings are diffeomorphic, but not necessarily co-symplectomorphic, contactomorphic, or co-contactomorphic). Furthermore, we will prove that it is sufficient and necessary for two embeddings to be neighborhood equivalent that the corresponding Reeb vector fields are proportional. These statements no longer hold for geometric structures modelling field theories, namely, in the cases of $k$-pre-symplectic, $k$-pre-contact, and multisymplectic geometries, as the generality needed translates into the existence of a plethora of different coisotropic embeddings. Our methodology, in these cases, will be to study uniqueness by fixing the local model of the thickening. In all the cases considered, we will give one model that allows for uniqueness, and one where uniqueness does not necessarily hold.

\noindent Finally, we mention some conclusions and future work.

\section{Preliminaries}
\label{Sec: Preliminaries}

In this section, we will recall the basic notions that we will use in the rest of the paper.

\subsection{Distributions and Frobenius theorem}

\begin{definition}[\textsc{Distribution}]
\label{Def: Distribution}
Let \( M \) be a smooth $d$-dimensional manifold.
A \textbf{distribution} \( \mathcal{D} \) on \( M \) is an assignment to each point \( m \in M \) of a linear subspace \( \mathcal{D}_m \subset \mathbf{T}_m M \).
The distribution is said to be \textbf{smooth} if for every \( m \in M \) there exists an open neighborhood \( U_m \subset M \) of \( m \) and a collection of smooth vector fields \( X_1, \dots, X_r \in \mathfrak{X}(U_m) \) satisfying
\[
\mathcal{D}_n = \operatorname{span} \{ X_1(n), \dots, X_r(n) \}, \quad \text{for all } n \in U_m.
\]
The integer $r$ is the \textbf{local rank} of the distribution on $U_m$.
\end{definition}

\begin{remark}
If the local rank of a smooth distribution is not constant across the manifold, the distribution is called a \textbf{singular distribution}.
In this work, however, we will primarily focus on \textbf{constant-rank distributions}, also referred to as \textbf{regular distributions}, i.e., those for which the integer $r$ is the same for every point $m \in M$.
In this case,
\be
\mathcal{D} \,:=\, \bigsqcup_{m \in M} \mathcal{D}_m \,,
\ee
defines a smooth rank-$r$ subbundle of the tangent bundle $\mathbf{T}M$.
\noindent More specifically, all distributions considered in this work will arise as the kernel of a differential form of constant rank
\be
\mathcal{D}_m \,=\, \mathrm{ker}\, \omega_m \,,
\ee
for some $\omega \in \Omega^k (M)$ with constant rank, or as intersections of kernels of differential forms of this type. 
{Equivalently, it can be interpreted as a smooth section of the $r$-th Grassmann bundle $G^r(TM) \rightarrow M$, that is, the fiber bundle over $M$ whose fiber at each point $m \in M$ is the Grassmannian $G(r, T_m M)$, i.e., the manifold of all $r$-dimensional linear subspaces of the tangent space $T_m M$.}
\end{remark}

\begin{definition}[\textsc{Integral manifold}]
Let \( \mathcal{D}\) be a smooth distribution on a smooth manifold \( M \).
An \textbf{integral manifold} of \( \mathcal{D} \) is a connected immersed submanifold  
\be
\mathfrak{i}  \colon  N \hookrightarrow M \,, 
\ee
such that
\be
\mathfrak{i}_* (X_n) \in  \mathcal{D}_{\mathfrak{i}(n)}\,,\;\; \forall \, n \in N,\, \text{and } \forall \, X_n \in \mathbf{T}_n N.
\ee
\end{definition}

\begin{definition}[\textsc{Integrable distribution}]
A smooth distribution \( \mathcal{D} \) on a smooth manifold $M$, is said to be \textbf{integrable} if, for every point \( m \in M \), there exists an integral manifold \( N \subset M \) passing through \( m \) such that \( T_n N = \mathcal{D}_n \) for all \( n \in N \).
\end{definition}

\begin{definition}[\textsc{Smooth foliation}] \label{Def: smooth foliation constant rank}
Let $M$ be a smooth $d$-dimensional manifold. 
A \textbf{smooth foliation of rank} $r$ (or \textbf{codimension} $d - r$) on $M$ is a decomposition of $M$ into a union of connected immersed submanifolds $\{ L_j \}_{j \in \mathscr{J}}$ ($\mathscr{J}$ denoting an index set), called \textbf{leaves}, such that for every point $m \in M$, there exists a coordinate chart $(U_m, \varphi)$
\be
\varphi  \colon  U_m \to \mathbb{R}^d  \colon  m \mapsto \varphi(m) \,=\, (x^1,...,\,x^l,\, z^1,...,\,z^r) \,,
\ee
with $l\,=\, d-r$, satisfying:
\begin{enumerate}
    \item For every leaf $L_j$, the connected components of $L_j \cap U$ are mapped by $\varphi$ to sets of the form
\be
    \varphi(L_j \cap U) = \{ (c^1,...,\,c^l,\, z^1,...,\,z^r) \in \mathbb{R}^d \},
\ee
    for some constants $c^{1}, \dots, c^l \in \mathbb{R}$.
    \item The local coordinate vector fields on $U$ 
    \be
    \left\{\,\frac{\partial}{\partial z^1}, \dots, \frac{\partial}{\partial z^r}\,\right\} \,,
    \ee 
    span a smooth rank-$r$ distribution $\mathcal{D} \subset \mathbf{T}U$, which coincides with the tangent spaces to the leaves.
\end{enumerate}
We refer to such charts as \textbf{foliated charts}.
\end{definition}

\begin{remark}
Smooth foliations of constant rank described in \cref{Def: smooth foliation constant rank} are also referred to as \textbf{regular foliations}.   
\end{remark}

\begin{definition}[\textsc{Completely integrable distribution}]
A smooth distribution \( \mathcal{D} \)  of constant rank on a smooth manifold $M$ is said to be \textbf{completely integrable} if it is integrable and its maximal integral manifolds define a smooth foliation of \( M \).
\end{definition}

\begin{theorem}[\textsc{Frobenius Theorem}]
Let $\mathcal{D}$ be a smooth rank-$r$ distribution on a smooth manifold $M$.
The following statements are equivalent:
\begin{enumerate}
    \item \textbf{Involutivity:} For all vector fields $X, Y \in \mathfrak{X}(M)$ belonging, at each $m \in M$, to $\mathcal{D}_m$, the Lie bracket $[X, Y]$ belongs, at each $m \in M$, to $\mathcal{D}_m$.
    \item \textbf{Complete integrability:} The distribution $\mathcal{D}$ is completely integrable.
    \item \textbf{Local triviality:} Around each point $m \in M$, there exist local coordinates $(x^1,...,\, x^l,\,z^1,...,\,z^r)$ defined on a neighborhood $U_m \ni m$ such that
\[
    \mathcal{D}_n = \operatorname{span} \left\{ \frac{\partial}{\partial z^1}, \dots, \frac{\partial}{\partial z^r} \right\} \Bigg|_n \quad \text{for all } n \in U_m \,.
\]
\end{enumerate}
\begin{proof}
See \cite{warner}   
\end{proof}
\end{theorem}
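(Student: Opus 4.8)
The plan is to establish the cycle of implications $(3)\Rightarrow(2)\Rightarrow(1)\Rightarrow(3)$, the last one carrying essentially all of the content. For $(3)\Rightarrow(2)$: in the chart $(x^1,\dots,x^l,z^1,\dots,z^r)$ provided by local triviality the slices $\{x^1=c^1,\dots,x^l=c^l\}$ are connected integral manifolds of $\mathcal D$ whose tangent spaces equal $\mathcal D$; since on any overlap two such charts flatten the same distribution, the plaques of one are unions of plaques of the other, so these local plaques assemble into well-defined maximal integral manifolds and the charts of $(3)$ are foliated charts in the sense of \cref{Def: smooth foliation constant rank}, whence $\mathcal D$ is completely integrable. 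For $(2)\Rightarrow(1)$: let $X,Y\in\mathfrak X(M)$ be $\mathcal D$-valued and fix $m\in M$; by complete integrability there is an integral manifold $\mathfrak i\colon N\hookrightarrow M$ through $m$ with $\mathfrak i_*(\mathbf T_nN)=\mathcal D_{\mathfrak i(n)}$, and near the preimage of $m$, where $\mathfrak i$ is an embedding onto a plaque, $X$ and $Y$ pull back to $\bar X,\bar Y\in\mathfrak X(N)$ that are $\mathfrak i$-related to them; then $[\bar X,\bar Y]$ is $\mathfrak i$-related to $[X,Y]$, so $[X,Y]_m=\mathfrak i_*[\bar X,\bar Y]\in\mathcal D_m$. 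Since $m$ was arbitrary, $\mathcal D$ is involutive.

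The heart is $(1)\Rightarrow(3)$, which I would prove by induction on the rank $r$, the statement being quantified over all manifolds of every dimension. For $r=1$ the distribution is locally generated by a single nonvanishing vector field, and the flow-box (straightening) theorem gives coordinates with $\mathcal D=\operatorname{span}\{\partial/\partial z^1\}$ (involutivity is vacuous here). For the inductive step, pick a local frame $X_1,\dots,X_r$ of $\mathcal D$ near $m$, straighten $X_1$ so that $X_1=\partial/\partial z^1$ in some chart, and replace each $X_a$ ($a\ge 2$) by $Y_a:=X_a-(X_a z^1)\,X_1$, so that $\{X_1,Y_2,\dots,Y_r\}$ still frames $\mathcal D$ while $Y_a z^1=0$, i.e.\ the $Y_a$ carry no $\partial/\partial z^1$ component. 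A direct computation gives $[Y_a,Y_b]z^1=0$ and $[X_1,Y_a]z^1=0$; combined with involutivity ($[Y_a,Y_b],[X_1,Y_a]\in\mathcal D$) and the fact that $X_1$ is the only frame field with a $\partial/\partial z^1$ component, this forces $[Y_a,Y_b]\in\operatorname{span}\{Y_2,\dots,Y_r\}$. Hence the restrictions $Y_a|_{\Sigma_0}$ to the slice $\Sigma_0:=\{z^1=0\}$ span an involutive rank-$(r-1)$ distribution on the $(d-1)$-manifold $\Sigma_0$; by the induction hypothesis there are coordinates $(x^1,\dots,x^l,z^2,\dots,z^r)$ on $\Sigma_0$ flattening it. Transport these off $\Sigma_0$ by the flow of $X_1$ and adjoin $z^1$: in the resulting chart $X_1=\partial/\partial z^1$, each $Y_a=\sum_{b\ge 2}A_a^b\,\partial/\partial z^b+\sum_i B_a^i\,\partial/\partial x^i$ with $B_a^i|_{\Sigma_0}=0$, and writing $[X_1,Y_a]\in\operatorname{span}\{Y_2,\dots,Y_r\}$ in these coordinates and comparing $\partial/\partial x^i$ components yields, for each $i$, a homogeneous linear ODE system in the variable $z^1$ satisfied by $(B_2^i,\dots,B_r^i)$ with zero initial datum on $\Sigma_0$; uniqueness forces $B_a^i\equiv 0$. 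Thus $Y_a\in\operatorname{span}\{\partial/\partial z^2,\dots,\partial/\partial z^r\}$ throughout the chart, so $\mathcal D=\operatorname{span}\{\partial/\partial z^1,\dots,\partial/\partial z^r\}$, which is $(3)$.

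The genuine obstacle is exactly this last step: carefully bookkeeping which coordinate components of the adjusted frame $\{Y_a\}$ vanish, checking that involutivity descends to the restricted frame on the slice $\Sigma_0$, and running the linear-ODE uniqueness argument that propagates flatness from $\Sigma_0$ to the nearby slices. A slightly slicker but essentially equivalent alternative is to first show that an involutive distribution admits, locally, a frame of pairwise commuting vector fields (by iterating the above correction) and then to use the standard fact that pointwise independent commuting vector fields $Z_1,\dots,Z_r$ are simultaneously straightened by their joint flow $(t^1,\dots,t^r)\mapsto\varphi^1_{t^1}\circ\cdots\circ\varphi^r_{t^r}(m_0)$; the equivalences with $(1)$ and $(2)$ are then obtained as above. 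A fully detailed argument is in \cite{warner}.
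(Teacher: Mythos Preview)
Your proposal is correct and follows the standard inductive argument (straighten one field, correct the rest, descend to a slice, propagate by ODE uniqueness) that is precisely the proof given in \cite{warner}, which is all the paper itself invokes. Since the paper does not supply its own argument but merely cites Warner, your write-up is in fact more detailed than, and entirely consistent with, the paper's treatment.
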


\noindent Frobenius Theorem has an alternative version using differential forms.

\noindent Indeed, let $\mathcal{D}^o$ be the annihilator of $\mathcal{D}$, that is, the space of $1$-forms on $M$ vanishing on $\mathcal{D}$. 
We can construct an ideal of the algebra of differential forms on $M$ by defining
$$
\mathcal{I}(\mathcal{D}^o) =
\{\alpha \wedge \beta, \hbox{where} \; \beta \in \mathcal{D}^o\}
$$
Then, we have the following result.
\begin{theorem}
$\mathcal{D}$ is involutive if and only if $\mathcal{I}(\mathcal{D}^o)$ is  differential ideal, that is,
$$
d(\mathcal{I}(\mathcal{D}^o)) \subset \mathcal{I}(\mathcal{D}^o).
$$
\begin{proof}
See \cite{warner}   
\end{proof}
\end{theorem}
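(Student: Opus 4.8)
The plan is to prove the two implications separately, reducing both to the Cartan formula for the exterior derivative, which expresses $d\beta(X,Y)$ in terms of $X(\beta(Y))$, $Y(\beta(X))$ and $\beta([X,Y])$. Throughout, recall that a $1$-form $\beta$ lies in $\mathcal{D}^o$ precisely when $\beta(Z) = 0$ for every (local) vector field $Z$ taking values in $\mathcal{D}$, and that locally $\mathcal{D}^o$ is spanned by $l = d - r$ pointwise-independent $1$-forms $\beta^1,\dots,\beta^l$; any element of the ideal $\mathcal{I}(\mathcal{D}^o)$ is a sum of terms $\gamma \wedge \beta^a$. The main structural observation is that $d(\mathcal{I}(\mathcal{D}^o)) \subset \mathcal{I}(\mathcal{D}^o)$ is equivalent, by the Leibniz rule $d(\gamma \wedge \beta^a) = d\gamma \wedge \beta^a + (-1)^{\deg\gamma}\gamma \wedge d\beta^a$, to the single condition that $d\beta^a \in \mathcal{I}(\mathcal{D}^o)$ for each generator $\beta^a$ — indeed the first summand is already in the ideal, so the ideal is differential iff each $d\beta^a$ is. Hence it suffices to show: $\mathcal{D}$ involutive $\iff$ $d\beta \in \mathcal{I}(\mathcal{D}^o)$ for every $\beta \in \mathcal{D}^o$.

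For the implication ``involutive $\Rightarrow$ differential ideal'', I would argue pointwise. Fix $\beta \in \mathcal{D}^o$ and a point $m$. To show $d\beta(m) \in \mathcal{I}(\mathcal{D}^o)_m$, it is enough (using the local coframe $\beta^1,\dots,\beta^l$ completed to a full coframe $\beta^1,\dots,\beta^l,\theta^1,\dots,\theta^r$) to check that $d\beta$ has no component along $\theta^i \wedge \theta^j$, i.e. that $d\beta(X,Y) = 0$ whenever $X, Y$ both take values in $\mathcal{D}$. By Cartan's formula, $d\beta(X,Y) = X(\beta(Y)) - Y(\beta(X)) - \beta([X,Y])$. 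Since $X, Y \in \mathcal{D}$ and $\beta \in \mathcal{D}^o$, the first two terms vanish identically; involutivity gives $[X,Y] \in \mathcal{D}$, so the third term vanishes as well. Thus $d\beta$ annihilates $\Lambda^2 \mathcal{D}$, which is exactly the statement that $d\beta \in \mathcal{I}(\mathcal{D}^o)$.

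For the converse, suppose $\mathcal{I}(\mathcal{D}^o)$ is a differential ideal and take $X, Y \in \mathfrak{X}(M)$ with values in $\mathcal{D}$; I want $[X,Y] \in \mathcal{D}$, equivalently $\beta([X,Y]) = 0$ for every $\beta \in \mathcal{D}^o$. Fix such a $\beta$. Then $d\beta \in \mathcal{I}(\mathcal{D}^o)$, so $d\beta = \sum_a \gamma^a \wedge \beta^a$ with $\beta^a \in \mathcal{D}^o$; evaluating on the pair $(X,Y)$ and using that each $\beta^a$ kills $X$ and $Y$, we get $d\beta(X,Y) = 0$. Cartan's formula again reads $d\beta(X,Y) = X(\beta(Y)) - Y(\beta(X)) - \beta([X,Y])$, and the first two terms vanish because $\beta \in \mathcal{D}^o$ annihilates $X$ and $Y$; hence $\beta([X,Y]) = 0$. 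Since $\beta$ was arbitrary in $\mathcal{D}^o$ and $\mathcal{D} = (\mathcal{D}^o)^o$ by the constant-rank hypothesis, we conclude $[X,Y] \in \mathcal{D}$.

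The only genuinely delicate point is the reduction to generators and the passage between ``$d\beta \in \mathcal{I}(\mathcal{D}^o)$ for all $\beta$'' and ``$\mathcal{I}(\mathcal{D}^o)$ is a differential ideal'': one must be careful that the local spanning $1$-forms $\beta^a$ are smooth (which holds because $\mathcal{D}$ has constant rank, so $\mathcal{D}^o$ is a smooth subbundle of $\T^*M$) and that the decomposition of an arbitrary element of the ideal as $\sum \gamma^a \wedge \beta^a$ is compatible with $d$ via Leibniz — all of which is routine but deserves a sentence. A cleaner packaging, which I would likely adopt in the write-up, is to note that membership in $\mathcal{I}(\mathcal{D}^o)$ of a $k$-form $\mu$ is equivalent to $\mu(V_1,\dots,V_k) = 0$ whenever all $V_i \in \mathcal{D}$; with this characterization both directions collapse to the single Cartan-formula computation above, and the generator-by-generator bookkeeping disappears entirely. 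I would cite \cite{warner} for the standard facts about the annihilator bundle and defer to it for any remaining routine verifications.
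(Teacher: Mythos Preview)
Your proposal is correct and is precisely the standard argument (via Cartan's formula and the characterization of ideal membership by vanishing on $\Lambda^k\mathcal{D}$) that one finds in the cited reference; the paper itself gives no proof beyond pointing to \cite{warner}, so there is nothing further to compare.
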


\begin{remark}
When a smooth distribution $\mathcal{D}$ on a smooth manifold $M$ arises as the kernel of a differential $k$-form $\omega \in \Omega^k(M)$ of constant rank, the involutivity of $\mathcal{D}$ is equivalent to the condition 
\be
\dd \omega \in \mathcal{I}^2(\operatorname{ker}\,\omega) \,, 
\ee
where $\mathcal{I}(\operatorname{ker}\,\omega)$ is the ideal generated by the annihilator of $\operatorname{ker}\,\omega$, and its second power $\mathcal{I}^2(\operatorname{ker}\,\omega)$ is defined as
\be
\mathcal{I}^2(\operatorname{ker}\,\omega) \,=\, \left\{\, \sum_{i,j} \alpha_i \wedge \beta_j \,,\;\;\; \text{for}\,\, \alpha_i,\,\beta_j \in \mathcal{I}(\operatorname{ker}\,\omega) \,\right\} \,.
\ee
For $k=1$ this condition coincides with
\be
\dd \omega \wedge \omega \,=\, 0 \,.
\ee
\end{remark}

\noindent The Frobenius theorem provides a complete picture for constant-rank distributions. 
For completeness, we now briefly consider the more general case of singular distributions. 
This requires introducing the concept of a \textit{singular foliation}.

\begin{definition}[\textsc{Singular foliation}]
Let $M$ be a smooth $d$-dimensional manifold.
A \textbf{singular foliation} on $M$ is a partition of $M$ into a disjoint union of connected, immersed submanifolds $\{ L_j \}_{j \in \mathscr{J}}$, called \textbf{leaves}, which may have varying dimensions.
This partition defines a singular distribution $\mathcal{D}$ by setting, for each $m \in M$,
\be
\mathcal{D}_m \,=\, \mathbf{T}_m L_j \,, \quad \text{where } m \in L_j\,.
\ee
We say that the foliation is \textbf{generated} by the distribution $\mathcal{D}$.
\end{definition}

\begin{theorem}[\textsc{Stefan-Sussmann's Theorem}]
Let $\mathcal{D}$ be a singular distribution on a smooth manifold $M$, generated by a family of smooth vector fields $\mathcal{F} \subseteq \mathfrak{X}(M)$.
The following statements are equivalent:
\begin{enumerate}
    \item \textbf{Involutivity:} The set of all smooth vector fields that are sections of $\mathcal{D}$, denoted $\Gamma(\mathcal{D})$, forms a Lie subalgebra of $\mathfrak{X}(M)$. That is, for any two vector fields $X, Y \in \Gamma(\mathcal{D})$, their Lie bracket $[X, Y]$ is also in $\Gamma(\mathcal{D})$.
    \item \textbf{Integrability:} For each point $m \in M$, there exists a unique maximal, connected, immersed submanifold (the \textit{leaf}) passing through $m$ that is an integral manifold of $\mathcal{D}$.
    \item \textbf{Foliation:} The distribution $\mathcal{D}$ generates a singular foliation on $M$, whose leaves are the maximal integral manifolds from the previous point.
\end{enumerate}
\begin{proof}
See \cite{stefan,sussmann}
\end{proof}
\end{theorem}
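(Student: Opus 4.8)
The final statement is the \textsc{Stefan--Sussmann Theorem}, and I would prove it as a cycle of implications $(3)\Rightarrow(2)\Rightarrow(1)\Rightarrow(3)$, the last one carrying essentially all of the content. For $(3)\Rightarrow(2)$: if $\mathcal{D}$ generates a singular foliation $\{L_j\}_{j\in\mathscr{J}}$, each leaf is by definition an immersed integral manifold of $\mathcal{D}$ and, since the leaves partition $M$, the one through a given $m$ is the unique maximal such submanifold. For $(2)\Rightarrow(1)$: let $X,Y\in\Gamma(\mathcal{D})$ and $m\in M$, and let $L$ be the leaf through $m$, so that $\mathbf{T}_pL=\mathcal{D}_p$ for all $p\in L$. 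Passing to a piece $L_0\subseteq L$ that is open in $L$ and embedded in $M$ (such pieces exist since $L$ is immersed), both $X$ and $Y$ restrict to smooth vector fields on $L_0$, because $X_p\in\mathcal{D}_p=\mathbf{T}_pL=\mathbf{T}_pL_0$ for $p\in L_0$ and the restriction of an ambient smooth vector field tangent to an embedded submanifold is a smooth vector field on it. Hence $[X,Y]|_{L_0}=[X|_{L_0},Y|_{L_0}]$ is tangent to $L_0$, so $[X,Y]_m\in\mathbf{T}_mL_0=\mathcal{D}_m$; as $m$ is arbitrary, $[X,Y]\in\Gamma(\mathcal{D})$.

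For the substantial implication $(1)\Rightarrow(3)$, fix $m\in M$, put $k:=\dim\mathcal{D}_m$, pick $X_1,\dots,X_k\in\Gamma(\mathcal{D})$ with $X_1(m),\dots,X_k(m)$ a basis of $\mathcal{D}_m$, and define, for $t=(t_1,\dots,t_k)$ near the origin,
\[
\Psi(t)\,:=\,\big(\phi^{X_1}_{t_1}\circ\cdots\circ\phi^{X_k}_{t_k}\big)(m)\,.
\]
Since $\dd\Psi_0(e_i)=X_i(m)$, $\Psi$ is an immersion near $0$, and after shrinking its domain it is an embedding onto a $k$-dimensional submanifold $N$ through $m$. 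The heart of the proof is the identity $\mathbf{T}_pN=\mathcal{D}_p$ for all $p\in N$. The inclusion ``$\subseteq$'' reduces to the \emph{flow-invariance} of $\mathcal{D}$: for $X\in\Gamma(\mathcal{D})$ one has $(\phi^X_t)_*\mathcal{D}_q=\mathcal{D}_{\phi^X_t(q)}$ whenever the flow is defined. To see this, work on an open set where $\mathcal{D}$ is generated by finitely many sections $Y_1,\dots,Y_N$; involutivity yields $[X,Y_a]=\sum_b c_a^b\,Y_b$ with smooth coefficients, so the time-dependent vector fields $Z_a(t):=(\phi^X_{-t})_*Y_a$ satisfy a \emph{linear} system $\dot Z_a=\sum_b(c_a^b\circ\phi^X_t)\,Z_b$; evaluated at a fixed point this is a finite-dimensional linear ODE whose solution, starting in $\mathcal{D}_q$, remains in $\mathcal{D}_q$ by uniqueness, and unwinding the pushforwards gives the asserted invariance. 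In particular $\dim\mathcal{D}_p\equiv k$ along $N$, which upgrades ``$\subseteq$'' to equality; thus $N$ is an integral manifold of $\mathcal{D}$ through $m$.

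It remains to glue these local integral manifolds into a foliation. Following Sussmann, define the \emph{orbit} of $m$ as the set of points reachable from $m$ by finite compositions of forward and backward flows of vector fields in $\Gamma(\mathcal{D})$; flow-invariance shows that orbits are pairwise disjoint and cover $M$, that the local charts $\Psi$ patch consistently along such flows, and that each orbit inherits a second-countable smooth manifold structure as a weakly embedded immersed submanifold with tangent space exactly $\mathcal{D}$ at every point. Completing each $\Psi$ with coordinates transverse to $\mathcal{D}_m$ produces foliated charts in the sense appropriate to the singular version of \cref{Def: smooth foliation constant rank}, so $\mathcal{D}$ generates a singular foliation whose leaves are the orbits; this is $(3)$, closing the cycle.

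The main obstacle is precisely the flow-invariance step, hence the passage from involutivity to integrability: in the purely smooth singular category, involutivity of $\Gamma(\mathcal{D})$ alone does \emph{not} imply integrability, and one needs the supplementary hypothesis that $\mathcal{D}$ be locally finitely generated (locally of finite type) in order to write the structure coefficients $c_a^b$ smoothly and to reduce the invariance to a finite-dimensional ODE --- this is where the singular theory genuinely departs from the Frobenius theorem. For the distributions that actually occur in this paper, namely kernels of constant-rank differential forms and finite intersections of such kernels, this hypothesis holds automatically (such distributions even have constant rank), so the subtlety never intervenes; the general statement, together with the orbit construction just outlined, is carried out in \cite{stefan,sussmann}.
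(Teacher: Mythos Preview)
The paper does not actually prove this theorem: its entire proof is the sentence ``See \cite{stefan,sussmann}''. Your proposal therefore goes well beyond what the paper offers, and the sketch you give --- the cycle $(3)\Rightarrow(2)\Rightarrow(1)\Rightarrow(3)$, with flow-invariance of $\mathcal{D}$ under its own sections as the key mechanism and the orbit construction for gluing --- is precisely the strategy of the original Stefan and Sussmann papers that the authors cite.

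Your closing caveat is also correct and worth emphasizing: the equivalence $(1)\Leftrightarrow(2)$ as stated in the paper is not true for arbitrary smooth singular distributions without an additional hypothesis such as local finite generation (there are standard counterexamples of involutive but non-integrable singular distributions). You rightly observe that this does not affect the paper, since all distributions actually used there arise as kernels of constant-rank forms and are therefore regular, so the classical Frobenius theorem already suffices and the Stefan--Sussmann machinery is included only for completeness.
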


\subsection{Moser's trick and relative Poincaré Lemma}

\noindent The following strategy, often referred to as \textit{Moser's trick}, was originally given by Jürgen Moser in 1965 to check when two volume forms are equivalent \cite{moser}, but its main applications are in symplectic geometry. It is the standard argument for the modern proof of Darboux's theorem, as well as for the proof of Darboux-Weinstein theorem and other normal form results \cite{weinstein71symplectic}. It will be relevant to discuss the uniqueness of coisotropic embeddings.

\begin{theorem}[\textsc{Moser's Trick}]
\label{thm:Moser_trick}
Let $M$ be a manifold, and let $\omega_0, \omega_1 \in \Omega^k(M)$ be two differential $a$-forms. 
If there exists a complete time-dependent vector field $X_t \in \mathfrak{X}(M)$ such that 
\[
\omega_1 - \omega_0 + \Lie_{X_t} (t \omega_1 + (1-t) \omega_0) = 0\,,
\]
then, there exists a diffeomorphism $\psi\colon M \rightarrow M$ satisfying $\psi^\ast \omega_1 = \omega_0$.
\end{theorem}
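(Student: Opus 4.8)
The plan is to use the standard Moser deformation argument: interpolate linearly between $\omega_0$ and $\omega_1$ and show that the hypothesis forces the flow of $X_t$ to pull back $\omega_1$ to $\omega_0$. First I would set $\omega_t := (1-t)\omega_0 + t\omega_1$ for $t \in [0,1]$, so that $\omega_t$ is a smooth path of $k$-forms with $\tfrac{\dd}{\dd t}\omega_t = \omega_1 - \omega_0$. The hypothesis is then precisely $\tfrac{\dd}{\dd t}\omega_t + \Lie_{X_t}\omega_t = 0$. Because $X_t$ is assumed complete, it generates a smooth one-parameter family of diffeomorphisms $\psi_t \colon M \to M$ with $\psi_0 = \mathrm{id}$ and $\tfrac{\dd}{\dd t}\psi_t = X_t \circ \psi_t$.

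Next I would compute the $t$-derivative of the pulled-back family $\psi_t^\ast \omega_t$. The key identity is the general formula for differentiating a pullback along a time-dependent flow,
\[
\frac{\dd}{\dd t}\bigl(\psi_t^\ast \omega_t\bigr) \,=\, \psi_t^\ast\!\left(\frac{\dd}{\dd t}\omega_t + \Lie_{X_t}\omega_t\right).
\]
By the hypothesis, the bracket on the right vanishes identically, hence $\psi_t^\ast \omega_t$ is constant in $t$. Evaluating at the endpoints gives $\psi_1^\ast \omega_1 = \psi_0^\ast \omega_0 = \omega_0$, so $\psi := \psi_1$ is the desired diffeomorphism.

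The only genuinely delicate point is justifying the displayed differentiation formula and, beforehand, the existence and smoothness of the flow $\psi_t$; this is where completeness of $X_t$ is used, guaranteeing that $\psi_t$ is defined for all $t \in [0,1]$ on all of $M$ rather than merely on shrinking open sets or short time intervals. The differentiation formula itself follows from the chain rule together with Cartan's magic formula $\Lie_{X_t} = \dd \circ \iota_{X_t} + \iota_{X_t} \circ \dd$, splitting $\tfrac{\dd}{\dd t}(\psi_t^\ast\omega_t)$ into the contribution from the $t$-dependence of the diffeomorphism (which produces the $\Lie_{X_t}$ term after pullback) and the contribution from the explicit $t$-dependence of $\omega_t$ (which produces $\psi_t^\ast(\tfrac{\dd}{\dd t}\omega_t)$). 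I would treat this as standard and cite \cite{moser} or a textbook rather than reproduce the computation in detail. Note that closedness of the forms plays no role in the statement as given — it is only in the applications, where one writes $\omega_1 - \omega_0 = \dd\alpha$ via a relative Poincaré lemma and then solves for $X_t$ through $\iota_{X_t}\omega_t = -\alpha$, that exactness and nondegeneracy enter.
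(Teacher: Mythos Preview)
Your proof is correct and follows essentially the same approach as the paper's: both define $\omega_t = (1-t)\omega_0 + t\omega_1$, compute $\frac{\dd}{\dd t}(\psi_t^\ast\omega_t) = \psi_t^\ast\bigl(\frac{\dd}{\dd t}\omega_t + \Lie_{X_t}\omega_t\bigr)$, and conclude from the hypothesis that this vanishes. Your additional remarks on where completeness enters and on the irrelevance of closedness for the bare statement are accurate and welcome elaborations.
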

\begin{proof}
Let $\psi_t\,:\; M \to M$ be the flow of the vector field $X_t$, such that $\psi_0 = \mathrm{id}_M$. 
We will show that its time-one map, $\psi := \psi_1$, satisfies $\psi^*\omega_1 = \omega_0$.

\noindent To do this, we follow the "trick" of showing that the family of forms $\omega_t := \psi_t^*(t \omega_1 + (1-t) \omega_0)$ (that coincides with $\omega_0$ for $t=0$ and with $\omega_1$ for $t=1$) is constant for all $t \in [0, 1]$. 

\noindent Indeed, 
\begin{align*}
\frac{\dd}{\dd t} \psi_t^\ast(\omega_t) &= \frac{\dd}{\dd t} \left( \psi_t^*(t \omega_1 + (1-t) \omega_0) \right) \\
&= \psi_t^* \left( \mathcal{L}_{X_t}(t \omega_1 + (1-t) \omega_0) + (\omega_1 - \omega_0) \right)\,,
\end{align*}
and the last line vanishes by assumption, so that it satisfies $\psi^ \ast_t \omega_t = \psi^\ast_0 \omega_0 = \omega_0$, for $t \in [0,1]$ Taking $t = 1$ we obtain the desired diffeomorphism.
\end{proof}

\noindent Lastly, we add the proof of the Relative Poincaré Lemma, which we use several times throught our study:

\begin{theorem}[\textsc{Relative Poincaré Lemma}]
\label{thm:Relative_Poincaré_Lemma}
Let $\mathfrak{i} \colon N \hookrightarrow M$ be an immersed submanifold and let $\omega$ be a closed $k$-form on $M$ that vanishes on $N$. Then, there is a neighborhood $U$ of $N$ in $M$ and an $(k-1)$-form $\theta$ on $M$ that vanishes on $N$ such that $\omega = \dd \theta$.
\end{theorem}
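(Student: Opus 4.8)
The plan is to use a deformation‑retraction argument combined with the homotopy formula for the exterior derivative. First I would choose a tubular neighborhood $U$ of $N$ in $M$; by the tubular neighborhood theorem, after shrinking $U$ we may identify it with (a neighborhood of the zero section in) the normal bundle $\nu(N) \to N$, and this identification provides a smooth family of maps $\varphi_t \colon U \to U$, $t \in [0,1]$, given by fiberwise scaling $v \mapsto t\,v$, so that $\varphi_1 = \mathrm{id}_U$, $\varphi_0$ is the projection onto $N$ followed by the inclusion, and $\varphi_t|_N = \mathrm{id}_N$ for all $t$. This is a smooth homotopy from $\varphi_0$ to $\varphi_1$ rel $N$.

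Next I would apply the standard homotopy operator associated with this family. Writing $\omega$ for the given closed $k$-form, the family $\varphi_t^\ast \omega$ interpolates between $\varphi_0^\ast \omega$ and $\omega$, and the classical homotopy formula gives
\[
\varphi_1^\ast \omega - \varphi_0^\ast \omega \,=\, \dd(\mathcal{Q}\omega) + \mathcal{Q}(\dd\omega)\,,
\]
where $\mathcal{Q}\omega \,=\, \int_0^1 \varphi_t^\ast(\iota_{Z_t}\omega)\,\dd t$ and $Z_t$ is the (time‑dependent) vector field generating $\varphi_t$. Since $\omega$ is closed, the second term vanishes, so $\omega = \varphi_0^\ast \omega + \dd\theta$ with $\theta := \mathcal{Q}\omega$. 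Because $\varphi_0$ factors through the projection to $N$ and $\omega$ vanishes on $N$ (meaning $\mathfrak{i}^\ast\omega = 0$), the form $\varphi_0^\ast\omega$ vanishes identically on $U$: indeed $\varphi_0^\ast\omega$ is the pullback along the retraction of $\mathfrak{i}^\ast\omega = 0$. Hence $\omega = \dd\theta$ on $U$.

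It then remains to check that $\theta$ vanishes on $N$. For $m \in N$ the generating vector field $Z_t$ at $m$ is the velocity of $t \mapsto \varphi_t(m) = m$, hence $Z_t(m) = 0$; therefore $\iota_{Z_t}\omega$ vanishes at every point of $N$, and since $\varphi_t$ fixes $N$, the integrand $\varphi_t^\ast(\iota_{Z_t}\omega)$ pulls back to $0$ on $N$ for every $t$, giving $\mathfrak{i}^\ast\theta = 0$. The main subtlety — the only place that requires a little care rather than routine bookkeeping — is the behavior of the scaling homotopy and of $Z_t$ near $t = 0$: the vector field $Z_t$ generating $v \mapsto tv$ is $Z_t = v/t$ in fiber coordinates, which is singular as $t \to 0$, but the contraction $\iota_{Z_t}\omega$ and hence the integrand $\varphi_t^\ast(\iota_{Z_t}\omega)$ extends smoothly across $t=0$ because $\omega$ vanishes on the zero section, so the factor of $t$ from the differential of $\varphi_t$ in the normal directions cancels the $1/t$; making this cancellation precise in local coordinates adapted to $\nu(N)$ is the one computation I would actually carry out. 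If $N$ is merely immersed rather than embedded, one works locally on a neighborhood where $\mathfrak{i}$ is an embedding, constructs $\theta$ there, and notes the construction is natural enough to be independent of choices, or simply states the result for the local model, which is all that is used later.
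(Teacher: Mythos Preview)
Your proof is correct and follows essentially the same route as the paper: tubular neighborhood, fiberwise scaling homotopy $\varphi_t(v)=tv$, the homotopy operator $\theta=\int_0^1 \varphi_t^\ast(\iota_{Z_t}\omega)\,\dd t$, and the observation that $\theta$ vanishes on $N$ because the generating field does. Your explicit discussion of the $t\to 0$ singularity of $Z_t$ and why the integrand nevertheless extends smoothly is a point the paper's proof leaves implicit.
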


\begin{proof}
Let $U$ be a tubular neighborhood of $N$ in $M$, endowed with a surjective submersion $\pi \colon U \rightarrow N$ and a vector bundle structure. Let $\Delta$ denote the Liouville vector field. Define, for $t \in [0,1]$ the $1$-parameter family of maps \[
\psi_t \colon U \rightarrow U\,, \qquad \psi_t(u) := t \cdot u\,.
\]
Let $\Delta_t := \frac{\dd \psi_t}{\dd t}$. Now, integrating $\frac{\dd}{\dd t}\left(\psi_t^\ast \omega\right)$ we have the following:
\begin{align*}
    \psi_1^\ast \omega - \psi_0^\ast \omega &=  \int_0^1 \frac{\dd}{\dd t} \left(\psi_t^\ast \omega \right)\dd t = \int_0^1 \psi_t^\ast\Lie_{ \Delta_t} \omega \dd t\\
    &=\int \psi_t^\ast\left(\dd i_{ \Delta_t} \omega + i_{ \Delta_t}\dd \omega \right)\dd t =  \int_0^1 \dd \psi_t^\ast i_{\Delta_t} \omega \dd t\\
    &= \dd \int_0^1\psi_t^\ast i_{\Delta_t} \omega \dd t\,.
\end{align*}
Notice that $\psi_1^\ast \omega = \omega$, and that $\psi_0^\ast \omega = \pi^\ast \omega|_N = 0$, since $\omega$ vanishes at $N$. Now, it is clear that if we define 
\[
\theta := \int_0^1 \psi_t^\ast i_{\Delta_t} \omega \dd t\,,
\]
we have $\omega = \dd \theta$. It only remains to notice that $\theta$ also vanishes at $N$, since $\Delta$ does as well.
\end{proof}

\subsection{Differential calculus and almost product structures}

In this section, we will recall the notion of \textit{almost product structure} introduced by \textit{A. G. Walker} \cite{walker} and \textit{T. Fukami} \cite{fukami}, and extensively studied in the sixties of the past century. 
We will follow the approach in \cite{dLR89}. 
An interpretation in terms of $G$-structures can be found in \cite{fujimoto}.

\begin{definition}[\textsc{Almost product structure}]
Let $M$ be a smooth $d$-dimensional manifold. 
An \textbf{almost product structure} on $M$ is a choice of two smooth distributions $\mathcal{H}, \mathcal{V}$ of constant rank, sometimes called the \textbf{horizontal} and \textbf{vertical distributions}, such that
\be
\mathbf{T}_m M = \mathcal{H}_m \oplus \mathcal{V}_m \,, \;\; \forall \,\, m \in M \,,
\ee
or
\be
\mathbf{T} M = \mathcal{H} \oplus \mathcal{V} \,,
\ee
in terms of the {Whitney} sum of vector bundles.
\end{definition}

\begin{remark}
In this work, we will only consider almost product structures in which one of the two distributions, say $\mathcal{V}$, arises as the kernel of a differential $k$-form of constant rank, or as the intersection of kernels of $k$-forms, and is completely integrable. 
The complementary distribution $\mathcal{H}$ is then chosen so that $\mathbf{T}M = \mathcal{H} \oplus \mathcal{V}$.
\end{remark}

\noindent Given an almost product structure $\mathbf{T}M = \mathcal{H} \oplus \mathcal{V}$, one can define a $(1,1)$-tensor field $P$ such that
\be
\begin{split}
P^2 &= P \,, \\
\operatorname{Im}(P) &= \mathcal{V} \,, \\
\ker(P) &= \mathcal{H} \,,
\end{split}
\ee
{namely the projection onto $\mathcal{V}$.}
Conversely, any such tensor defines an almost product structure with horizontal and vertical distributions given by its image and kernel, respectively.

\noindent Thus, given a smooth completely integrable distribution $\mathcal{V}$ on $M$ of rank $r$, there is a one-to-one correspondence between:
\begin{itemize}
    \item smooth $(1,1)$-tensor fields $P$ on $M$ satisfying $P^2 = P$ and $\operatorname{Im}(P) = \mathcal{V}$;
    \item smooth distributions $\mathcal{H}$ on $M$ such that $\mathcal{H}$ and $\mathcal{V}$ provide an almost product structure.
\end{itemize}
In particular, in the foliated chart associated with the completely integrable distribution $\mathcal{V}$, where
\be
\mathcal{V} \,=\, \operatorname{span}\left\{\, V_A \,:=\, \frac{\de}{\de z^A} \,\right\}_{A\,=\,1,...,\,r} \,,
\ee
the $(1,1)$-tensor $P$ reads
\be
P \,=\, (\dd z^A - P^A_a \dd x^a) \otimes V_A \,=:\, P^A \otimes V_A \,, \;\; a \,=\, 1,...,\,l;\, A\,=\, 1,...,\,r \,,
\ee
where $P^A_a$ are smooth functions on $M$, whose kernel reads
\be
\mathrm{ker}\,P \,=\, \operatorname{span}\left\{\, H_a \,:=\, \frac{\de}{\de x^a} + P^A_a \frac{\de}{\de z^A} \,\right\}_{a\,=\,1,...,\,l} \,=:\, \mathcal{H} \,.
\ee

\begin{remark}
Note that, given an almost product structure $(\mathcal{V},\, \mathcal{H})$ on a smooth manifold $M$ defined by the $(1,1)$-tensor $P$ as $\mathcal{V} \,=\, \mathrm{Im}\,P$, $\mathcal{H} \,=\, \mathrm{ker}\,P$, one can always define the opposite tensor $R \,:=\, \mathds{1} - P$ providing the opposite almost product structure $\mathbf{T}M \,=\, \mathcal{H} \oplus \mathcal{V}$, since
\be
\begin{split}
\mathcal{V} \,&=\, \mathrm{ker}\,R \,=\, \mathrm{Im}\,P \,,\\
\mathcal{H} \,&=\, \mathrm{Im}\,R \,=\, \mathrm{ker} \,P \,.
\end{split}
\ee
In the system of foliated charts associated with $\mathcal{V}$, $R$ reads
\be
R \,=\, \dd x^a \otimes H_a \,.
\ee
\end{remark}

\begin{proposition}[\textsc{Splitting of the cotangent bundle}]
\label{prop:split_cotangent_bundle}
Any almost product structure $(\mathcal{V},\, \mathcal{H})$ provided by the $(1,1)$-tensor $P$ over a smooth manifold $M$, provides a splitting of the cotangent bundle $\mathbf{T}^* M$ of the type
\be
\mathbf{T}^* M \,=\, \mathcal{V}^* \oplus \mathcal{H}^* \,,
\ee
for $\mathcal{V}^*$ and $\mathcal{H}^*$ the subbundles of $\mathbf{T}^* M$ dual to $\mathcal{V}$ and $\mathcal{H}$.
\begin{proof}
Consider a differential $1$-form $\alpha$ on $M$, namely a section of $\mathbf{T}^* M$.
Given the almost product structure $(\mathcal{V},\, \mathcal{H})$ given by the $(1,1)$-tensor $P$, $\alpha$ decomposes as
\be \label{Eq: decomposition 1-forms}
\alpha \,=\, \alpha^\parallel_P + \alpha^\perp_P \,,
\ee
where 
\be
\alpha^\parallel_P \,:=\, \alpha \circ P \,,
\ee
and
\be
\alpha^\perp_P \,:=\, \alpha - \alpha^\parallel_P \,.
\ee
In the system of foliated charts associated with the (completely integrable) distribution $\mathcal{V} \,=\, \mathrm{ker}\,P$, decomposition \eqref{Eq: decomposition 1-forms} for a differential $1$-form
\be
\alpha \,=\, \alpha_a \dd x^a + \alpha_A \dd z^A \,,
\ee
reads
\be
\begin{split}
\alpha^\parallel_P \,&=\, \alpha \circ P \,=\, \alpha_A P^A \,, \\
\alpha^\perp_P \,&=\, \alpha - \alpha^\parallel_P \,=\, \left(\,\alpha_a + P^A_a \alpha_A \,\right) \dd x^a \,.
\end{split}
\ee

\noindent Note that, by definition, $\alpha^\parallel_P$ belongs to the image of the $(1,1)$-tensor $P$ when it acts upon differential forms.
We will denote by $P^*$ such action.
On the other hand, $\alpha^\perp_P$ belongs to the kernel of $P^*$.
Thus, the discussion above may be resumed by saying that the cotangent bundle $\mathbf{T}^* M$ splits as the direct sum of two subbundles, say
\be
\begin{split}
\mathcal{V}^* \,&:=\, \mathrm{Im}\,P^* \,\subset \, \mathbf{T}^* M \,, \\
\mathcal{H}^* \,&:=\, \mathrm{Ker}\,P^* \,\subset \, \mathbf{T}^* M \,,
\end{split}
\ee
such that 
\be \label{Eq: decomposition T*M}
\mathbf{T}^* M \,=\, \mathcal{V}^* \oplus \mathcal{H}^* \,.
\ee
It is straightforward to check that $\mathcal{V}^*_m$ can be identified with the dual vector space of $\mathcal{V}_m$ at each $m \in M$, as well as $\mathcal{H}^*_m$ can be identified with the dual vector space of $\mathcal{H}_m$ for each $m \in M$.
\end{proof}
\end{proposition}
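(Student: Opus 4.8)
The plan is to prove Proposition \ref{prop:split_cotangent_bundle} by working fiberwise and then invoking smoothness of the involved tensor field. The key observation is that an almost product structure $\mathbf{T}M = \mathcal{H}\oplus\mathcal{V}$ encoded by the projector $P$ (with $P^2=P$, $\operatorname{Im}P=\mathcal{V}$, $\ker P=\mathcal{H}$) induces, at each point $m\in M$, a dual projector on $\mathbf{T}^\ast_m M$. Concretely, I would define the action $P^\ast$ of $P$ on $1$-forms by $(P^\ast\alpha)(X) := \alpha(PX)$ for $X\in\mathbf{T}_mM$, i.e.\ $P^\ast\alpha := \alpha\circ P$ in the notation of the statement. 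First I would check the purely linear-algebraic facts: $(P^\ast)^2=P^\ast$ (immediate from $P^2=P$), so $P^\ast$ is again a projector; hence $\mathbf{T}^\ast_m M = \operatorname{Im}P^\ast_m \oplus \ker P^\ast_m$ as vector spaces, with the splitting varying smoothly since $P$ (and therefore $P^\ast$, being its pointwise transpose) is a smooth $(1,1)$-tensor field. This gives the subbundles $\mathcal{V}^\ast := \operatorname{Im}P^\ast$ and $\mathcal{H}^\ast := \ker P^\ast$ and the decomposition \eqref{Eq: decomposition T*M}.

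Next I would supply the explicit local description to make the identification transparent and to double-check constant rank. In a foliated chart adapted to $\mathcal{V}=\ker P$, one has $P = P^A\otimes V_A$ with $P^A = \dd z^A - P^A_a\dd x^a$ and $V_A = \partial/\partial z^A$, and $\mathcal{H}=\ker P$ is spanned by $H_a = \partial/\partial x^a + P^A_a\,\partial/\partial z^A$. Then for $\alpha = \alpha_a\dd x^a + \alpha_A\dd z^A$ a direct computation gives $\alpha\circ P = \alpha_A P^A$ and $\alpha - \alpha\circ P = (\alpha_a + P^A_a\alpha_A)\dd x^a$, exactly as displayed in the statement; this shows $\mathcal{V}^\ast$ is locally spanned by the $r$ linearly independent $1$-forms $P^A$ and $\mathcal{H}^\ast$ by the $l$ linearly independent $1$-forms $\dd x^a$, so both have constant rank and $r+l = d$ confirms the direct sum fills $\mathbf{T}^\ast M$.

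Finally I would establish the last sentence, the canonical identifications $\mathcal{V}^\ast_m\cong(\mathcal{V}_m)^\ast$ and $\mathcal{H}^\ast_m\cong(\mathcal{H}_m)^\ast$. For this I would argue that an element of $\operatorname{Im}P^\ast_m$, being of the form $\alpha\circ P$, vanishes on $\ker P_m = \mathcal{H}_m$ (since $P$ kills $\mathcal{H}_m$), hence restricts to a linear functional on $\mathcal{V}_m=\operatorname{Im}P_m$; conversely any functional on $\mathcal{V}_m$ extends by zero on $\mathcal{H}_m$ to an element of $\operatorname{Im}P^\ast_m$, because $P$ acts as the identity on $\mathcal{V}_m$. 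Symmetrically, $\ker P^\ast_m$ consists of those $\alpha$ with $\alpha\circ P=0$, i.e.\ $\alpha$ vanishing on $\operatorname{Im}P_m=\mathcal{V}_m$, which is precisely the annihilator of $\mathcal{V}_m$, canonically $(\mathcal{H}_m)^\ast$ via the decomposition $\mathbf{T}_mM=\mathcal{H}_m\oplus\mathcal{V}_m$. I do not expect a genuine obstacle here: the entire proof is elementary linear algebra at each point plus a routine smoothness remark. The only point requiring a modicum of care is keeping the bookkeeping of which projector ($P^\ast$ versus $\mathds{1}-P^\ast$) has $\mathcal{V}^\ast$ as image rather than kernel, and making sure the annihilator conventions are consistent with the claimed identifications; the local coordinate computation above serves as the safeguard against sign/placement errors.
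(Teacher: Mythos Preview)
Your proposal is correct and follows essentially the same approach as the paper: define $P^\ast\alpha=\alpha\circ P$, observe it is a projector, and read off the splitting $\mathbf{T}^\ast M=\operatorname{Im}P^\ast\oplus\ker P^\ast$ together with the local coordinate description. You supply slightly more detail than the paper (the explicit verification that $(P^\ast)^2=P^\ast$ and the annihilator argument for the dual identifications), and apart from a harmless slip where you write ``$\mathcal{V}=\ker P$'' in one place (you correctly have $\operatorname{Im}P=\mathcal{V}$ elsewhere), there is nothing to correct.
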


\begin{remark}
Also in this case the opposite $(1,1)$-tensor $R \,=\, \mathds{1} - P$, provides the opposite splitting
\be
\mathbf{T}^* M \,=\, \mathcal{H}^* \oplus \mathcal{V}^* \,,
\ee
since
\be
\begin{split}
\mathcal{V}^* \,&=\, \mathrm{Ker} \, R^* \,=\, \mathrm{Im}\,P^* \,, \\
\mathcal{H}^* \,&=\, \mathrm{Im} \, R^* \,=\, \mathrm{Ker}\,P^* \,.
\end{split}
\ee
\end{remark}

\begin{proposition}[\textsc{Splitting of the bundle of $k$-forms}]
Any almost product structure $(\mathcal{V},\, \mathcal{H})$ provided by the $(1,1)$-tensor $P$ over a smooth manifold $M$, provides a splitting of the bundle of $k$-forms $\Lambda^k(M)$ of the type
\be
\Lambda^k(M) \,=\, {\Lambda^k}^\parallel_P(M) \oplus {\Lambda^k}^\perp_P(M) \,,
\ee  
for ${\Lambda^k}^\parallel_P(M)$ and ${\Lambda^k}^\perp_P(M)$ two smooth subbundles of $\Lambda^k(M)$.
\begin{proof}
Decomposition \eqref{Eq: decomposition 1-forms} holds for any $k$-form on $M$, where
\be
\begin{split}
\alpha^\parallel_P \,&:=\, \alpha(P(\,\cdot\,),...,  ,P(\,\cdot\,)) \,, \\
\alpha^\perp_P \,&:=\, \alpha - \alpha^\parallel_P \,.
\end{split}
\ee
To look at the local expression of such decomposition, consider a differential $k$-form $\alpha$ expressed in the basis
\be
\left\{\, \dd x^a,\, P^A \,\right\}_{a=1,...,l;\,A=1,...,r} \,,
\ee
say
\be
\begin{split}
\alpha \,=\, &\alpha_{A_1...A_k} P^{A_1} \wedge ... \wedge P^{A_k} + \\
&\alpha_{a_1 A_2 ... A_k} \dd x^{a_1} \wedge P^{A_2} \wedge ... \wedge P^{A_k} + \\ 
&\;\; \vdots \\
&\alpha_{a_1 ... a_k} \dd x^{a_1} \wedge ... \wedge \dd x^{a_k} \,.
\end{split}
\ee
A straightforward computation shows that 
\be
\alpha^\parallel_P \,=\, \alpha_{A_1...A_k} P^{A_1} \wedge ... \wedge P^{A_k} \,,
\ee
and
\be
\alpha^\perp_P \,=\, \alpha_{a_1 A_2 ... A_k} \dd x^{a_1} \wedge P^{A_2} \wedge ... \wedge P^{A_k} + ... + \alpha_{a_1 ... a_k} \dd x^{a_1} \wedge ... \wedge \dd x^{a_k} \,.
\ee
We will denote the splitting of the bundle $\Lambda^k(M)$ induced by such decomposition of $k$-forms as
\be \label{Eq: decomposition bundle of k-forms P}
\Lambda^k(M) \,=\, {\Lambda^k}^\parallel_P(M) \oplus {\Lambda^k}^\perp_P(M) \,.
\ee
\end{proof}
\end{proposition}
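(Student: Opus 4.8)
The statement to prove is the "Splitting of the bundle of $k$-forms": given an almost product structure $(\mathcal{V}, \mathcal{H})$ on $M$ induced by the $(1,1)$-tensor $P$, the bundle $\Lambda^k(M)$ decomposes as ${\Lambda^k}^\parallel_P(M) \oplus {\Lambda^k}^\perp_P(M)$.

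\textbf{Plan of proof.} The strategy is to mimic the proof of \cref{prop:split_cotangent_bundle}, which handled the case $k=1$, and to upgrade the pointwise linear-algebra decomposition to a smooth bundle decomposition. First I would define, for any $\alpha \in \Omega^k(M)$, the "parallel part" by pulling back all arguments through $P$, namely $\alpha^\parallel_P := \alpha(P(\cdot), \dots, P(\cdot))$, and the "perpendicular part" as the complement $\alpha^\perp_P := \alpha - \alpha^\parallel_P$. The key algebraic fact to verify is that this is genuinely a projection at the level of $k$-forms: the operator $P^{(k)} \colon \alpha \mapsto \alpha^\parallel_P$ satisfies $(P^{(k)})^2 = P^{(k)}$. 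This follows from $P^2 = P$ applied slot by slot, since $\alpha(P^2(\cdot),\dots) = \alpha(P(\cdot),\dots)$. Once idempotency is established, the standard linear-algebra fact that an idempotent endomorphism of a vector space (here the fiber $\Lambda^k_m(M)$) yields a direct sum decomposition into image and kernel gives the fiberwise splitting $\Lambda^k_m(M) = \operatorname{Im}(P^{(k)}_m) \oplus \ker(P^{(k)}_m)$.

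The second ingredient is smoothness: I would check that $P^{(k)}$ is a smooth bundle endomorphism (it is polynomial in the components of $P$), hence its image and kernel are smooth subbundles of constant rank — here I would invoke the local frame $\{\dd x^a, P^A\}$ adapted to the foliated chart associated with $\mathcal{V} = \ker P$, in which the explicit formulas for $\alpha^\parallel_P$ and $\alpha^\perp_P$ given in the statement make the rank manifestly constant. Concretely, writing a generic $k$-form in the wedge basis built from $\dd x^a$ and $P^A$, the operator $P^{(k)}$ simply annihilates every basis monomial containing at least one factor $\dd x^a$ and fixes the monomials built purely from the $P^A$'s; this is because $P(H_a) = 0$ and $P(V_A) = V_A$, so $P^A$ is $P$-invariant while $\dd x^a \circ P = 0$. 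This exhibits ${\Lambda^k}^\parallel_P(M)$ as spanned by $\{P^{A_1}\wedge\cdots\wedge P^{A_k}\}$ and ${\Lambda^k}^\perp_P(M)$ as spanned by the remaining basis monomials, both of constant rank, completing the proof.

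\textbf{Main obstacle.} The only subtlety — and it is mild — is confirming that $P^{(k)}$ is idempotent and that the local expressions are chart-independent, i.e.\ that the decomposition does not depend on the choice of foliated coordinates but only on $P$. This is handled by the intrinsic formula $\alpha^\parallel_P = \alpha(P(\cdot),\dots,P(\cdot))$, which makes no reference to coordinates; the coordinate computation is then just a verification. There is nothing deep here: once $k=1$ is understood (as in \cref{prop:split_cotangent_bundle}), the extension to general $k$ is a routine "tensor power of a projection is a projection" argument, and the smoothness/constant-rank claims follow from the adapted frame.
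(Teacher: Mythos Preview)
Your proposal is correct and follows essentially the same approach as the paper: define $\alpha^\parallel_P = \alpha(P(\cdot),\dots,P(\cdot))$, set $\alpha^\perp_P = \alpha - \alpha^\parallel_P$, and verify in the adapted coframe $\{\dd x^a, P^A\}$ that $\alpha^\parallel_P$ retains only the pure $P^{A_1}\wedge\cdots\wedge P^{A_k}$ terms. If anything, your version is more careful than the paper's, which omits the explicit idempotency and constant-rank observations and simply asserts the local expressions after a ``straightforward computation''.
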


\begin{remark}
In this case, the splitting induced by the opposite almost product structure $R \,=\, \mathds{1} - P$ does not coincide with the opposite of \eqref{Eq: decomposition bundle of k-forms P}.
Indeed, a direct computation shows that
\be
\alpha^\parallel_R \,=\, \alpha_{a_1 ... a_k} \dd x^{a_1} \wedge ... \wedge \dd x^{a_k} \,,
\ee
while
\be
\begin{split}
\alpha^\perp_R \,=\, &\alpha_{A_1...A_k} P^{A_1} \wedge ... \wedge P^{A_k} + \\
&\alpha_{a_1 A_2 ... A_k} \dd x^{a_1} \wedge P^{A_2} \wedge ... \wedge P^{A_k} + \\
&\;\;\vdots \\
&\alpha_{a_1...a_{k-1}A_k} \dd x^{a_1} \wedge... \wedge \dd x^{a_{k-1}} \wedge P^{A_k} \,.
\end{split}
\ee
We will denote the splitting of the bundle $\Lambda^k(M)$ induced by such decomposition of $k$-forms as
\be \label{Eq: decomposition bundle of k-forms R}
\Lambda^k(M) \,=\, {\Lambda^k}^\parallel_R(M) \oplus {\Lambda^k}^\perp_R(M) \,.
\ee
\end{remark}

\begin{remark}
\label{remark:bigrading}
Generally, given an almost product structure on a manifold $M$, $\T M  = \mathcal{V} \oplus \mathcal{H}$, the direct sum decomposition proved in \cref{prop:split_cotangent_bundle}, $\T^\ast M = \mathcal{V}^\ast \oplus \mathcal{H}^\ast$ gives a more general decomposition
\[
\Lambda^k (M) = \bigoplus_{j = 0}^{k} \,\Lambda^j \mathcal{V}^\ast \otimes \Lambda^{k-j} \mathcal{H}^\ast\,\,,
\]
where $\Lambda^j \mathcal{V}^\ast$ denotes $j$-times the wedge product of $\mathcal{V}^\ast$ with itself.
In terms of this decomposition, we may consider the following identifications: 
\[
\Lambda^{k\parallel}_P(M) = \Lambda^k \mathcal{V}^\ast\,, \qquad \Lambda^{k\perp}_P(M) = \bigoplus_{j = 0}^{k-1} \Lambda^j \mathcal{V}^\ast \otimes \Lambda^{k-j} \mathcal{H}^\ast.\]
\end{remark}

\begin{definition}[\textsc{Nijenhuis tensor of an almost product structure}]
Let $P$ be a $(1,1)$-tensor field on a smooth manifold $M$ defining an almost product structure.
The \textbf{Nijenhuis tensor} associated to $P$ is the $(1,2)$-tensor $\mathcal{N}_P$ defined by
\be
\mathcal{N}_P(X, Y) := [P(X), P(Y)] - P \big(\, [P(X), Y] + [X, P(Y)] - P([X, Y])\, \big),
\ee
for all vector fields $X, Y \in \mathfrak{X}(M)$.

\noindent Using the foliated charts associated with the distribution $\mathcal{V} \,=\, \mathrm{Im}\,P$, whose coordinates read $(x^a,\,z^A)$, the Nijenhuis tensor $\mathcal{N}_P$ reads
\be
\mathcal{N}_P \,=\, \left(\,\frac{\partial P^A_b}{\partial x^a} - \frac{\partial P^A_a}{\partial x^b} + P^B_a \frac{\partial P^A_b}{\partial z^B} - P^B_b \frac{\partial P^A_a}{\partial z^B} \,\right) \, \dd x^a \wedge \dd x^b \otimes \frac{\de}{\de z^A} + \frac{\partial P^A_a}{\partial z^B} \, \dd x^a \wedge P^B \otimes \frac{\de}{\de z^A}
\ee
\end{definition}

\begin{definition}[\textsc{Integrable Almost Product Structure}]
An almost product structure $P$ is said to be \textbf{integrable} if the two complementary distributions it defines are both involutive. 
By the Frobenius Theorem, this is equivalent to the existence of local coordinate charts, called \textbf{foliated charts}, adapted to both distributions simultaneously. 
\end{definition}

\noindent There is an interesting characterization in terms of the Nijenhuis tensor. 

\begin{theorem}
An almost product structure is integrable if and only if its Nijenhuis tensor vanishes.
\begin{proof}
See \cite{dLR89}   
\end{proof}
\end{theorem}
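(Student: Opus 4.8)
The plan is to prove both implications by working in the foliated chart associated with the vertical distribution $\mathcal{V} = \operatorname{Im}\,P$, where the Nijenhuis tensor has the explicit local expression given in the definition above. This reduces the problem to reading off the coefficients of $\mathcal{N}_P$ and comparing them with the involutivity conditions for $\mathcal{V}$ and for $\mathcal{H}$.

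First I would dispose of the vertical distribution. Since $\mathcal{V} = \operatorname{Im}\,P = \operatorname{span}\{V_A = \partial/\partial z^A\}$ is, by our standing assumption, completely integrable (equivalently, the $z^A$ are genuine foliated coordinates), $\mathcal{V}$ is always involutive: $[V_A, V_B] = 0$. So the content of integrability lies entirely with $\mathcal{H} = \operatorname{span}\{H_a = \partial/\partial x^a + P^B_a\, \partial/\partial z^B\}$. A direct computation gives
\[
[H_a, H_b] = \left(\frac{\partial P^A_b}{\partial x^a} - \frac{\partial P^A_a}{\partial x^b} + P^B_a \frac{\partial P^A_b}{\partial z^B} - P^B_b \frac{\partial P^A_a}{\partial z^B}\right)\frac{\partial}{\partial z^A}\,.
\]
Hence $\mathcal{H}$ is involutive if and only if all these coefficients vanish, since the right-hand side is purely vertical and $\{V_A\}$ are linearly independent. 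These are exactly the coefficients of the $\dd x^a \wedge \dd x^b \otimes \partial/\partial z^A$ term of $\mathcal{N}_P$.

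Next I would examine the second term of $\mathcal{N}_P$, namely $\frac{\partial P^A_a}{\partial z^B}\, \dd x^a \wedge P^B \otimes \partial/\partial z^A$. Its vanishing says $\partial P^A_a/\partial z^B = 0$ for all indices. One must check this is automatically implied once the first block of coefficients vanishes — or, more carefully, argue that the two conditions together are equivalent to the vanishing of $\mathcal{N}_P$, and that they are jointly equivalent to involutivity of $\mathcal{H}$. Here a cleaner route is to observe that $[H_a, H_b]$ computed above must itself be a section of $\mathcal{H}$ for involutivity; but it is already vertical, so involutivity forces it to be zero, giving the first block. For the converse direction, one checks that $\mathcal{N}_P(X,Y)$ is bilinear and vanishes on the pairs $(V_A, V_B)$, $(H_a, V_B)$, $(H_a, H_b)$ precisely when both bracket conditions hold; evaluating $\mathcal{N}_P$ on the frame $\{H_a, V_A\}$ and using $P(H_a)=0$, $P(V_A)=V_A$ recovers both blocks of the local formula, so $\mathcal{N}_P = 0$ if and only if $\mathcal{H}$ is involutive (with $\mathcal{V}$ involutive for free). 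Then the Frobenius Theorem supplies the equivalence with the existence of simultaneously adapted foliated charts, closing the argument.

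The main obstacle is purely bookkeeping: correctly identifying the $(1,2)$-tensor components of $\mathcal{N}_P$ in the mixed frame $\{H_a, V_A\}$ and matching them against the two brackets $[H_a,H_b]$ and $[H_a,V_B]$, being careful that the $z$-dependence of the $P^A_a$ contributes to both. One subtle point worth stating explicitly is that, although in our setting $\mathcal{V}$ is integrable by hypothesis, the vanishing of $\mathcal{N}_P$ in general also encodes involutivity of $\mathcal{V}$; in our restricted framework that part is vacuous, so the theorem effectively reads ``$\mathcal{N}_P = 0$ iff $\mathcal{H}$ is involutive.'' I would remark on this but cite \cite{dLR89} for the fully general statement rather than re-deriving the $\mathcal{V}$-block.
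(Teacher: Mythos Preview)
The paper does not give a proof; it simply cites \cite{dLR89}. Your direct frame computation is the standard route and is essentially correct: evaluate $\mathcal{N}_P$ on the adapted frame $\{H_a,V_A\}$ using $P(H_a)=0$, $P(V_A)=V_A$, and $P^2=P$ to obtain $\mathcal{N}_P(H_a,H_b)=P[H_a,H_b]$ (the vertical part of $[H_a,H_b]$, exactly your computed coefficient), $\mathcal{N}_P(V_A,V_B)=(\mathds{1}-P)[V_A,V_B]$ (the horizontal part of $[V_A,V_B]$, vanishing here since $\mathcal{V}$ is integrable by assumption), and---crucially---$\mathcal{N}_P(H_a,V_B)=0$ \emph{identically}. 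Hence $\mathcal{N}_P=0$ iff both distributions are involutive, and Frobenius closes the loop.

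The one place your outline goes astray is the attempt to ``recover both blocks'' of the displayed local formula and to tie the term $\tfrac{\partial P^A_a}{\partial z^B}\,\dd x^a\wedge P^B\otimes\tfrac{\de}{\de z^A}$ to an involutivity condition. You will not: because $\mathcal{N}_P(H_a,V_B)=0$ automatically, the adapted-coframe expansion of $\mathcal{N}_P$ carries only a $\dd x^a\wedge\dd x^b$ block (your $[H_a,H_b]$) and, in the general case, a $P^A\wedge P^B$ block encoding involutivity of $\mathcal{V}$; there is no nonzero $\dd x^a\wedge P^B$ component. The condition $\partial P^A_a/\partial z^B=0$ is strictly stronger than integrability of the almost product structure and is \emph{not} required for the theorem, so drop that part of the matching and argue directly from the three frame evaluations above.
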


\section{Pre-symplectic manifolds}
\label{Sec: Pre-symplectic manifolds}

\begin{definition}[\textsc{Pre-symplectic manifold}]
\label{Def: Pre-symplectic manifold}
A \textbf{pre-symplectic manifold} is a pair $(M, \omega)$ where $M$ is a smooth $d$-dimensional manifold, and $\omega \in \Omega^2(M)$ is a closed differential $2$-form.
\end{definition}

\begin{remark}
We will always assume the form $\omega$ to have constant rank, i.e., the dimension of $\mathrm{Im}(\omega^\flat_m) \subset T^*_m M$ to be constant for all $m \in M$, where 
\be
\omega^\flat  \colon  \mathbf{T}M \to  \mathbf{T}^* M  \colon  X \mapsto \omega^\flat(X)(\cdot) := \omega(X, \cdot) \,.
\ee
\end{remark}

\begin{definition}
The distribution
\be
\mathcal{V} := \ker \omega \subset \mathbf{T}M
\ee
is called the \textbf{characteristic distribution} of $\omega$.
\end{definition}

\begin{remark}[\textsc{Symplectic manifold}]
When $\mathrm{dim}\,\mathrm{Im}(\omega^\flat_m) \,=\, d \,, \,\, \forall\,\,m\in M$, the map $\omega^\flat$ is a vector bundle isomorphism between $\mathbf{T}M$ and $\mathbf{T}^* M$ whose inverse reads
\be
\omega^\sharp  \colon  \mathbf{T}^* M \to \mathbf{T}M  \colon  \alpha \mapsto \omega^\sharp(\alpha) \;\text{ s.t. } \omega(\omega^\sharp(\alpha), X) \,=\, \alpha(X) \,, \quad \forall\,\alpha \in \Omega^1(M),\, X \in \mathfrak{X}(M) \,,
\ee
and the pair $(M,\,\omega)$ is referred to as a \textbf{symplectic manifold}.
Note that symplectic manifolds are always even-dimensional.
\end{remark}

\begin{theorem}[\textsc{Darboux theorem for symplectic manifolds}]
\label{Thm: Darboux symplectic}
Let $(M, \omega)$ be a $2n$-dimensional symplectic manifold. 
Then, for every point $m \in M$, there exists a coordinate chart $(U, \varphi)$ centered at $m$, with coordinates $(q^1, \dots, q^n, p_1, \dots, p_n)$, such that the symplectic form $\omega$ reads
\be
\omega|_U = \dd q^a \wedge \dd p_a \,.
\ee
Such coordinates are called \textbf{Darboux coordinates} for the symplectic form $\omega$ around the point $m$.   
\begin{proof}
See \cite{godbillon1969geometrie,abraham,libermarle}.
\end{proof}
\end{theorem}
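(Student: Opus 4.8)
The plan is to prove the Darboux theorem for symplectic manifolds by combining a linear-algebra normal form at the point with Moser's trick (\cref{thm:Moser_trick}) to upgrade it to a normal form on a neighborhood. First I would work at the distinguished point $m$: the bilinear form $\omega_m$ on $T_m M$ is skew-symmetric and nondegenerate, so by the standard symplectic linear algebra one can choose a basis of $T_m M$ in which $\omega_m$ has the canonical block form. Pulling this basis back through a chart, I obtain coordinates $(x^1,\dots,x^{2n})$ centered at $m$ such that, \emph{at the origin only}, $\omega$ agrees with the constant-coefficient form $\omega_1 := \dd x^a \wedge \dd x^{a+n}$ (relabeling $q^a := x^a$, $p_a := x^{a+n}$). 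Call $\omega_0 := \omega$ the original form written in these coordinates; then $\omega_0$ and $\omega_1$ are both closed, and $(\omega_1 - \omega_0)_m = 0$.

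Next I would apply the Relative Poincaré Lemma (\cref{thm:Relative_Poincaré_Lemma}) with $N = \{m\}$ a point: the closed $2$-form $\tau := \omega_1 - \omega_0$ vanishes at $m$, so on a neighborhood $U$ of $m$ there is a $1$-form $\theta$ with $\tau = \dd\theta$ and $\theta_m = 0$. Now consider the path of forms $\omega_t := \omega_0 + t(\omega_1 - \omega_0) = \omega_0 + t\,\dd\theta$ for $t \in [0,1]$. At $m$ each $\omega_t$ coincides with $\omega_1$, hence is nondegenerate at $m$; by continuity and compactness of $[0,1]$, after shrinking $U$ we may assume $\omega_t$ is nondegenerate on all of $U$ for every $t \in [0,1]$. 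Define the time-dependent vector field $X_t$ by the equation $i_{X_t}\omega_t = -\theta$, which is solvable precisely because $\omega_t$ is nondegenerate; since $\theta_m = 0$ we get $X_t(m) = 0$, so $m$ is a fixed point of the (local) flow and, after once more shrinking $U$, the flow $\psi_t$ is defined for all $t \in [0,1]$ on a slightly smaller neighborhood.

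Then I would verify the hypothesis of Moser's trick. Differentiating $\psi_t^\ast \omega_t$ gives $\frac{\dd}{\dd t}\psi_t^\ast\omega_t = \psi_t^\ast\big(\Lie_{X_t}\omega_t + \frac{\dd\omega_t}{\dd t}\big) = \psi_t^\ast\big(\dd i_{X_t}\omega_t + i_{X_t}\dd\omega_t + \dd\theta\big)$; since $\omega_t$ is closed and $i_{X_t}\omega_t = -\theta$, the bracket is $-\dd\theta + 0 + \dd\theta = 0$. Hence $\psi_t^\ast\omega_t$ is constant in $t$, so $\psi_1^\ast\omega_1 = \psi_0^\ast\omega_0 = \omega_0 = \omega$. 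Composing the chart $\varphi$ with $\psi_1$ produces a new coordinate chart centered at $m$ in which $\omega$ has the constant form $\dd q^a \wedge \dd p_a$, as desired. (Equivalently, one invokes \cref{thm:Moser_trick} directly, noting that the constructed $X_t$ satisfies $\omega_1 - \omega_0 + \Lie_{X_t}(t\omega_1 + (1-t)\omega_0) = \dd\theta + \dd i_{X_t}\omega_t = \dd\theta - \dd\theta = 0$ on $U$.)

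The main obstacle is not any single computation but the care needed with the \emph{local} nature of everything: the Relative Poincaré Lemma as stated is global on a tubular neighborhood, so one must restrict to a ball around $m$; nondegeneracy of the interpolating family $\omega_t$ holds only near $m$ and only after shrinking; and the flow $\psi_t$ of $X_t$ must be shown to exist for the full time interval $[0,1]$ on a neighborhood of the fixed point $m$ — this uses the standard fact that near a zero of a (time-dependent) vector field the flow is defined for a uniform time. Keeping track of these successive shrinkings, and observing that $\theta$ and $X_t$ vanish at $m$ so that $\psi_t$ fixes $m$ and the resulting chart is genuinely centered at $m$, is the delicate part; the algebraic identities in Moser's trick are then routine.
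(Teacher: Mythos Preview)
Your proof is correct and is precisely the standard Moser--Weinstein argument for Darboux's theorem; the paper itself does not give a proof here but simply cites \cite{godbillon1969geometrie,abraham,libermarle}, and the modern proof in those references (certainly in Abraham--Marsden) is exactly the one you wrote. In fact, the paper has set up \cref{thm:Moser_trick} and \cref{thm:Relative_Poincaré_Lemma} with this kind of application in mind (though it deploys them for uniqueness of coisotropic embeddings rather than for Darboux), so your choice of tools is entirely in keeping with the paper's framework.
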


\begin{theorem}[\textsc{Darboux theorem for pre-symplectic manifolds}]
\label{Thm: Darboux pre-symplectic}
Let $(M, \omega)$ be a $d$-dimensional pre-symplectic manifold, with $\omega \in \Omega^2(M)$ a closed 2-form of constant rank $2r < d$.
Then, for every point $m \in M$, there exists a coordinate chart $(U, \varphi)$ around $m$, with coordinates $(q^1, \dots, q^r, p_1, \dots, p_r, z^1, \dots, z^{d - 2r})$, such that the 2-form $\omega$ reads
\be
\omega|_U = \dd q^a \wedge \dd p_a \,.
\ee
Again, such coordinates are called \textbf{Darboux coordinates} for the pre-symplectic form $\omega$ around the point $m$.
In the system of Darboux coordinates, the characteristic distribution of $\omega$ is spanned by the vector fields
\be
\mathcal{V} \,=\, \mathrm{ker}\,\omega \,=\, \operatorname{span}\,\left\{\, \frac{\de}{\de z^A} \,\right\}_{A=1,...,d-2r} \,.
\ee
Thus, Darboux coordinates for the pre-symplectic form $\omega$ are foliated charts for the foliation associated with the characteristic distribution.
\begin{proof}
See \cite{godbillon1969geometrie}.
\end{proof}
\end{theorem}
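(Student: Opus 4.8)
\section*{Proof proposal}

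The plan is to combine the Frobenius Theorem, used to straighten the characteristic distribution, with the symplectic Darboux theorem \cref{Thm: Darboux symplectic}, applied transversally to the characteristic foliation.

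First I would show that the characteristic distribution $\mathcal{V} = \ker\omega$, which has constant rank $d - 2r$ by hypothesis, is involutive. This follows directly from $\dd\omega = 0$: for sections $X, Y$ of $\ker\omega$ and any $Z \in \mathfrak{X}(M)$, the invariant formula for $\dd\omega$ collapses to $\dd\omega(X,Y,Z) = -\omega([X,Y],Z)$, since every other term contains $\omega$ contracted with $X$ or with $Y$; hence $\omega([X,Y],Z) = 0$ for all $Z$, i.e. $[X,Y] \in \ker\omega$. By the Frobenius Theorem there are local coordinates $(x^1,\dots,x^{2r}, z^1,\dots,z^{d-2r})$ around $m$ in which $\mathcal{V} = \operatorname{span}\{\partial/\partial z^A\}$.

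Next, since $\omega$ annihilates $\mathcal{V}$, in these coordinates it has no $\dd z^A$ component, so $\omega = \tfrac12\,\omega_{ij}(x,z)\,\dd x^i \wedge \dd x^j$ with $i,j = 1,\dots,2r$. Imposing $\dd\omega = 0$ and separating the terms that contain exactly one factor $\dd z^A$ gives $\partial\omega_{ij}/\partial z^A = 0$, so the coefficients depend only on the $x^i$; the remaining terms of $\dd\omega = 0$ say that $\omega$, regarded as a $2$-form on the $2r$-dimensional slice $\{z = \mathrm{const}\}$, is closed. Moreover the matrix $(\omega_{ij})$ is invertible, since its rank equals the rank of $\omega$, which is $2r$. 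Equivalently, $\omega = \varpi^\ast \bar\omega$, where $\varpi$ is the projection onto the $x$-coordinates and $\bar\omega$ is a symplectic form on a neighbourhood of the origin in $\mathbb{R}^{2r}$.

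Finally I would apply the symplectic Darboux theorem \cref{Thm: Darboux symplectic} to $\bar\omega$, obtaining functions $q^1,\dots,q^r,p_1,\dots,p_r$ of the $x^i$ alone with $\bar\omega = \dd q^a \wedge \dd p_a$. Pulling back by $\varpi$ and retaining the $z^A$, the tuple $(q^a, p_a, z^A)$ is a coordinate chart on $M$, the change of variables being a diffeomorphism in the $x$-block and the identity in the $z$-block, and in it $\omega = \dd q^a \wedge \dd p_a$. In these coordinates $\ker\omega = \operatorname{span}\{\partial/\partial z^A\}$ is manifest, so they are simultaneously Darboux coordinates and foliated charts for the characteristic foliation. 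The only genuinely delicate point is the transversal step: one must be certain that $\omega$ descends to a well-defined closed $2$-form on the local leaf space and that its rank there is full, and it is precisely the constant-rank hypothesis on $\omega$, together with the $z$-independence of the $\omega_{ij}$ forced by closedness, that makes this work. An alternative, more analytic route would bypass the appeal to symplectic Darboux and run \cref{thm:Moser_trick} directly, interpolating between $\omega$ and its constant-coefficient model $\dd q^a \wedge \dd p_a$ and invoking the relative Poincaré Lemma to produce the required primitive; but the structural argument above is shorter given the results already at hand.
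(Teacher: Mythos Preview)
Your argument is correct and complete: the involutivity computation is right, the reduction to a transversal symplectic form via Frobenius and closedness is the standard mechanism, and the appeal to \cref{Thm: Darboux symplectic} finishes it cleanly. The paper does not actually prove this theorem; it merely cites \cite{godbillon1969geometrie}. So there is nothing to compare beyond noting that you have supplied a self-contained proof where the paper defers to the literature, and your route (Frobenius followed by symplectic Darboux on the leaf space) is exactly the classical one found in Godbillon and elsewhere.
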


\subsection{Coisotropic submanifolds}

\begin{definition}[\textsc{Symplectic ortogonal}]
Let $(M, \omega)$ be a symplectic manifold, and let $W_m \subseteq \mathbf{T}_m M$ be a linear subspace of the tangent space at a point $m \in M$.
The \textbf{symplectic orthogonal} of $W_m$ with respect to $\omega$ is the subspace
\be
W^{\perp_\omega}_m \,:=\, \left\{\, X \in \mathbf{T}_m M  \colon  \omega(X, Y) = 0 \quad \forall\, Y \in W \,\right\}.
\ee    
\end{definition}

\begin{definition}[\textsc{Coisotropic submanifold of a symplectic manifold}]
Let $(M, \omega)$ be a symplectic manifold and let \be
\mathfrak{i}  \colon  N \hookrightarrow M
\ee 
be an immersed submanifold.
We say that $N$ is a \textbf{coisotropic submanifold} of $(M, \omega)$ if, for every point $n \in N$, the symplectic orthogonal of the tangent space $\mathbf{T}_n N$ satisfies
\be
\left( \mathbf{T}_n N \right)^{\perp_\omega} \,\subseteq\, \mathbf{T}_n N \,.
\ee 
\end{definition}

\begin{remark} 
\label{remark:symplectic_minimal_dimension}
Coisotropic submanifolds appear naturally when we are dealing with the question of finding an embedding of a pre-symplectic manifold $(M, \omega)$ into a symplectic manifold $(\widetilde M, \widetilde \omega)$ of minimal dimension. Indeed, if we denote by $\mathcal{V}|_x$ the characteristic distribution of $\omega$, for every $x \in M$, we have $\mathcal{V} \subseteq (\T_x M)^{\perp}\subseteq \T_x \widetilde M$. Such an embedding has minimal dimension only when $\mathcal{V} = (\T_x M)^{\perp}$, which is precisely the coisotropicity condition.
\end{remark}

\subsection{Coisotropic embeddings}

\subsubsection{Existence}

\begin{theorem}[\textsc{Symplectic thickenings for pre-symplectic manifolds}]
Let $(M,\,\omega)$ be a pre-symplectic manifold.
There always exists a symplectic manifold $(\widetilde{M},\,\widetilde{\omega})$ and an embedding
\be
\mathfrak{i} \colon  M \hookrightarrow \widetilde{M} \,,
\ee
such that $\mathfrak{i}(M)$ is a closed coisotropic submanifold of $\widetilde{M}$.
The symplectic manifold $(\widetilde{M},\,\widetilde{\omega})$ is referred to as \textbf{symplectic thickening} of $(M,\,\omega)$.

\begin{proof}
Let $\mathcal{V}$ be the characteristic distribution on $M$.
Consider a complementary distribution $\mathcal{H}$ providing an almost product structure on $M$ associated with the $(1,1)$-tensor $P$ on $M$ that, in the system of Darboux coordinates on $M$ as in \cref{Thm: Darboux pre-symplectic}, reads
\be
P \,=\, \left(\,\dd z^A - {P_q}^A_a \dd q^a - {P_p}^{Aa} \dd p_a\,\right) \otimes \frac{\de}{\de z^A} \,.
\ee
The opposite $(1,1)$-tensor $R$ reads
\be
R \,=\, \mathds{1} - P \,=\, \dd q^a \otimes \left(\, \frac{\de}{\de q^a} + {P_q}^A_a \frac{\de}{\de z^A} \,\right) + \dd p_a \otimes \left(\, 
\frac{\de}{\de p_a} + {P_p}^{Aa} \frac{\de}{\de z^A} \,\right) \,.
\ee
Consider the bundle ${\Lambda^1}^\perp_R(M)$ over $M$.
Denote by $\tau$ the projection map onto $M$.
Its sections are differential $1$-form that, in the system of Darboux coordinates, looks like
\be
\alpha \,=\, \alpha_A P^A \,.
\ee
Thus, a system of adapted coordinates on ${\Lambda^1}^\perp_R(M)$, reads
\be
\left\{\, q^a,\,p_a,\,z^A,\, \mu_A \,\right\}_{\substack{a=1,...,r \\ A=1,...,l}} \,,
\ee
with $l=d-2r$.
The projection map $\tau$ thus reads
\be
\tau  \colon  (q^a,\,p_a,\,z^A,\, \mu_A) \mapsto (q^a,\,p_a,\,z^A) \,.
\ee
The bundle ${\Lambda^1}^\perp_R(M)$ has a distinguished $1$-form, which is the obvious analogous of the tautological $1$-form on $\mathbf{T}^* M$, defined as
\be
\vartheta^P_{\alpha}(X) \,=\, \alpha(P\circ\tau_*(X)) \,, \;\; \forall\,\, X \in \mathbf{T}_\alpha M \,,
\ee
where $\alpha$ has to be considered as a point in ${\Lambda^1}^\perp_R(M)$ on the left hand side, and as a differential $1$-form on $M$ on the right hand side.
Since the bundle ${\Lambda^1}^\perp_R(M)$ depends on the chosen almost product structure $P$, we stressed the dependence of $\vartheta$ on $P$.
In the system of Darboux coordinates, $\vartheta^P$ reads
\be
\vartheta^P \,=\, \mu_A P^A \,.
\ee
Consider the differential $2$-form
\be
\widetilde{\omega} \,=\, \tau^* \omega + \dd \vartheta^P \,,
\ee
on ${\Lambda^1}^\perp_R(M)$.
It is closed by definition.
On the other hand, it is non-degenerate in a tubular neighborhood of the zero section of $\tau$.
Indeed, consider a vector field $X$ written in the basis
\be
\left\{\, {H_q}_a \,:=\,\frac{\de}{\de q^a} + {P_q}^A_a \frac{\de}{\de z^A},\, {H_p}^a\,:=\,\frac{\de}{\de p_a} + {P_p}^{Aa} \frac{\de}{\de z^A}, \, \frac{\de}{\de z^A},\, \frac{\de}{\de \mu_A} \,\right\}_{\substack{a=1,...,r \\ A=1,...,l}} \,,
\ee
as
\be
X \,=\, {X_q}^a \,{H_q}_a + {X_p}_a \,{H_p}^a + {X_z}^A \, \frac{\de}{\de z^A} + {X_\mu}_A\, \frac{\de}{\de \mu_A} \,.
\ee
The contraction $i_X \widetilde{\omega}$ reads
\be
i_X \widetilde{\omega} = F_a\,dq^a + G^a\,dp_a + H_A\,P^A + K^A\,d\mu_A,
\ee
where:
\begin{align}
F_a = -{X_p}_a
+ \mu_A \Biggl[
{X_q}^b\,&\left(\,\frac{\de {P_q}^A_b}{\de q^a} - \frac{\de {P_q}^A_a}{\de q^b} + {P_q}^B_a \frac{\de {P_q}^A_b}{\de z^B} - {P_q}^B_b \frac{\de {P_q}^A_a}{\de z^B} \,\right)
+ \nonumber \\
{X_p}_b\,&\left(\,\frac{\de {P_p}^{Ab}}{\de q^a} - \frac{\de {P_q}^A_a}{\de p_b} + {P_q}^B_a \frac{\de {P_p}^{Ab}}{\de z^B} - {P_p}^{Bb} \frac{\de {P_q}^A_a}{\de z^B}\,\right) - \nonumber \\
{X_z}^B &\frac{\de {P_q}^A_a}{\de z^B}
\Biggr], \\
G^a = {X_q}^a
+ \mu_A \Biggl[\,
{X_q}^b\,&\left(\,\frac{\de {P_q}^A_b}{\de p_a} - \frac{\de {P_p}^{Aa}}{\de q^b} + {P_p}^{Ba} \frac{\de {P_q}^A_b}{\de z^B} - {P_q}^B_b \frac{\de {P_p}^{Aa}}{\de z^B} \,\right) + \nonumber \\
{X_p}_b\,&\left(\,\frac{\de {P_p}^{Ab}}{\de p_a} - \frac{\de {P_p}^{Aa}}{\de p_b} + {P_p}^{Ba} \frac{\de {P_p}^{Ab}}{\de z^B} - {P_p}^{Bb} \frac{\de {P_p}^{Aa}}{\de z^B} \,\right) - \nonumber \\
{X_z}^B & \frac{\de {P_p}^{Aa}}{\de z^B}
\Biggr], \\
H_A = {X_z}_A
+ \mu_B  \Biggl[
{X_q}^b\, & \frac{\de {P_q}^B_b}{\de z^A} + {X_p}_b\,\frac{\de {P_p}^{Bb}}{\de z^A}
\Biggr], \\
K^A = -{X_\mu}^A \,. \hspace{1,1cm} &
\end{align}
With this in mind, one can easily prove that 
\be
i_X \widetilde{\omega} \,=\, 0 \,\implies\, X \,=\, 0 \,,
\ee
if and only if, either 
\be
\mu_A \,= \, 0 \,,
\ee
or
\begin{align}
\left(\,\frac{\de {P_q}^A_b}{\de q^a} - \frac{\de {P_q}^A_a}{\de q^b} + {P_q}^B_a \frac{\de {P_q}^A_b}{\de z^B} - {P_q}^B_b \frac{\de {P_q}^A_a}{\de z^B} \,\right) \,&=\, 0 \\
\left(\,\frac{\de {P_p}^{Ab}}{\de q^a} - \frac{\de {P_q}^A_a}{\de p_b} + {P_q}^B_a \frac{\de {P_p}^{Ab}}{\de z^B} - {P_p}^{Bb} \frac{\de {P_q}^A_a}{\de z^B}\,\right) \,&=\,0 \\
\left(\,\frac{\de {P_q}^A_b}{\de p_a} - \frac{\de {P_p}^{Aa}}{\de q^b} + {P_p}^{Ba} \frac{\de {P_q}^A_b}{\de z^B} - {P_q}^B_b \frac{\de {P_p}^{Aa}}{\de z^B} \,\right) \,&=\,0 \\
\left(\,\frac{\de {P_p}^{Ab}}{\de p_a} - \frac{\de {P_p}^{Aa}}{\de p_b} + {P_p}^{Ba} \frac{\de {P_p}^{Ab}}{\de z^B} - {P_p}^{Bb} \frac{\de {P_p}^{Aa}}{\de z^B} \,\right) \,&=\, 0 \\
\frac{\de {P_q}^A_b}{\de z^B} \,&=\,0 \,, \\
\frac{\de {P_p}^{Ab}}{\de z^B} \,&=\,0 \,.
\end{align}
for all $a,\,b \,=\, 1,...,r$ and for all $A,\, B \,=\, 1,...,l$.
This proves that $\widetilde{\omega}$ is non-degenerate in a tubular neighborhood of the zero section of $\tau$ (i.e. $\mu_A \,=\, 0$), or, if all the components of $\mathcal{N}_P$ vanish, on the whole ${\Lambda^1}^\perp_R(M)$.

\noindent Thus, the manifold $\widetilde{M}$ reads a tubular neighborhood of the zero section of the bundle ${\Lambda^1}^\perp_R$ (or the whole bundle ${\Lambda^1}^\perp_R$ if the almost product structure chosen has vanishing Nijenhuis tensor), the embedding map $\mathfrak{i}$ reads the zero section of $\tau$, and the symplectic structure $\widetilde{\omega}$ is
\be
\widetilde{\omega} \,=\, \tau^* \omega + \dd \vartheta^P \,.
\ee

\noindent To prove that $M$ is a coisotropic submanifold of $\widetilde{M}$, we shall prove that $\T_m M^\perp \subset T_m M$ for all $m \in \mathfrak{i}(M)$.
Elements of $\T_m M^\perp$ are tangent vectors to $m \in \mathfrak{i}(M)$
\be
X \,=\, {X_q}^a {H_q}_a\bigr|_{m} + {X_p}_a {H_p}^a\bigr|_{m} + {X_z}^A \frac{\de}{\de z^A}\bigr|_{m} + {X_\mu}^A \frac{\de}{\de \mu^A}\bigr|_{m} \,,
\ee
(now ${X_q}^a,\, {X_p}_a,\, {X_z}^A,$ and ${X_\mu}_A$ are real numbers) such that 
\be \label{Eq: coisotropicity pre-symplectic}
\widetilde{\omega}_m(X,\, W) \,=\, 0 \,,\;\;\; \forall \,\, W \in \T_m M \,, 
\ee
namely for any tangent vector $W$ to $M$ in $m$
\be
W \,=\, {W_q}^a {H_q}_a\bigr|_{m} + {W_p}_a {W_p}^a\bigr|_{m} + {W_z}^A \frac{\de}{\de z^A}\bigr|_{m} \,.
\ee
A straightforward computation shows that
\be
\widetilde{\omega}_m(X,\, W) \,=\, {X_q}^a {W_p}_a - {W_q}^a {X_p}_a + {X_\mu}_A {W_z}^A \,, 
\ee
and, thus, \eqref{Eq: coisotropicity pre-symplectic} implies 
\be
{X_q}^a \,=\, {X_p}_a \,=\, {X_\mu}_A \,=\, 0 \,,\;\;\; \forall\,\, a\,=\, 1,...,r;\,A\,=\,1,...,l \,.
\ee
Consequently $X \in \T_m M$, which proves that $\T_m M^\perp \subset \T_m M$.
\end{proof}
\end{theorem}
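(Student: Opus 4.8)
The plan is to realize the thickening concretely, as a neighborhood of the zero section of an auxiliary vector bundle over $M$ equipped with a two-form of the form ``$\tau^{*}\omega$ plus $\dd\vartheta$'', and then to verify the two required properties — non-degeneracy of the resulting form near $M$, and coisotropicity of $M$ itself — by a computation in Darboux coordinates. First I would fix the characteristic distribution $\mathcal{V}=\ker\omega$ (constant rank by assumption) and choose \emph{any} complementary distribution $\mathcal{H}$, so that $\T M=\mathcal{H}\oplus\mathcal{V}$ is an almost product structure with associated idempotent $(1,1)$-tensor $P$ (the projection onto $\mathcal{V}$) and opposite tensor $R=\mathds{1}-P$. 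Using \cref{prop:split_cotangent_bundle} and the induced splitting of the bundle of $1$-forms, I would take as total space the subbundle ${\Lambda^1}^\perp_R(M)\subset\T^{*}M$ whose fibre at $m$ consists of the covectors annihilating $\mathcal{H}_{m}$; in Darboux coordinates for $\omega$ (\cref{Thm: Darboux pre-symplectic}) these are exactly the forms $\mu_A P^A$, so $(q^a,p_a,z^A,\mu_A)$ is an adapted chart and $M$ sits inside as the zero section $\{\mu_A=0\}$, which is a closed submanifold. On this bundle there is a tautological $1$-form $\vartheta^P$ defined in analogy with the Liouville form by $\vartheta^P_\alpha(X)=\alpha(P(\tau_{*}X))$, with local expression $\vartheta^P=\mu_A P^A$, and I would set $\widetilde{\omega}:=\tau^{*}\omega+\dd\vartheta^P$, which is closed for free since $\dd\omega=0$.

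The heart of the argument — and the step I expect to be the main obstacle — is proving that $\widetilde{\omega}$ is non-degenerate on a neighborhood of the zero section. I would expand $i_X\widetilde{\omega}$ in the adapted frame $\{{H_q}_a,\,{H_p}^a,\,\de/\de z^A,\,\de/\de\mu_A\}$ dual to $\{\dd q^a,\dd p_a,P^A,\dd\mu_A\}$: the $\tau^{*}\omega$ term produces the familiar symplectic pairing between the $q$- and $p$-directions, the $\dd\mu_A\wedge P^A$ part of $\dd\vartheta^P$ pairs the $z$- and $\mu$-directions, and the remaining terms of $\dd\vartheta^P$ are linear in $\mu$ with coefficients built from the first derivatives of the functions $P^A_a$ — essentially the components of the Nijenhuis tensor $\mathcal{N}_P$. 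Hence at $\mu=0$ the matrix of $\widetilde{\omega}$ is block-diagonal and manifestly invertible, so by continuity it remains invertible on an open tube around the zero section; if moreover $\mathcal{N}_P\equiv 0$ for the chosen $\mathcal{H}$, the $\mu$-dependent corrections vanish and $\widetilde{\omega}$ is symplectic on the whole of ${\Lambda^1}^\perp_R(M)$. Carefully bookkeeping those correction terms is the one genuinely laborious part of the proof; everything around it is formal.

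Finally, to check coisotropicity of the zero section, I would take $m$ with $\mu=0$, write a general tangent vector $X\in\T_m\widetilde{M}$ together with a general $W\in\T_m M$ (so $W$ has no $\de/\de\mu$-component), and compute $\widetilde{\omega}_m(X,W)={X_q}^a{W_p}_a-{W_q}^a{X_p}_a+{X_\mu}_A{W_z}^A$; imposing that this vanish for all such $W$ forces ${X_q}^a={X_p}_a={X_\mu}_A=0$, i.e. $X\in\T_m M$. This gives $(\T_m M)^{\perp_{\widetilde{\omega}}}\subseteq\T_m M$, so $\mathfrak{i}$ is the desired closed coisotropic embedding and $(\widetilde{M},\widetilde{\omega})$ — a tubular neighborhood of the zero section of ${\Lambda^1}^\perp_R(M)$, or the whole bundle when $\mathcal{N}_P\equiv 0$ — is a symplectic thickening of $(M,\omega)$.
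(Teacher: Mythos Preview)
Your proposal is correct and follows essentially the same approach as the paper: the same bundle ${\Lambda^1}^\perp_R(M)$, the same tautological form $\vartheta^P$, the same $\widetilde{\omega}=\tau^{*}\omega+\dd\vartheta^{P}$, the same non-degeneracy argument via the adapted frame with the $\mu$-linear corrections governed by $\mathcal{N}_P$, and the identical coisotropicity computation at $\mu=0$. The only difference is that the paper writes out the coefficients $F_a,G^a,H_A,K^A$ of $i_X\widetilde{\omega}$ explicitly, whereas you summarize their structure; the logic is the same.
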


\subsubsection{Uniqueness}
Uniqueness of coisotropic embeddings of pre-symplectic manifolds of constant rank has been proved by \textit{M. J. Gotay} in \cite{gotay}. 

\noindent Let $(M,\omega)$ be a pre-symplectic manifold of constant rank and let $\mathfrak{i}_1\colon (M, \omega) \hookrightarrow (\widetilde M_1, \widetilde\omega_1)$, $\mathfrak{i}_2\colon (M, \omega) \hookrightarrow (\widetilde M_2, \widetilde\omega_2)$ be two different coisotropic embeddings. 

\noindent The proof of uniqueness can be split into two main steps: an algebraic one, and a geometric one.

\begin{itemize}
    \item Find a symplectic vector bundle isomorphism 
    $$\phi \colon  (\T \widetilde M_1 |_{M}, \widetilde\omega_1) \longrightarrow (\T \widetilde M_2 |_M, \widetilde\omega_2) \,,$$
    which is the identity when restricted to $\T M$.
\end{itemize}

\noindent This first step will be obtained by simple algebra on symplectic vector spaces, point-wise.

\noindent Then, such an isomorphism is integrated to a neighborhood diffeomorphism $\psi\colon U_1 \longrightarrow U_2$ where $M \subseteq U_1 \subseteq\widetilde M_1$ and $M \subseteq U_2 \subseteq \widetilde M_2$ are open neighborhoods of $M$ on $\widetilde M_1$ and $\widetilde M_2$, such that $\psi^\ast \widetilde\omega_2 |_M = \widetilde\omega_1$. 
Lastly, we apply \textit{Moser's trick}:

\begin{itemize}
    \item Look for a vector field $X_t$ vanishing on $M$ satisfying the hypotheses of \cref{thm:Moser_trick}.
\end{itemize}

This last step uses the Relative Poincaré Lemma (Theorem \ref{thm:Relative_Poincaré_Lemma}).

\noindent The condition that $X_t$ vanishes on $M$ means that its flow at time one, $\psi_1$, will be the identity on $M$, so that we have found a symplectomorphism of the two (possible smaller) neighborhoods $U_1$ and $U_2$ that is the identity over $M$.

\noindent Let us recall here the notion of \textit{neighborhood equivalence}:

\begin{definition}[\textsc{Neighborhood equivalence}]
Let $\mathfrak{i}_1\colon (M, \omega) \hookrightarrow (\widetilde M_1, \widetilde\omega_1)$ and $\mathfrak{i}_2\colon (M, \omega) \hookrightarrow (\widetilde M_2, \widetilde\omega_2)$ be two coisotropic embeddings. They are said to be \textbf{neighborhood equivalent} if there exist open neighborhoods $U_1$ of $\mathfrak{i}_1(M)$ in $\widetilde M_1$ and $U_2$ of $\mathfrak{i}_2(M)$ in $\widetilde M_2$, and a diffeomorphism $\psi\colon U_1 \longrightarrow U_2$ such that:
\begin{enumerate}
    \item $\psi$ is a symplectomorphism, i.e., $\psi^\ast \widetilde\omega_2 = \widetilde\omega_1$;
    \item $\psi$ is the identity on $M$, i.e., $\psi \circ \mathfrak{i}_1 = \mathfrak{i}_2$.
\end{enumerate}
\end{definition}

\noindent The two steps above are made precise in the following theorem.

\begin{theorem}[\textsc{Coisitropic embeddings uniqueness}] 
\label{Thm: uniqueness pre-symplectic}
Two different coisotropic embeddings of $(M, \omega)$ are neighborhood equivalent.
\end{theorem}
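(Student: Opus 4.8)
The plan is to follow the two-step scheme (algebraic, then geometric) already outlined in the text preceding the statement. Fix two coisotropic embeddings $\mathfrak{i}_1 \colon (M,\omega) \hookrightarrow (\widetilde M_1, \widetilde\omega_1)$ and $\mathfrak{i}_2 \colon (M,\omega) \hookrightarrow (\widetilde M_2, \widetilde\omega_2)$; after identifying $M$ with its images we must produce a symplectomorphism of neighborhoods restricting to the identity on $M$. The starting observation is that along $M$ the characteristic distribution $\mathcal{V} = \ker\omega$ equals $\T M^{\perp}$ inside both $\T\widetilde M_j|_M$ (this is exactly the coisotropicity condition, as in \cref{remark:symplectic_minimal_dimension}), and in particular both thickenings have the same dimension $\dim M + \operatorname{rank}\mathcal{V}$.

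First I would carry out the pointwise algebraic step. For a symplectic vector space $(W,\Omega)$ containing a subspace $C$ with $C^{\perp_\Omega} \subseteq C$, choose a complement $H$ to $C$ in $W$; then $\Omega$ restricts nondegenerately to a pairing between $H$ and $C^{\perp_\Omega}$ and to a symplectic form on $C/C^{\perp_\Omega}$, giving a canonical splitting $W \cong (C/C^{\perp_\Omega}) \oplus (C^{\perp_\Omega} \oplus H)$ with the second summand a hyperbolic space determined up to isomorphism by $\dim C^{\perp_\Omega}$. Applying this fiberwise to $(\T_m\widetilde M_1, \widetilde\omega_1)$ and $(\T_m\widetilde M_2, \widetilde\omega_2)$ with $C = \T_m M$, and noting that $C/C^{\perp_\Omega} = \T_m M / \mathcal{V}_m$ carries the same induced form $\omega$ on both sides and that $\mathcal{V}_m$ is the same subspace on both sides, one builds a vector bundle isomorphism $\phi \colon \T\widetilde M_1|_M \to \T\widetilde M_2|_M$ intertwining $\widetilde\omega_1$ and $\widetilde\omega_2$ and equal to the identity on $\T M$. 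Smoothness in $m$ is arranged by choosing the complements $\mathcal{H}$ smoothly (an almost product structure, as in the existence proof) and then a smooth hyperbolic frame.

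Next comes the geometric step: promote $\phi$ to a genuine diffeomorphism. Using tubular neighborhood theorems for $M$ in $\widetilde M_1$ and $\widetilde M_2$ (identifying each with a neighborhood of the zero section of the normal bundle) together with $\phi$ on the normal directions, one obtains a diffeomorphism $\psi_0 \colon U_1 \to U_2$ between neighborhoods of $M$ which is the identity on $M$ and whose differential along $M$ is $\phi$; hence $\psi_0^\ast \widetilde\omega_2$ and $\widetilde\omega_1$ are two closed $2$-forms on $U_1$ that agree on $M$ and have the same restriction to $\T\widetilde M_1|_M$. Their difference $\alpha := \psi_0^\ast\widetilde\omega_2 - \widetilde\omega_1$ is closed and vanishes on $\T\widetilde M_1|_M$, so by the Relative Poincaré Lemma (\cref{thm:Relative_Poincaré_Lemma}) we can write $\alpha = \dd\theta$ with $\theta$ vanishing on $M$. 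Then set $\omega_t := \widetilde\omega_1 + t\alpha$; since $\omega_0 = \widetilde\omega_1$ and $\omega_1 = \psi_0^\ast\widetilde\omega_2$ agree on $\T\widetilde M_1|_M$, each $\omega_t$ is nondegenerate in a (possibly smaller, $t$-uniform) neighborhood of $M$, so one can define $X_t$ by $i_{X_t}\omega_t = -\theta$; because $\theta$ vanishes on $M$, so does $X_t$, its flow fixes $M$ pointwise, and $\frac{\dd}{\dd t}(\psi_t^\ast\omega_t) = \psi_t^\ast(\Lie_{X_t}\omega_t + \alpha) = \psi_t^\ast(\dd i_{X_t}\omega_t + \dd\theta) = 0$. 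Composing $\psi_0$ with the time-one flow gives the desired neighborhood equivalence via \cref{thm:Moser_trick}.

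The main obstacle I expect is the smoothness and uniformity in the algebraic step: making the hyperbolic splitting of the symplectic fibers depend smoothly on $m$ (and, for the subsequent Moser argument, ensuring the linear interpolation $\omega_t$ stays nondegenerate on a single neighborhood for all $t \in [0,1]$ and that $X_t$ is complete enough for its flow to be defined up to $t=1$ after shrinking). The rest — the pointwise symplectic linear algebra, the tubular neighborhood comparison, and the Moser computation — is routine once the frame is chosen, and the Relative Poincaré Lemma has already been set up in exactly the form needed.
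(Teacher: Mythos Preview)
Your proposal is correct and follows essentially the same approach as the paper: an algebraic step producing a symplectic bundle isomorphism $\T\widetilde M_1|_M \cong \T\widetilde M_2|_M$ fixing $\T M$, followed by the tubular-neighborhood reduction and the Relative Poincar\'e/Moser argument exactly as you describe. The only cosmetic difference is that the paper organizes the linear algebra by picking a complement $\mathcal{H}$ to $\mathcal{V}$ inside $\T M$ (so that $\mathcal{H}$ is a symplectic subbundle of $\T\widetilde M|_M$ and $\mathcal{V}$ is Lagrangian in $\mathcal{H}^\perp$, whence $\T\widetilde M|_M \cong \mathcal{H}\oplus\mathcal{V}\oplus\mathcal{V}^\ast \cong \T\mathcal{V}^\ast|_M$), thereby routing both embeddings through the canonical model $\mathcal{V}^\ast$ from the existence theorem rather than comparing them directly as you do.
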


\begin{proof} 
Let $\mathcal{V}$ the characteristic distribution of $\omega$ on $M$, and let $\mathcal{H}$ be any complementary distribution so that $\T M = \mathcal{V} \oplus \mathcal{H}$. 
Let $\mathfrak{i}\colon (M, \omega) \hookrightarrow (\widetilde M, \widetilde\omega)$ be a coisotropic embedding. 
The distribution $\mathfrak{i}_\ast \mathcal{H}$ provides a symplectic vector subbundle of $(\T\widetilde M |_M, \widetilde\omega)$ since $\widetilde \omega|_{\mathcal{H}}$ is non degenerate, $\mathcal{H}$ being complementary to $\mathcal{V}$.

\noindent Consequently, $\T\widetilde M |_{M} = \mathcal{H} \oplus \mathcal{H}^{\perp}$ and $\mathcal{V} \subseteq \mathcal{H}^{\perp}$. 
\noindent On the other hand, $\mathcal{V}$ is a Lagrangian subbundle of $\mathcal{H}^\perp$. 
Indeed
\[
\mathrm{dim} \mathcal{H}^\perp \,=\, \underbrace{\mathrm{dim} \T \widetilde{M}}_{=\,2r + 2l} - \underbrace{\mathrm{dim} \mathcal{H}}_{=\,2r} \,=\, 2l \,, 
\]
where $r \,=\, \mathrm{rank} \mathcal{H}$ and $l \,=\, \mathrm{rank} \mathcal{V}$.
Therefore, we necessarily have $\mathcal{H}^{\perp} \cong \mathcal{V} \oplus \mathcal{V}^\ast$, so that 
\[
\T\widetilde M |_M \cong \mathcal{H} \oplus \mathcal{V} \oplus \mathcal{V}^\ast  = \T \mathcal{V}^\ast|_{M}\,.
\]
It is easy to check that all this chain of vector bundle isomorphisms preserve symplectic forms, where the symplectic form on $\mathcal{V}^\ast$ (which, following \cref{remark:bigrading}, can be identified with ${\Lambda^1}^\perp_R (M)$) is defined as in the previous section.

\noindent Following the steps defined at the beginning of this section, it only remains to deal with the case where $M$ is embedded in $U \subseteq \widetilde M$ and there are two symplectic forms $\widetilde\omega_1$, $\widetilde\omega_2$ such that $\widetilde\omega_1 = \widetilde\omega_2$ on $M$. 
Using the relative Poincaré Lemma, and shrinking $U$ further if necessary, we take a $1$-form, $\widetilde\theta$ vanishing on $M$ such that 
\[
\widetilde\omega_2 - \widetilde\omega_1 = \dd \widetilde\theta\,.
\]
Now, let us define the following family of forms 
\[
\widetilde\omega_t := t \widetilde\omega_2 + (1-t) \widetilde\omega_1\,,
\]
which is symplectic on $U$ (if necessary, we shrink this neighborhood further to achieve this). Now, define $X_t$ to be the unique time dependent vector field satisfying $i_{X_t} \widetilde\omega_t = -\widetilde\theta$. Then, 
\begin{align*}
    \widetilde\omega_2 - \widetilde\omega_1 +\Lie_{X_t} \widetilde\omega_t &= 
    \widetilde\omega_2 - \widetilde\omega_1 + \dd i_{X_t} \widetilde\omega_t = 0\,,
\end{align*}
by definition. 
This leaves us in the hypotheses of \cref{thm:Moser_trick}. Furthermore, $X_t$ vanishes on $M$, since $\widetilde\theta$ does, so that the time-one flow of $X_t$, say $\psi_1$, is the identity on $M$, and it defines a symplectomorphism between $(U,\, \widetilde{\omega}_1)$ and $(U,\, \widetilde{\omega}_2)$, proving the result.
\end{proof}

\section{Pre-cosymplectic manifolds}
\label{Sec: cosymplectic manifolds}

\begin{definition}[\textsc{Pre-cosymplectic manifold}]
A \textbf{pre-cosymplectic manifold} is a triple $(M, \eta, \omega)$, where $M$ is a smooth $(2d+1)$-dimensional manifold, $\eta \in \Omega^1(M)$ is a closed differential $1$-form, and $\omega \in \Omega^2(M)$ is a closed differential 2-form.
\end{definition}

\begin{remark}
We will always assume both $\eta$ and $\omega$ to have constant rank, i.e., $\eta$ to be nowhere vanishing and the dimension of $\mathrm{Im}(\omega^\flat_m) \subset T^*_m M$ to be constant for all $m \in M$, where 
\be
\omega^\flat  \colon  \mathbf{T}M \to  \mathbf{T}^* M  \colon  X \mapsto \omega^\flat(X)(\cdot) := \omega(X, \cdot) \,.
\ee
Moreover, we will always assume $\mathrm{ker}\,\eta \cap \mathrm{ker}\,\omega$ to have constant dimension, and, thus, to define an integrable distribution amounting to a subbundle of $\mathbf{T}M$.
\end{remark}

\begin{definition}[\textsc{Characteristic distribution}]
The distribution
\be
\mathcal{V} \,:=\, \mathrm{ker}\,\eta \cap \mathrm{ker}\, \omega \,,
\ee
is called the \textbf{characteristic distribution} of $(\eta,\, \omega)$.
\end{definition}

\begin{definition}[\textsc{Reeb vector field}]
Let $(M, \eta, \omega)$ be a pre-cosymplectic manifold. 
A \textbf{Reeb vector field} is a vector field $R \in \mathfrak{X}(M)$ satisfying the conditions
\be
i_R \omega = 0 \,, \qquad i_R \eta = 1 \,.
\ee
\end{definition}

\begin{remark}[\textsc{Cosymplectic manifold}]
When $\eta \wedge \omega^d$ is nowhere vanishing on $M$, i.e., it is a volume form, the pair $(\eta, \omega)$ defines a \textbf{cosymplectic structure} on $M$, and the triple $(M, \eta, \omega)$ is referred to as a \textbf{cosymplectic manifold}. Then, there is a unique Reeb vector field.
\end{remark}

\begin{theorem}[\textsc{Darboux theorem for cosymplectic manifolds}]
Let $(M, \eta, \omega)$ be a $(2d+1)$-dimensional cosymplectic manifold. 
Then, for every point $m \in M$, there exists a coordinate chart $(U, \varphi)$ centered at $m$, with coordinates $(t,\, q^1, \dots, q^d,\, p_1, \dots, p_d)$, such that:
\[
\eta|_U = \dd t \,, \qquad \omega|_U = \dd q^a \wedge \dd p_a \,.
\]
These coordinates are called \textbf{Darboux coordinates} for the cosymplectic structure.
\begin{proof}
See \cite{godbillon1969geometrie}
\end{proof}
\end{theorem}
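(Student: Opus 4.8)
The plan is to reduce the statement to the classical Darboux theorem for symplectic manifolds (\cref{Thm: Darboux symplectic}) applied to a leaf of the foliation $\ker \eta$, and then to propagate the resulting coordinates to a full neighbourhood by flowing along the Reeb vector field, exploiting the invariance of $\eta$ and $\omega$ under that flow.

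First, since $\eta$ is closed and nowhere vanishing, the Poincaré lemma gives, on a contractible neighbourhood of $m$, a function $t$ with $\eta = \dd t$ and $\dd t|_m \neq 0$; normalize so that $t(m) = 0$. The distribution $\ker \eta = \ker \dd t$ is involutive because $\dd \eta = 0$, so by the Frobenius Theorem its leaf through $m$ is, after shrinking, the hypersurface $N := \{t = 0\}$, with inclusion $\iota_N \colon N \hookrightarrow M$. The $2$-form $\iota_N^\ast \omega$ is closed, and it is in fact non-degenerate: writing $\omega = \eta \wedge \beta + \gamma$ pointwise with $\beta, \gamma$ not involving $\eta$, one has $\eta \wedge \omega^d = \eta \wedge \gamma^d$, so $\eta \wedge \omega^d \neq 0$ forces $\gamma^d \neq 0$, i.e.\ $\omega$ restricts to a form of maximal rank $2d$ on $\ker \eta = \mathbf{T}N$. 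Thus $(N, \iota_N^\ast\omega)$ is a $2d$-dimensional symplectic manifold, and \cref{Thm: Darboux symplectic} supplies coordinates $(q^1, \dots, q^d, p_1, \dots, p_d)$ centered at $m$ on $N$ with $\iota_N^\ast \omega = \dd q^a \wedge \dd p_a$.

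Next, use the unique Reeb vector field $R$, which exists since the structure is cosymplectic. Cartan's formula together with $\dd\eta = 0$, $\dd\omega = 0$, $i_R\eta = 1$, $i_R\omega = 0$ yields $\mathcal{L}_R \eta = 0$ and $\mathcal{L}_R\omega = 0$, while $R(t) = i_R\,\dd t = i_R\eta = 1$ shows $R$ is transverse to the level sets of $t$. Extend the functions $q^a, p_a$ from $N$ to a neighbourhood $U$ of $m$ by requiring them constant along the flow of $R$, i.e.\ $R(q^a) = R(p_a) = 0$. Since $\dd t|_m \neq 0$ and the $\dd q^a|_m, \dd p_a|_m$ are independent on $\ker \dd t|_m$ by construction, the family $(\dd t, \dd q^a, \dd p_a)$ is independent at $m$; hence, after shrinking $U$, the collection $(t, q^1, \dots, q^d, p_1, \dots, p_d)$ is a coordinate chart on $U$, in which $R = \de/\de t$ and, by construction, $\eta = \dd t$.

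It remains to identify $\omega$. From $i_R\omega = 0$ and $i_{\de/\de t}(\dd q^a \wedge \dd p_a) = 0$ we get $i_{\de/\de t}(\omega - \dd q^a \wedge \dd p_a) = 0$, so $\omega - \dd q^a \wedge \dd p_a$ has no $\dd t$-component; from $\mathcal{L}_{\de/\de t}\omega = 0$ and $\mathcal{L}_{\de/\de t}(\dd q^a\wedge\dd p_a) = 0$, its coefficients in the basis $\{\dd q^a, \dd p_a\}$ are independent of $t$; and restricting to $N = \{t = 0\}$ it vanishes by the symplectic Darboux step. Therefore $\omega - \dd q^a \wedge \dd p_a = 0$ on all of $U$, as desired. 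The steps are all standard; the only points requiring care are the verification that $\omega$ restricts to a symplectic form on $N$ and the bookkeeping of successive shrinkings (in particular ensuring the Reeb flow is defined long enough to reach every point of a small $U$), neither of which is a genuine obstacle.
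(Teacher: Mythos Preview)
Your proof is correct and follows the standard route: restrict $\omega$ to a leaf of the involutive distribution $\ker\eta$, invoke the symplectic Darboux theorem there, and then propagate the resulting coordinates off the leaf using the Reeb flow, which preserves both $\eta$ and $\omega$. The paper does not actually give an argument for this statement---it simply cites Godbillon---so there is no in-text proof to compare against; your approach is precisely the classical one found in such references.
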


\begin{theorem}[\textsc{Darboux theorem for pre-cosymplectic manifolds}]
Let $(M, \eta, \omega)$ be a pre-cosymplectic manifold of dimension $2d + 1$, where $\eta \in \Omega^1(M)$ and $\omega \in \Omega^2(M)$ are closed forms with constant rank, and such that $\eta \wedge \omega^r \neq 0$ for some $r \leq d$.
Then, for every point $m \in M$, there exists a coordinate chart $(U, \varphi)$ centered at $m$, with coordinates $(t,\, q^1, \dots, q^r,\, p_1, \dots, p_r,\, z^1, \dots, z^{2(d - r)})$, such that:
\be
\eta|_U = \dd t \,, \qquad \omega|_U = \dd q^a \wedge \dd p_a \,.
\ee
Again, such coordinates are called \textbf{Darboux coordinates} for the pre-cosymplectic structure $(\eta,\,\omega)$ around the point $m$.
In the system of Darboux coordinates, the characteristic distribution is spanned by the vector fields
\be
\mathcal{V} \,=\,  \mathrm{ker} \,\eta \cap \mathrm{ker}\, \omega \,=\, \operatorname{span}\,\left\{\, \frac{\de}{\de z^A} \,\right\}_{A=1,...,2(d-r)} \,.
\ee
Thus, Darboux coordinates for the pre-cosymplectic structure $(\eta,\,\omega)$ are foliated charts for the foliation associated with the characteristic distribution.
\begin{proof}
See \cite{godbillon1969geometrie,chinea}.
\end{proof}
\end{theorem}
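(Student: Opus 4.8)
The plan is to mimic the structure of the pre-symplectic Darboux theorem (\cref{Thm: Darboux pre-symplectic}), now carrying along the extra closed $1$-form $\eta$. First I would observe that since $\eta$ is closed and nowhere vanishing, the classical Darboux/straightening theorem for closed $1$-forms gives a coordinate $t$ with $\eta = \dd t$ on a neighborhood $U$ of $m$; equivalently, $\ker\eta$ is an integrable corank-$1$ distribution and $t$ is a transverse coordinate. The key point is then to understand how $\omega$ interacts with this splitting: I would restrict $\omega$ to a local leaf $\Sigma = \{t = \text{const}\}$ and use the hypothesis $\eta \wedge \omega^r \neq 0$ together with the constant-rank assumptions to conclude that $\omega|_\Sigma$ has constant rank $2r$ on $\Sigma$.

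Next, I would want to show that one may choose the hypersurfaces $\{t = \text{const}\}$ and arrange that $\omega$ has "no $\dd t$ component," i.e., that a Reeb-type vector field $R$ with $i_R\eta = 1$, $i_R\omega = 0$ exists locally, so that $\omega = \pi^\ast(\omega|_\Sigma)$ under the projection $\pi$ along $R$. This is where I expect the main subtlety: a priori one only has $\eta \wedge \omega^r \neq 0$ (a pre-cosymplectic, not cosymplectic, hypothesis), and one must check that $\ker\omega$ still contains a vector field complementary to $\ker\eta$. The cleanest route is to note $\ker\omega$ has constant rank (rank of $\omega$ is $2r$), that $\eta|_{\ker\omega} \neq 0$ by the nondegeneracy hypothesis $\eta\wedge\omega^r\neq0$, hence $\ker\omega \not\subset \ker\eta$, and pick $R \in \ker\omega$ with $\eta(R) = 1$; then flow coordinates along $R$ give $\eta = \dd t$ and $\Lie_R\omega = 0$, so $\omega$ descends to the quotient.

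Having reduced to the leaf $\Sigma$, I would apply \cref{Thm: Darboux pre-symplectic} to the pre-symplectic manifold $(\Sigma, \omega|_\Sigma)$: since $\omega|_\Sigma$ is closed of constant rank $2r < 2d = \dim\Sigma$, there are coordinates $(q^1,\dots,q^r, p_1,\dots,p_r, z^1,\dots,z^{2(d-r)})$ on $\Sigma$ with $\omega|_\Sigma = \dd q^a \wedge \dd p_a$ and $\ker(\omega|_\Sigma) = \operatorname{span}\{\partial/\partial z^A\}$. Pulling these back along $\pi$ and adjoining $t$ yields coordinates $(t, q^a, p_a, z^A)$ on a neighborhood in $M$ in which $\eta = \dd t$ and $\omega = \dd q^a \wedge \dd p_a$. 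Finally I would verify the claim about the characteristic distribution: $\ker\eta \cap \ker\omega = \operatorname{span}\{\partial/\partial z^A\}$ is immediate from these expressions, since $\ker\omega = \operatorname{span}\{\partial/\partial t, \partial/\partial z^A\}$ and intersecting with $\ker\eta = \operatorname{span}\{\partial/\partial q^a, \partial/\partial p_a, \partial/\partial z^A\}$ removes the $\partial/\partial t$ direction; and the foliated-chart statement follows because the $z^A$ are exactly the leaf coordinates for the integrable distribution $\mathcal{V}$. The main obstacle is the middle step — ensuring the local Reeb vector field exists and genuinely decouples $\omega$ from $\dd t$ under only the pre-cosymplectic hypothesis — after which everything reduces to the already-established pre-symplectic case; alternatively one can cite \cite{godbillon1969geometrie,chinea} where this is carried out in detail.
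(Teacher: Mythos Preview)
Your sketch is correct and follows the standard route: straighten $\eta$ via a local Reeb vector field $R\in\ker\omega$ with $\eta(R)=1$ (whose existence you correctly deduce from $\eta\wedge\omega^r\neq 0$, since $\ker\omega\subset\ker\eta$ would force $\eta\wedge\omega^r=0$), use $\Lie_R\omega=0$ and $i_R\omega=0$ to make $\omega$ basic over a leaf $\Sigma=\{t=\text{const}\}$, and then invoke \cref{Thm: Darboux pre-symplectic} on $(\Sigma,\omega|_\Sigma)$. One small wrinkle in your write-up: you first straighten $\eta$ and only afterwards produce $R$, but to get simultaneously $\eta=\dd t$ and $R=\partial/\partial t$ you should instead pick $R$ first, flow from a leaf of $\eta=\dd f$, and observe that $R(f)=1$ forces $f=t$ in the flow coordinates; otherwise the flow of $R$ need not preserve your initial choice of $t$.

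As for comparison with the paper: there is nothing to compare, since the paper does not give a proof but simply refers to \cite{godbillon1969geometrie,chinea}. Your argument is exactly the one carried out in those references, so you have in effect supplied what the paper omits.
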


\subsection{Coisotropic submanifolds}

\begin{definition}[\textsc{Cosymplectic orthogonal}]
\label{Def: cosymplectic ortogonal}
Let $(M, \eta, \omega)$ be a cosymplectic manifold, and let $W_m \subseteq \mathbf{T}_m M$ be a linear subspace of the tangent space at a point $m \in M$.
The \textbf{cosymplectic orthogonal} of $W_m$ with respect to $(\eta, \omega)$ is the subspace
\be
W^{\perp_{(\eta, \omega)}}_m \,:=\, \left\{\, X \in \mathbf{T}_m M  \colon  \eta(X) = 0 \;\text{ and }\; \omega(X, Y) = 0 \quad \forall\, Y \in W_m \,\right\}.
\ee    
\end{definition}

\begin{remark}
The orthogonality condition is imposed only with respect to the restriction of $\omega$ to $\ker \eta_m$, where $\omega$ defines a symplectic structure.
Outside of $\ker \eta$, $\omega$ is non-degenerate; this motivates \cref{Def: cosymplectic ortogonal}.
\end{remark}

\begin{definition}[\textsc{Coisotropic submanifold of a cosymplectic manifold}]
Let $(M, \eta, \omega)$ be a cosymplectic manifold, and let 
\[
\mathfrak{i}  \colon  N \hookrightarrow M
\]
be an immersed submanifold.
We say that $N$ is a \textbf{coisotropic submanifold} of $(M, \eta, \omega)$ if, for every point $n \in N$, the cosymplectic orthogonal of the subspace $\mathbf{T}_n N$ satisfies
\be
{\mathbf{T}_n N}^{\perp_{(\eta, \omega)}} \,\subseteq\, \mathbf{T}_n N \,.
\ee
\end{definition}

\begin{remark}
    Together with a cosymplectic manifold, there is an induced Poisson structure defined by the corresponding Poisson brackets (see \cite{dLR89}). Furthermore, every Poisson manifold also has the notion of orthogonal (see \cite{vai1994}). It is easy to see that this notion coincides with the one presented in this paper.
\end{remark}

\subsection{Coisotropic embeddings}

\subsubsection{Existence}
\label{Subsubsec: Existence co-symplectic}

\begin{theorem}[\textsc{Cosymplectic thickenings for pre-cosymplectic manifolds}] \label{Thm: cosymplectic tickening}
Let $(M,\,\eta,\, \omega)$ be a pre-cosymplectic manifold.
There exists a cosymplectic manifold $(\widetilde{M},\,\widetilde{\eta},\, \widetilde{\omega})$ and an embedding
\[
\mathfrak{i} \colon  M \hookrightarrow \widetilde{M}
\]
such that $\mathfrak{i}(M)$ is a coisotropic submanifold of $\widetilde{M}$.
The cosymplectic manifold $(\widetilde{M},\,\widetilde{\eta},\, \widetilde{\omega})$ is referred to as a \textbf{cosymplectic thickening} of $(M,\,\eta,\, \omega)$.
\end{theorem}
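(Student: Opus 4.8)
The plan is to transplant, essentially verbatim, the construction used above for pre-symplectic manifolds, exploiting that the $1$-form $\eta$ is already nowhere vanishing and hence needs no modification. First I would take $\mathcal{V} = \ker\eta\cap\ker\omega$, pick a complementary distribution $\mathcal{H}$ with $\T M = \mathcal{H}\oplus\mathcal{V}$ (such a global complement exists, e.g.\ using a Riemannian metric), let $P$ be the associated $(1,1)$-tensor with $\operatorname{Im}P = \mathcal{V}$, $\ker P = \mathcal{H}$, and put $R = \mathds{1} - P$. The thickening $\widetilde M$ will be a tubular neighborhood of the zero section of the bundle $\tau\colon {\Lambda^1}^\perp_R(M)\to M$, whose fibre is $\mathcal{V}^\ast$ and hence has rank $2(d-r)$, equipped with $\widetilde\eta := \tau^\ast\eta$ and $\widetilde\omega := \tau^\ast\omega + \dd\vartheta^P$, where $\vartheta^P$ is the tautological $1$-form of ${\Lambda^1}^\perp_R(M)$, defined exactly as in the pre-symplectic proof. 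Both forms are closed by construction, and $\widetilde\eta$ is nowhere vanishing since $\tau$ is a submersion; so, besides coisotropy, the only thing to prove is that $\widetilde\eta\wedge\widetilde\omega^{\,2d-r}$ is a volume form (note $\dim\widetilde M = 4d-2r+1 = 2(2d-r)+1$).

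For this I would pass to Darboux coordinates $(t,q^a,p_a,z^A)$ for $(M,\eta,\omega)$ provided by the Darboux theorem for pre-cosymplectic manifolds, so that $\eta = \dd t$, $\omega = \dd q^a\wedge\dd p_a$, $\mathcal{V} = \operatorname{span}\{\de/\de z^A\}$ with $A=1,\dots,2(d-r)$, and $P^A = \dd z^A - {P_t}^A\dd t - {P_q}^A_a\dd q^a - {P_p}^{Aa}\dd p_a$. With fibre coordinates $\mu_A$ one has $\vartheta^P = \mu_A P^A$, and in the adapted frame $\{H_t, H_{q^a}, H^{p_a}, V_A = \de/\de z^A, \de/\de\mu_A\}$ dual to $\{\dd t,\dd q^a,\dd p_a,P^A,\dd\mu_A\}$ one computes, along the zero section $\mu_A = 0$, that $\widetilde\omega = \dd q^a\wedge\dd p_a + \dd\mu_A\wedge P^A$ and $\widetilde\eta = \dd t$. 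Contracting with the frame vector fields gives $i_{H_t}\widetilde\omega = 0$, while $i_{H_{q^a}}\widetilde\omega = \dd p_a$, $i_{H^{p_a}}\widetilde\omega = -\dd q^a$, $i_{V_A}\widetilde\omega = -\dd\mu_A$ and $i_{\de/\de\mu_A}\widetilde\omega = P^A$ are linearly independent; hence the radical of $\widetilde\omega$ along the zero section is exactly $\operatorname{span}\{H_t\}$, it is one-dimensional and transverse to $\ker\widetilde\eta$, and $\widetilde\eta(H_t) = 1$. This is precisely the statement that $\widetilde\eta\wedge\widetilde\omega^{\,2d-r}\neq 0$ along the zero section, and since this is an open condition it persists on a tubular neighborhood $\widetilde M$ of it, which is therefore a cosymplectic manifold. (As in the pre-symplectic case, I expect that choosing $P$ with vanishing Nijenhuis tensor allows one to take all of ${\Lambda^1}^\perp_R(M)$ as $\widetilde M$.)

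It then remains to check coisotropy of $\mathfrak{i}(M)$, where $\mathfrak{i}$ is the zero section. Writing a candidate element $X = X^0 H_t + X^a H_{q^a} + X_a H^{p_a} + X^A V_A + {X_\mu}^A\,\de/\de\mu_A$ of ${(\T_m M)}^{\perp_{(\widetilde\eta,\widetilde\omega)}}$ at $m\in\mathfrak{i}(M)$, the condition $\widetilde\eta(X) = 0$ forces $X^0 = 0$, and imposing $\widetilde\omega_m(X,W) = 0$ for $W$ running over $H_{q^b}$, $H^{p_b}$, $V_B$ forces $X_b = X^b = {X_\mu}^B = 0$ (the choice $W = H_t$ gives no condition, $H_t$ being in the radical). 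Hence $X = X^A V_A\in\mathcal{V}_m\subseteq\T_m M$; in fact ${(\T_m M)}^{\perp_{(\widetilde\eta,\widetilde\omega)}} = \mathcal{V}_m$, so the embedding is coisotropic and of minimal dimension, and since the zero section is closed in $\widetilde M$ it is even a closed coisotropic submanifold.

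The construction is largely formal, and I do not expect a serious obstacle. The only non-routine points are the existence of a global complement $\mathcal{H}$ (standard, via a partition of unity or a Riemannian metric) and the passage from the zero-section computation of $\operatorname{rank}\widetilde\omega$ to an open neighborhood (by continuity of the rank); the computation of the radical of $\widetilde\omega$ in the adapted frame is where one must be slightly careful, but it amounts to the same interior-product bookkeeping as in \cref{Sec: Pre-symplectic manifolds}.
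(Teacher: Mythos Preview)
Your proposal is correct and follows essentially the same approach as the paper: the same bundle ${\Lambda^1}^\perp_R(M)$, the same forms $\widetilde\eta=\tau^\ast\eta$ and $\widetilde\omega=\tau^\ast\omega+\dd\vartheta^P$, and the same coisotropy check in the adapted frame. The only difference is cosmetic: the paper carries out the full contraction $i_X\widetilde\omega$ with the $\mu_A$-dependent terms and reads off the Nijenhuis conditions explicitly, whereas you compute only along $\mu_A=0$ and invoke openness of the rank condition---both arrive at the same tubular-neighborhood conclusion.
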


\begin{proof}
Consider the characteristic distribution $\mathcal{V}$ on $M$.
Consider a complementary distribution $\mathcal{H}$ providing an almost product structure on $M$ associated with the $(1,\,1)$-tensor $P$ on $M$ that, in the system of Darboux coordinates on $M$, reads
\be \label{Eq: almost product structure cosymplectic}
P \,=\, \left(\,\dd z^A - {P_t}^A \dd t - {P_q}^A_a \dd q^a - {P_p}^{Aa} \dd p_a\,\right) \otimes \frac{\de}{\de z^A} \,=:\, P^A \otimes \frac{\de}{\de z^A} \,.
\ee
The opposite $(1,\,1)$-tensor $R$ reads
\be
R \,=\, \mathds{1} - P \,=\, \dd t \otimes \left(\, \frac{\de}{\de t} + {P_t}^A \frac{\de}{\de z^A} \,\right) + \dd q^a \otimes \left(\, \frac{\de}{\de q^a} + {P_q}^A_a \frac{\de}{\de z^A} \,\right) + \dd p_a \otimes \left(\, 
\frac{\de}{\de p_a} + {P_p}^{Aa} \frac{\de}{\de z^A} \,\right) \,.
\ee
Consider the bundle ${\Lambda^1}^\perp_R(M)$ over $M$.
Denote by $\tau$ the projection map onto $M$.
Its sections are differential $1$-forms that, in the system of Darboux coordinates look like
\be
\alpha \,=\, \alpha_A P^A \,.
\ee
Thus, a system of adapted coordinates on ${\Lambda^1}^\perp_R(M)$, reads
\be
\left\{\, t,\, q^a,\,p_a,\,z^A,\, \mu_A \,\right\}_{\substack{a=1,...,r \\ A=1,...,l}} \,,
\ee
with $l=d-2r$.
The projection map $\tau$ thus reads
\be
\tau  \colon  (t,\,q^a,\,p_a,\,z^A,\, \mu_A) \mapsto (t,\,q^a,\,p_a,\,z^A) \,.
\ee
The bundle ${\Lambda^1}^\perp_R(M)$ has a distinguished $1$-form, which is the obvious analogous of the tautological $1$-form on $\mathbf{T}^* M$, defined as
\be
\vartheta^P_{\alpha}(X) \,=\, \alpha(\tau_*(X)) \,, \;\; \forall\,\, X \in \mathbf{T}_\alpha M \,,
\ee
where $\alpha$ has to be considered as a point in ${\Lambda^1}^\perp_R(M)$ on the left hand side, and as a differential $1$-form on $M$ on the right hand side.
In the system of Darboux coordinates $\vartheta^P$ reads
\be \label{Eq: thetaP cosymplectic}
\vartheta^P \,=\, \mu_A P^A \,.
\ee
Consider the differential $2$-form
\be
\widetilde{\omega} \,=\, \tau^* \omega + \dd \vartheta^P \,,
\ee
on ${\Lambda^1}^\perp_R(M)$.
It is closed by definition.
On the other hand, $(\,(\tau^* \eta),\, \widetilde{\omega}^d\,)$ form a cosymplectic structure on a tubular neighborhood of the zero section of $\tau$.
Indeed, consider a vector field $X$ written in the basis
\be
\left\{\, H_t \,:=\, \frac{\de}{\de t} + {P_t}^A \frac{\de}{\de z^A},\, {H_q}_a \,:=\,\frac{\de}{\de q^a} + {P_q}^A_a \frac{\de}{\de z^A},\, {H_p}^a\,:=\,\frac{\de}{\de p_a} + {P_p}^{Aa} \frac{\de}{\de z^A}, \, \frac{\de}{\de z^A},\, \frac{\de}{\de \mu_A} \,\right\}_{\substack{a=1,...,r \\ A=1,...,l}} \,,
\ee
as
\be
X \,=\, X_t \, H_t + {X_q}^a \,{H_q}_a + {X_p}_a \,{H_p}^a + {X_z}^A \, \frac{\de}{\de z^A} + {X_\mu}_A\, \frac{\de}{\de \mu_A} \,.
\ee
The contraction $i_X \widetilde{\omega}$ reads
\be
i_X \widetilde{\omega} = E\, \dd t + F_a\,\dd q^a + G^a\,\dd p_a + H_A\,P^A + K^A\,\dd \mu_A,
\ee
where:
\begin{align}
E \,=\, \mu_A \Biggl[\, {X_q}^a &\left(\, \frac{\de {P_q}^A_a}{\de t} - \frac{\de {P_t}^A}{\de q^a} + {P_t}^B \frac{\de {P_q}^A_a}{\de z^B} - {P_q}^B_a \frac{\de {P_t}^A}{\de z^B} \,\right) + \nonumber \\ 
{X_p}_a &\left(\, \frac{\de {P_p}^{Aa}}{\de t} - \frac{\de {P_t}^A}{\de p_a} + {P_t}^B \frac{\de {P_p}^{Aa}}{\de z^B} - {P_p}^{Ba} \frac{\de {P_t}^A}{\de z^B} \,\right) + \nonumber \\
{X_z}^B & \frac{\de {P_t}^A}{\de z^B}\,\Biggr] \,, \\
F_a = -{X_p}_a
+ \mu_A \Biggl[ X_t\,&\left(\, \frac{\de {P_t}^A}{\de q^a} - \frac{\de {P_q}^A_a}{\de t} + {P_q}^B_a \frac{\de {P_t}^A}{\de z^B} - {P_t}^B \frac{\de {P_q}^A_a}{\de z^B} \,\right) + \nonumber \\
{X_q}^b\,&\left(\,\frac{\de {P_q}^A_b}{\de q^a} - \frac{\de {P_q}^A_a}{\de q^b} + {P_q}^B_a \frac{\de {P_q}^A_b}{\de z^B} - {P_q}^B_b \frac{\de {P_q}^A_a}{\de z^B} \,\right)
+ \nonumber \\
{X_p}_b\,&\left(\,\frac{\de {P_p}^{Ab}}{\de q^a} - \frac{\de {P_q}^A_a}{\de p_b} + {P_q}^B_a \frac{\de {P_p}^{Ab}}{\de z^B} - {P_p}^{Bb} \frac{\de {P_q}^A_a}{\de z^B}\,\right) - \nonumber \\
{X_z}^B &\frac{\de {P_q}^A_a}{\de z^B}
\Biggr], \\
G^a = {X_q}^a
+ \mu_A \Biggl[\, X_t\,& \left(\, \frac{\de {P_t}^A}{\de p_a} - \frac{\de {P_p}^{Aa}}{\de t} + {P_p}^{Ba}\frac{\de {P_t}^A}{\de z^B} - {P_t}^B \frac{\de {P_p}^{Aa}}{\de z^B} \,\right) + \nonumber \\
{X_q}^b\,&\left(\,\frac{\de {P_q}^A_b}{\de p_a} - \frac{\de {P_p}^{Aa}}{\de q^b} + {P_p}^{Ba} \frac{\de {P_q}^A_b}{\de z^B} - {P_q}^B_b \frac{\de {P_p}^{Aa}}{\de z^B} \,\right) + \nonumber \\
{X_p}_b\,&\left(\,\frac{\de {P_p}^{Ab}}{\de p_a} - \frac{\de {P_p}^{Aa}}{\de p_b} + {P_p}^{Ba} \frac{\de {P_p}^{Ab}}{\de z^B} - {P_p}^{Bb} \frac{\de {P_p}^{Aa}}{\de z^B} \,\right) - \nonumber \\
{X_z}^B & \frac{\de {P_p}^{Aa}}{\de z^B}
\Biggr], \\
H_A = {X_z}_A
+ \mu_B  \Biggl[ {X_t} \,& \frac{\de {P_t}^B}{\de z^A} + {X_q}^b\, \frac{\de {P_q}^B_b}{\de z^A} + {X_p}_b\,\frac{\de {P_p}^{Bb}}{\de z^A}
\Biggr], \\
K^A = -{X_\mu}^A \,. \hspace{1,1cm} &
\end{align}
With this in mind, one can easily prove that 
\be
i_X \widetilde{\omega} \,=\, 0 \,\implies\, X \,=\, X_t \, H_t \,,
\ee
if and only if, either 
\be
\mu_A \,= \, 0 \,,
\ee
or
\begin{align}
\left(\, \frac{\de {P_q}^A_a}{\de t} - \frac{\de {P_t}^A}{\de q^a} + {P_t}^B \frac{\de {P_q}^A_a}{\de z^B} - {P_q}^B_a \frac{\de {P_t}^A}{\de z^B} \,\right) \,&=\, 0 \\
\left(\, \frac{\de {P_p}^{Aa}}{\de t} - \frac{\de {P_t}^A}{\de p_a} + {P_t}^B \frac{\de {P_p}^{Aa}}{\de z^B} - {P_p}^{Ba} \frac{\de {P_t}^A}{\de z^B} \,\right) \,&=\, 0 \\
\left(\, \frac{\de {P_t}^A}{\de q^a} - \frac{\de {P_q}^A_a}{\de t} + {P_q}^B_a \frac{\de {P_t}^A}{\de z^B} - {P_t}^B \frac{\de {P_q}^A_a}{\de z^B} \,\right) \,&=\,0 \\
\left(\,\frac{\de {P_q}^A_b}{\de q^a} - \frac{\de {P_q}^A_a}{\de q^b} + {P_q}^B_a \frac{\de {P_q}^A_b}{\de z^B} - {P_q}^B_b \frac{\de {P_q}^A_a}{\de z^B} \,\right) \,&=\, 0 \\
\left(\,\frac{\de {P_p}^{Ab}}{\de q^a} - \frac{\de {P_q}^A_a}{\de p_b} + {P_q}^B_a \frac{\de {P_p}^{Ab}}{\de z^B} - {P_p}^{Bb} \frac{\de {P_q}^A_a}{\de z^B}\,\right) \,&=\,0 \\
\left(\, \frac{\de {P_t}^A}{\de p_a} - \frac{\de {P_p}^{Aa}}{\de t} + {P_p}^{Ba}\frac{\de {P_t}^A}{\de z^B} - {P_t}^B \frac{\de {P_p}^{Aa}}{\de z^B} \,\right)\,&=\, 0 \\
\left(\,\frac{\de {P_q}^A_b}{\de p_a} - \frac{\de {P_p}^{Aa}}{\de q^b} + {P_p}^{Ba} \frac{\de {P_q}^A_b}{\de z^B} - {P_q}^B_b \frac{\de {P_p}^{Aa}}{\de z^B} \,\right) \,&=\,0 \\
\left(\,\frac{\de {P_p}^{Ab}}{\de p_a} - \frac{\de {P_p}^{Aa}}{\de p_b} + {P_p}^{Ba} \frac{\de {P_p}^{Ab}}{\de z^B} - {P_p}^{Bb} \frac{\de {P_p}^{Aa}}{\de z^B} \,\right) \,&=\, 0 \\
\frac{\de {P_t}^B}{\de z^A} \,&=\, 0 \,, \\
\frac{\de {P_q}^A_b}{\de z^B} \,&=\,0 \,, \\
\frac{\de {P_p}^{Ab}}{\de z^B} \,&=\,0 \,.
\end{align}
for all $a,\,b \,=\, 1,...,r$ and for all $A \,=\, 1,...,l$.
This proves that $\mathrm{ker}\,\tau^* \eta \cap \mathrm{ker} \, \widetilde{\omega} \,=\, \{0\}$ in a tubular neighborhood of the zero section of $\tau$, or, if all the components of $\mathcal{N}_P$ vanish, on the whole ${\Lambda^1}^\perp_R(M)$.

\noindent Thus, the manifold $\widetilde{M}$ reads a tubular neighborhood of the zero section of the bundle ${\Lambda^1}^\perp_R$ (or the whole bundle ${\Lambda^1}^\perp_R$ if the almost product structure chosen has vanishing Nijenhuis tensor), the embedding map $\mathfrak{i}$ reads the zero section of $\tau$, and the cosymplectic structure is given by the pair $(\widetilde{\eta},\, \widetilde{\omega})$ where
\be
\widetilde{\eta} \,=\, \tau^* \eta \,,
\ee
and
\be
\widetilde{\omega} \,=\, \tau^* \omega + \dd \vartheta^P \,.
\ee

\noindent To prove that $M$ is a coisotropic submanifold of $\widetilde{M}$, we shall prove that $\T_m M^{\perp_{(\widetilde{\eta}, \widetilde{\omega})}} \subset \T_m M$ for all $m \in \widetilde{M}$.
Elements of $\T_m M^{\perp_{(\widetilde{\eta}, \widetilde{\omega})}}$ are tangent vectors to $m \in \widetilde{M}$, belonging to $\operatorname{ker}\, \eta_m$
\be
X \,=\, {X_q}^a {H_q}_a\bigr|_{m} + {X_p}_a {H_p}^a\bigr|_{m} + {X_z}^A \frac{\de}{\de z^A}\bigr|_{m} + {X_\mu}^A \frac{\de}{\de \mu^A}\bigr|_{m} \,,
\ee
(now ${X_q}^a,\, {X_p}_a,\, {X_z}^A,$ and ${X_\mu}_A$ are real numbers) such that 
\be \label{Eq: coisotropicity pre-cosymplectic}
\widetilde{\omega}_m(X,\, W) \,=\, 0 \,,\;\;\; \forall \,\, W \in \T_m M \cap \operatorname{ker}\,\widetilde{\eta}_m \,, 
\ee
namely for any tangent vector $W$ to $M$ in $m$ of the type
\be
W \,=\, {W_q}^a {H_q}_a\bigr|_{m} + {W_p}_a {H_p}^a\bigr|_{m} + {W_z}^A \frac{\de}{\de z^A}\bigr|_{m} \,.
\ee
A straightforward computation shows that
\be
\widetilde{\omega}_m(X,\, W) \,=\, {X_q}^a {W_p}_a - {W_q}^a {X_p}_a + {X_\mu}_A {W_z}^A \,, 
\ee
and, thus \eqref{Eq: coisotropicity pre-contact} implies 
\be
{X_q}^a \,=\, {X_p}_a \,=\, {X_\mu}_A \,=\, 0 \,,\;\;\; \forall\,\, a\,=\, 1,...,r;\,A\,=\,1,...,l \,.
\ee
Consequently $X \in \T_m M$, which proves that $\T_m M^{\perp_{(\widetilde{\eta}, \widetilde{\omega})}} \subset \T_m M$.
\end{proof}

\begin{remark} \label{Rem: family embeddings Reeb pre-cosy}
On the pre-cosymplectic manifold $(M, \eta, \omega)$, the Reeb vector field, which is defined by the conditions
\begin{eqnarray} 
i_R \omega &\,=\, 0 \,, \label{Eq: Reeb pre-cosy 1}\\
i_R \eta &\,=\, 1 \,. \label{Eq: Reeb pre-cosy 2}
\end{eqnarray}
is not uniquely determined. 
Indeed, there exists a whole family of vector fields on $M$ satisfying \eqref{Eq: Reeb pre-cosy 1}-\eqref{Eq: Reeb pre-cosy 2}:
\be \label{Eq: family Reeb pre-cosy}
R = \frac{\de}{\de t} + R^A \frac{\de}{\de z^A} \,,
\ee
parametrized by $l = \mathrm{dim}(\mathcal{V})$ arbitrary functions $R^A$ on $M$.
With this in mind, \cref{Thm: cosymplectic tickening} can be refined in such a way that the Reeb vector field $\widetilde{R}$ on the cosymplectic manifold $(\widetilde{M}, \widetilde{\eta}, \widetilde{\omega})$ is an extension of one of the Reeb fields $R$ of the family defined above. 
In other words, for any given field $R$ from the family above, we can construct an embedding $\mathfrak{i}$ such that $\widetilde{R}$ is $\mathfrak{i}$-related to $R$, i.e., $\widetilde{R}|_{\mathfrak{i}(M)} = R$.

\noindent To show this, one first sees that the Reeb vector field on $\widetilde{M}$ reads 
$$
\widetilde{R} \,=\, \frac{\partial}{\partial t} + P_t^A \frac{\partial}{\partial z^A} \,.
$$
Note that this is actually tangent to $M$, so it makes sense to ask whether it is an extension of any of the vector fields \eqref{Eq: family Reeb pre-cosy}.
For $\widetilde{R}|_M$ to coincide with a given $R$, it is sufficient to impose
$$
P_t^A = R^A \,.
$$
This condition has a clear geometric interpretation: it is equivalent to choosing an almost product structure $P$ such that the pre-selected Reeb field $R$ belongs to the horizontal distribution $\mathcal{H} = \ker P$. 
A direct computation shows:
\be
    P(R) \,=\, (R^A - P_t^A) \otimes \frac{\de}{\de z^A} \,.
\ee
The condition $P(R) = 0$ is therefore satisfied if and only if $P_t^A = R^A$. 
Since one can always construct a horizontal distribution $\mathcal{H}$ (and, thus, an almost product structure $P$) that contains a given vector field transverse to the vertical distribution $\mathcal{V}$, we can conclude that for every Reeb field $R$ on $M$, there exists a coisotropic embedding that extends it.
\end{remark}

\subsubsection{Uniqueness}

As \cref{Rem: family embeddings Reeb pre-cosy} clearly shows, differently from the pre-symplectic case, coisotropic embeddings are not unique in this context, as we get a continuous family of possible coisotropic embeddings, parametrized by each possible choice of Reeb vector field. 

\noindent What we can guarantee, however, is that the topology of the embeddings must coincide, as there is a vector bundle isomorphism of $\T \widetilde{M} |_M$ with $\T \mathcal{V}^\ast |_M$, where $\mathfrak{i} \colon M \hookrightarrow \widetilde{M}$ is an arbitrary coisotropic embedding, and $\mathcal{V}$ is the characteristic distribution on $M$.

\begin{theorem} Let $(M, \omega, \eta)$ be a pre-cosymplectic manifold, and $ \mathfrak{i}\colon (M, \omega, \eta) \longrightarrow (\widetilde{M}, \widetilde{\omega}, \widetilde{\eta})$ be a coisotropic embedding. Then, $(M, \widetilde{M})$ and $(M, \mathcal{V}^\ast)$ are neighborhood diffeomorphic (not cosymplectomorphic).
\end{theorem}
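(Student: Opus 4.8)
The plan is to reproduce the \emph{algebraic} half of the pre-symplectic uniqueness argument (\cref{Thm: uniqueness pre-symplectic}) and then, in place of Moser's trick, to invoke the uniqueness of tubular neighbourhoods; the point is that in the cosymplectic world the two structures cannot in general be matched (cf.\ \cref{Rem: family embeddings Reeb pre-cosy}), so only a diffeomorphism, and not a cosymplectomorphism, can be expected. Everything reduces to proving that the normal bundle $\nu(M,\widetilde M) := \T\widetilde M|_M / \mathfrak i_\ast \T M$ is isomorphic, as a vector bundle over $M$, to $\mathcal V^\ast$. Since $M$ sits in the total space of $\mathcal V^\ast$ as the zero section, with normal bundle canonically equal to $\mathcal V^\ast$ itself, the tubular neighbourhood theorem will then yield a diffeomorphism between a neighbourhood of $M$ in $\widetilde M$ and one in $\mathcal V^\ast$ that restricts to the identity on $M$.

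First I would note that coisotropicity forces $\T_m M^{\perp_{(\widetilde\eta,\widetilde\omega)}} = \mathfrak i_\ast\mathcal V_m$: for any embedding with $\mathfrak i^\ast(\widetilde\eta,\widetilde\omega) = (\eta,\omega)$, the cosymplectic orthogonal of $\T M$ is contained in $\T M$ precisely when it coincides with the characteristic distribution of $(\eta,\omega)$, which is $\mathcal V$ by definition. Writing $l := \operatorname{rank}\mathcal V$ (even, by the pre-cosymplectic Darboux normal form), a Grassmann count gives $\dim\T_m M^{\perp_{(\widetilde\eta,\widetilde\omega)}} = \dim\widetilde M - \dim M - \epsilon$ with $\epsilon\in\{0,1\}$, where $\epsilon=0$ iff $\widetilde R_m\in\T_m M$; combined with $\dim\T M^\perp = l$ and the fact that $\dim\widetilde M$ and $\dim M$ are odd while $l$ is even, this forces $\dim\widetilde M = \dim M + l$ and $\widetilde R|_M$ tangent to $M$. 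Next I would pick a complement $\mathcal H_0 \subseteq \ker\eta$ to $\mathcal V$ and a vector field $Z$ with $\eta(Z)=1$, so that $\ker\eta = \mathcal H_0\oplus\mathcal V$ and $\T M = \mathcal H_0\oplus\mathcal V\oplus\langle Z\rangle$. On the symplectic vector bundle $(\ker\widetilde\eta|_M,\widetilde\omega)$, $\mathfrak i_\ast\mathcal H_0$ is symplectic (as $\omega|_{\mathcal H_0}$ is nondegenerate), so $\ker\widetilde\eta|_M = \mathfrak i_\ast\mathcal H_0\oplus(\mathfrak i_\ast\mathcal H_0)^{\perp_{\widetilde\omega}}$ with the complement of rank $2l$; the subbundle $\mathfrak i_\ast\mathcal V$ lies in that complement (because $\mathcal V\subseteq\ker\omega$), is isotropic there, and has rank $l$, hence is Lagrangian in it. A Lagrangian subbundle of a symplectic vector bundle admits a Lagrangian complement (partition of unity), identified with its dual via $\widetilde\omega$; therefore $(\mathfrak i_\ast\mathcal H_0)^{\perp_{\widetilde\omega}} \cong \mathfrak i_\ast\mathcal V\oplus\mathcal V^\ast$, so $\ker\widetilde\eta|_M \cong \mathfrak i_\ast(\ker\eta)\oplus\mathcal V^\ast$. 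Adjoining the Reeb line (using $\widetilde\eta(\widetilde R)=1$) gives $\T\widetilde M|_M \cong \mathfrak i_\ast(\ker\eta)\oplus\mathcal V^\ast\oplus\langle\widetilde R\rangle$; since $\widetilde\eta(\mathfrak i_\ast Z)=1$ the line $\langle\mathfrak i_\ast Z\rangle$ is transverse to $\ker\widetilde\eta|_M$, and quotienting $\T\widetilde M|_M$ by $\mathfrak i_\ast\T M = \mathfrak i_\ast(\ker\eta)\oplus\langle\mathfrak i_\ast Z\rangle$ leaves exactly the $\mathcal V^\ast$ summand, i.e.\ $\nu(M,\widetilde M)\cong\mathcal V^\ast$.

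With the normal bundles identified, the conclusion is the tubular neighbourhood theorem applied twice: a neighbourhood of $M$ in $\widetilde M$ is diffeomorphic, via a diffeomorphism fixing $M$ pointwise, to a neighbourhood of the zero section in $\nu(M,\widetilde M)\cong\mathcal V^\ast$, which is a neighbourhood of $M$ in $\mathcal V^\ast$; composing gives the asserted neighbourhood diffeomorphism restricting to the identity on $M$. That it cannot in general be promoted to a cosymplectomorphism is exactly the phenomenon of \cref{Rem: family embeddings Reeb pre-cosy}: different choices of Reeb field on $M$ give coisotropic embeddings whose Reeb fields need not be proportional, and such embeddings fail to be neighbourhood equivalent in the cosymplectic sense even though the present theorem makes them diffeomorphic; this is also why Moser's trick cannot simply be rerun (the relevant closed $2$-forms are everywhere degenerate and the Reeb data obstruct).

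The step I expect to be the main obstacle is the algebraic one: one must verify that the characteristic distribution, the $\widetilde\omega$-orthogonal complements, and the Lagrangian subbundle $\mathfrak i_\ast\mathcal V$ all assemble into smooth constant-rank subbundles — which is where the standing constant-rank hypotheses and the Darboux normal form are used — and the parity argument fixing $\dim\widetilde M = \dim M + l$ is indispensable, since otherwise the normal bundle could a priori have rank $l+1$ and the statement would fail. Once $\nu(M,\widetilde M)\cong\mathcal V^\ast$ is established, the geometric part is the standard tubular neighbourhood theorem and requires no Moser-type argument.
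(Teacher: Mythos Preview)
Your proof is correct and follows essentially the same approach as the paper: decompose $\T M = \mathcal V \oplus \mathcal H_0 \oplus \langle Z\rangle$, show $\mathcal H_0$ is symplectic inside $\ker\widetilde\eta|_M$, observe that $\mathcal V$ is Lagrangian in its orthogonal complement, and conclude $\T\widetilde M|_M \cong \T M \oplus \mathcal V^\ast$, after which the tubular neighbourhood theorem finishes. The paper's proof is terser and simply writes $\T\widetilde M|_M = \mathcal H \oplus \mathcal H^\perp \oplus \langle\widetilde R\rangle \cong \mathcal H \oplus \mathcal V \oplus \mathcal V^\ast \oplus \langle R\rangle$ without your dimension/parity argument; your extra step showing $\dim\widetilde M = \dim M + l$ and $\widetilde R|_M$ tangent to $M$ via the parity of $l$ is a genuine addition that the paper leaves implicit.
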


\begin{proof} Decompose $\T M$ as $\T M = \mathcal{V} \oplus \langle R \rangle \oplus \mathcal{H}$, where $\mathcal{V}$ is the characteristic distribution, $R$ is a choice of Reeb vector field, and $\mathcal{H}$ is an arbitrary complement to $\mathcal{V}$ in $\ker \eta$. Then, an argument similar to the one followed in \cref{Thm: uniqueness pre-symplectic} shows that $\mathcal{H}$ is a symplectic bundle in $\T \widetilde{M} |_M$, and so is $\mathcal{H}^\perp$, which satisfies 
\[
\T \widetilde{M} = \mathcal{H} \oplus \mathcal{H}^\perp \oplus \langle\widetilde R \rangle\,,
\]
where $\widetilde R$ denotes the Reeb vector field on $\widetilde M$. Since $\mathcal{V} \subseteq \mathcal{H}^\perp$ is Lagrangian, we have 
\[
\T \widetilde{M} |_M  = \mathcal{H} \oplus \mathcal{H}^\perp \oplus \langle \widetilde R\rangle \cong \mathcal{H} \oplus \mathcal{V} \oplus \mathcal{V}^\ast \oplus \langle R\rangle \cong \T \mathcal{V}^\ast |_{M}\,.
\]
\end{proof}

\begin{remark} It is easy to show that under the aditional condition that two coisotropic embeddings induce the same Reeb vector field on $M$, the the previous diffeomorphism preserves the cosymplectic structure on $M$.
\end{remark}

\noindent A natural question to ask is whether we can guarantee the uniqueness of the embedding when a possible Reeb vector field is fixed on $M$. This is not the case with full generality, as the following example shows:

\begin{example}
\label{ex:cosymplectic_nonuniqueness}
Let $M = \mathbb{S}^1 \times \mathbb{S}^1$ be the two-dimensional torus, together with the pre-symplectic structure $\omega = 0$, $\eta = \dd \theta_1$, where $\dd \theta_1$ denotes the canonical volume form on the first copy of $\mathbb{S}^1$. Let $\widetilde{M} := \mathbb{R} \times \mathbb{S}^1 \times \mathbb{S}^1$ together with the following two possible cosymplectic structures:
\[
(\widetilde{\omega}_1 = \dd t \wedge \dd \theta_2, \eta_1 = \dd \theta_1)\,, \qquad (\widetilde{\omega}_2 = \dd t \wedge \dd \theta_2 + t \dd \theta_1 \wedge \dd t\,, \eta = \dd \theta_1).
\]
It is clear that $M$, identified as the zero section of $\widetilde{M} = \mathbb{R} \times M$, is coisotropic for both structures. Let us check that these are not neighborhood isomorphic. Indeed, their respective Reeb vector fields are written as follows:
\[
R_1 = \frac{\partial}{\partial\theta^1}\,,\qquad R_2 =  \frac{\partial}{\partial\theta^1} + t  \frac{\partial}{\partial\theta^2}\,,
\]
and the orbits that these two vector fields define lie within the tori $\{a\} \times \mathbb{S}^1 \times \mathbb{S}^1$, where $a \in \mathbb{R}$ is arbitrary. Clearly, these two vector fields coincide on $M$ (where $t = 0$). However, for each neighborhood of $M$ in $\widetilde{M}$, there is a smaller neighborhood such that $R_1$ is only comprised of periodic orbits. However, $R_2$ does not satisfy this same property, showing that they are not neighborhood equivalent.
\end{example}

\noindent However, if we have a coisotropic embedding of a pre-cosymplectic manifold $(M, \omega, \eta)$ into one manifold $\widetilde{M}$ with two different cosymplectic structures $(\widetilde{\omega}_1, \widetilde{\eta}_1)$, $(\widetilde{\omega}_2, \widetilde{\eta}_2)$, such that they coincide on $M$ and their corresponding Reeb vector fields are proportional, they are neighborhood equivalent:

\begin{theorem} Let $(M, \omega, \eta)$ be a pre-cosymplectic manifold and $\mathfrak{i}\colon M \hookrightarrow \widetilde{M}$ be a coisotropic embedding for two different cosymplectic structures on $\widetilde{M}$, $(\widetilde{\omega}_1, \widetilde{\eta}_1)$ and $(\widetilde{\omega}_2, \widetilde{\eta}_2)$. Suppose that these two structures satisfy the following hypotheses:
\begin{enumerate}
    \item They coincide on $M$;
    \item Their Reeb vector fields are proportional.
\end{enumerate}
Then, these two embeddings are neighborhood equivalent.
\end{theorem}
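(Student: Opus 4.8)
The plan is to run a two-step Moser argument, extending the strategy of \cref{Thm: uniqueness pre-symplectic} to the cosymplectic setting. Since the two structures live on the same $\widetilde M$ and both restrict on $M$ to the same pre-cosymplectic pair $(\eta,\omega)$, the closed forms $\widetilde\eta_2-\widetilde\eta_1$ and $\widetilde\omega_2-\widetilde\omega_1$ vanish on $M$; by the Relative Poincaré Lemma (\cref{thm:Relative_Poincaré_Lemma}) there are, on a tubular neighbourhood $U$ of $M$, a function $g$ and a $1$-form $\beta$, both vanishing on $M$, with $\widetilde\eta_2-\widetilde\eta_1=\dd g$ and $\widetilde\omega_2-\widetilde\omega_1=\dd\beta$. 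Hypothesis~1 forces $\dd g$ to vanish on $\T\widetilde M|_M$, so $g$ vanishes to second order along $M$. Throughout, I use that the Reeb field $\widetilde R_1$ of $(\widetilde\eta_1,\widetilde\omega_1)$ is nowhere zero and tangent to $M$ — which follows from the identification $\T\widetilde M|_M\cong\T\mathcal{V}^\ast|_M$ of the previous theorem, under which $\widetilde R_1$ corresponds to a Reeb field of $(M,\eta,\omega)$ — and similarly for $\widetilde R_2$.

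\emph{Step one: matching the $1$-forms.} Set $\widetilde\eta_t:=(1-t)\widetilde\eta_1+t\widetilde\eta_2$, which is closed, and note that $i_{\widetilde R_1}\widetilde\eta_t=(1-t)+t\,i_{\widetilde R_1}\widetilde\eta_2$ equals $1$ on $M$, hence is positive on $U$ after shrinking. Then $Y_t:=-\,g\,\big(i_{\widetilde R_1}\widetilde\eta_t\big)^{-1}\,\widetilde R_1$ satisfies $i_{Y_t}\widetilde\eta_t=-g$, vanishes on $M$, and is everywhere proportional to $\widetilde R_1$; by \cref{thm:Moser_trick} its time-one flow $\Phi$ satisfies $\Phi^\ast\widetilde\eta_2=\widetilde\eta_1$, is the identity on $M$, and — since $\dd g$ vanishes on $\T\widetilde M|_M$ — has $\dd\Phi|_{\T\widetilde M|_M}=\mathrm{id}$. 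Being the flow of a field proportional to $\widetilde R_1$, $\Phi$ also preserves the line field $\langle\widetilde R_1\rangle$. Replacing $(\widetilde\eta_2,\widetilde\omega_2)$ by $\Phi^\ast(\widetilde\eta_2,\widetilde\omega_2)=(\widetilde\eta_1,\widetilde\omega_2')$ reduces us to two cosymplectic structures with the \emph{same} $1$-form $\widetilde\eta_1$, agreeing on $\T\widetilde M|_M$; here hypothesis~2 enters: the Reeb field of $(\widetilde\eta_1,\widetilde\omega_2')$ is $\Phi^{-1}_\ast\widetilde R_2$, still proportional to $\widetilde R_1$ (as $\widetilde R_2$ is and $\Phi$ preserves $\langle\widetilde R_1\rangle$), but having $1$-form $\widetilde\eta_1$ it must equal $\widetilde R_1$. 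In particular $i_{\widetilde R_1}\widetilde\omega_1=i_{\widetilde R_1}\widetilde\omega_2'=0$, hence $i_{\widetilde R_1}(\widetilde\omega_2'-\widetilde\omega_1)=0$.

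\emph{Step two: matching the $2$-forms.} Write $\widetilde\omega_2'-\widetilde\omega_1=\dd\beta$ with $\beta$ vanishing on $M$, and put $\widetilde\omega_t:=(1-t)\widetilde\omega_1+t\widetilde\omega_2'$; by hypothesis~1 and step one this agrees with $\widetilde\omega_1$ on $\T\widetilde M|_M$, so $(\widetilde\eta_1,\widetilde\omega_t)$ is cosymplectic on a uniform neighbourhood of $M$ (open condition, compactness of $[0,1]$), and since $i_{\widetilde R_1}\widetilde\omega_t=0$ the Reeb field is $\widetilde R_1$ for every $t$. We seek $X_t\in\ker\widetilde\eta_1$ with $i_{X_t}\widetilde\omega_t=-\beta$, so that its flow fixes $\widetilde\eta_1$ and carries $\widetilde\omega_1$ to $\widetilde\omega_2'$; since $\widetilde\omega_t$ restricts nondegenerately to $\ker\widetilde\eta_1$ and has kernel $\langle\widetilde R_1\rangle$, such an $X_t$ exists (uniquely, in $\ker\widetilde\eta_1$) precisely when $i_{\widetilde R_1}\beta=0$. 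One must therefore choose the primitive $\beta$ so that $i_{\widetilde R_1}\beta=0$; this is possible because $i_{\widetilde R_1}(\widetilde\omega_2'-\widetilde\omega_1)=0$ and $\widetilde R_1$ is nowhere zero and tangent to $M$: run the Relative Poincaré Lemma with a tubular neighbourhood adapted to $\widetilde R_1$, i.e. whose fibrewise homotheties $\psi_\lambda$ commute with the flow of $\widetilde R_1$ (equivalently, whose Liouville field $\Delta$ satisfies $[\Delta,\widetilde R_1]=0$); then $i_{\widetilde R_1}$ commutes with $\psi_\lambda^\ast$ and $i_{\widetilde R_1}i_{\Delta_\lambda}(\widetilde\omega_2'-\widetilde\omega_1)=-i_{\Delta_\lambda}i_{\widetilde R_1}(\widetilde\omega_2'-\widetilde\omega_1)=0$, so $\beta=\int_0^1\psi_\lambda^\ast i_{\Delta_\lambda}(\widetilde\omega_2'-\widetilde\omega_1)\,\dd\lambda$ is annihilated by $\widetilde R_1$ while still vanishing on $M$. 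With this $\beta$, the vector field $X_t$ vanishes on $M$; shrinking $U$ so that its flow exists up to time $1$, \cref{thm:Moser_trick} gives a diffeomorphism $\psi$ with $\psi^\ast\widetilde\omega_2'=\widetilde\omega_1$, $\psi^\ast\widetilde\eta_1=\widetilde\eta_1$, $\psi|_M=\mathrm{id}$; composing with $\Phi$ yields the sought neighbourhood equivalence.

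The main obstacle is the $\widetilde R_1$-adapted Relative Poincaré Lemma used in step two — constructing a tubular neighbourhood of $M$ whose homotheties commute with the nowhere-vanishing, $M$-tangent Reeb field $\widetilde R_1$ — which is exactly where the geometry of $\widetilde R_1$ near $M$ and the identity $i_{\widetilde R_1}(\widetilde\omega_2'-\widetilde\omega_1)=0$ (itself a consequence of hypothesis~2, via step one) are both indispensable; the absence of any such compatibility is what makes possible the non-equivalent embeddings of \cref{ex:cosymplectic_nonuniqueness}. The remaining points — that $\Phi$ preserves $\langle\widetilde R_1\rangle$ and has $\dd\Phi|_{\T\widetilde M|_M}=\mathrm{id}$, that the interpolated structures stay cosymplectic on a uniform neighbourhood, the solvability criterion for the relevant musical map, and the completeness of the Moser flows near $M$ — are routine verifications.
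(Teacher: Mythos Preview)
Your proof is correct and takes a genuinely different route from the paper's. The paper runs a \emph{single} Moser deformation, interpolating both forms at once via $\widetilde\eta_t=(1-t)\widetilde\eta_1+t\widetilde\eta_2$ and $\widetilde\omega_t=(1-t)\widetilde\omega_1+t\widetilde\omega_2$, and solves simultaneously for $X_t$ with $i_{X_t}\widetilde\omega_t=-\theta$ and $i_{X_t}\widetilde\eta_t=-f$ (using the Relative Poincar\'e primitives $\dd\theta=\widetilde\omega_2-\widetilde\omega_1$, $\dd f=\widetilde\eta_2-\widetilde\eta_1$); the proportionality of the Reeb fields is invoked only to observe that $\widetilde R_t\propto\widetilde R_1$ and hence that the obstruction $i_{\widetilde R_t}\theta$ to solving the first equation inside $\ker\widetilde\eta_t$ vanishes. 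Your two-step scheme first flows along $\widetilde R_1$ to match the $1$-forms, then runs a symplectic-style Moser on $\ker\widetilde\eta_1$. This costs you extra bookkeeping (preservation of $\langle\widetilde R_1\rangle$, $\dd\Phi|_{\T\widetilde M|_M}=\mathrm{id}$ so that the interpolated $\widetilde\omega_t$ stays cosymplectic) but buys a cleaner isolation of where hypothesis~2 is used: after step one the two Reeb fields are actually \emph{equal}, giving $i_{\widetilde R_1}(\widetilde\omega_2'-\widetilde\omega_1)=0$ directly. Both approaches ultimately hinge on the same technical crux --- producing a Relative Poincar\'e primitive annihilated by $\widetilde R_1$ --- and you are more explicit than the paper about what this requires (a tubular neighbourhood whose Euler field commutes with $\widetilde R_1$), which is exactly the point at which the proportionality hypothesis does real work and whose failure underlies \cref{ex:cosymplectic_nonuniqueness}.
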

\begin{proof} We just apply Moser's trick and look for a time-dependent vector field $X_t$ satisfying 
\[
\Lie_{X_t} \widetilde{\omega}_t = \widetilde{\omega}_2 - \widetilde{\omega}_1\,,\qquad \Lie_{X_t} \widetilde{\eta}_t = \widetilde{\eta}_2 - \widetilde{\eta}_1\,,
\]
where $\widetilde \omega_t = t \widetilde \omega_2 + (1-t) \widetilde \omega_1$, and $\widetilde \eta_t = t \widetilde \eta_2 + (1-t) \widetilde \eta_1$. Applying Poincar\'e Lemma, let us denote by $\theta$ and $f$ a $1$-form and a function vanishing on $M$, respectively, satisfying 
\[
\widetilde{\omega}_2 - \widetilde{\omega}_1  = \dd \theta\,, \qquad \widetilde \eta_1 - \widetilde \eta_1 = \dd f\,.
\]
Then, it is enough to look for a time-dependent vector field satisfying 
\[
i_{X_t} \widetilde \omega_t = \theta\,, \qquad i_{X_t} \theta_t = f\,.
\]
For this, it is enough to show that we may choose $\theta$ such that $i_{\widetilde R_t} \theta = 0$, for every $t$, where $\widetilde R_t$ denotes the Reeb vector field associated to the cosymplectic structure $(\widetilde \omega_t, \widetilde \eta_t)$. Indeed, using the Relative Poincaré Lemma (\cref{thm:Relative_Poincaré_Lemma}), $\theta$ can be explicitely defined as 
\[
\theta = \int_0^1 t^2i_{\Delta_t} (\widetilde \omega_2 - \widetilde \omega_1)\dd t\,,
\]
where $\Delta$ is a Liouville vector field for a tubular neighborhood. From this construction, and using that $\widetilde R_t$ is propostional to $\widetilde R_1$ and $\widetilde R_2$, it follows that $i_{\widetilde R_t} \theta = 0$, so that we may apply Moser's trick, and the theorem follows.
\end{proof}

\section{Pre-contact manifolds}
\label{Sec: contact manifolds}

\begin{definition}[\textsc{Pre-contact manifold}]
A \textbf{pre-contact manifold} is a pair $(M, \eta)$, where $M$ is a smooth $(2d+1)$-dimensional manifold, and $\eta \in \Omega^1(M)$ is a differential 1-form of constant rank.
\end{definition}

\begin{remark}
We always assume that $\eta$ is nowhere vanishing and that the distribution 
\[
\mathcal{V} := \ker \eta \subset \mathbf{T}M
\]
has constant rank $2d$, and defines a smooth subbundle of the tangent bundle.
We also assume that the restriction of $\dd \eta$ to $\mathcal{V}$ has constant rank.
\end{remark}

\begin{definition}[\textsc{Characteristic distribution}]
Let $(M, \eta)$ be a pre-contact manifold.
We define the \textbf{characteristic distribution} of $\eta$ as
\[
\mathcal{V} \,:=\, \ker \eta \cap \ker(\dd \eta|_{\ker \eta}) \,.
\]
It is the maximal integrable distribution contained in $\ker \eta$ on which $\dd \eta$ vanishes.
\end{definition}

\begin{definition}[\textsc{Reeb vector field}]
Let $(M, \eta)$ be a pre-contact manifold. 
A \textbf{Reeb vector field} is a vector field $R \in \mathfrak{X}(M)$ satisfying the conditions
\be
i_R \dd\eta = 0 \,, \qquad i_R \eta = 1 \,.
\ee
\end{definition}

\begin{remark}[\textsc{Contact manifold}]
When $\eta \in \Omega^1(M)$ is such that
\[
\eta \wedge (\dd \eta)^d \,\neq\, 0
\]
everywhere on $M$, the 1-form defines a \textbf{contact structure}, and the pair $(M, \eta)$ is called a \textbf{contact manifold}.
In this case, $\ker \eta$ has rank $2d$, and the restriction $\dd \eta|_{\ker \eta}$ defines a symplectic structure.
\end{remark}

\begin{theorem}[\textsc{Darboux theorem for contact manifolds}]
Let $(M, \eta)$ be a contact manifold of dimension $2d+1$.
Then, for every point $m \in M$, there exists a coordinate chart $(U, \varphi)$ centered at $m$, with coordinates $(t,\, q^1, \dots, q^d,\, p_1, \dots, p_d)$, such that
\[
\eta|_U = \dd t - p_a \dd q^a \,.
\]
Such coordinates are called \textbf{Darboux coordinates} for the contact form $\eta$.
\begin{proof}
See \cite{godbillon1969geometrie}.
\end{proof}
\end{theorem}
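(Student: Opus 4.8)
The plan is to reduce the statement to the Darboux theorem for pre-symplectic manifolds (\cref{Thm: Darboux pre-symplectic}) together with the ordinary Poincar\'e lemma. First I would observe that $\dd\eta$ is a closed $2$-form of constant rank $2d$ on the $(2d+1)$-dimensional manifold $M$, so that $(M,\dd\eta)$ is a pre-symplectic manifold in the sense of \cref{Def: Pre-symplectic manifold} whose characteristic distribution $\ker\dd\eta$ has rank $1$. Moreover, the Reeb vector field $R$ satisfies $i_R\dd\eta=0$, so $\ker\dd\eta=\langle R\rangle$; since $i_R\eta=1$, the form $\eta$ does not vanish on this line.

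Next I would apply \cref{Thm: Darboux pre-symplectic} to $(M,\dd\eta)$: around $m$ there are coordinates $(q^1,\dots,q^d,p_1,\dots,p_d,s)$ with $\dd\eta=\dd q^a\wedge\dd p_a$ and $\ker\dd\eta=\operatorname{span}\{\partial/\partial s\}$, so $\partial/\partial s$ is a nowhere-vanishing multiple of $R$. Then the $1$-form $\beta:=\eta+p_a\,\dd q^a$ is closed, since $\dd\beta=\dd\eta+\dd p_a\wedge\dd q^a=\dd q^a\wedge\dd p_a-\dd q^a\wedge\dd p_a=0$. By the Poincar\'e lemma there is, on a possibly smaller neighbourhood of $m$, a function $t$ with $\beta=\dd t$, hence $\eta=\dd t-p_a\,\dd q^a$.

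It remains to check that $(t,q^1,\dots,q^d,p_1,\dots,p_d)$ is a genuine coordinate chart near $m$, i.e.\ that $\dd t,\dd q^a,\dd p_a$ are linearly independent at $m$. If not, then, since the $\dd q^a,\dd p_a$ are already independent, one would have $\dd t\in\operatorname{span}\{\dd q^a,\dd p_a\}$ at $m$, and from $\eta=\dd t-p_a\,\dd q^a$ it would follow that $\eta_m\in\operatorname{span}\{\dd q^a,\dd p_a\}_m$, so $\eta_m(\partial/\partial s)=0$, contradicting $\eta(R)=1$ and the proportionality of $\partial/\partial s$ with $R$. Hence the functions are independent at $m$, and after shrinking $U$ they provide the asserted Darboux chart.

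I expect the only delicate point to be this last coordinate-independence verification, which is precisely where the contact hypothesis $\eta\wedge(\dd\eta)^d\neq0$ (rather than the weaker pre-contact assumption) enters; everything else is immediate from the pre-symplectic Darboux theorem and exactness of closed forms on a ball. An alternative, more self-contained route would avoid invoking \cref{Thm: Darboux pre-symplectic} and instead normalise $\eta$ directly by a Moser-type homotopy as in \cref{thm:Moser_trick}, but the reduction above is the shortest path given the results already established.
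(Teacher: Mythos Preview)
Your argument is correct: applying \cref{Thm: Darboux pre-symplectic} to the pre-symplectic form $\dd\eta$ and then using the Poincar\'e lemma to produce the coordinate $t$ is a standard and efficient route, and your independence check at the end is exactly where the full contact condition is used. One small remark: to obtain a chart \emph{centered} at $m$ as the statement asks, you may need to translate the $q^a$, $p_a$ and adjust $t$ accordingly, which is harmless since shifting $p_a$ by constants only changes $\eta$ by an exact form absorbed into $t$.

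The paper, however, does not prove this theorem at all: its entire proof is a citation to Godbillon's book. So there is no approach to compare against; your proposal simply supplies what the paper omits, and does so in a way that is internally consistent with the results already stated (in particular it makes \cref{Thm: Darboux pre-symplectic} do the real work). The Moser-type alternative you mention is also standard and would be equally acceptable, but your reduction is indeed the shortest path given the surrounding material.
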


\begin{theorem}[\textsc{Darboux theorem for pre-contact manifolds}]
Let $(M, \eta)$ be a pre-contact manifold of dimension $2d+1$, with $\eta \in \Omega^1(M)$ a 1-form of constant rank, and such that the rank of $\dd \eta|_{\ker \eta}$ is constant and equal to $2r < 2d$.
Then, for every point $m \in M$, there exists a coordinate chart $(U, \varphi)$ around $m$, with coordinates
\[
(t,\, q^1, \dots, q^r,\, p_1, \dots, p_r,\, z^1, \dots, z^{2(d-r)}) \,,
\]
such that the 1-form $\eta$ reads
\[
\eta|_U = \dd t - p_a \dd q^a \,.
\]
These coordinates are called \textbf{Darboux coordinates} for the pre-contact form $\eta$.
In these coordinates, the characteristic distribution of $\eta$ is
\[
\mathcal{V} = \operatorname{span} \left\{\, \frac{\partial}{\partial z^A} \,\right\}_{A = 1, \dots, 2(d - r)} \,.
\]
Thus, Darboux coordinates are foliated charts for the foliation associated with the characteristic distribution $\mathcal{V}$.
\begin{proof}
See \cite{godbillon1969geometrie}.
\end{proof}
\end{theorem}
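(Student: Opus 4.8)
The plan is to reduce the statement to the Darboux theorem for contact manifolds, already established above, by collapsing the characteristic foliation. Write $\mathcal{V} = \ker\eta \cap \ker(\dd\eta|_{\ker\eta})$, which by the constant-rank hypotheses is a subbundle of rank $2(d-r)$. I would first record two facts. \emph{(i) $\mathcal{V}$ is involutive}, hence completely integrable by the Frobenius Theorem: for $X,Y\in\Gamma(\mathcal{V})$ one has $\eta([X,Y]) = X(\eta(Y)) - Y(\eta(X)) - \dd\eta(X,Y) = -\dd\eta(X,Y) = 0$ since $\dd\eta(X,\cdot)$ annihilates $\ker\eta$; moreover $i_X\dd\eta$ is a $1$-form vanishing on the hyperplane field $\ker\eta$, so $i_X\dd\eta = g_X\,\eta$ for a function $g_X$, and a short Cartan-calculus computation (using $\dd\dd\eta = 0$) gives $i_{[X,Y]}\dd\eta = (X(g_Y) - Y(g_X))\,\eta$, which also annihilates $\ker\eta$; hence $[X,Y]\in\mathcal{V}$. \emph{(ii) Each $V\in\Gamma(\mathcal{V})$ satisfies $i_V\dd\eta = 0$ on all of $\T M$, hence $\Lie_V\eta = 0$}: writing $i_V\dd\eta = g_V\,\eta$ as above and pairing with a Reeb vector field $R$ (one exists under the standing pre-contact assumptions) yields $g_V = (i_V\dd\eta)(R) = \dd\eta(V,R) = -(i_R\dd\eta)(V) = 0$, and then $\Lie_V\eta = i_V\dd\eta + \dd\,i_V\eta = 0$.

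Next I would pass to a Frobenius chart for $\mathcal{V}$ around $m$, with coordinates $(y^1,\dots,y^{2r+1},z^1,\dots,z^{2(d-r)})$ such that $\mathcal{V} = \operatorname{span}\{\partial/\partial z^A\}$. Since $i_{\partial/\partial z^A}\eta = 0$, the form has no $\dd z^A$ component, $\eta = \eta_\mu(y,z)\,\dd y^\mu$; and since $\Lie_{\partial/\partial z^A}\eta = 0$ by (ii), the coefficients are independent of $z$, so $\eta = \eta_\mu(y)\,\dd y^\mu$ is the pullback of a $1$-form $\bar\eta := \eta_\mu(y)\,\dd y^\mu$ on an open set of $\mathbb{R}^{2r+1}$ under the projection $(y,z)\mapsto y$. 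Pulling back is injective on forms, so from $\eta\wedge(\dd\eta)^r\neq 0$ one gets $\bar\eta\wedge(\dd\bar\eta)^r\neq 0$; that is, $\bar\eta$ is a contact form on a $(2r+1)$-dimensional manifold.

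Finally, applying the Darboux theorem for contact manifolds to $\bar\eta$ provides coordinates $(t,q^1,\dots,q^r,p_1,\dots,p_r)$ on the $y$-space with $\bar\eta = \dd t - p_a\,\dd q^a$. Since this change of coordinates involves only the $y$-variables, $(t,q^a,p_a,z^A)$ is a coordinate system on a neighborhood of $m$ in $M$ in which $\eta = \dd t - p_a\,\dd q^a$ and $\mathcal{V} = \operatorname{span}\{\partial/\partial z^A\}$, so that these are foliated charts for the characteristic foliation — which is the assertion. (Equivalently, this recovers the classical Pfaff normal form for a $1$-form of odd class $2r+1$.)

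The main obstacle is fact (ii): one needs $\eta$ \emph{itself}, not merely $\ker\eta$ or $\dd\eta$, to be invariant along the leaves of the characteristic foliation, and this is exactly where a Reeb vector field is used, hence where the ``odd class'' / genuinely pre-contact nature of $\eta$ (that $\dd\eta$ has rank $2r$ on all of $\T M$, equivalently $\eta\wedge(\dd\eta)^r\neq 0$) enters; I would flag this hypothesis explicitly. The remaining ingredients — Frobenius integrability of $\mathcal{V}$, passing to the leaf coordinates, and invoking the contact Darboux theorem — are routine.
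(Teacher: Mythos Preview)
Your proof is correct and self-contained, whereas the paper simply cites Godbillon's \emph{G\'eom\'etrie diff\'erentielle et m\'ecanique analytique} without giving an argument. The strategy you use --- show the characteristic distribution $\mathcal{V}$ is involutive and that $\eta$ is basic with respect to it, then apply the contact Darboux theorem on the local leaf space --- is precisely the classical proof of the Pfaff--Darboux normal form for a $1$-form of constant odd class, which is what Godbillon presents.

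Your identification of the implicit hypothesis is also on point: the normal form $\dd t - p_a\,\dd q^a$ is the odd-class model, and fact~(ii) genuinely requires the existence of a Reeb vector field (equivalently, $\eta\wedge(\dd\eta)^r\neq 0$, or $\ker\dd\eta\not\subseteq\ker\eta$). Without it one can have, e.g., $\eta = q\,\dd p$ on $\mathbb{R}^3_{q,p,z}$ away from $q=0$: here $\dd\eta|_{\ker\eta}=0$ so $r=0$, yet $\Lie_{\partial_q}\eta = \dd p \neq 0$ and the form is of even class with normal form $p_1\,\dd q^1$, not $\dd t$. The paper's standing assumptions on pre-contact manifolds tacitly include Reeb existence (cf.\ the definition of Reeb vector field and the later remark exhibiting a whole family of them), so the hypothesis is there, just not stated in the theorem itself; flagging it explicitly, as you do, is the right call.
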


\subsection{Coisotropic submanifolds}

\begin{definition}[\textsc{Contact orthogonal}]
\label{Def: contact orthogonal}
Let $(M, \eta)$ be a contact manifold, and let $W_m \subseteq \mathbf{T}_m M$ be a linear subspace of the tangent space at a point $m \in M$.
The \textbf{contact orthogonal} of $W_m$ with respect to $\eta$ is defined as
\[
W^{\perp_\eta}_m := \left\{\, X \in \ker \eta_m  \colon  \dd \eta(X, Y) = 0 \quad \forall \, Y \in W_m \,\right\}.
\]
\end{definition}

\begin{remark}
Notice that $W^{\perp_\eta}_m$ is nothing but the symplectic orthogonal of $W_m$ inside $\ker \eta_m$.
\end{remark}

\begin{definition}[\textsc{Coisotropic submanifold of a contact manifold}]
Let $(M, \eta)$ be a contact manifold, and let
\[
\mathfrak{i}  \colon  N \hookrightarrow M
\]
be an immersed submanifold.
We say that $N$ is a \textbf{coisotropic submanifold} of $(M, \eta)$ if, for every point $n \in N$
\[
{\mathbf{T}_n N}^{\perp_\eta} \,\subseteq\, \mathbf{T}_n N\,.
\]
\end{definition}

\begin{remark}
    Similar to the cosymplectic setting, for each contact manifold there is an associated Jacobi structure, which have a notion of orthogonal. Again, it can be easily shown that both definitions coincide (see \cite{mlvmdl1}).
\end{remark}

\subsection{Coisotropic embeddings}

\subsubsection{Existence}

\begin{theorem}[\textsc{Contact thickenings of pre-contact manifolds}]
\label{Thm: existence contact}
Let $(M,\,\eta)$ be a pre-contact manifold.
There exists a contact manifold $(\widetilde{M},\,\widetilde{\eta})$ and an embedding
\[
\mathfrak{i} \colon  M \hookrightarrow \widetilde{M}
\]
such that $\mathfrak{i}(M)$ is a coisotropic submanifold of $\widetilde{M}$.
The contact manifold $(\widetilde{M},\,\widetilde{\eta})$ is referred to as a \textbf{contact thickening} of $(M,\,\eta)$.
\begin{proof}
Consider the characteristic distribution $\mathcal{V}$ on $M$.
Consider a complementary distribution $\mathcal{H}$ providing an almost product structure on $M$ associated with the $(1,1)$-tensor $P$ on $M$ that, in the system of Darboux coordinates on $M$, reads
\be
P \,=\, \left(\,\dd z^A - {P_t}^A \dd t - {P_q}^A_a \dd q^a - {P_p}^{Aa} \dd p_a\,\right) \otimes \frac{\de}{\de z^A} \,.
\ee
The opposite $(1,1)$-tensor $R$ reads
\be
R \,=\, \mathds{1} - P \,=\, \dd t \otimes \left(\, \frac{\de}{\de t} + {P_t}^A \frac{\de}{\de z^A} \,\right) + \dd q^a \otimes \left(\, \frac{\de}{\de q^a} + {P_q}^A_a \frac{\de}{\de z^A} \,\right) + \dd p_a \otimes \left(\, 
\frac{\de}{\de p_a} + {P_p}^{Aa} \frac{\de}{\de z^A} \,\right) \,.
\ee
Consider the bundle ${\Lambda^1}^\perp_R(M)$ over $M$.
Denote by $\tau$ the projection map onto $M$.
Its sections are differential $1$-form that, in the system of Darboux coordinates looks like
\be
\alpha \,=\, \alpha_A P^A \,.
\ee
Thus, a system of adapted coordinates on ${\Lambda^1}^\perp_R(M)$, reads
\be
\left\{\,t,\, q^a,\,p_a,\,z^A,\, \mu_A \,\right\}_{\substack{a=1,...,r \\ A=1,...,l}} \,,
\ee
where $l=1,...,2(d-r)$.
The projection map $\tau$ thus reads
\be
\tau  \colon  (t,\,q^a,\,p_a,\,z^A,\, \mu_A) \mapsto (t,\,q^a,\,p_a,\,z^A) \,.
\ee
The bundle ${\Lambda^1}^\perp_R(M)$ has a distinguished $1$-form, which is the obvious analogous of the tautological $1$-form on $\mathbf{T}^* M$, defined as
\be
\vartheta^P_{\alpha}(X) \,=\, \alpha(\tau_*(X)) \,, \;\; \forall\,\, X \in \mathbf{T}_\alpha M \,,
\ee
where $\alpha$ has to be considered as a point in ${\Lambda^1}^\perp_R(M)$ on the left hand side, and as a differential $1$-form on $M$ on the right hand side.
In the system of Darboux coordinates $\vartheta^P$ reads
\be
\vartheta^P \,=\, \mu_A P^A \,.
\ee
Consider the differential $1$-form
\be
\widetilde{\eta} \,=\, \tau^* \eta - \vartheta^P \,,
\ee
on ${\Lambda^1}^\perp_R(M)$.
It is a contact structure in a tubular neighborhood of the zero section of $\tau$.
Indeed, consider a vector field $X$ written in the basis
\be
\left\{\, {H_t} \,:=\, \frac{\de}{\de t} + {P_t}^A \frac{\de}{\de z^A},\, {H_q}_a \,:=\,\frac{\de}{\de q^a} + {P_q}^A_a \frac{\de}{\de z^A},\, {H_p}^a\,:=\,\frac{\de}{\de p_a} + {P_p}^{Aa} \frac{\de}{\de z^A}, \, \frac{\de}{\de z^A},\, \frac{\de}{\de \mu_A} \,\right\}_{\substack{a=1,...,r \\ A=1,...,l}} \,,
\ee
as
\be
X \,=\, {X_t} \, H_t + {X_q}^a \,{H_q}_a + {X_p}_a \,{H_p}^a + {X_z}^A \, \frac{\de}{\de z^A} + {X_\mu}_A\, \frac{\de}{\de \mu_A} \,.
\ee
The contraction $i_X \dd \widetilde{\eta}$ reads
\be
i_X \dd\widetilde{\eta} = E \, \dd t +  F_a\,\dd q^a + G^a\,\dd p_a + H_A\,P^A + K^A\,\dd\mu_A,
\ee
where:
\begin{align}
E \,=\, \mu_A \Biggl[\, {X_q}^a &\left(\, \frac{\de {P_t}^A}{\de q^a} - \frac{\de {P_q}^A_a}{\de t} + {P_q}^B_a \frac{\de {P_t}^A}{\de z^B} - {P_t}^B \frac{\de {P_q}^A_a}{\de z^B}
 \,\right) + \nonumber \\ 
 {X_p}_a &\left(\, \frac{\de {P_t}^A}{\de p_a} - \frac{\de {P_p}^{Aa}}{\de t} + {P_p}^{Ba} \frac{\de {P_t}^A}{\de z^B} - {P_t}^B \frac{\de {P_p}^{Aa}}{\de z^B} \,\right) + \nonumber \\
 {X_z}^B &\frac{\de {P_t}^A}{\de z^B}\,\Biggr] \,, \\
F_a = -{X_q}_a
+ \mu_A \Biggl[ {X_t} &\left(\, \frac{\de {P_q}^A_a}{\de t} - \frac{\de {P_t}^A}{\de q^a} + {P_t}^B \frac{\de {P_q}^A_a}{\de z^B} - {P_q}^B_a \frac{\de {P_t}^A}{\de z^B} \,\right) + \nonumber \\
{X_q}^b\,&\left(\,\frac{\de {P_q}^A_b}{\de q^a} - \frac{\de {P_q}^A_a}{\de q^b} + {P_q}^B_a \frac{\de {P_q}^A_b}{\de z^B} - {P_q}^B_b \frac{\de {P_q}^A_a}{\de z^B} \,\right)
+ \nonumber \\
{X_p}_b\,&\left(\,\frac{\de {P_p}^{Ab}}{\de q^a} - \frac{\de {P_q}^A_a}{\de p_b} + {P_q}^B_a \frac{\de {P_p}^{Ab}}{\de z^B} - {P_p}^{Bb} \frac{\de {P_q}^A_a}{\de z^B}\,\right) + \nonumber \\
{X_z}^B &\frac{\de {P_q}^A_a}{\de z^B}
\Biggr], \\
G^a = {X_q}^a
+ \mu_A \Biggl[\, X_t &\left(\, \frac{\de {P_p}^{Aa}}{\de t} - \frac{\de {P_t}^A}{\de p_a} + {P_t}^B \frac{\de {P_p}^{Aa}}{\de z^B} - {P_p}^{Ba} \frac{\de {P_t}^A}{\de z^B} \,\right) + \nonumber \\
{X_q}^b\,&\left(\,\frac{\de {P_q}^A_b}{\de p_a} - \frac{\de {P_p}^{Aa}}{\de q^b} + {P_p}^{Ba} \frac{\de {P_q}^A_b}{\de z^B} - {P_q}^B_b \frac{\de {P_p}^{Aa}}{\de z^B} \,\right) + \nonumber \\
{X_p}_b\,&\left(\,\frac{\de {P_p}^{Ab}}{\de p_a} - \frac{\de {P_p}^{Aa}}{\de p_b} + {P_p}^{Ba} \frac{\de {P_p}^{Ab}}{\de z^B} - {P_p}^{Bb} \frac{\de {P_p}^{Aa}}{\de z^B} \,\right) + \nonumber \\
{X_z}^B & \frac{\de {P_p}^{Aa}}{\de z^B}
\Biggr], \\
H_A = - {X_z}_A
- \mu_B  \Biggl[ {X_t} &\frac{\de {P_t}^B}{\de z^A}
{X_q}^b\, \frac{\de {P_q}^B_b}{\de z^A} + {X_p}_b\,\frac{\de {P_p}^{Bb}}{\de z^A}
\Biggr], \\
K^A = {X_\mu}^A \,. \hspace{1,1cm} &
\end{align}
With this in mind, one can easily prove that 
\be
i_X \dd \widetilde{\eta} \,=\, 0 \,\implies\, {X_q}^a \,=\, {X_p}_a \,=\, {X_z}^A \,=\, {X_\mu}_A \,=\, 0 \,,
\ee
if and only if, either 
\be
\mu_A \,= \, 0 \,,
\ee
or
\begin{align}
\left(\, \frac{\de {P_t}^A}{\de q^a} - \frac{\de {P_q}^A_a}{\de t} + {P_q}^B_a \frac{\de {P_t}^A}{\de z^B} - {P_t}^B \frac{\de {P_q}^A_a}{\de z^B}
 \,\right) \,&=\, 0 \\
\left(\, \frac{\de {P_t}^A}{\de p_a} - \frac{\de {P_p}^{Aa}}{\de t} + {P_p}^{Ba} \frac{\de {P_t}^A}{\de z^B} - {P_t}^B \frac{\de {P_p}^{Aa}}{\de z^B} \,\right) \,&=\, 0 \\
\left(\, \frac{\de {P_q}^A_a}{\de t} - \frac{\de {P_t}^A}{\de q^a} + {P_t}^B \frac{\de {P_q}^A_a}{\de z^B} - {P_q}^B_a \frac{\de {P_t}^A}{\de z^B} \,\right) \,&=\, 0 \\
\left(\,\frac{\de {P_q}^A_b}{\de q^a} - \frac{\de {P_q}^A_a}{\de q^b} + {P_q}^B_a \frac{\de {P_q}^A_b}{\de z^B} - {P_q}^B_b \frac{\de {P_q}^A_a}{\de z^B} \,\right) \,&=\, 0 \\
\left(\,\frac{\de {P_p}^{Ab}}{\de q^a} - \frac{\de {P_q}^A_a}{\de p_b} + {P_q}^B_a \frac{\de {P_p}^{Ab}}{\de z^B} - {P_p}^{Bb} \frac{\de {P_q}^A_a}{\de z^B}\,\right) \,&=\, 0 \\
\left(\, \frac{\de {P_p}^{Aa}}{\de t} - \frac{\de {P_t}^A}{\de p_a} + {P_t}^B \frac{\de {P_p}^{Aa}}{\de z^B} - {P_p}^{Ba} \frac{\de {P_t}^A}{\de z^B} \,\right) \,&=\, 0 \\
\left(\,\frac{\de {P_q}^A_b}{\de p_a} - \frac{\de {P_p}^{Aa}}{\de q^b} + {P_p}^{Ba} \frac{\de {P_q}^A_b}{\de z^B} - {P_q}^B_b \frac{\de {P_p}^{Aa}}{\de z^B} \,\right) \,&=\, 0 \\
\left(\,\frac{\de {P_p}^{Ab}}{\de p_a} - \frac{\de {P_p}^{Aa}}{\de p_b} + {P_p}^{Ba} \frac{\de {P_p}^{Ab}}{\de z^B} - {P_p}^{Bb} \frac{\de {P_p}^{Aa}}{\de z^B} \,\right) \,&=\, 0 \\
\frac{\de {P_t}^B}{\de z^A} \,&=\, 0 \\
\frac{\de {P_q}^B_b}{\de z^A} \,&=\, 0 \\
\frac{\de {P_p}^{Bb}}{\de z^A} \,&=\, 0 \,.
\end{align}
for all $a,\,b \,=\, 1,...,r$ and for all $A, B \,=\, 1,...,l$.
This proves that $\dd \widetilde{\eta}$ has kernel spanned by
\be
\mathrm{ker} \,\dd \widetilde{\eta} \,=\, \langle \left\{\, \frac{\de}{\de t} \,\right\}\rangle
\ee
in a tubular neighborhood of the zero section of $\tau$ (i.e. $\mu_A \,=\, 0$), or, if all the components of $\mathcal{N}_P$ vanish, on the whole ${\Lambda^1}^\perp_R(M)$.

\noindent Thus, the manifold $\widetilde{M}$ reads a tubular neighborhood of the zero section of the bundle ${\Lambda^1}^\perp_R$ (or the whole bundle ${\Lambda^1}^\perp_R$ if the almost product structure chosen has vanishing Nijenhuis tensor), the embedding map $\mathfrak{i}$ reads the zero section of $\tau$, and the contact structure $\widetilde{\eta}$ is
\be
\widetilde{\eta} \,=\, \tau^* \eta - \vartheta^P \,.
\ee

\noindent To prove that $M$ is a coisotropic submanifold of $\widetilde{M}$, we shall prove that $\T_m M^{\perp_{\widetilde{\eta}}} \subset \T_m M$ for all $m \in \widetilde{M}$.
Elements of $\T_m M^{\perp_{\widetilde{\eta}}}$ are tangent vectors to $m \in \widetilde{M}$, belonging to $\operatorname{ker}\, \widetilde{\eta}_m$
\be
X \,=\, {p_a}_m {W_q}^a {H_t}\bigr|_{m} + {X_q}^a {H_q}_a\bigr|_{m} + {X_p}_a {H_p}^a\bigr|_{m} + {X_z}^A \frac{\de}{\de z^A}\bigr|_{m} + {X_\mu}^A \frac{\de}{\de \mu^A}\bigr|_{m} \,,
\ee
(now ${X_q}^a,\, {X_p}_a,\, {X_z}^A,$ and ${X_\mu}_A$ are real numbers) such that 
\be \label{Eq: coisotropicity pre-contact}
\dd \widetilde{\eta}_m(X,\, W) \,=\, 0 \,,\;\;\; \forall \,\, W \in \T_m M \cap \operatorname{ker}\,\widetilde{\eta}_m \,, 
\ee
namely for any tangent vector $W$ to $M$ in $m$ of the type
\be
W \,=\, {p_a}_m {W_q}^a {H_t}\bigr|_{m} + {W_q}^a {H_q}_a\bigr|_{m} + {W_p}_a {H_p}^a\bigr|_{m} + {W_z}^A \frac{\de}{\de z^A}\bigr|_{m} \,.
\ee
A straightforward computation shows that
\be
\dd \widetilde{\eta}_m(X,\, W) \,=\, {X_q}^a {W_p}_a - {W_q}^a {X_p}_a + {X_\mu}_A {W_z}^A \,, 
\ee
and, thus \eqref{Eq: coisotropicity pre-contact} implies 
\be
{X_q}^a \,=\, {X_p}_a \,=\, {X_\mu}_A \,=\, 0 \,,\;\;\; \forall\,\, a\,=\, 1,...,r;\,A\,=\,1,...,l \,.
\ee
Consequently $X \in \T_m M$, which proves that $\T_m M^{\perp_{\widetilde{\eta}}} \subset \T_m M$.
\end{proof}
\end{theorem}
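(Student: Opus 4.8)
The plan is to transplant, essentially verbatim, the construction already used above for pre-symplectic and pre-cosymplectic manifolds, the only structural change being that the thickened form is now produced by \emph{subtracting} a tautological $1$-form from $\tau^\ast\eta$, rather than by adding an exact $2$-form to $\tau^\ast\omega$.

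First I would invoke the Darboux theorem for pre-contact manifolds to fix coordinates $(t,q^a,p_a,z^A)$ on $M$ with $\eta = \dd t - p_a\,\dd q^a$ and characteristic distribution $\mathcal{V} = \operatorname{span}\{\de/\de z^A\}$, $A = 1,\dots,2(d-r)$. Then I would choose an arbitrary complement $\mathcal{H}$ to $\mathcal{V}$, encoded by a $(1,1)$-tensor
\[
P \,=\, \bigl(\dd z^A - {P_t}^A\,\dd t - {P_q}^A_a\,\dd q^a - {P_p}^{Aa}\,\dd p_a\bigr)\otimes\frac{\de}{\de z^A} \,=:\, P^A\otimes\frac{\de}{\de z^A}\,,
\]
with $\operatorname{Im}P = \mathcal{V}$ and $\ker P = \mathcal{H}$, and set $R := \mathds{1} - P$. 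On the total space of $\tau\colon {\Lambda^1}^\perp_R(M)\to M$ — whose fibre over $m$ consists of the $1$-forms $\mu_A P^A$, giving adapted coordinates $(t,q^a,p_a,z^A,\mu_A)$ — I would write the obvious analogue of the tautological $1$-form, $\vartheta^P_\alpha(X) := \alpha(\tau_\ast X)$, which reads $\vartheta^P = \mu_A P^A$, and declare $\widetilde{\eta} := \tau^\ast\eta - \vartheta^P$, with $\widetilde M$ a tubular neighbourhood of the zero section and $\mathfrak{i}$ the zero section itself; since $\vartheta^P$ vanishes on the zero section, $\mathfrak{i}^\ast\widetilde{\eta} = \eta$.

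The heart of the matter is to verify that $\widetilde{\eta}$ is a genuine contact form near $\mathfrak{i}(M)$. For this I would expand $\dd\widetilde{\eta} = -\dd p_a\wedge\dd q^a - \dd\mu_A\wedge P^A - \mu_A\,\dd P^A$ and contract it with a general vector field written in the frame $\{H_t,{H_q}_a,{H_p}^a,\de/\de z^A,\de/\de\mu_A\}$ adapted to $P$ (with $H_t = \de/\de t + {P_t}^A\,\de/\de z^A$, and similarly ${H_q}_a,{H_p}^a$), and then read off $\ker\dd\widetilde{\eta}$. Along the zero section $\mu_A = 0$ the term $\mu_A\,\dd P^A$ — whose contraction yields exactly the components of the Nijenhuis tensor $\mathcal{N}_P$ of $P$ paired against $\mu$ — drops out, leaving $\ker\dd\widetilde{\eta}$ one-dimensional and spanned by $H_t$. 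Since $\widetilde{\eta}(H_t) = 1$, this kernel is transverse to $\ker\widetilde{\eta}$, so $\widetilde{\eta}\wedge(\dd\widetilde{\eta})^{\widetilde d}\neq 0$ along $\mathfrak{i}(M)$, where $\widetilde d = 2d-r$ and $\dim\widetilde M = 2\widetilde d + 1$; by openness of the contact condition this persists on a tubular neighbourhood of the zero section (and on the whole bundle when the chosen $P$ has $\mathcal{N}_P\equiv 0$). Coisotropy of $\mathfrak{i}(M)$ is then a short bilinear computation: evaluating $\dd\widetilde{\eta}$ along $\mathfrak{i}(M)$ on pairs $X,W\in\ker\widetilde{\eta}$ with $W$ tangent to $\mathfrak{i}(M)$ produces a pairing of the shape ${X_q}^a{W_p}_a - {W_q}^a{X_p}_a + {X_\mu}_A{W_z}^A$; its vanishing for all admissible $W$ kills the $q$-, $p$- and $\mu$-components of $X$, which together with $X\in\ker\widetilde{\eta}$ (forcing $X_t = p_a{X_q}^a = 0$) leaves only the $z$-component, tangent to $\mathfrak{i}(M)$ — i.e. $\T_m M^{\perp_{\widetilde{\eta}}}\subseteq\T_m M$.

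The main obstacle is precisely that rank computation for $\dd\widetilde{\eta}$: it is the only step in which the chosen almost product structure genuinely intervenes, and the linear-in-$\mu$ obstruction to global non-degeneracy is controlled by $\mathcal{N}_P$, so the efficient route is to organize the (long) contraction so that the Nijenhuis components appear transparently and then conclude by openness near the zero section, exactly in parallel with the pre-symplectic and pre-cosymplectic arguments above. Everything else — the existence of a complement $\mathcal{H}$, the vector-bundle structure and adapted coordinates on ${\Lambda^1}^\perp_R(M)$, and the definition of $\vartheta^P$ — is formal.
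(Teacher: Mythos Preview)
Your proposal is correct and follows essentially the same route as the paper: identical construction of $\widetilde M$ as a tubular neighbourhood of the zero section in ${\Lambda^1}^\perp_R(M)$, identical definition $\widetilde\eta = \tau^*\eta - \vartheta^P$, the same rank analysis of $\dd\widetilde\eta$ organised so that the $\mu$-linear obstruction is the Nijenhuis tensor of $P$, and the same bilinear coisotropy check. Your presentation is slightly more conceptual (invoking openness of the contact condition rather than writing out the full coefficient system, and correctly identifying the Reeb direction along the zero section as $H_t$), but there is no substantive difference in strategy.
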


\begin{remark} \label{Rem: family embeddings Reeb contact}
Similarly to the pre-cosymplectic case, one can show that a pre-contact manifold $(M,\, \eta)$ has a whole family of Reeb vector fields
\be
R \,=\, \frac{\de}{\de t} + R^A \frac{\de}{\de z^A} \,,
\ee
and, following the same proof, one can show that for each element of such a family, a coisotropic embedding $\mathfrak{i}$ from $(M,\, \eta)$ to $(\widetilde{M},\, \widetilde{\eta})$ can be constructed (the one given by the almost product structure such that $P(R) \,=\, 0$) such that the Reeb vector field of $\widetilde{R}$ of $(\widetilde{M},\, \widetilde{\eta})$ is $\mathfrak{i}$-related with $R$.
\end{remark}

\subsubsection{Uniqueness}

As for pre-cosymplectic manifolds, \cref{Rem: family embeddings Reeb contact} immediately implies also in the pre-contact case that coisotropic embeddings are not unique, as different choices of Reeb vector fields are possible. However, and similar to the case of pre-cosymplectic embeddings, the topology of the embeddings is fixed:

\begin{theorem} Let $(M, \eta)$ be a pre-contact manifold, and let $\mathfrak{i} \colon (M, \eta) \hookrightarrow (\widetilde M, \widetilde \eta)$ be a coisotropic embedding. Then, it is neighborhood diffeomorphic to the one built in \cref{Thm: existence contact}.
\end{theorem}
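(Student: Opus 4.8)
The plan is to follow the same two-step strategy as in the pre-symplectic and pre-cosymplectic cases: first produce a vector bundle isomorphism $\T\widetilde{M}|_{M} \cong \T M \oplus \mathcal{V}^{\ast}$ over the identity of $M$, where $\mathcal{V}$ is the characteristic distribution of $\eta$, and then integrate it by means of the tubular neighbourhood theorem. Since we only claim a diffeomorphism (not a contactomorphism), it is enough to identify the normal bundle $\nu := \T\widetilde{M}|_{M}/\T M$ with $\mathcal{V}^{\ast} \cong {\Lambda^1}^\perp_R(M)$; a neighbourhood of $\mathfrak{i}(M)$ in $\widetilde{M}$ will then be diffeomorphic, via a diffeomorphism fixing $M$, to a neighbourhood of the zero section of ${\Lambda^1}^\perp_R(M)$, which is precisely the model built in \cref{Thm: existence contact}.

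First I would fix a splitting $\T M = \mathcal{V} \oplus \mathcal{H} \oplus \langle R \rangle$, with $R$ a Reeb vector field for $\eta$ and $\mathcal{H}$ an arbitrary complement of $\mathcal{V}$ inside $\ker \eta$; by definition of the characteristic distribution, $\dd\eta|_{\mathcal{H}}$ is non-degenerate. Using $\mathfrak{i}^{\ast}\widetilde{\eta} = \eta$ one has $\ker\eta = \T M \cap \ker\widetilde{\eta}|_M$ and $\dd\widetilde{\eta}|_{\ker\eta} = \dd\eta|_{\ker\eta}$, so $\mathcal{H}$ is a symplectic subbundle of the symplectic vector bundle $(\ker\widetilde{\eta}|_M,\, \dd\widetilde{\eta})$, whence $\ker\widetilde{\eta}|_M = \mathcal{H} \oplus \mathcal{H}^{\perp}$ with $\mathcal{H}^{\perp}$ symplectic. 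Moreover $\mathcal{V} \subseteq \mathcal{H}^{\perp}$, since $\mathcal{V} \subseteq \ker\eta \subseteq \ker\widetilde{\eta}|_M$ and $\dd\widetilde{\eta}(\mathcal{V}, \mathcal{H}) = \dd\eta(\mathcal{V}, \mathcal{H}) = 0$.

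The key algebraic point is the dimension count. Since $\mathcal{V} \subseteq \T_m M^{\perp_{\widetilde{\eta}}}$ always holds (one checks $\dd\widetilde{\eta}(\mathcal{V}, \T M) = \dd\eta(\mathcal{V}, \T M) = 0$ using that $R$ is a Reeb field), coisotropicity forces $\T_m M^{\perp_{\widetilde{\eta}}} = \T_m M \cap \T_m M^{\perp_{\widetilde{\eta}}} = \mathcal{V}_m$. Comparing this with the general rank formula for the contact orthogonal then yields both that $\widetilde{R}|_M$ is tangent to $M$ and that $\dim\widetilde{M} = \dim M + \dim\mathcal{V}$; feeding this into $\dim\mathcal{H}^{\perp} = \dim\ker\widetilde{\eta}|_M - \dim\mathcal{H}$ gives $\dim\mathcal{H}^{\perp} = 2\dim\mathcal{V}$, so that $\mathcal{V}$ is a Lagrangian subbundle of $(\mathcal{H}^{\perp},\, \dd\widetilde{\eta})$. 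Choosing a Lagrangian complement (for instance by means of a compatible metric) gives $\mathcal{H}^{\perp} \cong \mathcal{V} \oplus \mathcal{V}^{\ast}$, hence
\[
\T\widetilde{M}|_M \,=\, \langle \widetilde{R}|_M \rangle \oplus \mathcal{H} \oplus \mathcal{H}^{\perp} \,\cong\, \langle \widetilde{R}|_M \rangle \oplus \mathcal{H} \oplus \mathcal{V} \oplus \mathcal{V}^{\ast} \,=\, \T M \oplus \mathcal{V}^{\ast}\,,
\]
where we used that $\langle \widetilde{R}|_M \rangle$ is a line subbundle of $\T M$ complementary to $\ker\eta$. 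Therefore $\nu \cong \mathcal{V}^{\ast}$ as vector bundles over $M$, and by \cref{remark:bigrading} $\mathcal{V}^{\ast} \cong {\Lambda^1}^\perp_R(M)$.

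To conclude, I would apply the tubular neighbourhood theorem to $\mathfrak{i}(M) \subseteq \widetilde{M}$ to obtain a diffeomorphism of a neighbourhood of $\mathfrak{i}(M)$ onto a neighbourhood of the zero section of $\nu$, restricting to the identity on $M$; composing with the bundle isomorphism $\nu \cong {\Lambda^1}^\perp_R(M)$ and with the corresponding tubular identification for the model of \cref{Thm: existence contact} (whose normal bundle to the zero section is ${\Lambda^1}^\perp_R(M)$ itself) produces the desired neighbourhood diffeomorphism. The main obstacle I anticipate is purely bookkeeping: verifying the numerical identities that force $\widetilde{R}|_M$ to be tangent to $M$ and $\mathcal{V}$ to be Lagrangian in $\mathcal{H}^{\perp}$, and checking that all the vector bundle isomorphisms above can be chosen to cover $\mathrm{id}_M$; everything else is a direct transcription of the pre-cosymplectic argument.
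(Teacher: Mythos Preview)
Your proposal is correct and follows essentially the same route as the paper: split $\T M = \mathcal{V}\oplus\mathcal{H}\oplus\langle R\rangle$, observe that $\mathcal{H}$ is symplectic in $(\ker\widetilde\eta|_M,\dd\widetilde\eta)$ so that $\ker\widetilde\eta|_M=\mathcal{H}\oplus\mathcal{H}^{\perp}$ with $\mathcal{V}$ Lagrangian in $\mathcal{H}^{\perp}$, deduce $\T\widetilde M|_M\cong \T M\oplus\mathcal{V}^{\ast}$, and finish with the tubular neighbourhood theorem. Your write-up is in fact more careful than the paper's on two points it leaves implicit: the dimension count forcing $\operatorname{rank}\mathcal{H}^{\perp}=2\operatorname{rank}\mathcal{V}$, and the observation that $\widetilde R|_M$ is tangent to $M$ (which, as you will find when you do the bookkeeping, follows from parity: the alternative would make $\dim\widetilde M$ even).
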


\noindent Throughout the proof, we use the term symplectic and Lagrangian subbundle of a contact manifold $(M, \eta)$ in the following sense: When dealing with a vector subbundle $D$ contained in $\ker\, \eta$, we say that $D$ is Lagrangian (resp. symplectic) when $D$ is a Lagrangian (resp. symplectic) vector subbundle of the symplectic vector bundle $(\ker \,\eta, \dd \eta)$.

\begin{proof} Denote by $\mathcal{V} = \ker\, \eta \cap \ker\, \dd \eta$ the characteristic distribution on $M$, and let $\mathcal{H}$ be any complement to $\mathcal{V} \oplus \langle R \rangle$, where $R$ is an arbitrary choice of vector field satisfying 
\[ \eta(R) = 1\,, \qquad i_R \dd \eta = 0\,,
\] so that $\T M = \mathcal{V} \oplus \mathcal{H}\oplus \langle R \rangle$. 
Following the techniques used in \cref{Sec: cosymplectic manifolds}, we have that 
\[
\T \widetilde M|_M  = \mathcal{H}^\perp\oplus \mathcal{H} \oplus \langle\widetilde R \rangle\,,
\]
where $\widetilde R$ denotes the Reeb vector field on $\widetilde M$. Furthermore, since $\mathcal{V}\subseteq \mathcal{H}^\perp$ is a Lagrangian subbundle from a symplectic subbundle (since $\mathcal{H}$ is symplectic in the pre-symplectic vector bundle $(\T \widetilde M |_M, \dd \eta)$), we have $\mathcal{H}^\perp \cong \mathcal{V} \oplus \mathcal{V}^\ast$. Hence, we have that 
\[
\T \widetilde M |_M = \mathcal{H}\oplus \mathcal{H}^\perp \oplus \langle \widetilde R\rangle \cong \mathcal{H} \oplus \mathcal{V}\oplus \mathcal{V}^\ast \oplus \langle\widetilde R \rangle\,, 
\]
which is clearly isomorphic to $\mathcal{H} \oplus \mathcal{V} \oplus \langle R\rangle \oplus \mathcal{V}^\ast \cong \mathcal{V}^\ast \oplus \T M \cong  \T \mathcal{V}^\ast |_M$. 
Taking a tubular neighborhood of the zero-section finishes the proof.
\end{proof}

\begin{remark} Furthermore, when two embeddings are such that their Reeb vector fields are equal on $M$, then it is easy to check that the previous diffeomorphism actually preserves the contact structure (both $\eta$ and $\dd \eta$) on $M$.
\end{remark}

\noindent As for the pre-cosymplectic scenario, fixing a Reeb vector field on $M$ is not enough to guarantee uniqueness, as the following example shows:

\begin{example}
\label{example:contact_nonuniqueness}
Let $M = \mathbb{S}^1 \times \mathbb{S}^1$ be the two-dimensional torus, together with the pre-contact structure given by $\eta = \dd \theta_2$, where $\dd \theta_2$ denotes the canonical volume form on the second copy of $\mathbb{S}^1$. Let $\widetilde{M} := (-1,1) \times \mathbb{S}^1 \times \mathbb{S}^1$ together with the following two possible contact structures:
\[
\widetilde \eta_1 = \dd \theta_2 - t \dd \theta_1\,, \qquad \widetilde \eta_2 = \left(1+\frac{t^2}{2}\right) \dd \theta_2 - t \dd \theta_1.
\]
It is clear that $M$, identified as the zero section of $\widetilde M = (-1,1) \times M$, is a coisotropic submanifold of $\widetilde M$, with respect to both contact structures. Let us check that these are not neighborhood isomorphic. Indeed, their respective Reeb vector fields are written as follows:
\[
\widetilde R_1 = \frac{\partial}{\partial\theta^2}\,,\qquad \widetilde R_2 = \frac{2}{2-t^2} \left(t\,\frac{\partial}{\partial\theta^1} + \frac{\partial}{\partial\theta^2}\right)\,.
\]
The argument now proceeds as in the cosymplectic scenario.
\end{example}

\noindent What we can guarantee, however, that if the orbits of the Reeb vector fields coincide on a neighborhood of $M$, then the contact structures are neighborhood equivalent.

\begin{theorem} Let $\widetilde M$ be a manifold and $\mathfrak{i} \colon M \hookrightarrow \widetilde M$ be a coisotropic submanifold. Suppose $\widetilde \eta_1$ and $\widetilde \eta_2$ are two contact forms on $\widetilde M$ such that $\widetilde \eta_1 = \widetilde \eta_2$ and $\dd \widetilde \eta_1 = \dd \widetilde \eta_2$ on $M$, and such that $M$ is a coisotropic submanifold of the contact structure induced by both of them. If the Reeb vector fields of the two contact structures are proportional, then the contact structures are neighborhood equivalent.
\end{theorem}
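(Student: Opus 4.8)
The plan is to run \emph{Moser's trick} (\cref{thm:Moser_trick}) in complete analogy with the pre-cosymplectic case, applied to the linear interpolation $\widetilde{\eta}_t := t\,\widetilde{\eta}_2 + (1-t)\,\widetilde{\eta}_1$. First I would observe that, by hypothesis~(1), $\widetilde{\eta}_t$ and $\dd\widetilde{\eta}_t$ agree on $M$ with $\widetilde{\eta}_1$ and $\dd\widetilde{\eta}_1$, so $\widetilde{\eta}_t$ is a contact form along $M$; as the contact condition is open, after shrinking to a neighbourhood $U$ of $M$ (uniformly in $t \in [0,1]$) each $\widetilde{\eta}_t$ is a contact form on $U$. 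The goal is then to produce a time-dependent vector field $X_t$ on $U$ that vanishes on $M$ and satisfies $\widetilde{\eta}_2 - \widetilde{\eta}_1 + \Lie_{X_t}\widetilde{\eta}_t = 0$; by \cref{thm:Moser_trick} its time-one flow (well defined on a possibly smaller neighbourhood since $X_t$ vanishes on $M$) is the identity on $M$ and pulls $\widetilde{\eta}_2$ back to $\widetilde{\eta}_1$, which is precisely the claimed neighbourhood equivalence.

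Next I would exploit hypothesis~(2). Writing the Reeb fields as $\widetilde{R}_2 = \lambda\,\widetilde{R}_1$ with $\lambda$ nowhere vanishing (and $\lambda = 1$ on $M$, since the contact structures and their differentials coincide there), the relation $i_{\widetilde{R}_2}\dd\widetilde{\eta}_2 = 0$ forces $i_{\widetilde{R}_1}\dd\widetilde{\eta}_2 = 0$, hence $i_{\widetilde{R}_1}(\dd\widetilde{\eta}_2 - \dd\widetilde{\eta}_1) = 0$ and $i_{\widetilde{R}_1}\dd\widetilde{\eta}_t = 0$ for all $t$. Since $\widetilde{\eta}_t$ is contact on $U$, $\ker\dd\widetilde{\eta}_t$ is one-dimensional, so it must equal $\langle\widetilde{R}_1\rangle$; in particular every Reeb field $\widetilde{R}_t$ of the interpolating structures is proportional to $\widetilde{R}_1$.

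Then I would solve the Moser equation by the usual splitting. Decompose $X_t = -f\,\widetilde{R}_t + Y_t$ with $Y_t$ a section of $\ker\widetilde{\eta}_t$ and $f$ a function to be determined; using $i_{\widetilde{R}_t}\dd\widetilde{\eta}_t = 0$, Cartan's formula turns the equation into $i_{Y_t}\dd\widetilde{\eta}_t = \dd f - (\widetilde{\eta}_2 - \widetilde{\eta}_1)$. Applying the Relative Poincaré Lemma (\cref{thm:Relative_Poincaré_Lemma}) to the closed $2$-form $\dd\widetilde{\eta}_2 - \dd\widetilde{\eta}_1$, which vanishes on $M$, yields a primitive $\gamma$ vanishing on $M$; then $\widetilde{\eta}_2 - \widetilde{\eta}_1 - \gamma$ is closed and vanishes on $M$, so a second application gives a function $f$ vanishing on $M$ with $\widetilde{\eta}_2 - \widetilde{\eta}_1 = \gamma + \dd f$. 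With this choice the equation becomes $i_{Y_t}\dd\widetilde{\eta}_t = -\gamma$, which, $\dd\widetilde{\eta}_t$ being non-degenerate on $\ker\widetilde{\eta}_t$, has a unique solution $Y_t \in \ker\widetilde{\eta}_t$ — vanishing on $M$ because $\gamma$ does — \emph{provided} the right-hand side annihilates $\widetilde{R}_t$, i.e.\ $i_{\widetilde{R}_t}\gamma = 0$. Granting this, $X_t = -f\,\widetilde{R}_t + Y_t$ vanishes on $M$ and Moser's trick applies.

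I expect the main obstacle to be securing $i_{\widetilde{R}_t}\gamma = 0$. By the proportionality of all $\widetilde{R}_t$ to $\widetilde{R}_1$ it is enough to arrange $i_{\widetilde{R}_1}\gamma = 0$, and this is where hypothesis~(2) is genuinely needed: without control on the Reeb directions the conclusion fails, as \cref{example:contact_nonuniqueness} shows. The plan, mirroring the pre-cosymplectic argument, is to take $\gamma$ to be the explicit primitive of \cref{thm:Relative_Poincaré_Lemma}, $\gamma = \int_0^1 \psi_s^\ast\, i_{\Delta_s}(\dd\widetilde{\eta}_2 - \dd\widetilde{\eta}_1)\,\dd s$, with the fibre scalings $\psi_s$ and the Liouville field $\Delta$ of the tubular neighbourhood chosen compatibly with the Reeb direction — concretely, so that $(\psi_s)_\ast\widetilde{R}_1 = \widetilde{R}_1$, which is possible because $\widetilde{R}_1$ is tangent to $M$. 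Then $i_{\widetilde{R}_1}\psi_s^\ast i_{\Delta_s}(\dd\widetilde{\eta}_2 - \dd\widetilde{\eta}_1) = -\psi_s^\ast i_{\Delta_s} i_{\widetilde{R}_1}(\dd\widetilde{\eta}_2 - \dd\widetilde{\eta}_1) = 0$, so that $i_{\widetilde{R}_1}\gamma = 0$. The delicate part will be making this compatibility between the tubular neighbourhood and the (possibly intricate) Reeb flow near $M$ precise, and checking it can be done uniformly in $t$; everything else is the routine Moser machinery already used in \cref{Sec: cosymplectic manifolds}.
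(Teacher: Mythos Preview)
Your proposal is correct in outline and would succeed, but it takes a longer route than the paper. The key simplification you miss is the paper's first move: since the Reeb fields are proportional and agree on $M$, one may precompose with an easy diffeomorphism (fixing $M$) so that $\widetilde{R}_1 = \widetilde{R}_2 =: \widetilde{R}$ \emph{everywhere} on the neighbourhood, not merely up to a scalar. After this normalisation $\widetilde{R}$ is the Reeb field of every $\widetilde{\eta}_t$, and the solvability condition becomes trivial: $i_{\widetilde{R}}(\widetilde{\eta}_1 - \widetilde{\eta}_2) = 1 - 1 = 0$. The paper then simply takes $X_t \in \ker\widetilde{\eta}_t$ and solves $i_{X_t}\dd\widetilde{\eta}_t = \widetilde{\eta}_1 - \widetilde{\eta}_2$ directly --- no Relative Poincar\'e Lemma, no auxiliary function $f$, no primitive $\gamma$, and no need to engineer a tubular neighbourhood compatible with the Reeb flow.

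Your approach instead keeps the Reeb fields merely proportional, splits $X_t = -f\,\widetilde{R}_t + Y_t$, and must then produce $\gamma$ with $i_{\widetilde{R}_1}\gamma = 0$; this is exactly the ``delicate part'' you flag about aligning the fibre scalings $\psi_s$ with $\widetilde{R}_1$. That step is not automatic (tangency of $\widetilde{R}_1$ to $M$ alone does not give $(\psi_s)_\ast\widetilde{R}_1 = \widetilde{R}_1$ for a generic tubular neighbourhood), and while it can be arranged, the paper's normalisation eliminates the issue entirely. What your route buys is a closer parallel to the pre-cosymplectic proof, where the Relative Poincar\'e Lemma is genuinely needed; in the contact case it is not.
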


\begin{proof} 
 Notice first that we may assume that both vector fields are equal, as it is easy to find a diffeomorphism that transforms one into another. Let us apply Moser's trick, and look for a time-dependent vector field $X_t$ such that 
\[
\dot{\widetilde \eta}_t + \Lie_{X_t} \widetilde \eta_t= 0\,,
\]
where $\widetilde \eta_t = t \widetilde \eta_2 + (1-t) \widetilde \eta_1$, that is, we look for a vector field satifying 
\[
\widetilde \eta_1 - \widetilde \eta _2 = \dd i_{X_t} \widetilde \eta_t + i_{X_t} \dd \widetilde \eta_t\,.
\]
Notice that the Reeb vector field of $\widetilde \eta_t$, $\widetilde R_t$, is the Reeb vector field of $\widetilde \eta_1$ and $\widetilde \eta_2$, which we denote by $\widetilde R$. Hence, we may choose $X_t$ to lie in the horizontal distribution $\ker \widetilde \eta_t$, and look for a solution of 
\[
i_{X_t} \dd \widetilde \eta_t = \widetilde \eta_1 - \widetilde \eta_2\,,
\]
which has a unique solution, as $\widetilde R_t$ annihilates the right term. As usual, the time one flow of this vector field (which exists on a neighborhodd since $X_t = 0$ on $M$) defines the desired contactomorphism.
\end{proof}

\begin{remark}
(Another definition of contact manifold). Some authors define contact manifolds $(M, \xi)$ as odd-dimensional manifolds
$M$ with a contact distribution $\xi$, that is, a maximally nonintegrable codimension 1 distribution. By the Frobenius theorem, this means that the distribution $\xi$ is
given locally as the kernel of a contact form $\eta$. Of course, every contact manifold $(M, \eta)$ is a contact manifold in this sense by taking $\xi = \ker \eta$.
Conversely, a contact distribution $\xi$ is globally the kernel of a contact form if and only if $\xi$ is co-orientable.
Although some of the concepts we will be working with only depend on the contact distribution, such as those of isotropic, coisotropic,
and Legendrian submanifold, the choice of contact forms plays a crucial role on the dynamics of the contact Hamiltonian system, and this is the reason for our choice.
We refer to \cite{vitagliano} where an embedding coisotropic theorem has been proved in this alternative context (see also \cite{tortorella}).
\end{remark}

\section{Pre-cocontact manifolds}
\label{Sec: cocontact manifolds}

\begin{definition}[\textsc{Pre-cocontact manifold}]
A \textbf{pre-cocontact manifold} is a triple $(M, \xi, \eta)$, where $M$ is a smooth $d$-dimensional manifold and $\xi, \eta \in \Omega^1(M)$ are differential 1-forms such that $\xi$ is closed, i.e., $\dd\xi = 0$.
\end{definition}

\begin{definition}[\textsc{Characteristic distribution}]
The \textbf{characteristic distribution} of a pre-cocontact structure $(M, \xi, \eta)$ is the distribution
\be
\mathcal{V} := \ker\xi \cap \ker\eta \cap \ker(d\eta) \subset \mathbf{T}M.
\ee
We assume $\mathcal{V}$ has constant rank.
\end{definition}

\begin{remark}
    From now on, we will assume that for $l = \operatorname{corank} \mathcal{V} - 2$, we have that $\xi \wedge \eta \wedge (\dd \eta)^l$ is nowhere zero. 
\end{remark}

\begin{definition}[\textsc{Reeb vector fields}]
Let $(M, \xi, \eta)$ be a pre-cocontact manifold. 
The structure admits two (non unique) characteristic vector fields, often called \textbf{Reeb vector fields}:
\begin{itemize}
    \item A vector field $R_\xi \in \mathfrak{X}(M)$ satisfying
    $$
    i_{R_\xi} \eta = 0 \,, \quad i_{R_\xi} \dd\eta = 0 \,, \quad i_{R_\xi} \xi = 1 \,.
    $$
    \item A vector field $R_\eta \in \mathfrak{X}(M)$ satisfying
    $$
    i_{R_\eta} \xi = 0 \,, \quad i_{R_\eta} \dd\eta = 0 \,, \quad i_{R_\eta} \eta = 1 \,.
    $$
\end{itemize}
\end{definition}

\begin{remark}[\textsc{Cocontact manifold}]
A pre-cocontact manifold is called a \textbf{cocontact manifold} if it is non-degenerate. This means that if the dimension of the manifold is $d=2n+2$, the form
\be
\xi \wedge \eta \wedge (d\eta)^n
\ee
is a volume form on $M$. This structure provides a geometric framework for time-dependent contact mechanics \cite{cocontact}. In a cocontact manifold, Reeb vector fields are unique.
\end{remark}

\begin{theorem}[\textsc{Darboux theorem for cocontact manifolds}]
Let $(M, \xi, \eta)$ be a $(2n+2)$-dimensional cocontact manifold. For every point $m \in M$, there exists a coordinate chart $(U, \varphi)$ centered at $m$, with coordinates $(s, t, q^1, \dots, q^n, p_1, \dots, p_n)$, such that:
\[
\xi|_U = \dd s, \qquad \eta|_U = \dd t - p_a \dd q^a.
\]
Such coordinates are called \textbf{Darboux coordinates} for the cocontact structure.
\begin{proof}
See \cite{cocontact}.
\end{proof}
\end{theorem}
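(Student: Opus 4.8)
The plan is to use the Reeb vector field $R_\xi$ to produce a local product decomposition of $M$ along which $\xi$ straightens to $\dd s$, and then to recognize the transverse slice as a contact manifold and invoke the Darboux theorem for contact manifolds established above.

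First I would record the elementary consequences of non-degeneracy: since $\xi \wedge \eta \wedge (\dd\eta)^n$ is a volume form, $\xi$ is nowhere vanishing, and the only vector annihilated by $\xi$, $\eta$ and $\dd\eta$ simultaneously is the zero vector, so the linear conditions $i_{R_\xi}\xi = 1$, $i_{R_\xi}\eta = 0$, $i_{R_\xi}\dd\eta = 0$ determine a unique vector field $R_\xi$. By Cartan's formula, $\Lie_{R_\xi}\xi = i_{R_\xi}\dd\xi + \dd(i_{R_\xi}\xi) = 0$ (using $\dd\xi = 0$ and $i_{R_\xi}\xi \equiv 1$) and $\Lie_{R_\xi}\eta = i_{R_\xi}\dd\eta + \dd(i_{R_\xi}\eta) = 0$, so the flow $\phi_s$ of $R_\xi$ preserves both $\xi$ and $\eta$. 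Since $\xi$ is closed and non-vanishing, the Poincaré lemma supplies a function $s_0$ near $m$ with $\xi = \dd s_0$ and $s_0(m) = 0$; as $R_\xi(s_0) = i_{R_\xi}\xi = 1$, the vector field $R_\xi$ is transverse to the hypersurface $N := \{s_0 = 0\}$ through $m$.

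Next I would pass to the flow-box chart $\Psi(s,x) := \phi_s(x)$, $x \in N$, defined near $(0,m)$. A direct computation gives $\tfrac{\dd}{\dd s}\,s_0(\phi_s(x)) = (R_\xi s_0)(\phi_s(x)) = 1$ and $s_0(\phi_0(x)) = 0$, hence $s_0 \circ \Psi = s$; therefore in the coordinates $\Psi^{-1}$ one has $\xi = \dd s$ and $R_\xi = \frac{\de}{\de s}$. Because $\Lie_{R_\xi}\eta = 0$ and $i_{R_\xi}\eta = 0$, in this chart $\eta$ involves neither $s$ nor $\dd s$, so it descends to a $1$-form $\eta_N := \eta|_N \in \Omega^1(N)$ with $\eta = \Psi^{-1\,\ast}\pi^\ast \eta_N$ for the obvious projection $\pi(s,x) = x$. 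Comparing $\xi \wedge \eta \wedge (\dd\eta)^n = \dd s \wedge \big(\eta_N \wedge (\dd\eta_N)^n\big)$ with the cocontact non-degeneracy hypothesis, I conclude that $\eta_N \wedge (\dd\eta_N)^n$ is a volume form on the $(2n+1)$-dimensional manifold $N$, i.e. $(N, \eta_N)$ is a contact manifold.

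Finally I would apply the Darboux theorem for contact manifolds to $(N,\eta_N)$ at the point $m$, obtaining coordinates $(t, q^1, \dots, q^n, p_1, \dots, p_n)$ on a neighborhood in $N$ with $\eta_N = \dd t - p_a\,\dd q^a$; pulling these back along $\pi$ and adjoining $s$ yields a chart $(s, t, q^a, p_a)$ near $m$ in $M$ in which $\xi = \dd s$ and $\eta = \dd t - p_a\,\dd q^a$, and shrinking it we may center it at $m$. The only genuinely delicate point is the simultaneous normalization in the third step — arranging $\xi = \dd s$ and $R_\xi = \frac{\de}{\de s}$ at the same time — since this is exactly what makes the slice $N$ well defined, $R_\xi$-invariant, and carrying an $s$-independent $\eta$; once this is in place, the argument reduces entirely to the contact Darboux theorem. (One may also run a Moser-type deformation argument instead, but the flow-box reduction is cleaner here.)
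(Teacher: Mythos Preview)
Your argument is correct and is the standard proof. The paper provides no argument of its own here --- it simply cites \cite{cocontact} --- so there is nothing substantive to compare; the flow-box reduction along $R_\xi$ to a transverse contact slice, followed by the contact Darboux theorem already recorded in the paper, is exactly the route taken in that reference.

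One minor point worth tightening: your sentence that triviality of $\ker\xi \cap \ker\eta \cap \ker\dd\eta$ makes the defining conditions ``determine a unique $R_\xi$'' establishes only uniqueness, not existence. For existence one also needs $\operatorname{rank}\dd\eta = 2n$ (equivalently $(\dd\eta)^{n+1}=0$), so that $\ker\dd\eta$ is two-dimensional and the pair $(\xi,\eta)$ restricts to an isomorphism from it onto $\mathbb{R}^2$; the bare volume-form condition $\xi\wedge\eta\wedge(\dd\eta)^n\neq 0$ does not by itself force this. The paper sidesteps the issue by asserting in its remark that cocontact manifolds have unique Reeb fields, so you may simply invoke that, but it would strengthen your write-up to say so explicitly.
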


\begin{theorem}[\textsc{Darboux theorem for pre-cocontact manifolds}]
Let $(M, \xi, \eta)$ be a pre-cocontact manifold such that its characteristic distribution $\mathcal{V}$ has constant rank $l$. For every point $m \in M$, there exists a coordinate chart $(U, \varphi)$ centered at $m$, with coordinates $(s, t, q^1, \dots, q^r, p_1, \dots, p_r, z^1, \dots, z^l)$, where $2+2r+l = \dim(M)$, such that:
\be
\xi|_U = \dd s, \qquad \eta|_U = \dd t - p_a \dd q^a.
\ee
In these coordinates, the characteristic distribution is 
\be
\mathcal{V} = \operatorname{span}\left\{\,\frac{\partial}{\partial z^A}\,\right\}_{A=1,\dots,l} \,.
\ee
\begin{proof}
The proof follows by a slight modification of that in \cite{cocontact} for cocontact manifolds.
\end{proof}
\end{theorem}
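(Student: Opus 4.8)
The plan is to reduce the statement to the non-degenerate (cocontact) case by quotienting out the characteristic foliation, exactly mirroring the passage from the symplectic to the pre-symplectic Darboux theorem.

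\emph{Integrability of $\mathcal{V}$.} First I would verify that $\mathcal{V} = \ker\xi \cap \ker\eta \cap \ker\dd\eta$ is involutive. For local sections $X, Y$ of $\mathcal{V}$: from $\dd\xi = 0$ and $\dd\xi(X,Y) = X(\xi(Y)) - Y(\xi(X)) - \xi([X,Y])$ one gets $\xi([X,Y]) = 0$; the same identity for $\eta$ together with $\dd\eta(X,Y) = 0$ gives $\eta([X,Y]) = 0$; and $i_{[X,Y]}\dd\eta = \Lie_X i_Y\dd\eta - i_Y\Lie_X\dd\eta = 0$, since $i_Y\dd\eta = 0$ and $\Lie_X\dd\eta = \dd i_X\dd\eta = 0$. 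Hence $[X,Y]$ is again a section of $\mathcal{V}$, so $\mathcal{V}$, of constant rank $l$, is completely integrable by the Frobenius theorem; I then fix a foliated chart $(x^1,\dots,x^{2+2r},\,z^1,\dots,z^l)$ adapted to it, with $\mathcal{V} = \operatorname{span}\{\de/\de z^A\}$ and $2+2r+l = \dim M$.

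\emph{The forms descend to the leaf space.} In this chart, $i_{\de/\de z^A}\xi = 0$ kills the $\dd z^A$-components of $\xi$, while $\Lie_{\de/\de z^A}\xi = \dd i_{\de/\de z^A}\xi + i_{\de/\de z^A}\dd\xi = 0$ kills the $z$-dependence of its coefficients; thus $\xi = \xi_a(x)\,\dd x^a$. The identical argument applies to $\eta$ --- here one needs both $i_{\de/\de z^A}\eta = 0$ and $i_{\de/\de z^A}\dd\eta = 0$, which is precisely why $\ker\dd\eta$ enters the definition of $\mathcal{V}$ --- giving $\eta = \eta_a(x)\,\dd x^a$. So $\xi = \pi^\ast\bar\xi$ and $\eta = \pi^\ast\bar\eta$ for $1$-forms $\bar\xi,\bar\eta$ on the $(2+2r)$-dimensional coordinate slice $S$ through $m$ (the local leaf space), with $\pi\colon(x,z)\mapsto x$; note $\bar\xi$ is closed since $\xi$ is, and $\dd\eta = \pi^\ast\dd\bar\eta$ likewise has no $\dd z^A$-component.

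\emph{The leaf space is cocontact; conclusion.} The $(2+2r)$-form $\xi\wedge\eta\wedge(\dd\eta)^r = \pi^\ast\!\big(\bar\xi\wedge\bar\eta\wedge(\dd\bar\eta)^r\big)$ involves only the $2+2r$ differentials $\dd x^a$, hence equals $f(x)\,\dd x^1\wedge\cdots\wedge\dd x^{2+2r}$; the non-degeneracy hypothesis on $(\xi,\eta)$ forces $f$ to be nowhere zero, i.e.\ $\bar\xi\wedge\bar\eta\wedge(\dd\bar\eta)^r$ is a volume form and $(S,\bar\xi,\bar\eta)$ is a cocontact manifold. Applying the Darboux theorem for cocontact manifolds on $S$, near $\pi(m)$ there are coordinates $(s,t,q^1,\dots,q^r,p_1,\dots,p_r)$, functions of the $x^a$ alone, with $\bar\xi = \dd s$ and $\bar\eta = \dd t - p_a\,\dd q^a$. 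Pulling these back and retaining the leaf coordinates $z^A$ yields a chart $(s,t,q^a,p_a,z^A)$ about $m$ (recentered and shrunk if necessary) in which $\xi = \dd s$, $\eta = \dd t - p_a\,\dd q^a$, and $\mathcal{V} = \operatorname{span}\{\de/\de z^A\}$, as claimed.

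\emph{Main obstacle.} There is no deep obstacle: the statement is a reduction to a known normal form. The only steps demanding genuine care are the integrability of $\mathcal{V}$ (using $\dd\xi = 0$ and the Cartan calculus for $\dd\eta$) and the verification that $\eta$, not merely $\xi$, is basic with respect to the characteristic foliation --- which is exactly what the factor $\ker\dd\eta$ in the definition of $\mathcal{V}$ is there to guarantee. Everything else is coordinate bookkeeping, and all the substantive geometric content is inherited from the cocontact Darboux theorem.
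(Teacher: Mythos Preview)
Your proof is correct and is precisely the natural way to carry out what the paper only gestures at: the paper's entire proof is the single sentence ``The proof follows by a slight modification of that in \cite{cocontact} for cocontact manifolds,'' and your reduction to the cocontact Darboux theorem via the local leaf space of the characteristic foliation is exactly that modification made explicit. There is nothing to compare --- you have supplied the details the paper omits.
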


\subsection{Coisotropic submanifolds}

\begin{definition}[\textsc{Cocontact orthogonal}]
Let $(M, \xi, \eta)$ be a cocontact manifold, and let $W_m \subseteq \mathbf{T}_m M$ be a linear subspace of the tangent space at a point $m \in M$. The \textbf{cocontact orthogonal} of $W_m$ is the subspace
\be
W^{\perp_{(\xi, \eta)}}_m := \left\{\, X \in \operatorname{ker} \,\xi_m \cap \operatorname{ker}\,\eta_m  \colon  d\eta_m(X, Y) = 0 \quad \forall\, Y \in W_m \,\right\}.
\ee
\end{definition}

\begin{definition}[\textsc{Coisotropic submanifold of a cocontact manifold}]
Let $(M, \xi, \eta)$ be a cocontact manifold, and let $\mathfrak{i} \colon N \hookrightarrow M$ be an immersed submanifold. We say that $N$ is a \textbf{coisotropic submanifold} if, for every point $n \in N$, its cocontact orthogonal is contained within its tangent space:
\be
(\mathbf{T}_n N)^{\perp_{(\xi, \eta)}} \subseteq \mathbf{T}_n N.
\ee
\end{definition}

\subsection{Coisotropic embeddings}

\subsubsection{Existence}

\begin{theorem}[\textsc{Cocontact thickenings for pre-cocontact manifolds}]
\label{thm: cocontact_existence}
Let $(M, \xi, \eta)$ be a pre-cocontact manifold. There exists a cocontact manifold $(\widetilde{M}, \widetilde{\xi}, \widetilde{\eta})$ and an embedding
\[
\mathfrak{i} \colon  M \hookrightarrow \widetilde{M}
\]
such that $\mathfrak{i}(M)$ is a coisotropic submanifold of $\widetilde{M}$.
\begin{proof}
Consider the characteristic distribution $\mathcal{V} = \ker\xi \cap \ker\eta \cap \ker(d\eta)$ and an associated almost product structure $P$. 
The thickening space $\widetilde{M}$ is a tubular neighborhood of the zero section of the vector bundle ${\Lambda^1}^\perp_R(M)$. 
Let $\tau\colon \widetilde{M} \to M$ be the bundle projection. 
In a Darboux chart for the pre-cocontact structure, the adapted coordinates on $\widetilde{M}$ are $(s, t, q^a, p_a, z^A, \mu_A)$.

\noindent We define the new structure $(\widetilde{\xi}, \widetilde{\eta})$ on $\widetilde{M}$ by:
\begin{align}
\widetilde{\xi} &:= \tau^* \xi, \\
\widetilde{\eta} &:= \tau^* \eta + \vartheta^P,
\end{align}
where $\vartheta^P = \mu_A P^A$ is the tautological 1-form. 
In local coordinates, we have 
\begin{align}
\widetilde{\xi} \,&=\, \dd s \,, \\
\widetilde{\eta} \,&=\, \dd t - p_a \dd q^a + \mu_A P^A \,.
\end{align}

\noindent The form $\widetilde{\xi}$ is clearly closed. 
The new $1$-form $\widetilde{\eta}$ is constructed such that the term $d\vartheta^P$ introduces the necessary non-degenerate pairings between the new fiber coordinates $\mu_A$ and the base coordinates $z^A$, which were the source of the degeneracy. 
A direct computation analogous to the one performed in the case of pre-contact manifolds shows that $\widetilde{\xi} \wedge \widetilde{\eta} \wedge (d\widetilde{\eta})^r$ is non-vanishing in a neighborhood of the zero section, making $(\widetilde{M}, \widetilde{\xi}, \widetilde{\eta})$ a cocontact manifold.

\noindent The proof that $M$ is a coisotropic submanifold follows by showing that any vector $X \in (\mathbf{T}_m M)^{\perp_{(\widetilde{\xi}, \widetilde{\eta})}}$ must be tangent to $M$, which can be proved, again, by an analogous computation to the one performed in the pre-contact case.
\end{proof}
\end{theorem}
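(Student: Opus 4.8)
The plan is to repeat, essentially verbatim, the construction carried out for pre-contact manifolds in \cref{Thm: existence contact}, the only new feature being the extra closed $1$-form $\xi$, which will simply be pulled back and nothing more. First I would fix the characteristic distribution $\mathcal{V} = \ker\xi \cap \ker\eta \cap \ker\dd\eta$ and choose (via a Riemannian metric, say) a complementary distribution $\mathcal{H}$, hence an almost product structure encoded by a $(1,1)$-tensor $P$ which, in Darboux coordinates $(s,t,q^a,p_a,z^A)$ for $(\xi,\eta)$, reads $P = P^A\otimes\de/\de z^A$ with $P^A = \dd z^A - {P_s}^A\,\dd s - {P_t}^A\,\dd t - {P_q}^A_a\,\dd q^a - {P_p}^{Aa}\,\dd p_a$. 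I would then form the vector bundle ${\Lambda^1}^\perp_R(M)\to M$ together with its tautological $1$-form $\vartheta^P = \mu_A P^A$ (adapted coordinates $(s,t,q^a,p_a,z^A,\mu_A)$), take $\widetilde M$ to be a tubular neighborhood of the zero section, $\mathfrak{i}$ the zero section, and set $\widetilde\xi := \tau^*\xi$ and $\widetilde\eta := \tau^*\eta + \vartheta^P$. Since all of these objects are intrinsically defined, the local picture glues to a global one.

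The substantial step is to check that $(\widetilde M,\widetilde\xi,\widetilde\eta)$ is genuinely cocontact near the zero section. Closedness of $\widetilde\xi$ is free, as $\dd\widetilde\xi = \tau^*\dd\xi = 0$. For the non-degeneracy condition $\widetilde\xi\wedge\widetilde\eta\wedge(\dd\widetilde\eta)^{r+l}\neq 0$ (with $l=\dim\mathcal{V}$, so that $\dim\widetilde M = 2+2r+2l$), I would proceed exactly as in the pre-contact proof: write a general vector field $X = X_s\,\de/\de s + X_t\,H_t + {X_q}^a{H_q}_a + {X_p}_a{H_p}^a + {X_z}^A\,\de/\de z^A + {X_\mu}_A\,\de/\de\mu_A$ in the adapted frame, expand $i_X\dd\widetilde\eta$, and verify that its vanishing forces ${X_q}^a = {X_p}_a = {X_z}^A = {X_\mu}_A = 0$ on a tubular neighborhood of $\{\mu_A=0\}$ (and on all of ${\Lambda^1}^\perp_R(M)$ if the Nijenhuis tensor $\mathcal{N}_P$ vanishes). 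This says $\ker\dd\widetilde\eta = \langle\de/\de s,\,H_t\rangle$ there, a rank-$2$ distribution on which, along the zero section, $\widetilde\xi(\de/\de s)=1$, $\widetilde\eta(\de/\de s)=0$, $\widetilde\xi(H_t)=0$, $\widetilde\eta(H_t)=1$; by continuity this persists on a possibly smaller neighborhood, and it is precisely equivalent to $\widetilde\xi\wedge\widetilde\eta\wedge(\dd\widetilde\eta)^{r+l}\neq 0$. (Alternatively one may evaluate this top form directly on the zero section, where it reduces to a nonzero multiple of $\dd s\wedge\dd t\wedge\dd q^a\wedge\dd p_a\wedge\dd z^A\wedge\dd\mu_A$.) In passing, this identifies $\de/\de s$ and $H_t$ with the two Reeb fields $\widetilde R_\xi$, $\widetilde R_\eta$ near the zero section.

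Finally, for coisotropy, I would take $m\in\mathfrak{i}(M)$ and $X\in(\T_m M)^{\perp_{(\widetilde\xi,\widetilde\eta)}}$, i.e. $X\in\ker\widetilde\xi_m\cap\ker\widetilde\eta_m$ with $\dd\widetilde\eta_m(X,W)=0$ for all $W\in\T_m M$. On the zero section the $\mu_A\,\dd P^A$ contribution to $\dd\widetilde\eta$ drops out and the pairing $\dd\widetilde\eta_m(X,W)$ collapses to ${X_q}^a{W_p}_a - {W_q}^a{X_p}_a + {X_\mu}_A{W_z}^A$; requiring this to vanish for all $W$ tangent to $M$ yields ${X_q}^a={X_p}_a={X_\mu}_A=0$, after which $X\in\ker\widetilde\xi_m\cap\ker\widetilde\eta_m$ forces the remaining $\de/\de s$- and $H_t$-components to vanish too, so $X={X_z}^A\,\de/\de z^A\in\T_m M$, proving $\mathfrak{i}(M)$ coisotropic.

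I do not expect a conceptual obstacle: the whole argument is a transcription of the pre-contact case with one passive closed $1$-form carried along, and the closedness of $\xi$ makes the term $\tau^*\xi$ behave trivially. The one genuinely laborious point is the explicit expansion of $i_X\dd\widetilde\eta$ in the adapted frame and the bookkeeping that pins down $\ker\dd\widetilde\eta = \langle\de/\de s, H_t\rangle$ --- in particular, tracking the $\mathcal{N}_P$-dependent correction terms, which is exactly what forces one, in general, to restrict to a tubular neighborhood of the zero section rather than to work on the whole bundle.
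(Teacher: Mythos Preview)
Your proposal is correct and follows exactly the paper's approach; indeed the paper's own proof is a brief sketch that explicitly defers both the non-degeneracy and coisotropy computations to the pre-contact case, and you have filled in precisely those details. One cosmetic slip: near the zero section $\ker\dd\widetilde\eta$ is spanned by $H_s = \de/\de s + {P_s}^A\,\de/\de z^A$ and $H_t$, not by $\de/\de s$ and $H_t$ (since $i_{\de/\de s}(\dd\mu_A\wedge P^A) = -{P_s}^A\,\dd\mu_A$ need not vanish), but this changes nothing in the argument because $\widetilde\xi(H_s)=1$ and $\widetilde\eta(H_s)=0$ still hold on the zero section.
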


\subsubsection{Uniqueness}

Uniqueness of coisotropic embeddings in the pre-cocontact case does not hold, similar to the pre-contact and pre-cosymplectic scenario. However, as in the previous cases, topological uniqueness still holds:

\begin{theorem}
    Let $(M, \xi,\eta)$ be a pre-cocontact manifold and let $\mathfrak{i} \colon (M, \xi, \eta) \hookrightarrow (\widetilde M, \widetilde \xi, \widetilde \eta)$ be a coisotropic embedding. Then, it is neighborhood diffeomorphic to the one of \cref{thm: cocontact_existence}.
\end{theorem}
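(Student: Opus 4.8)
The plan is to follow the template of the topological uniqueness results for pre-cosymplectic and pre-contact manifolds (\cref{Sec: cosymplectic manifolds}, \cref{Sec: contact manifolds}), adapting it so as to carry along the two Reeb directions. Since we only claim a neighborhood \emph{diffeomorphism} and not a co-contactomorphism, no Moser-type argument will be needed: it will be enough to identify the normal bundle of $\mathfrak{i}(M)$ in $\widetilde M$ with the one of $M$ in the model thickening of \cref{thm: cocontact_existence}, and then to invoke the tubular neighborhood theorem.

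First I would set up the splitting on $M$. Writing $\mathcal{V} = \ker\xi\cap\ker\eta\cap\ker\dd\eta$ for the characteristic distribution, fixing Reeb vector fields $R_\xi, R_\eta$, and choosing a complement $\mathcal{H}$ of $\mathcal{V}$ inside $\ker\xi\cap\ker\eta$ (which, as one checks from the Darboux normal form, is symplectic for $\dd\eta$), one gets $\T M = \mathcal{H}\oplus\mathcal{V}\oplus\langle R_\xi\rangle\oplus\langle R_\eta\rangle$. Then I would transport this to $\widetilde M$: since a coisotropic embedding satisfies $\mathfrak{i}^\ast\widetilde\xi = \xi$ and $\mathfrak{i}^\ast\widetilde\eta = \eta$, the subbundle $\mathfrak{i}_\ast\mathcal{H}$ sits inside $(\ker\widetilde\xi\cap\ker\widetilde\eta)|_M$ and is symplectic there, so, writing $2r = \operatorname{rank}\mathcal{H}$ and $l = \operatorname{rank}\mathcal{V}$ — so that the symplectic bundle $\big((\ker\widetilde\xi\cap\ker\widetilde\eta)|_M, \dd\widetilde\eta\big)$ has rank $2r+2l$ — its $\dd\widetilde\eta$-orthogonal $\mathcal{H}^\perp$ has rank $2l$ and $(\ker\widetilde\xi\cap\ker\widetilde\eta)|_M = \mathcal{H}\oplus\mathcal{H}^\perp$. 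Coisotropicity together with the minimal-dimension remark gives $(\T M)^{\perp_{(\widetilde\xi,\widetilde\eta)}} = \mathcal{V}$, and $\dd\widetilde\eta|_\mathcal{V} = 0$ with $\operatorname{rank}\mathcal{V} = \tfrac12\operatorname{rank}\mathcal{H}^\perp$ makes $\mathcal{V}\subseteq\mathcal{H}^\perp$ Lagrangian; picking a Lagrangian complement $\mathcal{W}\cong\mathcal{V}^\ast$ yields $(\ker\widetilde\xi\cap\ker\widetilde\eta)|_M = \mathcal{H}\oplus\mathcal{V}\oplus\mathcal{W}$. Adjoining the rank-$2$ trivial bundle $\langle\widetilde R_\xi\rangle\oplus\langle\widetilde R_\eta\rangle$, transverse to the above, gives $\T\widetilde M|_M = \mathcal{H}\oplus\mathcal{V}\oplus\mathcal{W}\oplus\langle\widetilde R_\xi\rangle\oplus\langle\widetilde R_\eta\rangle$, and since $\mathfrak{i}_\ast R_\xi - \widetilde R_\xi$ and $\mathfrak{i}_\ast R_\eta - \widetilde R_\eta$ lie in $(\ker\widetilde\xi\cap\ker\widetilde\eta)|_M$, the image $\mathfrak{i}_\ast\T M$ meets $(\ker\widetilde\xi\cap\ker\widetilde\eta)|_M$ in exactly $\mathcal{H}\oplus\mathcal{V}$; hence the normal bundle $\T\widetilde M|_M / \mathfrak{i}_\ast\T M$ is isomorphic to $\mathcal{W}\cong\mathcal{V}^\ast$.

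Finally I would compare with the model: by \cref{remark:bigrading} the fibres of ${\Lambda^1}^\perp_R(M)$ appearing in \cref{thm: cocontact_existence} are canonically $\mathcal{V}^\ast$, so there too the normal bundle of $M$ is $\mathcal{V}^\ast$; the tubular neighborhood theorem then identifies a neighborhood of $\mathfrak{i}(M)$ in $\widetilde M$ and a neighborhood of $M$ in the model each with a neighborhood of the zero section of $\mathcal{V}^\ast$, by diffeomorphisms that are the identity on $M$, and composing them gives the claim. I expect the main obstacle to be precisely the algebraic step — showing pointwise that $\T\widetilde M|_M \cong \T\mathcal{V}^\ast|_M$ — because one must keep track of \emph{both} Reeb directions simultaneously and verify that the radical of $\dd\eta$ on $\ker\xi\cap\ker\eta$ is exactly $\mathcal{V}$, so that $\mathcal{H}$ is genuinely symplectic and the rank count above goes through; once this linear algebra is in place, the passage to a neighborhood diffeomorphism is routine.
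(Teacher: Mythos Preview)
Your proposal is correct and follows essentially the same route as the paper: the paper's proof is the single sentence ``the proof goes as in the contact case, taking a complement $\mathcal{H}$ to $\mathcal{V}\oplus\langle R_\xi\rangle\oplus\langle R_\eta\rangle$,'' and you have simply written out in full the symplectic--Lagrangian linear algebra and tubular-neighborhood step that this sentence encodes.
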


\begin{proof} The proof goes as in the contact case, taking a complement $\mathcal{H}$ to $\mathcal{V} \oplus \langle R_\xi\rangle \oplus \langle R_\eta \rangle$, where $R_\xi$ and $R_\eta$ are arbitrary Reeb vector fields on $M$.
\end{proof}

\begin{remark} Again, assuming that two coisotropic embeddings induce the same pair of Reeb vector fields on $M$ we may show that the diffeomorphism preserves the cocontact structure on $M$.
\end{remark}

Now, as for neighborhood equivalence goes, \cref{ex:cosymplectic_nonuniqueness} and \cref{example:contact_nonuniqueness} may be easily generalized to include a pre-cocontact manifold. Nevertheless, we can still prove a version of uniqueness, when both pairs of Reeb vector fields are tangent and proportional to each other. 

\begin{theorem} Let $(M, \xi, \eta)$ be a pre-cocontact manifold, and let $\mathfrak{i}_i \colon M \rightarrow \widetilde M$, be coisotropic embeddings for two different cocontact structures, $(\widetilde \xi_i, \widetilde \eta_i)$, for $i = 1,2$, such that $\widetilde \xi_1 = \widetilde \xi_2$, $\widetilde \eta_1 = \widetilde \eta_2$, and $\dd \widetilde \eta_1 = \dd  \widetilde \eta_2$ on $M$. Further suppose that both pairs of Reeb vector fields $(R^i_t, R^i_s)$ are proportional pair-wise. Then, the embeddings are neighborhood equivalent to each other. 
\end{theorem}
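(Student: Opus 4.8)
The plan is to follow the same two-stage strategy already used for pre-cosymplectic and pre-contact manifolds: first normalise the two structures on a neighbourhood of $M$ so that they share their Reeb data, and then connect them by Moser's trick. By the preceding topological-uniqueness theorem I may assume that both embeddings have the same target $\widetilde M$ and the same image, which I identify with $M$; so the task is to compare two cocontact structures $(\widetilde\xi_1,\widetilde\eta_1)$ and $(\widetilde\xi_2,\widetilde\eta_2)$ on $\widetilde M$ which, together with $\dd\widetilde\eta_1=\dd\widetilde\eta_2$, coincide along $M$, and whose Reeb pairs $(\widetilde R^1_t,\widetilde R^1_s)$, $(\widetilde R^2_t,\widetilde R^2_s)$ are pairwise proportional. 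As in the pre-contact case, the first step is a reduction: using that the Reeb vector fields of a cocontact structure are unique and commute, and that the two pairs are proportional and already equal on $M$, I would produce a diffeomorphism of a neighbourhood of $M$, equal to the identity on $M$, that carries one pair onto the other. After applying it I may assume the two structures have a common pair of Reeb vector fields $\widetilde R_t,\widetilde R_s$; in fact, inspecting the computation below, it is enough to match the $\eta$-Reeb fields $\widetilde R^i_t$, the $\xi$-Reeb fields being needed only up to proportionality.

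Next I would set up Moser's trick for the pair of forms. Put $\widetilde\xi_\sigma:=\sigma\widetilde\xi_2+(1-\sigma)\widetilde\xi_1$ and $\widetilde\eta_\sigma:=\sigma\widetilde\eta_2+(1-\sigma)\widetilde\eta_1$ for $\sigma\in[0,1]$; since non-degeneracy is an open condition and the two structures agree on $M$ (with their differentials), after shrinking the neighbourhood I may assume $(\widetilde\xi_\sigma,\widetilde\eta_\sigma)$ is a cocontact structure for every $\sigma$. A direct check shows that $\widetilde R_t$ and (after the reduction) $\widetilde R_s$ are its Reeb vector fields for every $\sigma$, because all the contractions defining the Reeb conditions hold at the two endpoints and are affine in $\sigma$; in particular $\ker\dd\widetilde\eta_\sigma=\langle\widetilde R_t,\widetilde R_s\rangle$ and $\dd\widetilde\eta_\sigma$ is symplectic on $\ker\widetilde\xi_\sigma\cap\ker\widetilde\eta_\sigma$. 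By \cref{thm:Moser_trick}, applied simultaneously to the two families as in \cref{Sec: cosymplectic manifolds}, it suffices to find a time-dependent vector field $X_\sigma$, vanishing on $M$, with $\Lie_{X_\sigma}\widetilde\xi_\sigma=\widetilde\xi_1-\widetilde\xi_2$ and $\Lie_{X_\sigma}\widetilde\eta_\sigma=\widetilde\eta_1-\widetilde\eta_2$; its time-one flow is then a diffeomorphism of a neighbourhood of $M$ preserving the cocontact structure and restricting to the identity on $M$.

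To produce $X_\sigma$ I would argue as follows. Both $\widetilde\xi_i$ are closed, so by the Relative Poincar\'e Lemma (\cref{thm:Relative_Poincaré_Lemma}) $\widetilde\xi_1-\widetilde\xi_2=\dd g$ for a function $g$ vanishing on $M$. I look for $X_\sigma$ of the form $X_\sigma=X_\sigma^{0}+g\,\widetilde R_s$ with $X_\sigma^{0}\in\ker\widetilde\xi_\sigma\cap\ker\widetilde\eta_\sigma$. The summand $g\,\widetilde R_s$ is tailored so that $i_{X_\sigma}\widetilde\xi_\sigma=g$, hence $\Lie_{X_\sigma}\widetilde\xi_\sigma=\dd g$ as wanted, and it contributes nothing to the $\widetilde\eta_\sigma$-equation since $\widetilde R_s\in\ker\widetilde\eta_\sigma\cap\ker\dd\widetilde\eta_\sigma$. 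Because $i_{X_\sigma^{0}}\widetilde\eta_\sigma=0$, Cartan's formula turns the remaining equation into $i_{X_\sigma^{0}}\dd\widetilde\eta_\sigma=\widetilde\eta_1-\widetilde\eta_2$. On the rank-$2n$ subbundle $\ker\widetilde\xi_\sigma\cap\ker\widetilde\eta_\sigma$ the map $X\mapsto i_X\dd\widetilde\eta_\sigma$ is an isomorphism onto the annihilator of $\langle\widetilde R_t,\widetilde R_s\rangle$, so this equation has a unique solution precisely when $\widetilde\eta_1-\widetilde\eta_2$ annihilates $\widetilde R_t$ and $\widetilde R_s$; and indeed $i_{\widetilde R_s}(\widetilde\eta_1-\widetilde\eta_2)=0-0=0$ and $i_{\widetilde R_t}(\widetilde\eta_1-\widetilde\eta_2)=1-1=0$, which is the only place the common $\eta$-Reeb field is used. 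Finally $X_\sigma$ vanishes on $M$: there $g=0$ and $\widetilde\eta_1-\widetilde\eta_2=0$, while $\dd\widetilde\eta_\sigma$ is non-degenerate on the subbundle, forcing $X_\sigma^{0}=0$ on $M$. Hence the flow of $X_\sigma$ is defined on $[0,1]$ on a possibly smaller neighbourhood of $M$, and composing its time-one map with the reduction diffeomorphism of the first step yields the desired neighbourhood equivalence.

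The step I expect to be the real obstacle is the reduction, i.e. promoting the pointwise proportionality of the two commuting Reeb pairs to a genuine diffeomorphism fixing $M$. For a single Reeb field this is the elementary reparametrisation argument already invoked in the pre-contact case; for the pair one additionally notes that commutativity of $(\widetilde R^2_t,\widetilde R^2_s)=(f\widetilde R^1_t,h\widetilde R^1_s)$ forces $\widetilde R^1_s(f)=0$ and $\widetilde R^1_t(h)=0$, so the rescaling can be integrated orbit by orbit and patched with a partition of unity while staying the identity on $M$ (where $f=h=1$). Alternatively one can dispense with the reduction altogether and mimic the pre-cosymplectic uniqueness proof, choosing primitives of $\widetilde\xi_1-\widetilde\xi_2$ and of $\dd\widetilde\eta_1-\dd\widetilde\eta_2$ as contractions of a Liouville vector field for a tubular neighbourhood and exploiting $i_{\widetilde R}(\dd\widetilde\eta_1-\dd\widetilde\eta_2)=0$ for either Reeb field $\widetilde R$ (valid by proportionality alone), at the cost of a more delicate bookkeeping with the homotopy operator of \cref{thm:Relative_Poincaré_Lemma}.
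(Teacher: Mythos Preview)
Your approach is essentially the same as the paper's: interpolate $(\widetilde\xi_\sigma,\widetilde\eta_\sigma)$, use the Relative Poincar\'e Lemma to write $\widetilde\xi_1-\widetilde\xi_2=\dd g$, and solve the Moser equations by choosing $X_\sigma\in\ker\widetilde\eta_\sigma$ with a prescribed $\widetilde\xi_\sigma$-component and a horizontal part determined by $i_{X_\sigma}\dd\widetilde\eta_\sigma=\widetilde\eta_1-\widetilde\eta_2$. Your ansatz $X_\sigma=X_\sigma^{0}+g\,\widetilde R_s$ with $X_\sigma^{0}\in\ker\widetilde\xi_\sigma\cap\ker\widetilde\eta_\sigma$ is a more explicit way of writing exactly the system the paper records as $i_{X_t}\widetilde\xi_t=f$, $\widetilde\eta_t(X_t)=0$, $i_{X_t}\dd\widetilde\eta_t=\widetilde\eta_1-\widetilde\eta_2$.

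The one point worth noting is the reduction step. In the contact case the paper explicitly normalises the Reeb vector fields to be equal before running Moser; in the cocontact case it appeals to ``a mix of the arguments employed in the cosymplectic and contact scenario'' and works directly with proportionality. You instead carry out the normalisation first and then observe that the solvability check $i_{\widetilde R_t}(\widetilde\eta_1-\widetilde\eta_2)=1-1=0$ genuinely uses equality of the $\eta$-Reeb fields, not mere proportionality; your assessment that this reduction is the only delicate step is accurate, and your sketch of how to promote proportional commuting Reeb pairs (with factors equal to $1$ on $M$) to equal ones is adequate for the level of the argument. Your final remark that one can alternatively bypass the reduction via the explicit homotopy operator of the Relative Poincar\'e Lemma is precisely the route the paper takes in the cosymplectic case, so both variants you describe are represented in the paper.
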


\begin{proof}

Define $\widetilde \xi_t := t \widetilde \xi_2 +(1-t) \widetilde \xi_1$ and $\widetilde \eta_t := t  \widetilde \eta_2 - \widetilde \eta_1$, for $t \in [0,1]$. Then, reducing neighborhoods further if necessary, we have that $(\widetilde \xi_t, \widetilde \eta_t)$ defines a cocontact manifold, for every $t \in [0,1]$. In order to apply Moser's trick, let us look for a time dependent vector field $X_t \in \mathfrak{X}(\widetilde M_1)$ satisfying the following
\begin{align*}
    &\widetilde \xi_2 - \widetilde \xi_1 + \Lie_{X_t} \widetilde \xi_t = 0\,,\\
    & \widetilde\eta_2 - \widetilde \eta_1+ \Lie_{X_t} \widetilde \eta_t = 0\,.
\end{align*}
Hence, we need to find a solution to
\begin{align*}
    \widetilde \xi_2 - \widetilde \xi_1 + \dd i_{X_t}  \xi_t = 0\,,\\
     \widetilde\eta_2 - \widetilde \eta_1+ \dd i_{X_t} \widetilde \eta_t+ i_{X_t} \dd \widetilde \eta_t = 0\,.
\end{align*}
Notice that the $1$-form $\widetilde \xi_2 - \widetilde \xi_1$ is closed and vanishes on $M$, so that we may apply the relative Poincaré Lemma and (again, reducing neighborhoods if necessary), and write $\widetilde \xi_2 - \widetilde \xi_1 = \dd f$, for certain function $f$ vanishing on $M$. Hence, it suffices to solve 
\begin{align*}
    i_{X_t}\widetilde \xi_t &=  f\,,\\
     \widetilde\eta_2 - \widetilde \eta_1+ \dd i_{X_t} \widetilde \eta_t+ i_{X_t} \dd \widetilde \eta_t &= 0\,.
\end{align*}

\noindent Recall that the corresponding Reeb vector fields are proportional, so that Reeb vector fields of the cocontact structure defined by $(\widetilde \xi_t, \widetilde \eta_t)$ remain proportional to the original. This implies that we may look for $X_t \in \ker \eta_t$ and solve for
\begin{align*}
    i_{X_t}\widetilde \xi_t &=  f\,,\\
      i_{X_t} \dd \widetilde \eta_t &= -\widetilde\eta_2 + \widetilde \eta_1\,,\\
      \widetilde \eta_t(X_t) &= 0\,.
\end{align*}
With a mix of the arguments employed in the cosymplectic and contact scenario, we see that such a vector field exists, and is unique. Furthermore, since the right hand side of all three condition vanishes on $M$, so does $X_t$, which implies that we may appy Moser's Trick and obtain the desired neighborhood equivalence.
\end{proof}

\section{k-Pre-symplectic manifolds}
\label{Sec: k-symplectic manifolds}

To our knowledge, the theory of $k$-symplectic manifolds is usually done by requiring a Darboux theorem, namely the existence of coordinate charts that transform the differential forms defining the structure into forms having constant coefficients. However, we work with a general family of $2$-forms such that their kernels intersect trivially. 
This allows for more general embeddings, but when working with manifolds which are locally isomorphic to \[\underbrace{\T^\ast Q \oplus \cdots \oplus \T^\ast Q}_{k \text{ times}}\,,\]
up to a kernel, or more general manifolds admitting a Darboux theorem (see \cite{darbouxvarios}), we do not recover another manifold with such a $k$-symplectic structure when performing the thickening procedure developed here.

\noindent Hence, we prove a general coisotropic embedding theorem for general $k$-pre-symplectic manifolds, at the risk of loosing structure on the thickening. 

\noindent Nevertheless, we deal with the question of classifying the coisotropic embeddings of a $k$-pre-symplectic manifold with certain local model into one where the model is \[\underbrace{\T^\ast Q \oplus \cdots \oplus \T^\ast Q}_{k \text{ times}}\] in the uniqueness section. We mention that, as explained in the introduction, uniqueness in geometries modelling field theories (if ti exists) will be only found topologically, through a diffeomorphism that preserves the structures on the submanifold.

\begin{definition}[\textsc{k-Pre-symplectic manifold}]
\label{Def: k-Pre-symplectic manifold}
A \textbf{k-pre-symplectic manifold} is a pair $(M, \{\omega_1, \dots, \omega_k\})$ where $M$ is a smooth $d$-dimensional manifold, and $\{\omega_1, \dots, \omega_k\}$ is a collection of $k$ closed differential $2$-forms, $\omega_j \in \Omega^2(M)$ for $j=1,\dots,k$.
\end{definition}

\begin{remark}
As in \cref{Sec: Pre-symplectic manifolds}, we will always assume that each form $\omega_j$ has constant rank. 
Furthermore, we assume that the intersection of the kernels has constant rank.
\end{remark}

\begin{definition}[\textsc{Characteristic distribution}]
The distribution
\be
\mathcal{V} := \bigcap_{j=1}^k \ker \omega_j \subset \mathbf{T}M
\ee
is called the \textbf{characteristic distribution} of the k-pre-symplectic structure. 
Since each $\omega_j$ is closed, one easily proves that $\mathcal{V}$ is an involutive distribution and thus, it is integrable.
\end{definition}

\begin{remark}[\textsc{k-Symplectic manifold}]
When the intersection of the kernels is trivial, i.e., $\mathcal{V} = \{0\}$, the collection of forms $\{\omega_1, \dots, \omega_k\}$ is said to be non-degenerate, and the pair $(M, \{\omega_1, \dots, \omega_k\})$ is referred to as a \textbf{k-symplectic manifold}.
\end{remark}

\begin{remark} As we mentioned, the notion of $k$-symplectic manifold present in the literature is more restrictive. We prefer this general approach because of the wider possibilities of applications in classical field theories.
\end{remark}

\noindent As we have established, our notion of $k$-symplectic manifold is more general than the one that may be found in the literature. For completeness, we state the Darboux theorem in the latter scenario for both $k$-symplectic and $k$-pre-symplectic manifolds.

\begin{definition}[$k$-\textsc{Lagrangian distribution}]
A distribution $\mathcal{L}$ on a $k$-symplectic manifold $(M, \{\omega_1, \dots, \omega_k\})$ is called {$k$-\bf Lagrangian} if for every $m \in M$
\[
\mathcal{L}|_m = \{v \in \T_m M \colon \,,\omega_j(v, {\mathcal{L}|_m}) = 0\,, \text{ for } j = 1, \dots k\}\,.
\]
\end{definition}

\begin{theorem}[\textsc{Darboux theorem for k-symplectic manifolds}]
\label{Thm: Darboux k-symplectic}
Let $(M, \{\omega_1, \dots, \omega_k\})$ be a k-symplectic manifold of dimension $n + nk$, for certain $k$. If there exists a Lagrangian involutive distribution on $M$ of rank $nk$, then for every point $m \in M$, there exists a coordinate chart $(U, \varphi)$ with coordinates $(q^a, p_a^j)$, with $a = 1, \dots, n$ such that the $k$-symplectic forms read
\be
\omega_j|_U = \sum_{a=1}^n \dd q^a \wedge \dd p_a^j, \quad \text{for } j=1,\dots,k.
\ee
\begin{proof}
See \cite{silvia1,silvia2}.
\end{proof}
\end{theorem}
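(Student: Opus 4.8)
The plan is to run the classical G\"unther--Awane-type argument for the $k$-symplectic Darboux theorem, using the given Lagrangian involutive distribution as the organizing structure. Write $\mathcal{L}$ for the involutive $k$-Lagrangian distribution of rank $nk$. The first step is to apply the Frobenius Theorem to $\mathcal{L}$: around any point $m\in M$ we obtain coordinates $(x^1,\dots,x^n,\,y^1,\dots,y^{nk})$ in which $\mathcal{L}=\operatorname{span}\{\de/\de y^\alpha\}_{\alpha=1,\dots,nk}$. Since $\mathcal{L}$ is $k$-Lagrangian, in particular $\omega_j(\de/\de y^\alpha,\de/\de y^\beta)=0$ for all $\alpha,\beta$ and all $j$, so each $\omega_j$ has no $\dd y^\alpha\wedge\dd y^\beta$ component; equivalently $\omega_j=\dd x^i\wedge\beta^j_i$ for suitable local $1$-forms $\beta^j_i$.

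Next I would produce, for each $j$, a primitive of $\omega_j$ of the special form $\theta_j=-p^j_i\,\dd x^i$. As each $\omega_j$ is closed, the Poincar\'e Lemma gives $\omega_j=\dd\lambda_j$ locally; split $\lambda_j=\lambda_j^{(x)}+\lambda_j^{(y)}$ into its $\dd x$-part and its $\dd y$-part. The $\dd y\wedge\dd y$ component of $\dd\lambda_j$ comes only from $\dd\lambda_j^{(y)}$, and it must vanish by the previous step, so $\lambda_j^{(y)}$ is closed along each leaf of $\mathcal{L}$; applying the Poincar\'e Lemma leaf-wise (smoothly in the leaf parameter $x$) we write $\lambda_j^{(y)}=\dd h^j$ minus its $\dd x$-part for a function $h^j$, whence $\theta_j:=\lambda_j-\dd h^j$ is still a primitive of $\omega_j$ involving only the $\dd x^i$. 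Writing its coefficients as $-p^j_i$ gives $\omega_j=\dd x^i\wedge\dd p^j_i$. (The Relative Poincar\'e Lemma proved earlier could replace the plain Poincar\'e Lemma at the first step, applied to the leaf $\{x=0\}$ of $\mathcal{L}$, on which every $\omega_j$ vanishes; it yields primitives vanishing on that leaf, but the leaf-wise correction by $\dd h^j$ is still needed to remove the $\dd y$-part.)

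It then remains to check that $(x^1,\dots,x^n,\,p^j_i)$ is a genuine coordinate system near $m$, and this is where the non-degeneracy hypothesis $\bigcap_{j}\ker\omega_j=\{0\}$ enters. The Jacobian of these $n+nk$ functions with respect to $(x^i,y^\alpha)$ is block triangular, with an identity block and the $nk\times nk$ matrix $(\de p^j_i/\de y^\alpha)$ on the diagonal, so invertibility of the Jacobian reduces to invertibility of that matrix. If a nonzero $y$-direction $v=v^\alpha\de/\de y^\alpha$ were annihilated by all the $\dd p^j_i$, then, using $\dd x^i(v)=0$, we would get $i_v\omega_j=i_v(\dd x^i\wedge\dd p^j_i)=\bigl(\dd p^j_i(v)\bigr)\,\dd x^i=0$ for every $j$, contradicting $\bigcap_j\ker\omega_j=\{0\}$. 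Hence $(x^i,p^j_i)$ is a chart; relabelling $x^i\mapsto q^a$ and $p^j_i\mapsto p^j_a$, and choosing the sign of $\theta_j$ so that $\omega_j=\dd q^a\wedge\dd p^j_a$ rather than $\dd p^j_a\wedge\dd q^a$, completes the proof.

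The Frobenius normalization and the two Poincar\'e-type integrations are routine; the points where I expect the actual work to lie are (i) arranging the primitives $\theta_j$ in the adapted form $-p^j_i\,\dd x^i$ simultaneously for all $j$, and (ii) deducing functional independence of the $p^j_i$ along the $y$-directions from the \emph{joint} condition $\bigcap_j\ker\omega_j=\{0\}$ rather than from non-degeneracy of any single $\omega_j$ — which in fact fails here as soon as $k\ge 2$, since an isotropic $\mathcal{L}$ of rank $nk>n$ forces each individual $\omega_j$ to be degenerate.
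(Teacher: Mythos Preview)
The paper does not actually prove this statement: its ``proof'' is the single line ``See \cite{silvia1,silvia2}.'' Your argument, by contrast, is a complete and correct self-contained proof along the classical G\"unther--Awane lines (Frobenius normal form for the involutive $k$-Lagrangian distribution, leaf-wise Poincar\'e integration to put each primitive in the form $-p^j_i\,\dd x^i$, and then the joint non-degeneracy $\bigcap_j\ker\omega_j=\{0\}$ to force functional independence of the $p^j_i$ along the leaf directions). The only places worth tightening are the smooth parametric Poincar\'e step for $h^j$ (standard, but worth one sentence) and the remark that the sign convention in $\theta_j=-p^j_i\,\dd x^i$ already gives $\omega_j=\dd x^i\wedge\dd p^j_i$, so no further sign choice is needed at the end.
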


\begin{remark} \cref{Thm: Darboux k-symplectic} implies that under the assumption that $\dim M = n + nk$ and that there exists a Lagrangian involutive distribution of rank $nk$, $(M, \{\omega_1, \dots, \omega_k\})$ is locally isomorphic to $\underbrace{\T^\ast Q\oplus \cdots \oplus \T^\ast Q}_{k \text{ times}}$, together with its canonical $k$-symplectic structure (see \cite{silvia1, silvia2}). We would like to mention that less restrictive conditions on a $k$-pre-symplectic manifold have been obtained in order to guarantee a Darboux theorem in \cite{darbouxvarios}.
\end{remark}

\noindent In full generality, we can only guarantee the following:

\begin{theorem}[\textsc{Darboux theorem for k-pre-symplectic manifolds}]
\label{Thm: Darboux k-pre-symplectic}
Let $(M, \{\omega_1, \dots, \omega_k\})$ be a k-pre-symplectic manifold, such that its characteristic distribution $\mathcal{V}$ has constant rank $l$.
Then, for every point $m \in M$, there exists a coordinate chart $(U, \varphi)$ around $m$, with coordinates $(x^1, \dots, x^{d-l}, z^1, \dots, z^l)$, such that the characteristic distribution is locally spanned by $\{\frac{\partial}{\partial z^A}\}_{A=1,\dots,l}$, and the 2-forms $\omega_j$ depend only on the coordinates $x^a$:
\be
\omega_j|_U = \omega_j(x).
\ee
Thus, Darboux coordinates for the k-pre-symplectic structure are foliated charts for the foliation associated with the characteristic distribution.
\end{theorem}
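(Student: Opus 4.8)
The plan is short, since the statement is essentially the Frobenius theorem together with a one-line consequence of closedness. First I would record that the characteristic distribution $\mathcal{V} = \bigcap_{j=1}^k \ker\omega_j$ is involutive: for sections $X, Y$ of $\mathcal{V}$, closedness of each $\omega_j$ gives $i_{[X,Y]}\omega_j = \Lie_X i_Y\omega_j - i_Y\Lie_X\omega_j = -\,i_Y\big(d\,i_X\omega_j + i_X d\omega_j\big) = 0$, so $[X,Y]$ is again a section of $\mathcal{V}$. Since $\mathcal{V}$ has constant rank $l$ by hypothesis, the Frobenius theorem produces, around any $m\in M$, a chart with coordinates $(x^1,\dots,x^{d-l},z^1,\dots,z^l)$ in which $\mathcal{V} = \operatorname{span}\{\partial/\partial z^1,\dots,\partial/\partial z^l\}$; such a chart is, by definition, a foliated chart for the characteristic foliation, so the last assertion of the theorem is automatic.

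Next I would determine the shape of the $\omega_j$ in this chart. Each $\partial/\partial z^A$ lies in $\mathcal{V}\subseteq\ker\omega_j$, hence $i_{\partial/\partial z^A}\omega_j = 0$ for all $A$; expanding $\omega_j$ in the basis $\{\dd x^a, \dd z^A\}$ and contracting with the $\partial/\partial z^A$ forces every component of $\omega_j$ carrying a $\dd z$-factor to vanish. Equivalently, in the decomposition $\Lambda^2(M) = \bigoplus_{i=0}^{2}\Lambda^i\mathcal{V}^\ast\otimes\Lambda^{2-i}\mathcal{H}^\ast$ of \cref{remark:bigrading} (for any complement $\mathcal{H}$ adapted to the chart), $\omega_j$ sits in the purely horizontal summand $\Lambda^2\mathcal{H}^\ast$, so $\omega_j|_U = \tfrac12 (\omega_j)_{ab}\,\dd x^a\wedge \dd x^b$ with coefficients a priori depending on all the coordinates $(x,z)$.

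Finally I would impose $\dd\omega_j = 0$. Writing $\dd\omega_j = \tfrac12\,\partial_{x^c}(\omega_j)_{ab}\,\dd x^c\wedge \dd x^a\wedge \dd x^b + \tfrac12\,\partial_{z^A}(\omega_j)_{ab}\,\dd z^A\wedge \dd x^a\wedge \dd x^b$ and splitting $\Lambda^3(M)$ by the number of $\dd z$-factors, the two summands belong to different subbundles and so must vanish separately; the second one yields $\partial_{z^A}(\omega_j)_{ab}=0$ for every $A$, i.e. the coefficients depend only on the $x^a$. Hence $\omega_j|_U = \omega_j(x)$, which is the claim.

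There is no real obstacle here: the only thing requiring care is the bookkeeping of the bigrading of forms induced by $\mathbf{T}M = \mathcal{H}\oplus\mathcal{V}$, and — in contrast with the symplectic Darboux theorem — no Moser-type normalization is needed. I would stress, though, that this is genuinely the best one can do at this level of generality: bringing the $\omega_j$ to constant-coefficient form, as in \cref{Thm: Darboux k-symplectic}, requires the additional hypotheses there (a Lagrangian involutive distribution of the appropriate rank) and fails for arbitrary $k$-pre-symplectic structures.
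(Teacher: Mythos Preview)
Your proof is correct and follows the same strategy as the paper's: invoke integrability of $\mathcal{V}$ (as the intersection of kernels of closed forms), apply Frobenius to obtain a foliated chart, and then use $\partial/\partial z^A\in\ker\omega_j$ together with $\dd\omega_j=0$ to conclude that the $\omega_j$ depend only on the $x$-coordinates. The paper's own proof is a terse two-sentence version of exactly this argument, whereas you have spelled out the involutivity computation and the bigrading bookkeeping explicitly.
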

\begin{proof}
Since we are assuming that the characteristic distribution is regular, it is completely integrable (as it arises as the kernel of closed forms). It is enough to take a coordinate chart $(U, \varphi)$ around an arbitrary point $m \in M$ such that $(x^1, \dots, x^{d-l}, z^1, \dots, z^{l})$ is a foliated chart for the integrable distribution $\mathcal{V}$. The fact that $\omega_j|_U$ is closed and only depends on $(x^1, \dots, x^{d-l})$ follows from the fact that $\{\frac{\partial}{\partial z^A}\}_{A = 1, \dots, l}$ generate the characteristic distribution.
\end{proof}

\subsection{Coisotropic submanifolds}

\begin{definition}[$\ell$-\textsc{Coisotropic submanifold of a $k$-symplectic manifold}]
Let $(M, \{\omega_1, \dots, \omega_k\})$ be a $k$-symplectic manifold and let $\mathfrak{i} \colon N \hookrightarrow M$ be an immersed submanifold.
We say that $N$ is a $\ell$-\textbf{coisotropic submanifold} of $(M, \{\omega_j\})$ if, for every point $n \in N$
\be
\left( \mathbf{T}_n N \right)^{{\ell,\,\perp}_{\{\omega\}}} \,\subseteq\, \mathbf{T}_n N \,,
\ee
where the $\ell$-orthogonal is defined as
\be
\left( \mathbf{T}_n N \right)^{{\ell,\,\perp}_{\{\omega\}}} \,=\, \left\{\, X \in \T_nM  \colon  i_X \omega_1(Y) \,=\, ...\,=\, i_X \omega_\ell(Y) \,=\, 0 \,, \;\; \forall \,\, Y \in \T_n N \,\right\} \,.
\ee
\end{definition}

\begin{remark}
Again, coisotropic embeddings are related to the search of minimal embeddings of a $k$-pre-symplectic manifold into a $k$-symplectic manifold. Indeed, in general, the $k$-th orthogonal must contain the characteristic distribution, so that when they are equal (the coisotropic case), there is not a coisotropic embedding into a smaller subspace.

\noindent Notice the difference between the statement in \cref{remark:symplectic_minimal_dimension} and the $k$-symplectic setting. Indeed, we cannot guarantee that a particular $k$-coisotropic embedding is the one with minimal dimension (unlike in the symplectic setting), only that there is no proper subspace which contains the $k$-pre-symplectic subspace and is $k$-symplectic.

\end{remark}

\subsection{Coisotropic embeddings}

\subsubsection{Existence}

\begin{theorem}[\textsc{k-Symplectic thickenings for k-pre-symplectic manifolds}]
\label{Thm: existence k-pre-symplectic}
Let $(M, \{\omega_1, \dots, \omega_k\})$ be a k-pre-symplectic manifold.
There always exists a k-symplectic manifold $(\widetilde{M},\{\widetilde{\omega}_1, \dots, \widetilde{\omega}_k\})$ and an embedding
\be
\mathfrak{i} \colon  M \hookrightarrow \widetilde{M} \,,
\ee
such that $\mathfrak{i}(M)$ is a $k$-coisotropic submanifold of $\widetilde{M}$.
The k-symplectic manifold $(\widetilde{M},\{\widetilde{\omega}_j\})$ is referred to as a \textbf{k-symplectic thickening} of $(M,\{\omega_j\})$.
\begin{proof}
The proof strategy is a direct generalization of the pre-symplectic case. 

\noindent We consider the characteristic distribution $\mathcal{V} := \bigcap_{j=1}^k \ker \omega_j$, which is assumed to have constant rank, and a complementary distribution $\mathcal{H}$ defining an almost product structure on $M$ via a $(1,\,1)$-tensor $P$ with $\operatorname{Im}(P) = \mathcal{V}$.
The thickening space $\widetilde{M}$ is constructed as a tubular neighborhood of the zero section of the vector bundle ${\Lambda^1}^\perp_R(M)$ over $M$, where $R = \mathds{1}-P$. 
Let $\tau\colon\widetilde{M} \to M$ be the bundle projection.

\noindent The key idea is to apply the construction from the pre-symplectic case to each form $\omega_j$ using the same thickening space $\widetilde{M}$ and the same tautological form $\vartheta^P$ built from the common characteristic distribution $\mathcal{V}$.
Let $\vartheta^P$ be the tautological 1-form on $\widetilde{M}$ defined as in the pre-symplectic case. In local Darboux coordinates $(x^a, z^A)$ for the foliation and adapted coordinates $(x^a, z^A, \mu_A)$ on $\widetilde{M}$, it reads $\vartheta^P = \mu_A P^A$.
We then define a collection of $k$ 2-forms on $\widetilde{M}$ as follows:
\be
\widetilde{\omega}_j := \tau^* \omega_j + \dd \vartheta^P, \quad \text{for } j=1, \dots, k.
\ee
Each $\widetilde{\omega}_j$ is closed since both $\tau^* \omega_j$ and $\dd\vartheta^P$ are closed. The term $\dd\vartheta^P$ is independent of $j$ and is precisely the term that "repairs" the degeneracy of all the $\omega_j$ along the directions of $\mathcal{V}$.
An analogous computation to the one performed in the pre-symplectic case shows that the intersection of the kernels of the new forms is trivial, $\bigcap_{j=1}^k \ker \widetilde{\omega}_j = \{0\}$, in a tubular neighborhood of the zero section. 
Thus, $(\widetilde{M}, \{\widetilde{\omega}_j\})$ is a $k$-symplectic manifold.

\noindent Regarding coisotropicity, also in this case, an analogous computation to the one performed in the pre-symplectic case shows that $(M,\, \{\,\omega_1,\,...,\,\omega_k\,\})$ is an $\ell$-coisotropic submanifold of $(\widetilde{M},\, \{\,\widetilde{\omega}_1,\,...,\,\widetilde{\omega}_k\,\})$ for $\ell \,=\, k$.
\end{proof}
\end{theorem}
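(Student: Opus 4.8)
The plan is to mimic the single-form construction of \cref{Sec: Pre-symplectic manifolds}, now using the \emph{common} characteristic distribution $\mathcal{V} = \bigcap_{j=1}^{k}\ker\omega_j$ as the one object carrying the correction. First I would choose a complementary distribution $\mathcal{H}$, obtaining an almost product structure with $(1,1)$-tensor $P$, $\operatorname{Im} P = \mathcal{V}$, $\ker P = \mathcal{H}$, and then form the vector bundle ${\Lambda^1}^\perp_R(M)$ (with $R = \mathds{1} - P$) over $M$, with projection $\tau$ and its tautological $1$-form $\vartheta^P$. Working in the foliated (Darboux) charts of \cref{Thm: Darboux k-pre-symplectic}, the thickening $\widetilde M$ is a tubular neighbourhood of the zero section of this bundle (the whole bundle if $\mathcal{N}_P = 0$), $\mathfrak{i}$ is the zero section, and
\[
\widetilde\omega_j := \tau^*\omega_j + \dd\vartheta^P\,, \qquad j = 1,\dots,k\,.
\]
Each $\widetilde\omega_j$ is closed at once. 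The point to notice is that the correction $\dd\vartheta^P$ is the \emph{same} for every $j$: it is built to repair precisely the directions of $\mathcal{V}$, which is by definition the common kernel of the $\omega_j$.

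Second, I would verify $\bigcap_{j=1}^{k}\ker\widetilde\omega_j = \{0\}$ near the zero section. In adapted coordinates $(x^a, z^A, \mu_A)$ this is exactly the computation already done for one $2$-form, run in parallel for all $j$: contracting a tangent vector $X$ with each $\widetilde\omega_j$, the $\dd\mu_A$-components kill the $\partial/\partial z^A$-part of $X$; the $P^A$-components (at $\mu = 0$, or everywhere when $\mathcal{N}_P = 0$) kill the $\partial/\partial\mu_A$-part; and the $\dd x^a$-components give $i_X\omega_j = 0$ for every $j$, forcing the remaining $\mathcal{H}$-valued part of $X$ into $\mathcal{H}\cap\bigcap_j\ker\omega_j = \mathcal{H}\cap\mathcal{V} = \{0\}$. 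Hence $(\widetilde M,\{\widetilde\omega_j\})$ is $k$-symplectic.

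Third, for the $k$-coisotropicity of $\mathfrak{i}(M)$ I would evaluate, at a point $m$ of the zero section, $\widetilde\omega_j(X,Y)$ for an arbitrary $X\in\T_m\widetilde M$ and $Y\in\T_mM$. At $\mu = 0$ this equals $\omega_j(X,Y) + X_\mu^A Y_z^A$ (in the evident component notation); demanding that it vanish for all $Y\in\T_mM$ and all $j=1,\dots,k$ forces the $\partial/\partial\mu_A$-part of $X$ to vanish and pushes the $\mathcal{H}$-part of $X$ into $\mathcal{H}\cap\bigcap_j\ker\omega_j = \{0\}$, so $X\in\mathcal{V}\subseteq\T_mM$. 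Thus $(\T_m M)^{{k,\,\perp}_{\{\omega\}}}\subseteq\T_m M$, i.e. $\mathfrak{i}(M)$ is $\ell$-coisotropic with $\ell = k$.

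The only real thing to get right — more a matter of care than a genuine obstacle — is that a single copy of $\dd\vartheta^P$ simultaneously removes the degeneracy of all $k$ forms. Individually, $\widetilde\omega_j$ may still be degenerate, since $\ker\omega_j$ can be strictly larger than $\mathcal{V}$; what rescues the argument is that $k$-symplecticity demands only that the \emph{intersection} of the kernels be trivial, and the identities $\mathcal{V} = \bigcap_j\ker\omega_j$ and $\mathcal{V}\cap\mathcal{H} = \{0\}$ are exactly what make the intersection of the corrected kernels vanish. Everything else is the pre-symplectic bookkeeping, performed $k$-fold.
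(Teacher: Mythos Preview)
Your proposal is correct and follows essentially the same approach as the paper: the same bundle ${\Lambda^1}^\perp_R(M)$, the same single correction $\dd\vartheta^P$ added to every $\tau^*\omega_j$, and the same reduction of both non-degeneracy and $k$-coisotropicity to the pre-symplectic computation run in parallel. If anything, your explicit remark that individual $\widetilde\omega_j$ may remain degenerate while only $\bigcap_j\ker\widetilde\omega_j=\{0\}$ is required---via $\mathcal{H}\cap\mathcal{V}=\{0\}$---is a clarification the paper leaves implicit.
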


\subsubsection{Uniqueness}

In this case, and in the subsequent ones, the uniqueness of coisotropic embeddings is not guaranteed, not even topologically. 
In order to discuss uniqueness, given the generality of the geometric structures we deal with, we need to restrict to particular cases, where, given the $k$-pre-symplectic manifold $(M, \{\omega_1, \dots, \omega_k\})$ (with certain point-wise structure arising), we demand that its $k$-symplectic thickening $(\widetilde M, \{\widetilde \omega_1, \dots, \widetilde \omega_k\})$ has certain point-wise structure when restricted to $M$.

\noindent In particular, we focus on two different cases, one in which we find topological uniqueness of the embeddings and the other in which we do not even find topological uniqueness. 

\noindent Let us begin with the second case.
Let $Q \hookrightarrow P$ be an embedded submanifold of a given manifold $P$, and define the canonical $k$-symplectic manifold \[\widetilde M := \underbrace{\T^\ast P \oplus \cdots \oplus \T^\ast P}_{k \text{ times}}\,,\] together with the $k$-coisotropic submanifold \[M:= \underbrace{\T^\ast P |_Q \oplus \cdots \oplus \T^\ast P |_Q}_{k \text{ times}}\,.\] Denote by $\{\widetilde\omega_1, \dots, \widetilde \omega_k\}$ the canonical $2$-forms on $\widetilde M$, and by $\{\omega_1, \dots, \omega_k\}$ the canonical $2$-forms on $M$, induced by the natural inclusion 
\[
\mathfrak{i} \colon M \hookrightarrow \widetilde M\,.
\]
We will show that in certain cases, there exists a different embedding 
\[
\mathfrak{i}_2 \colon M \hookrightarrow \widetilde M_2 \,,
\]
where $\widetilde M_2$ is $k$-symplectomorphic to $\widetilde M$ but such that the embedding is not neighborhood equivalent to the canonical $ \mathfrak{i} \colon M \hookrightarrow \widetilde M$. In fact, we may prove a more general statement, characterizing all embeddings topologically, where equivalence is given by diffeomorphism $\psi \colon U \rightarrow U_2$, where $U$ and $U_2$ are neighborhoods of $M$ in $\widetilde M$ and $\widetilde M_2$, respectively; and such that $\psi$ preserves the $k$-symplectic structures on $M$.

\begin{theorem}
\label{thm:k_symplectic_nonuniqueness}
Let $(M, \omega_1, \dots, \omega_k)$ be a $k$-pre-symplectic manifold which is locally $k$-symplectomorphic to $\underbrace{\T^\ast P |_Q \oplus \cdots \oplus \T^\ast P |_Q}_{k \text{ times}}$. Let $\mathcal{V}$ denote its characteristic distribution. Then, coisotropic embeddings up to topological equivalence as described above $M \hookrightarrow \widetilde M$, where the $k$-symplectic structure on $\widetilde M$ is isomorphic to the canonical on $M$, are in bijection with vector bundles
\[
\mathcal{K}^\ast \rightarrow M\,,
\]
such that $\underbrace{\mathcal{K} \oplus \cdots \oplus \mathcal{K}}_{k \text{ times}} \cong \mathcal{V}$.
\end{theorem}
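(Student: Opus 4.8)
The strategy is to show that the "topological data" of a coisotropic embedding of such a $k$-pre-symplectic manifold is captured entirely by the way the characteristic distribution $\mathcal{V}$ is filled in as the vertical part of a diagonal-type thickening. Recall from the existence theorem that every coisotropic embedding arises (up to neighborhood diffeomorphism preserving the structures on $M$) from a choice of almost product structure $P$ and the bundle ${\Lambda^1}^\perp_R(M)$, and all the $k$ thickened forms share the same "repairing" term $\dd\vartheta^P$. The point is that when the local model is forced to be $\underbrace{\T^\ast P\oplus\cdots\oplus \T^\ast P}_{k}$, the thickening must be built so that the new fiber directions enter all $k$ forms simultaneously and identically; concretely, one is forced to introduce a single vector bundle $\mathcal{K}^\ast\to M$ and thicken by $\underbrace{\mathcal{K}^\ast\oplus\cdots\oplus\mathcal{K}^\ast}_{k}$, with the requirement that $\underbrace{\mathcal{K}\oplus\cdots\oplus\mathcal{K}}_{k}\cong\mathcal{V}$ so that the characteristic distribution of the thickened structure, restricted to $M$, is exactly the old $\mathcal{V}$ (this is the $k$-symplectic analogue of the identification $\T\widetilde M|_M\cong \mathcal{H}\oplus\mathcal{V}\oplus\mathcal{V}^\ast$ from the pre-symplectic case, but now the $\mathcal{V}^\ast$-slot is distributed across $k$ copies).

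First I would set up the forward map: given a coisotropic embedding $\mathfrak{i}\colon M\hookrightarrow \widetilde M$ with $\widetilde M$ carrying a $k$-symplectic structure isomorphic, near $M$, to the canonical one on the diagonal model, I would analyze $\T\widetilde M|_M$ together with the $k$ forms $\{\widetilde\omega_j\}$. As in the pre-symplectic uniqueness argument (Theorem \ref{Thm: uniqueness pre-symplectic}), pick a complement $\mathcal{H}$ to $\mathcal{V}$ in $\T M$; then $\mathcal{H}$ is "$k$-symplectic" in $\T\widetilde M|_M$ and its $k$-orthogonal $\mathcal{H}^\perp$ contains $\mathcal{V}$ as a maximal ($k$-Lagrangian) subbundle. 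The normal bundle $\nu := \mathcal{H}^\perp/\mathcal{V}$ is then the new fiber data; the diagonal local model forces $\nu$ to be of the form $\underbrace{\mathcal{K}^\ast\oplus\cdots\oplus\mathcal{K}^\ast}_{k}$ for a single bundle $\mathcal{K}^\ast$, and the $k$-Lagrangian/pairing conditions force $\underbrace{\mathcal{K}\oplus\cdots\oplus\mathcal{K}}_{k}\cong\mathcal{V}$. I would then define the map sending the embedding to the isomorphism class of $\mathcal{K}^\ast\to M$. Conversely, given such a $\mathcal{K}^\ast$ with $\underbrace{\mathcal{K}\oplus\cdots\oplus\mathcal{K}}_{k}\cong\mathcal{V}$, I would run the construction of Theorem \ref{Thm: existence k-pre-symplectic} but replacing the single tautological bundle ${\Lambda^1}^\perp_R(M)$ by $\underbrace{\mathcal{K}^\ast\oplus\cdots\oplus\mathcal{K}^\ast}_{k}$, putting a tautological-type $1$-form $\vartheta^P_j$ in each slot, and setting $\widetilde\omega_j := \tau^\ast\omega_j + \dd\vartheta^P_j$; a computation analogous to the existence proof shows this is $k$-symplectic near the zero section, has the diagonal local model, and makes $M$ $k$-coisotropic.

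Then I would verify that these two assignments are mutually inverse up to the stated topological equivalence. One direction: starting from $\mathcal{K}^\ast$, building the thickening, and extracting the normal data recovers $\mathcal{K}^\ast$ by construction. The other direction: starting from an arbitrary embedding, I would use a Moser-type / relative-Poincaré argument (as in Theorems \ref{thm:Moser_trick} and \ref{thm:Relative_Poincaré_Lemma}, and exactly in the spirit of Theorem \ref{Thm: uniqueness pre-symplectic}) to produce a neighborhood diffeomorphism, identity on $M$, carrying the given $k$-symplectic structure to the model one built from $\mathcal{K}^\ast := (\mathcal{H}^\perp/\mathcal{V})$-data; here one must be careful that the $k$ forms differ from the model by closed $2$-forms vanishing on $M$, write each difference as $\dd\theta_j$ with $\theta_j$ vanishing on $M$, and solve $i_{X_t}\widetilde\omega_{t} = $ (appropriate combination) — the subtlety is that there are $k$ equations for a single vector field $X_t$, so one must use that the characteristic/orthogonal structure is common to all $k$ forms to see the system is consistent (this is where coisotropicity of $M$ and the diagonal model are essential). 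Finally I would check well-definedness: isomorphic $\mathcal{K}^\ast$ give neighborhood-equivalent embeddings (transport the isomorphism fiberwise), and neighborhood-equivalent embeddings give isomorphic $\mathcal{K}^\ast$ (a structure-preserving diffeomorphism induces a bundle isomorphism on normal data).

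The main obstacle I expect is precisely the over-determined Moser step in the reverse direction: a single time-dependent vector field must simultaneously interpolate all $k$ pairs $(\widetilde\omega_j^{(1)},\widetilde\omega_j^{(2)})$. The resolution should be that, because $M$ is $k$-coisotropic and the local model is diagonal, the differences $\widetilde\omega_j^{(2)}-\widetilde\omega_j^{(1)} = \dd\theta_j$ can be chosen with $\theta_j$ lying in the common "$\mathcal{V}^\ast$-slot" of the bigrading of Remark \ref{remark:bigrading}, so that contracting with a vector field valued in the common $\mathcal{H}^\perp$-complement solves all $k$ equations at once; making this simultaneous choice of primitives compatible across $j$ — rather than solving each $j$ separately and hoping the vector fields agree — is the delicate point, and is where the hypothesis that the thickening has the diagonal model on $M$ (not merely that each $\widetilde\omega_j$ is individually nice) does the real work.
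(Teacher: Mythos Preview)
Your proposal has two genuine problems, one in each direction of the bijection.

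\textbf{The construction from $\mathcal{K}^\ast$ is on the wrong total space.} You propose thickening $M$ by the bundle $\mathcal{K}^{\ast \oplus k}$, with a separate tautological form $\vartheta^P_j$ in each slot. But a dimension count kills this: in the local model $(\T^\ast P|_Q)^{\oplus k} \hookrightarrow (\T^\ast P)^{\oplus k}$ the codimension of $M$ in $\widetilde M$ is $\dim P - \dim Q$, while $\operatorname{rank}\mathcal{V} = k(\dim P - \dim Q)$. So the normal bundle has rank equal to $\operatorname{rank}\mathcal{K}^\ast$, \emph{not} $\operatorname{rank}\mathcal{K}^{\ast \oplus k}$. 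Your thickening is too large by a factor of $k$ and therefore cannot carry the prescribed diagonal local model. The paper instead takes $\widetilde M$ to be a neighborhood of the zero section in $\mathcal{K}^\ast$ itself, and uses the slot inclusions $\mathfrak{i}_A\colon \mathcal{K}^\ast \hookrightarrow \mathcal{K}^{\ast \oplus k}\cong \mathcal{V}^\ast$ to pull back the single tautological form $\vartheta$ on $\mathcal{V}^\ast$, defining $\vartheta_A := \mathfrak{i}_A^\ast\vartheta$ and $\widetilde\omega_A := \tau^\ast\omega_A + \dd\vartheta_A$ on $\mathcal{K}^\ast$. That is the mechanism by which one bundle of small rank feeds all $k$ forms.

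\textbf{The Moser step is both unnecessary and (as you suspect) unavailable.} The theorem classifies embeddings only up to \emph{topological} equivalence: a neighborhood diffeomorphism that restricts to the identity on $M$ and matches the $k$-symplectic structures \emph{on $\T\widetilde M|_M$}, not on the whole neighborhood. So once you have identified the normal bundle with $\mathcal{K}^\ast$ via the vector-bundle isomorphism $\mathcal{V}\to \mathcal{K}^{\ast\oplus k}$, $v\mapsto (i_v\widetilde\omega_1,\dots,i_v\widetilde\omega_k)$ (this is the paper's extraction map), the tubular neighborhood theorem already gives a diffeomorphism whose differential is the identity on $M$, and that is all that is claimed. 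Your over-determined Moser system, with one $X_t$ solving $k$ equations simultaneously, is exactly the obstruction to proving the stronger neighborhood equivalence---and the paper deliberately does not assert that stronger statement here.
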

\begin{proof} Let us first show that given a vector bundle $\mathcal{K}^\ast \rightarrow M$ together with an isomorphism of vector bundles $\phi \colon \underbrace{\mathcal{K} \oplus \cdots \oplus \mathcal{K}}_{k \text{ times}} \rightarrow \mathcal{V}$ gives rise to a coisotropic embedding. Indeed, let $\mathcal{H}$ be any complement to $\mathcal{V}$ and identify 
\[\underbrace{\mathcal{K}^\ast \oplus \cdots \oplus \mathcal{K}^\ast}_{k \text{ times}} \cong \mathcal{V}^\ast \cong {\Lambda^1}^\perp (M)\,.\]
Denote by 
$\mathfrak{i}_A \colon \mathcal{K}^\ast \rightarrow \underbrace{\mathcal{K}^\ast \oplus \cdots \oplus \mathcal{K}^\ast}_{k \text{ times}}\,, \quad \mathfrak{i}_A(k) := (0, \cdots,0,k,0, \cdots 0)\,.$
Using the identifactions above, let $\vartheta$ denote the canonical $1$-form on $\underbrace{\mathcal{K}^\ast \oplus \cdots \oplus \mathcal{K}^\ast}_{k \text{ times}}$ and define $\vartheta_A := \mathfrak{i}_A^\ast\vartheta$ together with the forms 
\[\widetilde \omega_A := \omega_A + \dd \vartheta_A\,.\] Then, some local computations as above show that in a neighborhood of $M$ in $\mathcal{K}^\ast$, say $\widetilde M:= U$ the previous forms define a $k$-symplectic structure for which $M$ is $k$-coisotropic. Furthermore, $\{\widetilde \omega_1, \dots, \widetilde \omega_k\}$ define a $k$-symplectic structure such that $(\T_m M, \{\widetilde \omega_1, \dots, \widetilde \omega_k\})$ is $k$-symplectomorphic to the canonical $k$-symplectic structure. 

Conversely, let $\mathfrak{i} \colon M \rightarrow \widetilde M$ be a $k$-coisotropic embedding and define $\mathcal{K} := \T \widetilde M |_M$. Let
\[\phi \colon \mathcal{V} \rightarrow  \underbrace{\mathcal{K}^\ast \oplus \cdots \oplus \mathcal{K}^\ast}_{k \text{ times}}\]
be given by 
\[
\phi(v) := (i_v \widetilde \omega_1, \dots, i_v \widetilde \omega_k)\,.
\]
Using the local model, it is easy to check that this is a well defined vector bundle isomorphism. We now clearly may identify $\mathcal{K}$ as a tubular neighborhood of $M$ in $\widetilde M$, $U$, so that there is a diffeomorphism
\[
\phi \colon \mathcal{K} \rightarrow U\,,
\]
such that $\phi_\ast$ is the identity on $M$. This last condition implies that $\widetilde \omega_1, \dots, \widetilde \omega_k$ coincide with the forms built above on $M$.
\end{proof}

\begin{remark}[\textsc{Counting solutions via K-theory}]
\label{rem:counting_solutions_ktheory}
\cref{thm:k_symplectic_nonuniqueness} frames the classification of topologically non-equivalent coisotropic embeddings as a problem of finding "k-th roots" of the vector bundle. The subject can be said to begin with A. Grothendieck in 1957, and developed in terms of vector bundles by M. Atiyah and F. Hirzebruch (see \cite{karoubi,atiyah}).
The natural setting to analyze this question is topological K-theory, and the fundamental tool is the so-called \textbf{Grothendieck group} of real vector bundles over $M$. 

\noindent The set of isomorphism classes of real vector bundles over $M$, which we denote by $\mathrm{Vect}(M)$, forms a commutative semigroup under the direct sum operation $\oplus$. The \textbf{Grothendieck group} $K_0(M)$ is the group completion of this semigroup. Its elements are equivalence classes of formal pairs of vector bundles $(E, F)$, called \textbf{virtual bundles}, which are thought of as formal differences $[E] - [F]$. Two pairs $(E,F)$ and $(G,H)$ are equivalent if there exists a trivial vector bundle $L$ over $M$ such that $E \oplus H \oplus L \cong F \oplus G \oplus L$.
The addition in this group is defined by $([E] - [F]) + ([G] - [H]) := [E \oplus G] - [F \oplus H]$. Any genuine vector bundle $\mathcal{K} \in \mathrm{Vect}(M)$ is represented in $K_0(M)$ by the class of the pair $(\mathcal{K}, 0)$, which we denote simply by $[\mathcal{K}]$.
With this algebraic structure, the geometric condition on the vector bundle $\mathcal{K}$,
\[
\mathcal{V} \cong \mathcal{K} \oplus \cdots \oplus \mathcal{K} \quad (k \text{ times})\,,
\]
is translated into the algebraic equation within the ring $K_0(M)$:
\be \label{Eq: K-theory}
[\mathcal{V}] = k \cdot [\mathcal{K}]\,,
\ee
where the multiplication by an integer $k$ arises from the repeated addition of $[\mathcal{K}]$ to itself.

\noindent Since the existence of at least one embedding is guaranteed by \cref{Thm: existence k-pre-symplectic}, let us denote by $\mathcal{K}_0$ a vector bundle corresponding to one solution. Then its class $[\mathcal{K}_0]$ is a solution to \eqref{Eq: K-theory}. If $\mathcal{K}_1$ is another such bundle, its class $[\mathcal{K}_1]$ must also satisfy \eqref{Eq: K-theory}, which implies
\[
k \cdot [\mathcal{K}_0] = k \cdot [\mathcal{K}_1] \quad \implies \quad k \cdot \left( [\mathcal{K}_1] - [\mathcal{K}_0] \right) = 0\,.
\]
This means that the difference between any two solutions, $[\mathcal{K}_1] - [\mathcal{K}_0]$, must be an element of the so-called \textbf{k-torsion subgroup} of $K_0(M)$, which is precisely the set of solutions $[\mathcal{K}]$ of the algebraic equation
\[
k \cdot [\mathcal{K}] \,=\, 0 \,.
\]

\noindent Consequently, the number of non-equivalent vector bundles $\mathcal{K}$ satisfying the hypothesis (and thus, the number of non-equivalent embeddings) is given by the cardinality of the $k$-torsion subgroup of $K_0(M)$. 
This number is a well-known topological invariant of the manifold $M$. 
\end{remark}

\noindent The subsequent example for $M \cong \mathbb{S}^1 \times \mathbb{R}^{2}$ illustrates this principle, as the two solutions (the orientable and non-orientable line bundles) correspond to the two elements of the 2-torsion subgroup of $K_0(M)$.

\begin{example}
\label{example:non_uniqueness_k_symplectic}
Let $Q = \mathbb{S}^1 \hookrightarrow P = \mathbb{R}^2$ and $k = 2l$ be even. Then, \[M = \underbrace{\T^\ast P|_Q \oplus \cdots \oplus \T^\ast P|_Q}_{2l \text{ times}}\] is a $2l$-coisotropic submanifold and the characteristic distribution $\mathcal{V}$ is a trivial bundle over $M$. Is not hard to check that $\operatorname{rank} \mathcal{V} = 2l$, and using that $M \cong \mathbb{S}^1 \times \mathbb{R}^{2l}$, we can see that there are two vector bundles $\mathcal{K} \rightarrow M$ such that $\mathcal{V} \cong \mathcal{K}^\ast \oplus \cdots \oplus \mathcal{K}^\ast$, an orientable (trivial) line bundle and a non-orientable line bundle.
\end{example}

\noindent Now let us move on to an example where we can find topological uniqueness. 
The local model will be the following. Let $Q$ be a manifold, and let $\mathcal{D} \subseteq \T Q$ be a completely integrable distribution. Define $\widetilde M := \underbrace{\T^\ast Q \oplus \cdots \oplus \T^\ast Q}_{k \text{ times}}$ to be the canonical $k$-symplectic manifold, and let 
$ M := \underbrace{\mathcal{D}^\circ \oplus \cdots \oplus \mathcal{D}^\circ}_{k \text{ times}} \subseteq \widetilde M$ the be the $k$-coisotropic submanifold. Then, the canonical embedding is the unique possible coisotropic embedding, up to a diffeomorphism that preserves the form on $M$.

\begin{theorem} Let $(M, \omega_1, \dots, \omega_k)$ be a $k$-pre-symplectic manifold that is locally $k$-symplectomorphic to $\mathcal{D}^\circ \oplus \cdots \oplus \mathcal{D}^\circ$. Then, all $k$-coisotropic embeddings of $M$ into a $k$-symplectic manifold locally $k$-symplectomorphic to the canonical one are neighborhood equivalent to each other.
\end{theorem}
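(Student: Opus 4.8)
The plan is to follow the two–step scheme of \cref{Thm: uniqueness pre-symplectic} and of the cosymplectic/contact sections: an \emph{algebraic} step producing a neighbourhood diffeomorphism that identifies the two thickenings along $M$ and reduces everything to two $k$-symplectic structures on one manifold agreeing on $M$, followed by a \emph{geometric} step via \cref{thm:Moser_trick}. Write $\mathcal V=\bigcap_j\ker\omega_j$ for the characteristic distribution; by \cref{Thm: Darboux k-pre-symplectic} together with the hypothesis, locally $M$ looks like $\mathcal D^\circ\oplus\cdots\oplus\mathcal D^\circ$ and $\mathcal V$ is the pullback of $\mathcal D$ to $M$. The structural point — and the reason uniqueness survives here while it fails for the $\T^\ast P|_Q$ model of \cref{thm:k_symplectic_nonuniqueness} — is that $\mathcal V$ now enters \emph{once}, not as a $k$-fold sum, so the normal bundle of any $k$-coisotropic embedding into a manifold locally modelled on the canonical one is rigidly forced to be $\mathcal V^\ast\oplus\cdots\oplus\mathcal V^\ast$ ($k$ copies), with no ``$k$-th root'' ambiguity.

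For the algebraic step, fix a complement $\T M=\mathcal V\oplus\mathcal H$, chosen compatibly with the local model so that $\mathcal H$ is $k$-symplectic. As in the proof of \cref{Thm: uniqueness pre-symplectic}, one checks that $\mathcal H$ is a $k$-symplectic subbundle of $(\T\widetilde M|_M,\{\widetilde\omega_j\})$, that its $k$-orthogonal $\mathcal H^{\perp}$ complements it, and that $\mathcal V\subseteq\mathcal H^{\perp}$ is $k$-Lagrangian; a dimension count with the local model gives $\mathcal H^{\perp}\cong\mathcal V\oplus\mathcal V^\ast\oplus\cdots\oplus\mathcal V^\ast$, hence
\[
\T\widetilde M|_M\;\cong\;\mathcal H\oplus\mathcal V\oplus\underbrace{\mathcal V^\ast\oplus\cdots\oplus\mathcal V^\ast}_{k}\;\cong\;\T M\oplus\underbrace{\mathcal V^\ast\oplus\cdots\oplus\mathcal V^\ast}_{k}
\]
as $k$-symplectic vector bundles, via the identity on $\T M$. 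Because the $k$-symplectic structures of the two targets are locally canonical, this isomorphism integrates — through a tubular-neighbourhood argument exactly as in the pre-symplectic case — to a diffeomorphism between neighbourhoods of $M$ in $\widetilde M_1$ and $\widetilde M_2$ that is the identity on $M$. Pulling one family back through it, we are left with two $k$-symplectic families $\{\widetilde\omega_j^{0}\}$, $\{\widetilde\omega_j^{1}\}$ on a single $\widetilde M$, both locally canonical, both making $M$ $k$-coisotropic, with $\widetilde\omega_j^{0}|_M=\widetilde\omega_j^{1}|_M$ for each $j$.

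For the Moser step, put $\widetilde\omega_{j,t}:=(1-t)\widetilde\omega_j^{0}+t\widetilde\omega_j^{1}$, which is $k$-symplectic on a smaller neighbourhood for all $t$. By \cref{thm:Relative_Poincaré_Lemma} applied with the radial homotopy along the fibres of $\widetilde M\to M$, write $\widetilde\omega_j^{1}-\widetilde\omega_j^{0}=\dd\theta_j$ with $\theta_j|_M=0$; in the bigrading of \cref{remark:bigrading} attached to $\T\widetilde M|_M=\T M\oplus(\mathcal V^\ast)^{\oplus k}$ each $\theta_j$ sits in the block dual to the $j$-th copy of $\mathcal V^\ast$, and a local computation of the same type as the non-degeneracy computation in \cref{Thm: existence k-pre-symplectic} shows that the simultaneous system $i_{X_t}\widetilde\omega_{j,t}=-\theta_j$ ($j=1,\dots,k$) has a unique solution $X_t$: the $k$-symplectic pairing couples the $\mathcal V$-directions in the base with the $j$-th fibre block, so the $j$-th equation fixes the $j$-th fibre component of $X_t$ and the base components are then determined consistently. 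Then $\widetilde\omega_j^{1}-\widetilde\omega_j^{0}+\Lie_{X_t}\widetilde\omega_{j,t}=\dd\theta_j+\dd i_{X_t}\widetilde\omega_{j,t}=0$ simultaneously for all $j$, so \cref{thm:Moser_trick} applies to the whole family at once; as $\theta_j$ vanishes on $M$ so does $X_t$, its time-one flow is the identity on $M$, and it gives the required neighbourhood equivalence.

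The main obstacle is precisely the consistency of the $k$ Moser equations with a \emph{single} vector field $X_t$, since a priori the tuple $(\theta_1,\dots,\theta_k)$ need not lie in the image of $X\mapsto(i_X\widetilde\omega_{1,t},\dots,i_X\widetilde\omega_{k,t})$. The $\mathcal D^\circ$ local model is what makes this work, and it is used twice: to force the normal bundle to be $(\mathcal V^\ast)^{\oplus k}$ in the algebraic step, and to guarantee that in the induced bigrading the $\theta_j$ decouple across the $k$ fibre blocks so that the system is solvable — the failure of this decoupling for the $\T^\ast P|_Q$ model is exactly why uniqueness breaks down there.
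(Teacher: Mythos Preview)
Your algebraic step matches the paper in outcome but is more roundabout: the paper bypasses the orthogonal-complement and $k$-Lagrangian analysis and directly writes down
\[
\phi\colon \T\widetilde M|_M/\T M \longrightarrow (\mathcal V^\ast)^{\oplus k},\qquad [v]\longmapsto \bigl(i_v\widetilde\omega_1|_{\mathcal V},\ldots,i_v\widetilde\omega_k|_{\mathcal V}\bigr),
\]
checking from the local model that this is a vector-bundle isomorphism. Either route reduces, via tubular neighbourhoods, to two $k$-symplectic families on a single $\widetilde M$ agreeing along $M$ --- exactly where you land.

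The paper's proof actually \emph{stops} at that reduction; it records no Moser step. So your second step goes beyond what the paper writes, but the heart of it is asserted rather than proved. You correctly flag as ``the main obstacle'' that the $k$ equations $i_{X_t}\widetilde\omega_{j,t}=-\theta_j$ must be solved by a \emph{single} $X_t$, and then claim this follows because ``each $\theta_j$ sits in the block dual to the $j$-th copy of $\mathcal V^\ast$''. That is precisely what needs justification and the relative Poincar\'e formula does not supply it: in the canonical model the image of $X\mapsto(i_X\widetilde\omega_{1,t},\dots,i_X\widetilde\omega_{k,t})$ consists of tuples $(\alpha_1,\dots,\alpha_k)$ for which $\alpha_j$ has no $\dd p^i_a$-component when $i\neq j$ and for which the $\dd p^j_a$-components of the $\alpha_j$ agree across $j$. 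Writing $\theta_j=\int_0^1\psi_s^\ast i_{\Delta_s}(\widetilde\omega_j^{1}-\widetilde\omega_j^{0})\,\dd s$ gives no control over the $\dd p^i_a$-part of $\theta_j$ for $i\neq j$, nor over the consistency of the $\dd p^j_a$-parts across $j$, since $\Delta$ ranges over the full normal bundle $(\mathcal V^\ast)^{\oplus k}$ and the differences $\widetilde\omega_j^{1}-\widetilde\omega_j^{0}$ are only constrained to vanish on $M$. The ``decoupling'' you invoke is therefore the entire content of the step, and it is not established by your argument.
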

\begin{proof} Let $M \hookrightarrow \widetilde M$ be a $k$-coisotropic embedding and let $\mathcal{V}$ be the characteristic distribution on $M$. It is easy to check that the map 
\[
\phi\colon\T \widetilde M \big /\T M \longrightarrow \underbrace{\mathcal{V^\ast} \oplus \cdots \oplus \mathcal{V^\ast}}_{k \text{ times}}\,, 
\]
given by $\phi([v]) := (i_v \widetilde \omega_1, \dots, i_v \widetilde\omega_k)$ is well defined and defines an isomorphism (essentially restricting to the local case). Now, as usual, to end the proof we choose a diffeomorphism between two neighborhoods that coincides with $\phi$ on $M$, so that we may assume that we have two different k-symplectic structures $\{\widetilde \omega_1^{(1)}, \dots, \widetilde \omega_k^{(1)}\}$ and $\{\widetilde \omega_1^{(2)}, \dots, \widetilde \omega_k^{(2)}\}$ defined on a neighborhood $U$ of $M$, and such that they coincide on $M$.

\end{proof}

\section{\texorpdfstring{$k$}{k}-Pre-cosymplectic manifolds}
\label{Sec: k-cosymplectic manifolds}

As in the $k$-symplectic setting, we deal with more general structures that one may find in the literature. For the sake of completeness, we cite the Darboux theorem for the more restrictive version.

\begin{definition}[\textsc{$k$-Pre-cosymplectic manifold}]
A \textbf{$k$-pre-cosymplectic manifold} is a tuple $$(M, \{\eta_1, \dots, \eta_k\}, \{\omega_1, \dots, \omega_k\})\,,$$ where $M$ is a smooth manifold, $\eta_1, \dots, \eta_k \in \Omega^1(M)$ are $k$ closed differential 1-forms, and $\omega_1, \dots, \omega_k \in \Omega^2(M)$ are $k$ closed differential 2-forms.
\end{definition}

\begin{definition}[\textsc{Characteristic distribution}]
The \textbf{characteristic distribution} of a $k$-pre-cosymplectic structure is the distribution
\be
\mathcal{V} \,:=\, \left( \bigcap_{i=1}^k \ker\,\eta_i \right) \cap \left( \bigcap_{j=1}^k \ker\,\omega_j \right).
\ee
\end{definition}

\begin{remark}
As in the previous sections, we will always assume that all forms have constant rank, that the $1$-forms $\{\eta_1, \dots, \eta_k\}$ are nowhere zero, and that their common characteristic distribution is a smooth distribution of constant rank.
\end{remark}

\begin{remark}[\textsc{$k$-Cosymplectic manifold}]
A $k$-pre-cosymplectic manifold is a \textbf{$k$-cosymplectic manifold} when its characteristic distribution is trivial, i.e., $\mathcal{V} = \{0\}$. This structure appears in the description of certain classical field theories and time-dependent systems with multiple conserved quantities \cite{silvia2}.
\end{remark}

Before citing the Darboux theorem found in the literature, let us first introduce what we mean by a Lagrangian distribution. A distribution $\mathcal{L}$ on a $k$-cosymplectic manifold $(M, \{\eta_1, \dots, \eta_k\}, \{\omega_1, \dots, \omega_k\})$ is called {\bf $k$-Lagrangian} if for every $m \in M$, 
\[
\mathcal{L}|_m = \{v \in \T_m M \colon \, \eta_j(v) = \omega_j(v, \mathcal{L}|_m) = 0\,, \text{ for }j = 1, \dots, k\}\,.
\]

\begin{theorem}[\textsc{Darboux theorem for $k$-cosymplectic manifolds}]
\label{thm: Darboux_for_k_cosymplectic}
Let $(M, \{\eta_i\}, \{\omega_j\})$ be a $k$-cosymplectic manifold of dimension $d = k + n + nk$. Assume that there is an involutive Lagrangian distribution of rank $nk$ and that $\eta_1 \wedge \cdots \wedge \eta_k$ defines a volume form on $\bigcap_{j = 1}^k \ker\, \omega_j$ (so in particular $\bigcap_{j = 1}^{k} \ker\, \omega_k$ has rank $k$).
Then, for every point $m \in M$, there exists a coordinate chart $(U, \varphi)$ centered at $m$, with coordinates $(t^1, \dots, t^k, q^{a}, p_{a}^j)_{j=1,\dots,k; a=1,\dots,n}$, such that:
\[
\eta_i|_U = \dd t^i, \qquad \omega_j|_U = \dd q^{a} \wedge \dd p_{a}^j\,.
\]
We will call such coordinates \textbf{Darboux coordinates} for the $k$-cosymplectic structure.

\begin{proof}
See \cite{silvia2}.
\end{proof}
\end{theorem}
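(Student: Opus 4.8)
The plan is to strip off the closed $1$-forms $\eta_i$ together with their Reeb directions, so that the statement reduces to the $k$-symplectic Darboux theorem (\cref{Thm: Darboux k-symplectic}) on a transverse slice; everything is local around a fixed $m\in M$. The first step is to produce the Reeb vector fields. Set $\mathcal{R}:=\bigcap_{j=1}^{k}\ker\omega_j$, a rank-$k$ subbundle on which, by hypothesis, $\eta_1\wedge\cdots\wedge\eta_k$ restricts to a volume form; hence $\eta_1|_{\mathcal{R}},\dots,\eta_k|_{\mathcal{R}}$ form a coframe of $\mathcal{R}^\ast$, and the dual frame yields unique vector fields $R_1,\dots,R_k\in\Gamma(\mathcal{R})$ with $i_{R_i}\omega_j=0$ and $\eta_j(R_i)=\delta_{ij}$ for all $i,j$. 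Cartan's formula together with $\dd\eta_j=\dd\omega_j=0$ gives $\Lie_{R_i}\eta_j=0$ and $\Lie_{R_i}\omega_j=0$; from these, $i_{[R_i,R_j]}\omega_l=0$ and $\eta_l([R_i,R_j])=0$ for every $l$, so $[R_i,R_j]$ belongs to the characteristic distribution $\mathcal{V}=\{0\}$ and the $R_i$ pairwise commute.

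The second step straightens the $R_i$ and the $\eta_i$ simultaneously. Choosing a codimension-$k$ submanifold $S\ni m$ transverse to $\mathcal{R}=\langle R_1,\dots,R_k\rangle$ and using the joint flow of the commuting fields gives coordinates $(t^1,\dots,t^k,s)$ near $m$ in which $R_i=\de/\de t^i$ and $S=\{t^1=\cdots=t^k=0\}$. The $1$-form $\eta_i-\dd t^i$ annihilates every $R_j$ and is $R_j$-invariant, hence is $\pi^\ast\beta_i$ for a $1$-form $\beta_i$ on $S$, with $\pi\colon M\to S$ the projection along the flows; being closed, $\beta_i$ is closed, and the Poincaré Lemma on $S$ gives $\beta_i=\dd g_i$. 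Replacing $t^i$ by $t^i+g_i\circ\pi$ — a change that leaves $\de/\de t^i=R_i$ unchanged — we obtain coordinates with $R_i=\de/\de t^i$ and $\eta_i=\dd t^i$.

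The last step reduces to the $k$-symplectic case. Since $i_{R_i}\omega_j=0=\Lie_{R_i}\omega_j$, each $\omega_j$ is basic for $\pi$ and descends to a closed $2$-form $\bar\omega_j$ on $S$; because $\bigcap_j\ker\omega_j=\mathcal{R}$ is exactly the fibre direction, $\bigcap_j\ker\bar\omega_j=\{0\}$, so $(S,\{\bar\omega_j\})$ is a $k$-symplectic manifold of dimension $n+nk$. Since $\mathcal{L}\subseteq\bigcap_i\ker\eta_i$, which along $S$ coincides with $\T S$, the restriction $\mathcal{L}_0:=\mathcal{L}|_S$ is a rank-$nk$ involutive distribution on $S$, and the self-referential characterization of $\mathcal{L}$ shows that $\mathcal{L}_0$ is $k$-Lagrangian for $\{\bar\omega_j\}$. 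Applying \cref{Thm: Darboux k-symplectic} to $(S,\{\bar\omega_j\},\mathcal{L}_0)$ produces coordinates $(q^a,p_a^j)$ on $S$ with $\bar\omega_j=\dd q^a\wedge\dd p_a^j$; pulling these back along $\pi$ and appending the $t^i$ gives a chart on $M$ in which $\eta_i=\dd t^i$ and $\omega_j=\dd q^a\wedge\dd p_a^j$. (Refining the transverse coordinates $s$ to $(q^a,p_a^j)$ does not disturb $\eta_i=\dd t^i$, since $t^i$ and $\dd t^i$ are untouched.)

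The step I expect to be the main obstacle is the simultaneous normalization in the second step: one must secure $R_i=\de/\de t^i$ and $\eta_i=\dd t^i$ for the \emph{same} coordinate $t^i$, and then check that the later refinement of the transverse coordinates to Darboux coordinates on $S$ preserves the normal form of the $\eta_i$. The remaining points — that the $\omega_j$ descend, that their kernels intersect trivially on $S$, and that $\mathcal{L}_0$ remains $k$-Lagrangian — are bookkeeping once the reduction is in place, and the substance of the argument is then delegated to the already-established $k$-symplectic Darboux theorem.
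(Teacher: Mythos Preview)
Your argument is correct. The paper itself does not supply a proof of this statement: it simply cites the reference \cite{silvia2}. Your sketch is the standard reduction-to-$k$-symplectic argument and fills in the details the paper delegates elsewhere.

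A couple of minor remarks for polish. In the second step, the normalization of the $\eta_i$ works exactly because $\eta_i-\dd t^i$ is both horizontal (annihilates each $R_j$) and $R_j$-invariant, hence basic; this is clean and you state it correctly. In the last step, the involutivity of $\mathcal{L}_0$ on $S$ is slightly more than pure bookkeeping: it relies on the fact that $\mathcal{L}\subseteq\bigcap_i\ker\eta_i=\bigcap_i\ker\dd t^i$, so any local sections of $\mathcal{L}$ near $S$ are automatically tangent to the level sets $\{t=\text{const}\}$, and in particular to $S$ itself; this is what makes the bracket on $S$ agree with the restriction of the ambient bracket. You implicitly use this, and it is worth making the dependence on $\mathcal{L}\subseteq\ker\eta_i$ explicit. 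Otherwise the reduction is airtight, and invoking \cref{Thm: Darboux k-symplectic} on $(S,\{\bar\omega_j\},\mathcal{L}_0)$ is exactly the right move.
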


\begin{remark} The canonical example of $k$-cosymplectic manifold with the previous local structure is $\mathbb{R}^k \times \underbrace{\T^\ast Q \oplus \cdots \oplus \T^\ast Q}_{k \text{ times}}$. Again, a more general result can be found in \cite{darbouxvarios}.
\end{remark}

\begin{remark}
\label{remark:Reeb_vector_fields_k_cosymplectic}
Under the hypotheses of \cref{thm: Darboux_for_k_cosymplectic}, a family of \textbf{Reeb vector fields}, $R_1, \dots, R_k$, can be defined uniquely by
\[
i_{R_i} \eta_j = \delta_{ij}\,, \quad i_{R_i} \omega_j = 0\,.
\]
In Darboux coordinates, these vector fields read $R_i = \frac{\partial}{\partial t^i}$. 
We would like to mention that Reeb vector fields are not well defined in general.
\end{remark}

As for $k$-pre-symplectic manifolds in general, one can only prove the following:

\begin{theorem}[\textsc{Darboux theorem for $k$-pre-cosymplectic manifolds}] 
Let $(M, \{\eta_1, \dots,\eta_k\}, \{\omega_1, \dots, \omega_k\})$ be a $k$-pre-cosymplectic manifold, such that its characteristic distribution $\mathcal{V}$ has constant rank $l$. Then, for every point $m \in M$, there exists a coordinate chart $(U, \varphi)$ around $m$, iwht coordinates $(x^1, \dots, x^{d-l}, z^1, \dots, z^l)$, such that the characteristic distribution is locally spanned by $\{\frac{\partial}{\partial z^A}\}_{A = 1, \dots, l}$ and the $1$-forms $\eta_j$ and the $2$-forms $\omega_j$ only depend on the coordinates $x^a$.
\end{theorem}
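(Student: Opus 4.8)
The plan is to follow \emph{verbatim} the proof of \cref{Thm: Darboux k-pre-symplectic}, the only new ingredient being the $k$ closed $1$-forms $\eta_j$, which behave with respect to the characteristic foliation exactly like the closed $2$-forms. First I would verify that the characteristic distribution $\mathcal{V} = \left(\bigcap_{i=1}^k \ker\eta_i\right) \cap \left(\bigcap_{j=1}^k \ker\omega_j\right)$ is involutive. For vector fields $X, Y \in \mathfrak{X}(M)$ taking values in $\mathcal{V}$, closedness of the forms gives
\[
\eta_i([X,Y]) = X(\eta_i(Y)) - Y(\eta_i(X)) - \dd\eta_i(X,Y) = 0\,,
\]
and, using $i_{[X,Y]} = \Lie_X \circ i_Y - i_Y \circ \Lie_X$ together with $\Lie_X\omega_j = \dd\, i_X\omega_j + i_X\dd\omega_j = 0$,
\[
i_{[X,Y]}\omega_j = \Lie_X(i_Y\omega_j) - i_Y(\Lie_X\omega_j) = 0\,,
\]
so $[X,Y]$ again takes values in $\mathcal{V}$. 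Since $\mathcal{V}$ is assumed to have constant rank $l$, the Frobenius Theorem provides, around each $m \in M$, a foliated chart with coordinates $(x^1, \dots, x^{d-l}, z^1, \dots, z^l)$ in which $\mathcal{V} = \operatorname{span}\{\partial/\partial z^A\}_{A=1,\dots,l}$.

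From such a chart the two claimed properties follow immediately. Because each $\partial/\partial z^A$ lies in $\mathcal{V} \subseteq \ker\eta_j \cap \ker\omega_j$, contraction with $\partial/\partial z^A$ annihilates every $\eta_j$ and every $\omega_j$; hence in these coordinates $\eta_j = (\eta_j)_a\,\dd x^a$ and $\omega_j = \tfrac{1}{2} (\omega_j)_{ab}\,\dd x^a\wedge\dd x^b$, with no $\dd z^A$ term appearing. To see that the coefficients do not depend on the $z^A$, I would invoke closedness once more through Cartan's formula: $\Lie_{\partial/\partial z^A}\eta_j = i_{\partial/\partial z^A}\dd\eta_j + \dd\, i_{\partial/\partial z^A}\eta_j = 0$ and likewise $\Lie_{\partial/\partial z^A}\omega_j = 0$. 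Comparing coefficients in the chart yields $\partial(\eta_j)_a/\partial z^A = 0$ and $\partial(\omega_j)_{ab}/\partial z^A = 0$ for all $A$, i.e. $\eta_j = \eta_j(x)$ and $\omega_j = \omega_j(x)$; and by construction the chart is a foliated chart for the characteristic foliation.

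I do not expect any genuine obstacle: the argument is a direct transcription of the $k$-pre-symplectic case, the single extra observation being that a closed $1$-form, just like a closed $2$-form, has an involutive kernel on which it is leaf-wise constant. The only place the standing hypotheses enter is in guaranteeing that $\mathcal{V}$ has constant rank, which is exactly what makes the Frobenius Theorem applicable.
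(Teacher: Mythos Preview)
Your proposal is correct and follows essentially the same approach as the paper: the paper's proof simply says ``The same strategy as in \cref{Thm: Darboux pre-symplectic} works,'' and that strategy is precisely to use Frobenius on the constant-rank involutive characteristic distribution and then observe that the closed forms are leafwise constant. Your version is just a more explicit spelling-out of this argument, including the Cartan-formula computation that the paper leaves implicit.
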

\begin{proof} The same strategy as in \cref{Thm: Darboux pre-symplectic} works.
\end{proof}

\subsection{Coisotropic submanifolds}

\begin{definition}[$\ell$-\textsc{Coisotropic submanifold of a $k$-cosymplectic manifold}] \label{Def: coisotropic submanifold k-cosymplectic}
Let $(M, \{\eta_i\}, \{\omega_j\})$ be a $k$-cosymplectic manifold, and let $\mathfrak{i} \colon 
N \hookrightarrow M$ be an immersed submanifold. For an integer $\ell \in \{1, \dots, k\}$, we say that $N$ is an \textbf{$\ell$-coisotropic submanifold} if, for every point $n \in N$,
\[
{\mathbf{T}_n N}^{\ell, \perp} \,\subseteq\, \mathbf{T}_n N\,,
\]
where the \textbf{$\ell$-$k$-cosymplectic orthogonal} is defined as
\[
{\mathbf{T}_n N}^{\ell, \perp} := \left\{\, X \in \mathbf{T}_n M \mid \eta_i(X)=0, \, \omega_j(X, Y) = 0, \; \forall i,j \in \{1,\dots,\ell\}, \; \forall Y \in \mathbf{T}_n N \,\right\}.
\]
\end{definition}

\subsection{Coisotropic embedding}

\subsubsection{Existence}

\begin{theorem}[\textsc{$k$-Cosymplectic thickenings for $k$-pre-cosymplectic manifolds}]
Let $(M, \{\eta_i\}, \{\omega_j\})$ be a $k$-pre-cosymplectic manifold.
There exists a $k$-cosymplectic manifold $(\widetilde{M}, \{\widetilde{\eta}_i\}, \{\widetilde{\omega}_j\})$ and an embedding
\be
\mathfrak{i} \colon  M \hookrightarrow \widetilde{M} \,,
\ee
such that $\mathfrak{i}(M)$ is a $k$-coisotropic submanifold of $\widetilde{M}$.
\begin{proof}
Consider the characteristic distribution $\mathcal{V}$ and a complementary distribution $\mathcal{H}$ defining an almost product structure via a projector $P$ with $\operatorname{Im}(P) = \mathcal{V}$.
The thickening space $\widetilde{M}$ is a tubular neighborhood of the zero section of the vector bundle ${\Lambda^1}^\perp_R(M)$, where $R = \mathds{1}-P$. Let $\tau\colon \widetilde{M} \to M$ be the bundle projection.

\noindent As in the $k$-pre-symplectic case, we define a single tautological 1-form $\vartheta^P$ on $\widetilde{M}$ and the new $k$-cosymplectic structure is then defined by pulling back all the original forms and applying the same regularization term for the $\omega_j$:
\begin{align}
\widetilde{\eta}_i &:= \tau^* \eta_i \,, \quad \text{for } i=1, \dots, k, \\
\widetilde{\omega}_j &:= \tau^* \omega_j + \dd\vartheta^P, \quad \text{for } j=1, \dots, k.
\end{align}
All new forms are closed by construction. The correction terms $\dd\vartheta^P$ are designed to introduce non-degeneracy along the directions of $\mathcal{V}$. An analogous computation to the previous sections confirms that the resulting structure $(\widetilde{M}, \{\widetilde{\eta}_i\}, \{\widetilde{\omega}_j\})$ has a trivial characteristic distribution in a neighborhood of the zero section, and is thus a $k$-cosymplectic manifold.

\noindent Also the proof of $k$-coisotropicity follows from an analogous computation to the ones made in the previous sections by taking into account \cref{Def: coisotropic submanifold k-cosymplectic}.
\end{proof}
\end{theorem}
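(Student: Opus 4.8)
The plan is to fuse the constructions already carried out for pre-cosymplectic manifolds in \cref{Sec: cosymplectic manifolds} and for $k$-pre-symplectic manifolds in \cref{Sec: k-symplectic manifolds} into a single argument, building the thickening as a suitable bundle over $M$ equipped with one universal correction term. First I would take the characteristic distribution
\be
\mathcal{V} = \left(\bigcap_{i=1}^k \ker\eta_i\right) \cap \left(\bigcap_{j=1}^k \ker\omega_j\right)\,,
\ee
which is assumed to have constant rank and, being cut out by kernels of closed forms, is completely integrable; I would then fix any complementary distribution $\mathcal{H}$ with $\T M = \mathcal{H} \oplus \mathcal{V}$, encoded by a $(1,1)$-tensor $P$ such that $\operatorname{Im} P = \mathcal{V}$. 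The thickening $\widetilde M$ is defined as a tubular neighborhood of the zero section of the bundle ${\Lambda^1}^\perp_R(M)$ over $M$, with projection $\tau$ and opposite tensor $R = \mathds{1} - P$, exactly as in the earlier sections. In a foliated chart with coordinates $(x^a, z^A)$ adapted to $\mathcal{V}$, supplemented by fiber coordinates $\mu_A$, the natural frame consists of the horizontal lifts $H_a = \partial/\partial x^a + P^A_a\,\partial/\partial z^A$ together with $\partial/\partial z^A$ and $\partial/\partial\mu_A$.

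Next I would introduce a single tautological $1$-form $\vartheta^P$ on the total space, reading $\vartheta^P = \mu_A P^A$ in these coordinates, and set
\be
\widetilde{\eta}_i := \tau^\ast \eta_i\,, \qquad \widetilde{\omega}_j := \tau^\ast \omega_j + \dd\vartheta^P\,.
\ee
All of these forms are manifestly closed. The crucial observation is that the \emph{same} correction $\dd\vartheta^P$ is added to every $\omega_j$: since $\mathcal{V}$ is the common kernel of all the structure forms, the degeneracy that must be repaired lies entirely along the $\mathcal{V}$-directions, and $\dd\vartheta^P$ supplies precisely the missing pairing between the fiber coordinates $\mu_A$ and the leaf coordinates $z^A$. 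I would then verify that the characteristic distribution of the new structure, $\left(\bigcap_i \ker\widetilde{\eta}_i\right)\cap\left(\bigcap_j\ker\widetilde{\omega}_j\right)$, is trivial on a neighborhood of the zero section, which is exactly the condition making $(\widetilde M, \{\widetilde\eta_i\}, \{\widetilde\omega_j\})$ a $k$-cosymplectic manifold.

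The heart of this verification is a local computation entirely parallel to the ones in \cref{Thm: cosymplectic tickening} and \cref{Thm: existence k-pre-symplectic}, now performed for $k$ copies simultaneously: writing an arbitrary vector field $X$ in the frame above and imposing $\widetilde\eta_i(X) = 0$ for all $i$ together with $i_X\widetilde\omega_j = 0$ for all $j$, the $\mu_A$-dependent contributions --- organized by the components of the Nijenhuis tensor $\mathcal{N}_P$ --- force every component of $X$ to vanish, either on the tubular neighborhood $\mu_A = 0$, or on the whole bundle when $\mathcal{N}_P = 0$. Finally, $k$-coisotropicity of $\mathfrak{i}(M)$, realized as the zero section, follows by an identical argument: for $X$ in the $\ell$-orthogonal with $\ell = k$, the conditions $\widetilde\eta_i(X) = 0$ and $\widetilde\omega_j(X, W) = 0$ for all $W$ tangent to $M$ reduce, through the fiber--leaf pairing carried by $\dd\vartheta^P$, to the vanishing of the horizontal and fiber components of $X$, so that $X \in \T_m M$.

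I expect the main obstacle to be bookkeeping rather than conceptual difficulty: one must check that a single term $\dd\vartheta^P$ simultaneously cures the degeneracy of all $k$ two-forms, so that the joint characteristic distribution of $(\{\widetilde\eta_i\}, \{\widetilde\omega_j\})$ becomes trivial near the zero section, and that the coisotropy condition at the level $\ell = k$ is genuinely satisfied. This succeeds precisely because $\mathcal{V}$ is defined as the intersection of \emph{all} the kernels, so the correction direction is common to every form; certifying that no residual degeneracy survives is the only non-routine step, and it amounts to the explicit contraction computation sketched above.
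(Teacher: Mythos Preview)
Your proposal is correct and follows essentially the same approach as the paper: both construct $\widetilde{M}$ as a tubular neighborhood of the zero section of ${\Lambda^1}^\perp_R(M)$, define $\widetilde{\eta}_i = \tau^\ast\eta_i$ and $\widetilde{\omega}_j = \tau^\ast\omega_j + \dd\vartheta^P$ using a single tautological correction term, and defer the verification of non-degeneracy and $k$-coisotropicity to the analogous computations already carried out in the pre-cosymplectic and $k$-pre-symplectic cases. Your write-up is in fact somewhat more explicit than the paper's, spelling out the role of the Nijenhuis tensor and the fiber--leaf pairing, but the underlying argument is identical.
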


\subsubsection{Uniqueness}
Uniqueness in the $k$-pre-cosymplectic setting is not guaranteed, as in the $k$-pre-symplectic scenario, and the same counter example may be easily generalized to include the previous case:

\noindent Let $Q \hookrightarrow P$ be a submanifold and define the canonical $k$-cosymplectic manifold \[\widetilde M := \mathbb{R}^k \times\underbrace{\T^\ast P \oplus \cdots \oplus \T^\ast P}_{k \text{ times}}\] with the $k$-coisotropic submanifold defined as\[M := \mathbb{R}^k \times \underbrace{\T^\ast P|_Q \oplus \cdots \oplus \T^\ast P|_Q}_{k \text{ times}} \subset \widetilde M\,.\] We would like to study the possible $k$-coisotropic embeddings of $M$ or, more generally, the possible $k$-coisotropic embeddings of a $k$-cosymplectic manifold $(M, \{\eta_j\}, \{\omega_j\})$ that is point-wise isomorphic to that of $\mathbb{R}^k \times \underbrace{\T^\ast P|_Q \oplus \cdots \oplus \T^\ast P|_Q}_{k \text{ times}}$.

\begin{theorem} Let $(M, \{\eta_j\}, \{\omega_j\})$ be a $k$-cosymplectic manifold which is point-wise $k$-cosymplectomorphic to $\mathbb{R}^k \times \underbrace{\T^\ast P|_Q \oplus \cdots \oplus \T^\ast P|_Q}_{k \text{ times}}$. Denote by $\mathcal{V}$ its characteristic distribution. Then, coisotropic embeddings up to neighborhood diffemorphism preserving the structure on $M$, $\mathfrak{i} \colon M \hookrightarrow \widetilde M$ into a $k$-cosymplectic manifold $(\widetilde M , \{\widetilde \eta\}, \{\widetilde \omega_j\})$ which is isomorphic to the canonical $k$-cosymplectic structure on $M$ are characterized by vector bundles $\mathcal{K} \rightarrow M$ such that 
\[
\underbrace{\mathcal{K}^\ast \oplus \cdots \mathcal{K}^\ast}_{k \text{ times}} \cong \mathcal{V}\,.
\]
\end{theorem}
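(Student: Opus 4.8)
The argument is a direct transposition of the proof of \cref{thm:k_symplectic_nonuniqueness} to the $k$-cosymplectic setting, the only new ingredient being the handling of the closed $1$-forms $\widetilde\eta_i$ and their associated Reeb-type directions. The plan is to exhibit the bijection in both directions.

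First I would show that a vector bundle $\mathcal{K}^\ast \to M$ equipped with an isomorphism $\phi \colon \mathcal{K} \oplus \cdots \oplus \mathcal{K} \xrightarrow{\ \sim\ } \mathcal{V}$ ($k$ summands) produces a coisotropic embedding. Using the splitting coming from the local model $\mathbb{R}^k \times \T^\ast P|_Q \oplus \cdots$, write $\T M = \mathbb{R}^k \oplus \mathcal{H} \oplus \mathcal{V}$ where $\mathbb{R}^k$ is spanned by the (locally defined) Reeb vector fields and $\mathcal{H}$ is a complement to $\mathcal{V}$ in $\bigcap_j \ker\eta_j$. Identify $\mathcal{K}^\ast \oplus \cdots \oplus \mathcal{K}^\ast \cong \mathcal{V}^\ast \cong {\Lambda^1}^\perp_R(M)$ via $\phi$, let $\mathfrak{i}_A \colon \mathcal{K}^\ast \hookrightarrow \mathcal{K}^\ast \oplus \cdots \oplus \mathcal{K}^\ast$ be the $A$-th inclusion, set $\vartheta_A := \mathfrak{i}_A^\ast \vartheta$ for $\vartheta$ the tautological form, and define on a tubular neighborhood $\widetilde M := U$ of the zero section the forms
\[
\widetilde\eta_j := \tau^\ast \eta_j\,, \qquad \widetilde\omega_j := \tau^\ast \omega_j + \dd\vartheta_j\,.
\]
The same local computation as in the existence theorem shows these forms are closed, have trivial common characteristic distribution near $M$, hence define a $k$-cosymplectic structure, for which $M$ is $\ell$-coisotropic with $\ell = k$ and whose point-wise structure along $M$ is $k$-cosymplectomorphic to the canonical one.

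Conversely, given a $k$-coisotropic embedding $\mathfrak{i} \colon M \hookrightarrow \widetilde M$ into a manifold whose $k$-cosymplectic structure is isomorphic to the canonical one along $M$, set $\mathcal{K} := \T\widetilde M|_M$ and define
\[
\phi \colon \mathcal{V} \longrightarrow \underbrace{\mathcal{K}^\ast \oplus \cdots \oplus \mathcal{K}^\ast}_{k \text{ times}}\,, \qquad \phi(v) := (i_v\widetilde\omega_1, \dots, i_v\widetilde\omega_k)\,.
\]
Using the local model one checks $\phi$ is a well-defined vector bundle isomorphism — here one must verify, as in the contact and cosymplectic uniqueness arguments of \cref{Sec: k-symplectic manifolds} and \cref{Sec: cosymplectic manifolds}, that $\mathcal{V}$ sits as a Lagrangian subbundle of the relevant $k$-symplectic complement and that the Reeb directions are accounted for separately, so that no information is lost. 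Finally, identify $\mathcal{K}$ with a tubular neighborhood $U$ of $M$ in $\widetilde M$ by a diffeomorphism whose differential is the identity on $M$; this forces $\widetilde\eta_j$ and $\widetilde\omega_j$ to agree on $M$ with the forms constructed above, so the two constructions are mutually inverse up to the prescribed equivalence.

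The main obstacle I expect is the same one as in \cref{thm:k_symplectic_nonuniqueness}: carefully justifying that $\phi$ is an isomorphism of vector bundles and that the correspondence is well-defined on equivalence classes, rather than any single hard estimate. Concretely, one has to rule out that the extra closed $1$-forms $\widetilde\eta_j$ and their Reeb directions interfere with the $\mathcal{V} \mapsto (\mathcal{V}^\ast)^{\oplus k}$ identification — i.e. that the pairing $v \mapsto i_v\widetilde\omega_j$ restricted to $\mathcal{V}$ lands in, and surjects onto, $\mathcal{K}^\ast$ after quotienting appropriately. This is handled by working in the Darboux chart of the local model, where everything becomes an explicit linear-algebra check, exactly parallel to the $k$-symplectic case.
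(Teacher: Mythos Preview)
Your proposal is correct and follows exactly the paper's own approach: the paper's proof consists of the single sentence ``The proof follows as in the $k$-symplectic case. The $1$-forms $\widetilde\eta_j$ are just given by the pullback of $\eta_j$,'' and you have simply unpacked that reference, handling $\widetilde\eta_j := \tau^\ast\eta_j$ exactly as the paper indicates. The one cosmetic point is that your definition $\mathcal{K} := \T\widetilde M|_M$ in the converse direction should be the normal bundle $\T\widetilde M|_M / \T M$ (so that the tubular-neighborhood identification makes sense), but this is the same slip present in the paper's proof of \cref{thm:k_symplectic_nonuniqueness} that you are transcribing.
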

\begin{proof} The proof follows as in the $k$-symplectic case. The $1$-forms $\widetilde \eta_j$ are just given by the pullback of $\eta_j$.
\end{proof}

\begin{example} Now it is easy to generalize \cref{example:non_uniqueness_k_symplectic} to the $k$-cosymplectic setting by defining $Q:= \mathbb{S}^1\hookrightarrow P := \mathbb{R}^2$, with $k = 2l$, showing that we do not have uniqueness of embeddings.
\end{example}

\noindent Again, following the same ideas of the $k$-pre-symplectic case, we may prove uniqueness in certain particular cases. 
However, in the case of $k$-cosymplectic geometry, as in cosymplectic, we need to be careful with the orbits of the Reeb vector fields. In full generality, we may only prove that any two embeddings with the local structure mentioned in the statement of the following theorem are topologically equivalent.

\begin{theorem} Let $(M, \eta_1, \dots, \eta_k, \omega_1, \dots, \omega_k)$ be a $k$-pre-cosymplectic manifold which is locally $\mathcal{D}^\circ \oplus \dots \oplus \mathcal{D}^\circ \times \mathbb{R}^k$, for some completely integrable distribution $\mathcal{D}$ on certain $P$. Then any pair of coisotropic embeddings $M \hookrightarrow \widetilde M$ into a $k$-cosymplectic manifold which is point-wise isomorphic to $\T^\ast P \oplus \cdots \oplus \T^\ast P \times \mathbb{R}^k$ on $M$ are diffeomorphic.
\end{theorem}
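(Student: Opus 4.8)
The plan is to follow the strategy used for the $\mathcal{D}^\circ$-model in the $k$-pre-symplectic case, adding the bookkeeping required by the closed $1$-forms $\eta_j$. Let $\mathfrak{i}_1\colon M\hookrightarrow\widetilde M_1$ and $\mathfrak{i}_2\colon M\hookrightarrow\widetilde M_2$ be two $k$-coisotropic embeddings with the stated local and point-wise structure, and let $\mathcal{V}\subseteq\T M$ be the characteristic distribution of $(M,\{\eta_j\},\{\omega_j\})$. First I would record that, as in the pre-cosymplectic setting (see \cref{Rem: family embeddings Reeb pre-cosy}), the Reeb vector fields $\widetilde R_1,\dots,\widetilde R_k$ of each $\widetilde M_i$ are tangent to $M$: in the point-wise model $\T^\ast P\oplus\cdots\oplus\T^\ast P\times\mathbb{R}^k$ they read $\partial/\partial t^j$, which is tangent to the $\mathbb{R}^k$-factor contained in $M$. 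Consequently the directions normal to $M$ in $\widetilde M_i$ are annihilated by $\widetilde\eta_j$, so the $1$-forms do not enter the identification of the normal bundle and the ``$\eta$-part'' of the structure is rigid along $M$.

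Next I would identify the normal bundle intrinsically. Contracting the thickened $2$-forms and restricting to $\mathcal{V}$ yields a map $\T\widetilde M_i|_M/\T M\to$ (a bundle built out of $\mathcal{V}$ alone), $[v]\mapsto(i_v\widetilde\omega_1|_{\mathcal{V}},\dots,i_v\widetilde\omega_k|_{\mathcal{V}})$; since $\mathcal{V}\subseteq\bigcap_j\ker\omega_j$ and $\mathfrak{i}_i^\ast\widetilde\omega_j=\omega_j$, it is well defined on the quotient, and $k$-coisotropicity together with a direct computation in canonical Darboux coordinates for $\T^\ast P\oplus\cdots\oplus\T^\ast P\times\mathbb{R}^k$ shows it is a vector bundle isomorphism. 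The essential point — and the reason uniqueness is recovered here whereas it fails in the $\T^\ast P|_Q$-model of \cref{thm:k_symplectic_nonuniqueness} — is that for the $\mathcal{D}^\circ$-model the target is the \emph{canonical} bundle attached to $M$ (built from $\mathcal{V}\cong\mathcal{D}$ lifted from $P$), so no ``$k$-th root'' freedom of the kind governed by \cref{rem:counting_solutions_ktheory} can appear.

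Once the normal bundle of $M$ in each $\widetilde M_i$ has been identified with a bundle depending only on $(M,\{\eta_j\},\{\omega_j\})$, the two normal bundles are isomorphic, and the tubular neighborhood theorem upgrades the composite isomorphism to a diffeomorphism between a neighborhood of $M$ in $\widetilde M_1$ and a neighborhood of $M$ in $\widetilde M_2$ restricting to the identity on $M$; since the model is fibred over the $(\mathcal{D}^\circ)^k$-part with the $\mathbb{R}^k$-factor left untouched, this diffeomorphism automatically respects the remaining $\mathbb{R}^k$-directions, which proves that the two embeddings are diffeomorphic. I expect the main obstacle to be this second step: one must check in the point-wise model that the $\mathcal{D}^\circ$-structure genuinely rigidifies the normal bundle — so that the $K$-theoretic multiplicity of \cref{rem:counting_solutions_ktheory} does not occur — and reconcile this with the non-uniqueness of Reeb vector fields on $M$, the resolution being precisely that the Reeb fields of $\widetilde M_i$ are forced to be tangent to $M$ and to lie in the rigid $\mathbb{R}^k$-factor, as recorded in the first step.
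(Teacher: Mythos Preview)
Your proposal is correct and follows essentially the same route as the paper. The paper does not give an explicit proof of this statement but refers back to the $k$-pre-symplectic $\mathcal{D}^\circ$-model argument, whose core step is precisely the normal-bundle identification $\phi\colon \T\widetilde M|_M/\T M \to \mathcal{V}^\ast\oplus\cdots\oplus\mathcal{V}^\ast$, $[v]\mapsto(i_v\widetilde\omega_1|_{\mathcal{V}},\dots,i_v\widetilde\omega_k|_{\mathcal{V}})$, followed by the tubular neighborhood theorem; your discussion of the Reeb fields being tangent to $M$ is more than is strictly needed for the bare diffeomorphism claim, but it correctly anticipates the refinement the paper states immediately afterwards.
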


\noindent For completeness, let us mention that employing the same tecniques as in the cosymplectic and $k$-symplectic we may guarantee that the diffeomorphism preserves the structure on $M$, provided that the Reeb vector fields (defined in \cref{remark:Reeb_vector_fields_k_cosymplectic}) are equal on $M$. So that we may obtain the following refinement:

\begin{theorem} Let $(M, \eta_1, \dots, \eta_k, \omega_1, \dots, \omega_k)$ be a pre-$k$-cosymplectic manifold which is locally $\mathcal{D}^\circ \oplus \dots \oplus \mathcal{D}^\circ \times \mathbb{R}^k$, coisotropically embedded into $\widetilde M$, equipped with two different $k$-cosymplectic structures, $(\widetilde \eta_1^{(1)}, \dots, \widetilde \eta_k^{(1)}, \widetilde \omega_1^{(1)}, \dots, \widetilde \omega_k^{(1)})$ and $(\widetilde \eta_1^{(2)}, \dots, \widetilde \eta_k^{(2)}, \widetilde \omega_1^{(2)}, \dots, \widetilde \omega_k^{(2)})$, which are point-wise isomorphic to the canonical $k$-cosymplectic structure, equal on $M$ and such that the induced Reeb vector fields are equal on $M$. Then, they are diffeomorphic through a diffeomorphism that preserves the structures on $M$.
\end{theorem}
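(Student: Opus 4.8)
# Proof Proposal

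The plan is to combine the topological rigidity already established in the previous theorem (any two such embeddings are neighborhood diffeomorphic) with a Moser-type argument adapted to the cocontact/$k$-cosymplectic hybrid structure, being careful about the Reeb vector fields. First I would invoke the preceding theorem to reduce to the situation where both embeddings land in the \emph{same} neighborhood $U$ of $M$ in $\widetilde M$, carrying two $k$-cosymplectic structures $(\widetilde\eta_j^{(1)}, \widetilde\omega_j^{(1)})$ and $(\widetilde\eta_j^{(2)}, \widetilde\omega_j^{(2)})$ that agree on $M$, and for which the Reeb vector fields agree on $M$. Because of the hypothesis that the local model is $\mathcal{D}^\circ \oplus \cdots \oplus \mathcal{D}^\circ \times \mathbb{R}^k$ and all forms only depend on base coordinates, I would (as in the $k$-pre-symplectic uniqueness proof for the $\mathcal{D}^\circ$-model) first arrange, via the diffeomorphism obtained from the vector bundle isomorphism $\phi([v]) := (\eta_j(v), \omega_j(v,\cdot))_j$, that the two structures have a common tautological origin, so that the only remaining discrepancy is along the fiber directions.

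Next I would set up the linear interpolation $\widetilde\eta^t_j := t\widetilde\eta_j^{(2)} + (1-t)\widetilde\eta_j^{(1)}$ and $\widetilde\omega^t_j := t\widetilde\omega_j^{(2)} + (1-t)\widetilde\omega_j^{(1)}$, shrinking $U$ so that this remains a $k$-cosymplectic structure for all $t\in[0,1]$ (this is an open condition, and it holds along $M$ since the two structures coincide there). Applying the Relative Poincaré Lemma (\cref{thm:Relative_Poincaré_Lemma}) to each closed $1$-form $\widetilde\eta_j^{(2)} - \widetilde\eta_j^{(1)}$ and each closed $2$-form $\widetilde\omega_j^{(2)} - \widetilde\omega_j^{(1)}$, both of which vanish on $M$, I obtain functions $f_j$ (vanishing on $M$) with $\widetilde\eta_j^{(2)} - \widetilde\eta_j^{(1)} = \dd f_j$ and $1$-forms $\theta_j$ (vanishing on $M$) with $\widetilde\omega_j^{(2)} - \widetilde\omega_j^{(1)} = \dd\theta_j$. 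The goal is a single time-dependent vector field $X_t$, vanishing on $M$, solving simultaneously
\[
i_{X_t}\widetilde\eta^t_j = f_j\,,\qquad i_{X_t}\dd\widetilde\eta^t_j = -\theta_j\,,\qquad i_{X_t}\widetilde\omega^t_j = -\theta_j
\]
for all $j$; then $\Lie_{X_t}\widetilde\eta^t_j = \dd(i_{X_t}\widetilde\eta^t_j) + i_{X_t}\dd\widetilde\eta^t_j = \dd f_j - \theta_j$ would need to match $\widetilde\eta_j^{(1)} - \widetilde\eta_j^{(2)} = -\dd f_j$; to fix this bookkeeping I would instead, as in the cocontact theorem above, absorb the $\dd f_j$ contribution and solve the cleaned-up system $i_{X_t}\widetilde\eta^t_j = f_j$ together with $i_{X_t}\dd\widetilde\eta^t_j = -\theta_j$ and $i_{X_t}\widetilde\omega^t_j = -\theta_j$ (with $\theta_j$ re-chosen compatibly), so that Moser's trick (\cref{thm:Moser_trick}) applies to each pair. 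The existence and uniqueness of such $X_t$ is where the Reeb hypothesis enters: since the Reeb vector fields $R_i^t$ of $(\widetilde\eta^t_j,\widetilde\omega^t_j)$ are proportional to (hence, after normalization, equal to) those of the endpoints on $M$, one may, exactly as in the contact and cocontact proofs, choose the $\theta_j$ (via the explicit formula $\theta_j = \int_0^1 \psi_t^\ast i_{\Delta_t}(\widetilde\omega_j^{(2)} - \widetilde\omega_j^{(1)})\,\dd t$ from the Relative Poincaré Lemma) so that $i_{R_i^t}\theta_j = 0$, and then look for $X_t$ in $\bigcap_i\ker\widetilde\eta_i^t$; on that distribution the forms $\widetilde\omega_j^t$ together with the $k$-cosymplectic nondegeneracy pin down $X_t$ uniquely. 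Since all right-hand sides vanish on $M$, so does $X_t$, hence its time-one flow is the identity on $M$ and is the sought neighborhood equivalence.

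The main obstacle I anticipate is the simultaneity of the linear system: with $k$ distinct pairs $(\widetilde\eta_j, \widetilde\omega_j)$ one must produce a \emph{single} $X_t$ satisfying $2k$ (or $3k$) equations at once, and it is not a priori obvious that the solutions of the separate equations coincide. The resolution should be that, in the local model $\mathcal{D}^\circ\oplus\cdots\oplus\mathcal{D}^\circ\times\mathbb{R}^k$, the fiber coordinates dual to the $j$-th copy are acted on only by $\widetilde\omega_j$, so the equations decouple along the fiber directions, while along the base directions the equal-Reeb hypothesis forces consistency; this is precisely the mechanism used in the $k$-pre-symplectic $\mathcal{D}^\circ$-uniqueness theorem and in the cocontact theorem, and I would assemble $X_t$ from those local pieces and check that the construction is coordinate-independent (it is, since it is characterized by the intrinsic interior-product equations above). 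The secondary subtlety — ensuring the interpolating structures stay $k$-cosymplectic on a common shrunken neighborhood — is handled by openness of nondegeneracy plus the fact that equality holds on the compact-in-fibers zero section $M$.
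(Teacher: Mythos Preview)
You are proving more than the theorem asserts. The statement only claims a diffeomorphism that preserves the structures \emph{on $M$}, not a full neighborhood equivalence ($k$-cosymplectomorphism on a tubular neighborhood). The paper's intended argument, indicated by the sentence preceding the theorem, is correspondingly lighter: take the vector-bundle isomorphism $\phi\colon \T\widetilde M/\T M \to \mathcal{V}^\ast \oplus \cdots \oplus \mathcal{V}^\ast$ from the $k$-symplectic $\mathcal{D}^\circ$-model proof, adjoin the Reeb directions (which match by hypothesis), and extend to a diffeomorphism of tubular neighborhoods whose differential along $M$ is this explicit map. One then simply checks that this differential intertwines the two $k$-cosymplectic structures \emph{at points of $M$}; no Moser deformation is required.

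Your Moser program, if it worked, would yield the stronger conclusion of genuine neighborhood equivalence --- but the paper deliberately stops short of this in the $k$-cosymplectic setting (compare the summary table, where only ``preserves structure on $M$'' is claimed for this model). The reason is exactly the obstacle you identify: for $k>1$ you need a \emph{single} vector field $X_t$ solving $i_{X_t}\widetilde\eta_j^t = f_j$ and $i_{X_t}\widetilde\omega_j^t = -\theta_j$ simultaneously for all $j$, and your proposed decoupling along fiber directions does not obviously close. In the $\mathcal{D}^\circ$-model the normal bundle is $\mathcal{V}^\ast\oplus\cdots\oplus\mathcal{V}^\ast$, so the fiber component of $X_t$ decomposes as $(X_t^{(1)},\dots,X_t^{(k)})$ with $\widetilde\omega_j^t$ controlling $X_t^{(j)}$; but the base component of $X_t$ is shared among all $k$ equations, and the $\theta_j$ produced by the relative Poincar\'e lemma need not be consistent there. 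Your assertion that ``the equal-Reeb hypothesis forces consistency'' along the base is the step that would need a real argument, and the paper does not supply one --- which is why it settles for the weaker conclusion.
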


\section{\texorpdfstring{$k$}{k}-Pre-contact manifolds}
\label{Sec: k-contact manifolds}

\begin{definition}[\textsc{k-Pre-contact manifold}]
A \textbf{k-pre-contact manifold} is a pair $(M, \{\eta_1, \dots, \eta_k\})$, where $M$ is a smooth manifold and $\eta_1, \dots, \eta_k \in \Omega^1(M)$ are $k$ differential 1-forms.
\end{definition}

\begin{remark}
Throughout this section, we will assume that the distributions $\mathcal{D}_j := \ker(\eta_j)$ and their intersection $\mathcal{D} := \bigcap_{j=1}^k \ker(\eta_j)$ have constant rank. We also assume that the restriction of each $d\eta_j$ to $\mathcal{D}$ has constant rank.
\end{remark}

\begin{definition}[\textsc{Characteristic distribution}]
Let $(M, \{\eta_j\})$ be a k-pre-contact manifold. We define its \textbf{characteristic distribution} as
\[
\mathcal{V} \,:=\, \left( \bigcap_{j=1}^k \ker \eta_j \right) \cap \left( \bigcap_{j=1}^k \ker(d \eta_j|_{\bigcap_{l=1}^k \ker \eta_l}) \right) \,.
\]
\end{definition}

\begin{remark}[\textsc{k-Contact manifold}]
When $\mathcal{V} \,=\, \left\{\,0\,\right\}$, or, equivalently, the set of forms $\{\eta_1, \dots, \eta_k\}$ is such that
\[
\eta_1 \wedge \dots \wedge \eta_k \wedge d\eta_1 \wedge \dots \wedge d\eta_k \,\neq\, 0
\]
everywhere on $M$, the collection defines a \textbf{k-contact structure}, and the pair $(M, \{\eta_j\})$ is called a \textbf{k-contact manifold}.
\end{remark}

Again, we prefer a more general approach but, for the sake of completeness, we add here the Darboux theorem found in the literature. 

\begin{definition}[$k$-\textsc{Lagrangian distribution}]
A distribution $\mathcal{L}$ on $M$ is called {\bf $k$-Lagrangian} if for every $m \in M$,
\[
\mathcal{L}|_m = \{v \in \T_m M \colon \, \eta_j(v) = \dd \eta_j(v, \mathcal{L}|_m) = 0\, \text{ for }j = 1, \dots, k\}\,.
\]
\end{definition}

\begin{theorem}[\textsc{Darboux theorem for k-contact manifolds}]\label{thm:Darboux_k_contact}
Let $(M, \{\eta_j\})$ be a k-contact manifold of dimension $d = k + n + nk$. Suppose there is a $k$-Lagrangian distribution of rank $nk$ and that $\eta_1 \wedge \cdots \wedge \eta_k$ defines a volume form on $\bigcap_{j= 1}^{k} \ker \dd\eta_j$.
Then, for every point $m \in M$, there exists a coordinate chart $(U, \varphi)$ centered at $m$, with coordinates $(t^1, \dots, t^k, q^{a}, p_{aj})_{j=1,\dots,k; a=1,\dots,n_j}$, such that
\[
\eta_j|_U = \dd t^j - p_{a}^j \dd q^{a} \quad (\text{no sum over } j).
\]
We will call such coordinates \textbf{Darboux coordinates} for the k-contact structure.
\begin{proof}
See \cite{narciso_nuevo}.
\end{proof}
\end{theorem}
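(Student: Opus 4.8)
The plan is to peel off the Reeb directions and reduce the statement to the Darboux theorem for $k$-symplectic manifolds, \cref{Thm: Darboux k-symplectic}. First I would construct the Reeb vector fields: since $\eta_1\wedge\cdots\wedge\eta_k$ is a volume form on the rank-$k$ bundle $W:=\bigcap_j\ker\dd\eta_j$, the map $v\mapsto(\eta_1(v),\dots,\eta_k(v))$ is a fibrewise isomorphism $W\to\mathbb{R}^k$, so there are unique $R_1,\dots,R_k$ with $\iota_{R_i}\eta_j=\delta_{ij}$ and $\iota_{R_i}\dd\eta_j=0$ for all $j$. Cartan's formula gives $\Lie_{R_i}\eta_j=\iota_{R_i}\dd\eta_j+\dd\iota_{R_i}\eta_j=0$, hence $\Lie_{R_i}\dd\eta_j=0$; plugging these into $\iota_{[R_i,R_j]}\eta_l=\Lie_{R_i}(\iota_{R_j}\eta_l)-\iota_{R_j}\Lie_{R_i}\eta_l$ and into the analogous identity with $\dd\eta_l$ shows $[R_i,R_j]\in W\cap\bigcap_l\ker\eta_l=\{0\}$ (the last equality again by the volume-form hypothesis), so the $R_i$ pairwise commute.

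The $R_i$ are pointwise independent and commute, so straightening them simultaneously (a standard consequence of the flow-box theorem) yields coordinates $(t^1,\dots,t^k,y^1,\dots,y^{n+nk})$ near $m$ with $R_i=\partial/\partial t^i$; write $\Sigma=\{t^1=\cdots=t^k=0\}$ and let $\pi$ be the projection onto the $y$'s. From $\iota_{R_i}\eta_j=\delta_{ij}$ and $\Lie_{R_i}\eta_j=0$ one reads off $\eta_j=\dd t^j+\pi^\ast\bar\beta_j$ for $1$-forms $\bar\beta_j$ on $\Sigma$, so $\dd\eta_j=\pi^\ast\dd\bar\beta_j$. I would then check that $(\Sigma,\{\dd\bar\beta_1,\dots,\dd\bar\beta_k\})$ is a $k$-symplectic manifold of dimension $n+nk$: each $\dd\bar\beta_j$ is closed, and if $w\in\T\Sigma$ annihilates every $\dd\bar\beta_j$ then, seen as a vector along $\Sigma$ in $M$, it lies in $\bigcap_j\ker\dd\eta_j=W=\langle R_1,\dots,R_k\rangle$, which is transverse to $\Sigma$, forcing $w=0$.

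Next I would transport the $k$-Lagrangian distribution $\mathcal{L}$ to $\Sigma$. Because $\Lie_{R_i}\eta_j=\Lie_{R_i}\dd\eta_j=0$, the Reeb flows preserve the intrinsically defined $\mathcal{L}$; since moreover $\mathcal{L}\subseteq\bigcap_j\ker\eta_j$ meets $\langle R_1,\dots,R_k\rangle$ trivially, $\pi_\ast$ carries $\mathcal{L}$ isomorphically onto a rank-$nk$ distribution $\bar{\mathcal{L}}$ on $\Sigma$; in fact $\eta_j(v)=0$ pins down the $\partial/\partial t^j$-component of any $v\in\mathcal{L}$, so $\mathcal{L}$ is the horizontal lift of $\bar{\mathcal{L}}$ and $\dd\eta_j(v_1,v_2)=\dd\bar\beta_j(\pi_\ast v_1,\pi_\ast v_2)$. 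From this one checks that $\bar{\mathcal{L}}$ is $k$-Lagrangian for $\{\dd\bar\beta_j\}$, and — assuming, as in \cite{cata_k_contact}, that the $k$-Lagrangian distribution is involutive — that $\bar{\mathcal{L}}$ is involutive as well. Applying \cref{Thm: Darboux k-symplectic} on $\Sigma$ then produces coordinates $(q^a,p_a^j)$ with $\dd\bar\beta_j=\dd q^a\wedge\dd p_a^j$.

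Finally I would assemble the chart. Writing $q^a,p_a^j$ also for $q^a\circ\pi,p_a^j\circ\pi$, we get $\dd\eta_j=\dd q^a\wedge\dd p_a^j$ on $M$, so $\bar\beta_j+p_a^j\,\dd q^a$ is closed on $\Sigma$ and, after shrinking the neighborhood, equals $\dd g_j$ for some $g_j\in C^\infty(\Sigma)$; setting $\tilde t^j:=t^j+g_j\circ\pi$ gives $\eta_j=\dd\tilde t^j-p_a^j\,\dd q^a$, and $(\tilde t^j,q^a,p_a^j)$ is a coordinate system because its $k+n+nk=d$ differentials are linearly independent. I expect the two delicate points to be the commutativity of the Reeb fields — where the volume-form hypothesis on $\bigcap_j\ker\dd\eta_j$ is exactly what is used — and the descent of the $k$-Lagrangian distribution, together with its involutivity, to the slice $\Sigma$, since that is precisely the input \cref{Thm: Darboux k-symplectic} requires; without an involutivity assumption on the $k$-Lagrangian distribution the reduction would stall there, just as it does in the purely $k$-symplectic case.
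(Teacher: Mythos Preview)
The paper does not give a proof here; it simply cites \cite{cata_k_contact}. Your argument is a correct and complete fleshing-out of the standard proof strategy, and it is in fact the approach taken in that reference: construct the commuting Reeb frame, straighten it, reduce to a slice carrying a $k$-symplectic structure, and apply \cref{Thm: Darboux k-symplectic}. Each step is handled carefully, including the verification that the $k$-Lagrangian distribution descends (using Reeb-invariance of $\mathcal{L}$, which follows from $\Lie_{R_i}\eta_j=\Lie_{R_i}\dd\eta_j=0$ and the fact that $\mathcal{L}$ is intrinsically characterised by these forms).

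Your closing remark about involutivity is well taken: the statement as printed in the paper asks only for a $k$-Lagrangian distribution, but \cref{Thm: Darboux k-symplectic} --- and the argument in \cite{cata_k_contact} --- requires it to be involutive. Without that hypothesis the reduction to $\Sigma$ still produces a $k$-symplectic structure with a $k$-Lagrangian $\bar{\mathcal{L}}$, but one cannot invoke the $k$-symplectic Darboux theorem. So the gap you identify is in the \emph{statement} (a missing hypothesis carried over from the cited source), not in your proof.
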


\begin{remark} The canonical model for these coordinates is $M := \mathbb{R}^k \times \underbrace{\T^\ast Q \oplus \cdots \oplus \T^\ast Q}_{k \text{ times}}$.
\end{remark}

\begin{remark} As in the $k$-cosymplectic scenario, for $k$-contact manifolds under the hypotheses of \cref{thm:Darboux_k_contact}, there is a distinguished family of vector fields, called {\bf Reeb vector fields}, $R_1, \dots, R_k$ defined as the unique vector fields satisfying 
\[
i_{R_i} \eta_j = \delta_{ij}\,, \quad i_{R_i} \dd \eta_j = 0\,.
\]
Locally, in Darboux coordinates, $R_i = \frac{\partial}{\partial t^i}$.
\end{remark}

Again, in full generality, we may only guarantee the following:

\begin{theorem}[\textsc{Darboux theorem for k-pre-contact manifolds}]

Let $(M, \{\eta_j\})$ be a $k$-pre-contact manifold, such that its characteristic distribution $\mathcal{V}$ has constant rank $l$. Then, for every point $m \in M$, there exists a coordinate chart $(U, \varphi)$ around $m$, with cordinates $(x^1, \dots, x^{d-l}, z^1, \dots, z^{l})$ such that the characteristic distribution is locally spanned by $\{\frac{\partial}{\partial z^A}\}_{A= 1, \dots, l}$, and such that the $1$-forms $\eta_j$ only depend on the coordinates $x^a$.
\begin{proof}
The proof follows as a direct generalization of the proof of \cref{Thm: Darboux pre-symplectic}.
\end{proof}
\end{theorem}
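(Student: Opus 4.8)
The plan is to follow the scheme of the proof of \cref{Thm: Darboux k-pre-symplectic}: reduce the statement to the existence of a foliated chart adapted to the characteristic distribution $\mathcal{V}$. The one point where the argument genuinely departs from the $k$-pre-symplectic case is the integrability of $\mathcal{V}$. There, $\mathcal{V} = \bigcap_j \ker\omega_j$ is the kernel of an intersection of \emph{closed} forms and is involutive for free; here the $\eta_j$ need not be closed and $\mathcal{V}$ is the more delicate subbundle $\left(\bigcap_j\ker\eta_j\right)\cap\left(\bigcap_j\ker(\dd\eta_j|_{\mathcal{D}})\right)$, with $\mathcal{D} := \bigcap_l\ker\eta_l$, so involutivity must be checked by hand. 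This is the step I expect to be the main obstacle.

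First I would prove $\mathcal{V}$ is involutive. Let $X,Y$ be sections of $\mathcal{V}$. From $\dd\eta_j(X,Y) = X(\eta_j(Y)) - Y(\eta_j(X)) - \eta_j([X,Y])$ together with $\eta_j(X) = \eta_j(Y) = 0$ and $\dd\eta_j(X,Y) = 0$ (the latter because $X,Y$ lie in $\mathcal{V}$), one gets $\eta_j([X,Y]) = 0$ for all $j$, so $[X,Y]$ is a section of $\mathcal{D}$. The same identity shows that whenever $X$ is a section of $\mathcal{V}$ and $Z$ a section of $\mathcal{D}$, the bracket $[X,Z]$ is again a section of $\mathcal{D}$ (one uses $\eta_l(Z) = 0$, $\eta_l(X) = 0$, and $\dd\eta_l(X,Z) = 0$, this last because $\dd\eta_l(X,\cdot)$ vanishes on $\mathcal{D}$). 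Now I would invoke $\dd(\dd\eta_j) = 0$ through the intrinsic formula for the exterior derivative of the $2$-form $\dd\eta_j$ on the triple $(X,Y,Z)$ with $X,Y$ sections of $\mathcal{V}$ and $Z$ a section of $\mathcal{D}$:
\[
0 = X(\dd\eta_j(Y,Z)) - Y(\dd\eta_j(X,Z)) + Z(\dd\eta_j(X,Y)) - \dd\eta_j([X,Y],Z) + \dd\eta_j([X,Z],Y) - \dd\eta_j([Y,Z],X).
\]
Each of the first three terms vanishes because $\dd\eta_j$ is being evaluated either on two sections of $\mathcal{V}$ or on one section of $\mathcal{V}$ and one of $\mathcal{D}$; the last two vanish because $[X,Z]$ and $[Y,Z]$ are sections of $\mathcal{D}$ (just shown) and $\dd\eta_j$ pairs each of them trivially against the $\mathcal{V}$-section $Y$, resp.\ $X$. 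Hence $\dd\eta_j([X,Y],Z) = 0$ for every $j$ and every section $Z$ of $\mathcal{D}$; since $[X,Y]$ is already a section of $\mathcal{D}$, this is precisely the statement that $[X,Y]$ is a section of $\mathcal{V}$.

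With involutivity in hand, the Frobenius theorem applies to the constant-rank distribution $\mathcal{V}$ and produces, around every $m \in M$, a foliated chart $(x^1,\dots,x^{d-l},z^1,\dots,z^l)$ with $\mathcal{V} = \operatorname{span}\{\partial_{z^A}\}_{A=1,\dots,l}$. Since each $\partial_{z^A}$ is a section of $\mathcal{V}\subseteq\ker\eta_j$, one has $i_{\partial_{z^A}}\eta_j = 0$, so $\eta_j|_U$ carries no $\dd z^A$-component and is a combination of the $\dd x^a$ alone, which is the asserted dependence of $\eta_j$ on the coordinates $x^a$. I would add one cautionary remark: unlike in the $k$-pre-symplectic case, where closedness gives $\mathcal{L}_{\partial_{z^A}}\omega_j = i_{\partial_{z^A}}\dd\omega_j = 0$ and hence the coefficients of $\omega_j$ are themselves independent of the $z^A$, here one only has $\mathcal{L}_{\partial_{z^A}}\eta_j = i_{\partial_{z^A}}\dd\eta_j$, which need not vanish, so the coefficients of $\eta_j$ may still depend on the $z^A$; the foliated-chart normal form above is then the strongest conclusion one can expect in general.
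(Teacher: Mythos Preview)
Your proof is correct and follows the same overall strategy the paper has in mind: show the characteristic distribution is involutive, invoke Frobenius to obtain a foliated chart, and read off the desired normal form for the $\eta_j$. The paper's proof is a one-line deferral to the pre-symplectic case, so your write-up is strictly more informative.

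The genuine content you add is the involutivity argument for $\mathcal{V}$, which the paper does not spell out and which is \emph{not} automatic here (unlike in the $k$-pre-symplectic case, where $\mathcal{V}$ is an intersection of kernels of closed forms). Your two-step argument --- first $[X,Y]\in\mathcal{D}$ via the Cartan formula for $\dd\eta_j$, then $[X,Y]\in\mathcal{V}$ via the intrinsic formula for $\dd(\dd\eta_j)=0$ on $(X,Y,Z)$ with the auxiliary fact that $[\Gamma(\mathcal{V}),\Gamma(\mathcal{D})]\subseteq\Gamma(\mathcal{D})$ --- is clean and correct.

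Your closing caveat is also well taken and worth keeping: from $\partial_{z^A}\in\mathcal{V}$ one only gets $i_{\partial_{z^A}}\eta_j=0$, hence $\eta_j=\eta_{ja}(x,z)\,\dd x^a$ with coefficients that may still depend on the $z^A$, since $\mathcal{L}_{\partial_{z^A}}\eta_j=i_{\partial_{z^A}}\dd\eta_j$ need not vanish (it is only guaranteed to annihilate $\mathcal{D}$). The paper's phrase ``depend only on the coordinates $x^a$'' should thus be read in the weak sense you identify --- no $\dd z^A$-component --- rather than as asserting that the coefficient functions are independent of $z$; the stronger claim, which does hold in the $k$-pre-symplectic case by closedness, is not available here in general.
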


\subsection{Coisotropic submanifolds}

\begin{definition}[$\ell$-\textsc{Coisotropic submanifold of a k-contact manifold}]
Let $(M, \{\eta_1, \dots, \eta_k\})$ be a k-contact manifold, and let
\[
\mathfrak{i}  \colon  N \hookrightarrow M
\]
be an immersed submanifold. For an integer $\ell \in \{1, \dots, k\}$, we say that $N$ is an \textbf{$\ell$-coisotropic submanifold} of $(M, \{\eta_j\})$ if, for every point $n \in N$,
\[
{\mathbf{T}_n N}^{\ell, \perp_{\{\eta\}}} \,\subseteq\, \mathbf{T}_n N\,,
\]
where the \textbf{$\ell$-k-contact orthogonal} of the tangent space $\mathbf{T}_n N$ is defined as the subspace
\[
{\mathbf{T}_n N}^{\ell, \perp_{\{\eta\}}} := \left\{\, X \in \bigcap_{j=1}^\ell \ker(\eta_j)_n  \colon  d\eta_j(X, Y) = 0 \quad \forall j \in \{1,\dots,\ell\}, \quad \forall \, Y \in \mathbf{T}_n N \,\right\}.
\]
\end{definition}

\subsection{Coisotropic embedding}

\subsubsection{Existence}

\begin{theorem}[\textsc{k-Contact thickenings for k-pre-contact manifolds}]
Let $(M, \{\eta_1, \dots, \eta_k\})$ be a k-pre-contact manifold.
There exists a k-contact manifold $(\widetilde{M}, \{\widetilde{\eta}_1, \dots, \widetilde{\eta}_k\})$ and an embedding
\[
\mathfrak{i} \colon  M \hookrightarrow \widetilde{M}
\]
such that $\mathfrak{i}(M)$ is a k-coisotropic submanifold of $\widetilde{M}$.
The k-contact manifold $(\widetilde{M}, \{\widetilde{\eta}_j\})$ is referred to as a \textbf{k-contact thickening} of $(M, \{\eta_j\})$.
\begin{proof}

\noindent Consider the characteristic distribution $\mathcal{V}$ of the k-pre-contact structure and an associated almost product structure $P$ with $\operatorname{Im}(P) = \mathcal{V}$.
The thickening space $\widetilde{M}$ is constructed as a tubular neighborhood of the zero section of the vector bundle ${\Lambda^1}^\perp_R(M)$, where $R = \mathds{1}-P$. Let $\tau\colon \widetilde{M} \to M$ be the bundle projection. As in the previous sections, the adapted coordinates on $\widetilde{M}$ include the base coordinates from $M$ and a single set of fiber coordinates, which we denote by $\mu_A$.

\noindent On this space, we define a single tautological 1-form $\vartheta^P := \mu_A P^A$. The new k-contact structure on $\widetilde{M}$ is then defined by applying the same correction to each pre-contact form:
\[
\widetilde{\eta}_j := \tau^* \eta_j - \vartheta^P, \quad \text{for } j=1, \dots, k.
\]
By taking the exterior derivative, we get $d\widetilde{\eta}_j = \tau^* d\eta_j - d\vartheta^P$. The term $-d\vartheta^P$, common to all forms, is designed to resolve the degeneracy along the characteristic distribution $\mathcal{V}$. A direct computation, analogous to the one for a single pre-contact form, shows that the resulting structure $(\widetilde{M}, \{\widetilde{\eta}_j\})$ is non-degenerate in a neighborhood of the zero section, thus defining a k-contact manifold.

\noindent To prove that $M$ is a k-coisotropic submanifold, we must show that $\mathbf{T}_m M^{\perp_{\{\widetilde{\eta}\}}} \subset \mathbf{T}_m M$, which can be proved via a computation analogous to the one made in the pre-contact case.
\end{proof}
\end{theorem}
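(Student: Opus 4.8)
The plan is to transcribe, almost verbatim, the construction used in \cref{Thm: existence contact} for a single pre-contact form; the point is that a single correction term built from the \emph{common} characteristic distribution $\mathcal{V}$ simultaneously repairs the degeneracy of all of the $\eta_j$. First I would take
\[
\mathcal{V} := \Bigl(\bigcap_{j=1}^k \ker\eta_j\Bigr) \cap \Bigl(\bigcap_{j=1}^k \ker\bigl(\dd\eta_j|_{\bigcap_l \ker\eta_l}\bigr)\Bigr),
\]
which is completely integrable by the Darboux theorem for $k$-pre-contact manifolds stated above: in a foliated chart $\mathcal{V} = \operatorname{span}\{\partial/\partial z^A\}$ and the $\eta_j$ depend only on the transverse coordinates. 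Choosing any complement $\mathcal{H}$ yields an almost product structure with projector $P$, $\operatorname{Im} P = \mathcal{V}$, $\ker P = \mathcal{H}$, opposite tensor $R = \mathds{1}-P$, and the associated bundle ${\Lambda^1}^\perp_R(M)$ with projection $\tau$, adapted fibre coordinates $\mu_A$, and tautological one-form $\vartheta^P = \mu_A P^A$, exactly as in the earlier sections. I would then set $\widetilde M$ to be a tubular neighbourhood of the zero section of ${\Lambda^1}^\perp_R(M)$ (or the whole bundle when $\mathcal{N}_P \equiv 0$), let $\mathfrak{i}$ be the zero section, and put $\widetilde\eta_j := \tau^*\eta_j - \vartheta^P$ for $j=1,\dots,k$, so that $\dd\widetilde\eta_j = \tau^*\dd\eta_j - \dd\vartheta^P$ with the $j$-independent term $-\dd\vartheta^P$ supplying the missing non-degeneracy.

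There remain two verifications. For non-degeneracy of $\{\widetilde\eta_j\}$ near the zero section, I would contract an arbitrary vector field, written in the frame adapted to $P$, with each $\dd\widetilde\eta_j$: as in the pre-contact computation, $\dd\vartheta^P$ contributes the canonical pairing $\dd\mu_A\wedge\dd z^A$ together with terms equal to $\mu_A$ times the components of the Nijenhuis tensor $\mathcal{N}_P$, so that $i_X\dd\widetilde\eta_j = 0$ for all $j$ together with $X \in \bigcap_j\ker\widetilde\eta_j$ forces every component of $X$ except the Reeb-type ones to vanish once $\mu_A = 0$, hence — by openness of the non-degeneracy condition — on a whole neighbourhood of the zero section; if $\mathcal{N}_P$ vanishes the $\mu_A$-linear obstructions disappear and the conclusion holds on all of ${\Lambda^1}^\perp_R(M)$. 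This makes $(\widetilde M,\{\widetilde\eta_j\})$ a $k$-contact manifold. For $k$-coisotropy, at a point $m$ of the zero section I would evaluate $\dd\widetilde\eta_j(X,W)$ for $X$ in the $k$-contact orthogonal of $\T_m M$ and $W \in \T_m M \cap \bigcap_j \ker(\widetilde\eta_j)_m$; since $\mu_A = 0$ along $M$, all $\mathcal{N}_P$-terms drop out, leaving only the pairing induced by $\dd\eta_j$ along the horizontal directions transverse to $M$ plus $\dd\mu_A\wedge\dd z^A$. The latter kills the $\mu$-components of $X$ and the former kills its components transverse to $M$, so $X \in \T_m M$ and $\mathfrak{i}(M)$ is $k$-coisotropic.

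I expect the only genuinely delicate point to be the non-degeneracy check: the coisotropy argument lives on the zero section, where $\mu_A = 0$, but the $k$-contact condition must hold on an open set, so one really has to carry the $\mu_A$-linear terms and argue that, for a small enough neighbourhood, they form a controllable perturbation of the block pairing $(\text{pairing of the }\dd\eta_j)\oplus(\dd\mu_A\wedge\dd z^A)$. Recognising this obstruction as $\mathcal{N}_P$ — so that the construction is valid globally precisely when the chosen almost product structure is integrable — is the one step that is not a pure transcription of the pre-contact proof; everything else is that proof repeated $k$ times with the single shared correction $\vartheta^P$.
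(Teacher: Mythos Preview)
Your proposal is correct and follows essentially the same approach as the paper: build $\widetilde M$ as a tubular neighbourhood of the zero section of ${\Lambda^1}^\perp_R(M)$, define $\widetilde\eta_j = \tau^*\eta_j - \vartheta^P$ with a single shared correction, and verify non-degeneracy and $k$-coisotropy by direct analogy with the pre-contact case. Your write-up is in fact more explicit than the paper's, which simply defers both verifications to ``a direct computation, analogous to the one for a single pre-contact form''; your identification of the $\mu_A$-linear obstruction with $\mathcal{N}_P$ and the resulting global-versus-local dichotomy is exactly the content the paper leaves implicit.
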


\subsubsection{Uniqueness}
Much in vain of the $k$-pre-cosymplectic case, and employing the same tecniques developed, uniqueness is not garanteed, and a generalization of the example described in the $k$-symplectic case works.

\noindent Nevertheless, uniqueness may follow by requiring the thickening $\widetilde M$ to have certain geometry. Indeed, we have the following

\begin{theorem} Let $(M,\eta_1, \dots, \eta_k)$ be a $k$-pre-contact manifold locally diffeomorphic to $\mathcal{D}^\circ \oplus \cdots \oplus \mathcal{D}^\circ \times \mathbb{R}^k$, for certain completely integrable distribution $\mathcal{D}$ on a particular manifold $P$. Then, any pair of coisotropic embeddings $M \hookrightarrow \widetilde M$ into a $k$-contact manifolds which are locally diffeomorphic to $\T^\ast P \oplus \cdots \oplus \T^\ast P \times\mathbb{R}^k$ are diffeomorphic through a diffeomorphism preserving the structures on $M$.
\end{theorem}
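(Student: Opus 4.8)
The plan is to follow the three-step template already used for the $k$-pre-symplectic and $k$-pre-cosymplectic uniqueness results, adapted to a family of $k$ contact-type $1$-forms; the genuinely new feature is that one Moser vector field must control the homotopy of all $k$ forms at once, and this is exactly what forces the restrictive hypothesis on the local model.

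First I would carry out the topological normalization. Given a $k$-coisotropic embedding $\mathfrak{i}\colon M\hookrightarrow\widetilde M$ with $\widetilde M$ locally diffeomorphic to $\T^\ast P\oplus\cdots\oplus\T^\ast P\times\mathbb{R}^k$, write $\mathcal{V}$ for the characteristic distribution of $(M,\{\eta_j\})$ and $R_1,\dots,R_k$ for its Reeb vector fields (well defined thanks to the assumed local model). As in \cref{Sec: k-symplectic manifolds}, the assignment $\phi([v]):=(i_v\dd\widetilde\eta_1,\dots,i_v\dd\widetilde\eta_k)$ identifies the normal bundle $\T\widetilde M|_M/\T M$ with $k$ copies of $\mathcal{V}^\ast$ --- this is checked in the local model, where $M$ looks like $\mathcal{D}^\circ\oplus\cdots\oplus\mathcal{D}^\circ\times\mathbb{R}^k$ and $\mathcal{V}$ is spanned by the leaf directions of $\mathcal{D}$ --- so a tubular neighborhood of $\mathfrak{i}(M)$ is diffeomorphic, through a map restricting to the identity on $M$, to a fixed model bundle over $M$. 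Applying this to two embeddings of the required kind reduces the problem to the case where $M$ sits inside a single neighborhood $U$ carrying two $k$-contact structures $\{\widetilde\eta_j^{(1)}\}$ and $\{\widetilde\eta_j^{(2)}\}$ that, together with their exterior derivatives, agree on $M$.

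Second, I would straighten the Reeb vector fields: in the local model these are the coordinate fields of the $\mathbb{R}^k$-factor, so for each structure they commute, span an integrable rank-$k$ distribution transverse to $\bigcap_j\ker\widetilde\eta_j$, and coincide on $M$ for the two structures; a simultaneous flow-box rectification of the second structure's family, arranged to be the identity on $M$, further reduces to the situation where both structures on $U$ have the same Reeb vector fields $\widetilde R_1,\dots,\widetilde R_k$. These are then automatically the Reeb fields of every member of the interpolation $\widetilde\eta_j^{(t)}:=t\widetilde\eta_j^{(2)}+(1-t)\widetilde\eta_j^{(1)}$, which (after shrinking $U$) is $k$-contact for all $t\in[0,1]$.

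Finally, I would run Moser's trick, looking for a time-dependent vector field $X_t$ on $U$, vanishing on $M$, with $X_t\in\bigcap_{j}\ker\widetilde\eta_j^{(t)}$ and $i_{X_t}\dd\widetilde\eta_j^{(t)}=\widetilde\eta_j^{(1)}-\widetilde\eta_j^{(2)}$ for all $j$. Then $i_{X_t}\widetilde\eta_j^{(t)}=0$ gives $\Lie_{X_t}\widetilde\eta_j^{(t)}=i_{X_t}\dd\widetilde\eta_j^{(t)}=-\tfrac{\dd}{\dd t}\widetilde\eta_j^{(t)}$, so \cref{thm:Moser_trick} produces a flow whose time-one map $\psi_1$ satisfies $\psi_1^\ast\widetilde\eta_j^{(2)}=\widetilde\eta_j^{(1)}$ and is the identity on $M$, as desired. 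The hard part will be the existence and uniqueness of $X_t$: each $\widetilde\eta_j^{(1)}-\widetilde\eta_j^{(2)}$ vanishes on $M$ and is annihilated by all $\widetilde R_i$ (since $i_{\widetilde R_i}\widetilde\eta_j^{(\alpha)}=\delta_{ij}$ for $\alpha=1,2$), hence lies in the annihilator of the Reeb distribution, and one must verify --- this is exactly where the hypothesis on the local model enters --- that the overdetermined-looking assignment $v\mapsto(i_v\dd\widetilde\eta_1^{(t)},\dots,i_v\dd\widetilde\eta_k^{(t)})$ on the common horizontal distribution $\bigcap_j\ker\widetilde\eta_j^{(t)}$ has precisely that annihilator as image and is invertible there, with the solution vanishing where the data vanishes. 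This system need not be solvable for a general $k$-pre-contact thickening --- which is why uniqueness fails without the hypothesis --- but becomes invertible in the model $\mathcal{D}^\circ\oplus\cdots\oplus\mathcal{D}^\circ\times\mathbb{R}^k$ because of the Lagrangian-type splitting that each $\mathcal{D}^\circ$ carries inside $\T^\ast P$.
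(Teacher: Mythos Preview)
Your Step~1 is essentially the paper's intended proof and is enough for the theorem as stated. Note carefully what the theorem claims: the two embeddings are \emph{diffeomorphic through a diffeomorphism preserving the structures on $M$}. This is the intermediate column of the paper's uniqueness table --- a diffeomorphism of neighborhoods that matches the $k$-contact data only along $M$, \emph{not} full neighborhood equivalence (i.e., a $k$-contactomorphism). The normal-bundle identification $\phi([v])=(i_v\dd\widetilde\eta_1,\dots,i_v\dd\widetilde\eta_k)$, checked in the local model $\mathcal{D}^\circ\oplus\cdots\oplus\mathcal{D}^\circ\times\mathbb{R}^k\hookrightarrow\T^\ast P\oplus\cdots\oplus\T^\ast P\times\mathbb{R}^k$, together with the tubular-neighborhood theorem, already yields such a diffeomorphism; the verification that the resulting map preserves $\{\eta_j\}$ and $\{\dd\eta_j\}$ on $M$ is the same bookkeeping as in the $k$-symplectic and $k$-cosymplectic cases. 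The paper gives no explicit proof here, pointing instead to those earlier sections, and your Step~1 is exactly that argument.

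Your Steps~2 and~3, however, attempt to prove the stronger conclusion of the \emph{next} theorem in the paper (neighborhood equivalence), and there is a genuine gap. The rectification in Step~2 --- making the two families of Reeb vector fields coincide on a full neighborhood via a diffeomorphism fixing $M$ --- is not available without additional hypotheses: the Reeb fields agree on $M$ but their integral foliations off $M$ need not match, and the counterexamples in the cosymplectic and contact sections show precisely this obstruction. This is why the paper's refinement theorem explicitly assumes the Reeb vector fields are proportional. Without that assumption, the overdetermined Moser system in Step~3 cannot in general be solved: in the model, $i_X\dd\eta_j = X^a\,\dd p_a^j - X_a^j\,\dd q^a$, so the $\dd p_a^j$-component of each $\widetilde\eta_j^{(1)}-\widetilde\eta_j^{(2)}$ would have to equal the \emph{same} $X^a$ for every $j$, a compatibility you have no way to enforce. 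So: keep Step~1 as the proof of the theorem as stated, and reserve Steps~2--3 for the refinement, where the extra Reeb hypothesis makes them go through.
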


\noindent Again, this theorem may be refined if we take into account the orbits of the Reeb vector fields:

\begin{theorem}Let $(M,\eta_1, \dots, \eta_k)$ be a $k$-pre-contact manifold locally diffeomorphic to $\mathcal{D}^\circ \oplus \cdots \oplus \mathcal{D}^\circ \times \mathbb{R}^k$, for certain completely integrable distribution $\mathcal{D}$ on a particular manifold $P$. Let $M$ be coisotropically embedded into $\widetilde M$, where $\widetilde M$ is endowed with two different $k$-contact structures, $(\widetilde \eta_1^{(1)}, \dots, \widetilde \eta_k^{(1)})$ and $(\widetilde \eta_1^{(2)}, \dots, \widetilde \eta_k^{(2)})$ such that the forms and their exterior differentials coincide on $M$. If the Reeb vector fields are proportional, both embeddings are neighborhood equivalent.
\end{theorem}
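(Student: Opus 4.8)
The overall strategy is a Moser-trick argument, as in the proof of \cref{Thm: uniqueness pre-symplectic} and its pre-contact analogue, now carried out for all $k$ one-forms simultaneously. First I would normalise the Reeb fields: since the two $k$-tuples $\{\widetilde R^{(1)}_i\}$ and $\{\widetilde R^{(2)}_i\}$ are pairwise proportional and agree on $M$ (there they reduce to the Reeb fields of the pre-contact structure), a diffeomorphism of a neighbourhood of $M$ fixing $M$ can be chosen so that $\widetilde R^{(1)}_i = \widetilde R^{(2)}_i =: \widetilde R_i$ for all $i$. Then I would form the interpolation $\widetilde\eta_{j,t} := t\,\widetilde\eta^{(2)}_j + (1-t)\,\widetilde\eta^{(1)}_j$; since $k$-contactness is an open condition and the two structures coincide on $M$, after shrinking the neighbourhood the family $(\widetilde\eta_{j,t})_j$ is $k$-contact for every $t\in[0,1]$, and one checks that $i_{\widetilde R_i}\widetilde\eta_{j,t} = \delta_{ij}$ and $i_{\widetilde R_i}\dd\widetilde\eta_{j,t} = 0$, so the $\widetilde R_i$ are simultaneously the Reeb fields of each interpolated structure.

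By \cref{thm:Moser_trick} it then suffices to produce a time-dependent vector field $X_t$ vanishing on $M$ with $\Lie_{X_t}\widetilde\eta_{j,t} = \widetilde\eta^{(1)}_j - \widetilde\eta^{(2)}_j =: \beta_j$ for all $j$; its time-one flow is then the identity on $M$ and carries $(\widetilde\eta^{(2)}_j)_j$ to $(\widetilde\eta^{(1)}_j)_j$, that is, the sought neighbourhood equivalence. Writing $X_t = \sum_i h^t_i\,\widetilde R_i + Y_t$ with $Y_t \in \bigcap_j \ker\widetilde\eta_{j,t}$ and using $i_{\widetilde R_i}\dd\widetilde\eta_{j,t} = 0$, the Moser equation becomes $i_{Y_t}\dd\widetilde\eta_{j,t} = \beta_j - \dd h^t_j$ for each $j$. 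The compatibility with the Reeb directions, $i_{\widetilde R_i}\beta_j = \delta_{ij} - \delta_{ij} = 0$, holds precisely because we forced the Reeb fields to coincide — this is where the proportionality hypothesis is used — and each $\beta_j$ vanishes on $M$. Here the canonical local model enters: in the corresponding Darboux coordinates $(t^i,q^a,p^i_a)$ one has $\widetilde R_i = \partial_{t^i}$, the distribution $\bigcap_j\ker\widetilde\eta_{j,t}$ is a graph over the $(q,p)$-directions, and on it $(\dd\widetilde\eta_{j,t})_j$ restricts to a mildly deformed canonical $k$-symplectic structure; one then chooses the functions $h^t_j$, via a relative Poincaré Lemma argument (\cref{thm:Relative_Poincaré_Lemma}) exploiting that $\beta_j$ vanishes on $M$, so that a single horizontal $Y_t$ solves all $k$ equations at once.

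The main obstacle is exactly this coupling. For $k>1$ the map $Y \mapsto (i_Y\dd\widetilde\eta_{1,t},\dots,i_Y\dd\widetilde\eta_{k,t})$ from the horizontal distribution into a $k$-fold product of cotangent spaces is far from surjective, so the $k$ equations cannot be treated independently; making them simultaneously solvable forces the $h^t_j$ to absorb the ``off-diagonal'' components of $\beta_j$ (in the local model, the $\dd p^l_a$-components with $l\neq j$) while keeping the ``diagonal'' components matched across $j$. Verifying that the difference of two $k$-contact forms with common Reeb fields, each locally standard, admits such auxiliary functions is the technical heart of the proof, and this is where I expect the real work — and the essential use of the local-model hypothesis — to lie. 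Once the $h^t_j$ and $Y_t$ are obtained, $X_t$ vanishes on $M$ since every right-hand side does, so its flow exists in a neighbourhood of $M$, is the identity there, and intertwines the two $k$-contact structures, yielding the neighbourhood equivalence.
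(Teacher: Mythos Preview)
Your Moser-trick strategy is exactly what the paper intends: the statement is asserted without proof, the paper simply referring back to the techniques of the single contact case (reduce to common Reeb fields, interpolate, take $X_t$ horizontal, solve $i_{X_t}\dd\widetilde\eta_{j,t}=\widetilde\eta_j^{(1)}-\widetilde\eta_j^{(2)}$). So at the level of approach you are aligned with the paper.

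Where you go further than the paper is in naming the genuine obstruction: for $k>1$ the map $Y\mapsto(i_Y\dd\widetilde\eta_{1,t},\dots,i_Y\dd\widetilde\eta_{k,t})$ from $\bigcap_j\ker\widetilde\eta_{j,t}$ is not surjective onto $k$-tuples of horizontal one-forms --- in the canonical model the $\dd p^j_a$-coefficients must all arise from a \emph{common} $q$-component of $Y$. The paper does not confront this point; it simply invokes ``the same techniques''. Your idea to introduce Reeb components $\sum_i h_i^t\widetilde R_i$ to absorb the mismatch is the right instinct, but note the constraint you derived, $i_{\widetilde R_i}(\beta_j-\dd h_j^t)=0$, forces $\widetilde R_i(h_j^t)=0$, so each $h_j^t$ is constant along the Reeb directions. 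A single such function $h_j^t$ then contributes $\dd h_j^t$, which is a one-form in the $(q,p)$-variables; whether this suffices to kill all ``off-diagonal'' $\dd p^l_a$-components of $\beta_j$ simultaneously and leave compatible ``diagonal'' data is exactly the point that needs the local-model hypothesis $\mathcal{D}^\circ\oplus\cdots\oplus\mathcal{D}^\circ\times\mathbb{R}^k \hookrightarrow \T^\ast P\oplus\cdots\oplus\T^\ast P\times\mathbb{R}^k$, and you are right that this is where the real work is. You have correctly located the gap that the paper leaves implicit.
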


\section{Pre-multisymplectic manifolds}
\label{Sec: multisymplectic manifolds}

\begin{definition}[\textsc{Pre-multisymplectic manifold}]
A \textbf{pre-multisymplectic manifold} is a pair $(M, \omega)$, where $M$ is a smooth $d$-dimensional manifold and $\omega \in \Omega^{k+1}(M)$ is a closed differential $(k+1)$-form.
\end{definition}

\begin{remark}
We will always assume that $\omega$ has \textbf{constant rank}, in the sense that the map
\[
\omega^\flat_m \colon \mathbf{T}_m M \longrightarrow \Lambda^k \mathbf{T}^*_m M \;:\; X \mapsto i_X \Omega_m
\]
has image of constant dimension as $m$ varies over $M$. 
That is, $\dim(\operatorname{Im}\, \omega^\flat_m)$ is independent of $m \in M$.
\end{remark}

\begin{definition}[\textsc{Characteristic distribution}]
The \textbf{characteristic distribution} of a pre-multisymplectic form $\omega$ is defined by
\[
\mathcal{V} := \ker \omega \,.
\]
\end{definition}

\begin{remark}[\textsc{Multisymplectic manifold}]
If $\omega^\flat$ is injective (i.e., $\ker \omega^\flat = \{0\}$), then $\omega$ is said to be \textbf{non-degenerate}, and the pair $(M, \omega)$ is called a \textbf{multisymplectic manifold} (or sometimes \textbf{$(k+1)$-plectic}).
In this case, $\omega$ defines a fiberwise isomorphism
\[
\omega^\flat \colon \mathbf{T}M \to \Lambda^k \mathbf{T}^* M.
\]
\end{remark}

\subsection{Coisotropic submanifolds}

\begin{definition}[\textsc{$\ell$-coisotropic submanifold of a multisymplectic manifold}]
Let $(M, \omega)$ be a multisymplectic manifold.
Let $N \subset M$ be an immersed submanifold, and let $\ell \in \{1, \dots, k\}$.
We say that $N$ is an \textbf{$\ell$-coisotropic submanifold} of $(M, \omega)$ if, for every point $n \in N$,
\[
\left( \Lambda^\ell T_n N \right)^{\perp_\omega} \subseteq \Lambda^\ell T_n N,
\]
where the $\ell$-orthogonal is defined as
\[
\left( \Lambda^\ell T_n N \right)^{\perp_\omega} := \left\{\, X \in T_n M \;:\; i_X \omega_n(\xi) = 0 \quad \forall\, \xi \in \Lambda^\ell T_n N \,\right\}.
\]
\end{definition}

\begin{remark}
When $k\, =\, 1$, the notion of $\ell$-coisotropic submanifold coincides with the usual definition of coisotropic submanifold in symplectic and pre-symplectic geometry.
\end{remark}

\subsection{Coisotropic embeddings}

\subsubsection{Existence}
\label{subsection:multisymplectic_existence}
\begin{theorem}[\textsc{Multisymplectic thickenings of pre-multisymplectic manifolds}] \label{Thm: multisymplectic thickening}
Let $(M,\, \omega)$ be a pre-multisymplectic manifold, with $\omega$ a closed $k$-form on $M$.
Assume $\operatorname{ker}\,\omega$ to be of constant rank and integrable.
There always exists a multisymplectic manifold $(\widetilde{M},\, \widetilde{\omega})$ and an embedding
\be
\mathfrak{i}  \colon  M \hookrightarrow \widetilde{M} \,,
\ee
such that $\mathfrak{i}(M)$ is a $(k-1)$-coisotropic submanifold of $\widetilde{M}$.
The multisymplectic manifold $(\widetilde{M},\, \widetilde{\omega})$ is referred to as \textbf{multisymplectic tickening} of $(M,\, \omega)$.
\begin{proof}
Since $\omega$ has been assumed to have constant rank $r$, there exists a system of local coordinates
\be
\left\{\, x^a,\, z^A \,\right\}_{a=1,...,r; A=1,...,l} \,,
\ee
with $l=d - r$, such that $\omega$ reads
\be
\omega \,=\, \omega_{a_1 ... a_k}(x) \dd x^{a_1} \wedge ... \wedge \dd x^{a_k} \,,
\ee
whose characteristic distribution $\mathcal{V}$ reads
\be
\mathcal{V} \,=\, \operatorname{span} \left\{\, \frac{\de}{\de z^A} \,\right\}_{A=1,...,l} \,.
\ee
Consider a complementary distribution $\mathcal{H}$ providing an almost product structure on $M$ associated with the $(1,\,1)$-tensor $P$ on $M$ that, in the system of local coordinates considered, reads
\be
P \,=\, \left(\, \dd z^A - {P_x}^A_a \dd x^a \,\right) \otimes \frac{\de}{\de z^A} \,=:\, P^A \otimes \frac{\de}{\de z^A} \,.
\ee
The opposite $(1,\,1)$-tensor $R$ reads
\be
R \,=\, \mathds{1} - P \,=\, \dd x^a \otimes \left(\, \frac{\de}{\de x^a} + {P_x}^A_a \frac{\de}{\de z^A} \,\right) \,.
\ee
Consider the bundle ${\Lambda^{k-1}}^\perp_R(M)$ over $M$.
Denote by $\tau$ the projection map onto $M$.
Its sections are differential $(k-1)$-forms that locally reads
\be
\begin{split}
\alpha \,=\, &\alpha_{A_1...A_k} P^{A_1} \wedge ... \wedge P^{A_k} + \\
&\alpha_{a_1 A_2 ... A_k} \dd x^{a_1} \wedge P^{A_2} \wedge ... \wedge P^{A_k} + \\
&\;\;\vdots \\
&\alpha_{a_1...a_{k-1}A_k} \dd x^{a_1} \wedge... \wedge \dd x^{a_{k-1}} \wedge P^{A_k} \,.
\end{split}
\ee
Thus, a system of adapted coordinates on ${\Lambda^{k-1}}^\perp_R(M)$, reads
\be
\left\{\, x^a,\, z^A,\, \mu_{A_1 \,...\, A_{k-1}},\, \mu_{a_1 \, A_2 \, ... \, A_{k-1}}, ...,\, \mu_{a_1\,...\,a_{k-1} \, A_{k-1}} \,\right\}_{a_j=1,...,r; A_j=1,...,l} \,.
\ee
The projection map $\tau$ thus reads
\be
\tau  \colon  (x^a,\, z^A,\, \mu_{A_1 \,...\, A_{k-1}},\, \mu_{a_1 \, A_2 \, ... \, A_{k-1}}, ...,\, \mu_{a_1\,...\,a_{k-1} \, A_{k-1}}) \mapsto (x^a,\, z^A) \,.
\ee
The bundle ${\Lambda^{k-1}}^\perp_R(M)$ has a distinguished $(k-1)$-form, which is the obvious analogous of the tautological $1$-form of $\mathbf{T}^* M$, defined as
\be
\vartheta^P_\alpha(X_1, ...,\, X_{k-1}) \,=\, \alpha(\tau_*(X_1),...,\, \tau_*(X_{k-1})) \,, \;\;\; \forall \,\, X_1,...,X_{k-1} \in \mathbf{T}_\alpha M \,,
\ee
where $\alpha$ has to be considered as a point in ${\Lambda^{k-1}}^\perp_R(M)$ on the left hand side, and as a differential $(k-1)$-form on $M$ on the right hand side.
In the system of local coordinates chosen, $\vartheta^P$ reads
\be
\begin{split}
\vartheta^P \,=\, &\mu_{A_1\,...\,A_{k-1}} P^{A_1} \wedge ... \wedge P^{A_{k-1}} + \\ 
&\mu_{a_1 \, A_2 \,...\, A_{k-1}} \dd x^{a_1} \wedge P^{A_2} \wedge ... \wedge P^{A_{k-1}} +\\
&\;\; \vdots \\
&\mu_{a_1 \,...\, a_{k-2} \, A_{k-1}} \dd x^{a_1} \wedge ... \wedge \dd x^{a_{k-2}} \wedge P^{A_{k-1}} \,.
\end{split}
\ee
Consider the differential $k$-form 
\be \label{Eq: multisymplectic structure tickening}
\widetilde{\omega} \,=\, \tau^* \omega + \dd \vartheta^P \,,
\ee
on ${\Lambda^{k-1}}^\perp_R(M)$.
It is closed by definition.
It is also non-degenerate on the whole ${\Lambda^{k-1}}^\perp_R(M)$.
Indeed, consider a vector field $X$ written in the basis
\be
\left\{\, H_a \,:=\, \frac{\de}{\de x^a} + {P_x}^A_a \frac{\de}{\de z^A},\, \frac{\de}{\de z^A},\, \frac{\de}{\de \mu_{A_1 \,...\, A_{k-1}}},\, \frac{\de}{\de \mu_{a_1\,A_2\,...\,A_{k-1}}},\,...\,,\, \frac{\de}{\de \mu_{a_1\,...\,a_{k-2}\,A_{k-1}}} \,\right\} \,,
\ee
as
\be
\begin{split}
X \,=\, X^a H_a + X^A \frac{\de}{\de z^A} + &X_{A_1 \,...\, A_{k-1}} \frac{\de}{\de \mu_{A_1 \,...\, A_{k-1}}} + \\
&X_{a_1 \, A_2 \,...\, A_{k-1}} \frac{\de}{\de \mu_{a_1\,A_2\,...\,A_{k-1}}} + \\
& \;\; \vdots \\
&X_{a_1\,...\,a_{k-2}\,A_{k-1}} \frac{\de}{\de \mu_{a_1\,...\,a_{k-2}\,A_{k-1}}} \,.
\end{split}
\ee
The contraction $i_X \widetilde{\omega}$ reads
\be \label{Eq: contraction multisymplectic tickening}
\begin{split}
i_X \widetilde{\omega} \,=\,  &{E^{(0)}}_{A_1 \,...\, A_{k-1}} \, P^{A_1} \wedge ... \wedge P^{A_{k-1}} + \\
&{E^{(1)}}_{a_1 \, A_2 \,...\, A_{k-1}} \, \dd x^{a_1} \wedge P^{A_2} \wedge \,...\, \wedge P^{A_{k-1}} + \\
&\;\; \vdots \\
&{E^{(k-1)}}_{a_1 \,...\, a_{k-1}} \, \dd x^{a_1} \wedge \,...\, \dd x^{a_{k-1}} \, + \\
& {F^{(0)}}^{A_1} \, \dd \mu_{A_1 \,...\, A_{k-1}} \wedge P^{A_2} \wedge \,...\, \wedge P^{A_{k-1}} + \\
&{F^{(1)}_0}^{a_1} \, \dd \mu_{a_1 \,A_2\,...\, A_{k-1}} \wedge P^{A_2} \wedge \,...\, \wedge P^{A_{k-1}} + \\ 
&{F^{(1)}_1}^{A_2} \dd \mu_{a_1 \, A_2 \, A_{k-1}} \wedge \dd x^{a_1} \wedge P^{A_3} \wedge \,...\, \wedge P^{A_{k-1}} + \\
& \;\; \vdots \\
&{F^{(k-1)}_{k-2}}^{a_1} \dd \mu_{a_1 \,...\, a_{k-2} A_{k-1}} \wedge \dd x^{a_2} \wedge ... \wedge \dd x^{a_{k-2}} \wedge P^{A_{k-1}} + \\
&{F^{(k-1)}_{k-1}}^{A_{k-1}} \dd \mu_{a_1 \,...\, a_{k-2} A_{k-1}} \wedge \dd x^{a_1} \wedge ... \wedge \dd x^{a_{k-2}} \,. 
\end{split}
\ee
The coefficient ${F^{(0)}}^{A_1}$ reads
\be
{F^{(0)}}^{A_1} \,=\, - X^{A_1} \,.
\ee
Since the form
\be
\dd \mu_{A_1 \,...\, A_{k-1}} \wedge P^{A_2} \wedge \,...\, \wedge P^{A_{k-1}} 
\ee
is independent of all the other forms appearing in the decomposition \eqref{Eq: contraction multisymplectic tickening}, imposing $i_X \widetilde{\omega} \,=\, 0\,,\;\;\; \forall \,\, X$ gives
\be \label{Eq: XA=0}
X^A \,=\, 0 \,,\;\; \forall \,\, A=1,...,l \,.
\ee
With this condition in mind one gets
\begin{align}
{F^{(1)}_1}^A_2 \,=\, -X^{A_2} \,=\, 0 \,,\\
{F^{(1)}_0}^{a_1} \,=\, -X^{a_1} \,.
\end{align}
Since the form
\be
\dd \mu_{a_1 \,A_2\,...\, A_{k-2}} \wedge P^{A_2} \wedge \,...\, \wedge P^{A_{k-1}}
\ee
is independent of all the other forms appearing in the decomposition \eqref{Eq: contraction multisymplectic tickening}, imposing $i_X \widetilde{\omega} \,=\, 0\,,\;\;\; \forall \,\, X$ gives
\be \label{Eq: Xa=0}
X^a \,=\, 0 \,,\;\; \forall \,\, a=1,...,r \,.
\ee
With conditions \eqref{Eq: XA=0} and \eqref{Eq: Xa=0} in mind, one gets that all the other coefficients ${F^{(j)}_k}$ vanish, whereas
\begin{align}
{E^{(0)}}_{A_1 \,...\, A_{k-1}} \,&=\, X_{A_1\,...\,A_{k-1}} \,, \\
{E^{(1)}}_{a_1\,A_2 \,...\,A_{k-1}} \,&=\, X_{a_1 \,A_2 \,...\,A_{k-1}} \,, \\
& \vdots \\
{E^{(k-2)}}_{a_1 \,...\, a_{k-2}\,A_{k-1}} \,&=\, X_{a_1 \,...\, a_{k-2}\,A_{k-1}} \,,\\
{E^{(k-1)}}_{a_1 \,...\, a_{k-1}} \,&=\, 0 \,.
\end{align}
Since the $E^{(j)}_k$ are coefficients of independent $(k-1)$-forms, imposing $i_X \widetilde{\omega} \,=\, 0 \,,\;\; \forall \,\, X$ gives
\begin{align}
X_{A_1\,...\,A_{k-1}} \,&=\, 0 \,, \label{Eq: Xmu0=0} \\
X_{a_1 \,A_2 \,...\,A_{k-1}}\,&=\,0 \,, \label{Eq: Xmu1=0} \\
& \vdots  \nonumber \\
X_{a_1 \,...\, a_{k-2}\,A_{k-1}} \,&=\, 0 \,. \label{Eq: Xmuk-2=0}
\end{align}
Conditions \eqref{Eq: XA=0}, \eqref{Eq: Xa=0}, \eqref{Eq: Xmu0=0}, \eqref{Eq: Xmu1=0}, ..., \eqref{Eq: Xmuk-2=0} are the proof that 
\be
i_X \widetilde{\omega} \,=\, 0 \,,\;\; \forall\,\, X \in \mathfrak{X}({\Lambda^{k-1}}^\perp_R(M)) \;\; \implies \;\; X \,=\,0 \,,
\ee
namely, that $\widetilde{\omega}$ is non-degenerate, and, thus, multisymplectic. 

\noindent We will prove that $M$ is a $(k-1)$-coisotropic submanifold ${\Lambda^{k-1}}^\perp_R(M)$, that will be our $\widetilde{M}$.

\noindent Consider the contraction $\widetilde{\omega}_m(X,\, W_1,...,\,W_{k-1})$ for $m \in M$, where $W_1,\, W_2,\,...,\, W_{k-1}$ are tangent vectors to $M$ at $m$
\be
W_j \,=\, {W_j}^a H_a\bigr|_m + {W_j}^A \frac{\de}{\de z^A}\bigr|_m \,.
\ee
Such a contraction reads
\be
\begin{split}
\widetilde{\omega}_m(X,\, W_1,...,\,W_{k-1}) \,=\, &\tau^* \omega_m(X,\, W_1,...,\,W_{k-1}) \,+ \\
& X_{A_1\,...\,A_{k-1}} \left(\,P^{A_1} \wedge ... \wedge P^{A_{k-1}}\,\right)(W_1,...,\,W_{k-1}) \,+ \\
& X_{a_1\,A_2\,...\,A_{k-1}} \left(\, \dd x^{a_1} \wedge P^{A_2} \wedge ... \wedge P^{A_{k-1}} \,\right)(\,W_1,...,\,W_{k-1}) \,+\\
&\;\; \vdots \\
& X_{a_1\,...\,a_{k-2}\,A_{k-1}} \left(\, \dd x^{a_1} \wedge ... \dd x^{a_{k-2}} \wedge P^{A_{k-1}} \,\right)(W_1,...,\,W_{k-1}) \,.
\end{split}
\ee
We want to prove that imposing this contraction to be zero for all $W_j$ implies that $X$ is tangent to $M$.

\noindent Let us start by considering all the $W_j$ to be vertical, namely such that ${W_j}^a \,=\, 0$.
In this case the only contribution to the previous contraction reads
\be
X_{A_1\,...\,A_{k-1}} \left(\,P^{A_1} \wedge ... \wedge P^{A_{k-1}}\,\right)(W_1,...,\,W_{k-1}) \,.
\ee
Imposing it to vanish for all the $W_j$ implies 
\be
X_{A_1\,...\,A_{k-1}} \,=\, 0 \,.
\ee
Consider now $W_1$ to be horizontal, namely of the type $W_1 \,=\, {W_1}^a H_a$, and all the other $W_j$ to be vertical.
In this case, the only contribution to the contraction above is
\be
X_{a_1\,A_2\,...\,A_{k-1}} \left(\, \dd x^{a_1} \wedge P^{A_2} \wedge ... \wedge P^{A_{k-1}} \,\right)(\,W_1,...,\,W_{k-1}) \,.
\ee
Imposing it to be zero for all the $W_j$ implies 
\be
X_{a_1\,A_2\,...\,A_{k-1}} \,=\, 0 \,.
\ee
This argument can be iterated by considering all the other possible choices for the vector fields $W_j$. 
Eventually, one ends up with the conditions
\be
\begin{split}
X_{a_1\,a_2\,...\,A_{k-1}} \,&=\, 0 \,,\\
& \vdots \\
X_{a_1\,...\,a_{k-2}\,A_{k-1}} \,&=\, 0 \,,
\end{split}
\ee
thus proving that $X$ is tangent to $M$.
This completes the proof that $M$ is a $(k-1)$-coisotropic submanifold of a ${\Lambda^{k-1}}^\perp_R(M)$.

\noindent Thus, the multisymplectic manifold $\widetilde{M}$ we were searching for is the bundle ${\Lambda^{k-1}}^\perp_R(M)$, and the embedding $\mathfrak{i}$ is the zero section of $\tau$
\be
\mathfrak{i} \,=\, \sigma_0^\tau \,.
\ee
\end{proof}
\end{theorem}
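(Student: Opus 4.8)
The plan is to run, one degree higher, exactly the construction the paper has already used for all the previous structures: build the thickening as a neighbourhood of the zero section of a bundle of $(k-1)$-forms over $M$, equipped with a canonical $k$-form obtained from a tautological form plus the pull-back of $\omega$. First I would use that $\ker\omega$ has constant rank and is integrable, together with $\dd\omega=0$, to obtain around each point a foliated chart $(x^a,z^A)$ ($a=1,\dots,r$, $A=1,\dots,l$, $l=d-r$) in which $\omega=\omega_{a_1\cdots a_k}(x)\,\dd x^{a_1}\wedge\cdots\wedge\dd x^{a_k}$ and $\mathcal V=\ker\omega=\operatorname{span}\{\partial_{z^A}\}$; this is the same argument that proves \cref{Thm: Darboux k-pre-symplectic} (integrability gives the foliated chart, and $\Lie_{\partial_{z^A}}\omega=\dd i_{\partial_{z^A}}\omega+i_{\partial_{z^A}}\dd\omega=0$ forces the coefficients to be $z$-independent). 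Then I would choose any complement $\mathcal H$ to $\mathcal V$, encode the almost product structure in the $(1,1)$-tensor $P$ with $\operatorname{Im}P=\mathcal V$, pass to $R=\mathds{1}-P$, and set $\widetilde M$ to be the bundle ${\Lambda^{k-1}}^\perp_R(M)$ of those $(k-1)$-forms whose $R$-bigrading, in the sense of \cref{remark:bigrading}, carries at least one horizontal leg, with projection $\tau$.

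Next I would introduce the tautological $(k-1)$-form $\vartheta^P$ on $\widetilde M$ defined pointwise by $\vartheta^P_\alpha(X_1,\dots,X_{k-1}):=\alpha(\tau_*X_1,\dots,\tau_*X_{k-1})$ — the obvious higher-degree analogue of the Liouville $1$-form — and put $\widetilde\omega:=\tau^*\omega+\dd\vartheta^P$, which is closed by construction. It then remains to check non-degeneracy of $\widetilde\omega$ and $(k-1)$-coisotropicity of $M$, identified with the zero section $\mathfrak{i}:=\sigma_0^\tau$. For non-degeneracy I would introduce adapted fibre coordinates $\mu_{a_1\cdots a_j A_{j+1}\cdots A_{k-1}}$ ($j=0,\dots,k-2$), use the frame $\{H_a:=\partial_{x^a}+{P_x}^A_a\partial_{z^A},\ \partial_{z^A},\ \partial/\partial\mu_{\bullet}\}$, expand a generic $X$ in it and compute $i_X\widetilde\omega$. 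The point is that $\dd\vartheta^P$ produces, on one side, forms of type $\dd\mu_{\bullet}\wedge(\text{wedge of }\dd x\text{'s and }P^A\text{'s})$ whose coefficients are (up to sign) the $z$- and $x$-components of $X$, and on the other side genuine $(k-1)$-forms $P^{A_1}\wedge\cdots\wedge P^{A_{k-1}}$, $\dd x^{a_1}\wedge P^{A_2}\wedge\cdots$, \dots whose coefficients carry the $\mu$-components of $X$ together with $\mathcal N_P$-type terms. Since these forms are mutually independent, $i_X\widetilde\omega=0$ forces first $X^A=0$ and $X^a=0$; once those vanish the Nijenhuis-type contributions drop out, and reading off the remaining $(k-1)$-form coefficients layer by layer forces all the $\mu$-components of $X$ to vanish too. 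Thus, in contrast with the pre-symplectic and pre-contact cases, non-degeneracy holds on the whole of ${\Lambda^{k-1}}^\perp_R(M)$ without any integrability hypothesis on $P$, so $\widetilde\omega$ is multisymplectic.

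For coisotropicity I would evaluate $\widetilde\omega_m(X,W_1,\dots,W_{k-1})$ at a point $m$ of the zero section with each $W_j={W_j}^aH_a+{W_j}^A\partial_{z^A}$ tangent to $M$; the contributions of $\tau^*\omega$ and of the $\mu$-independent part give nothing new, leaving only the pairing of the $\mu$-components of $X$ against $P^{A_1}\wedge\cdots\wedge P^{A_{k-1}}$, then against $\dd x^{a_1}\wedge P^{A_2}\wedge\cdots$, and so on. Taking all the $W_j$ vertical kills $X_{A_1\cdots A_{k-1}}$; taking one $W_j$ horizontal and the rest vertical kills $X_{a_1 A_2\cdots A_{k-1}}$; iterating over all admissible patterns kills every $\mu$-component of $X$, so $X$ is tangent to $M$, i.e. $(\Lambda^{k-1}\T_m M)^{\perp_{\widetilde\omega}}\subseteq\Lambda^{k-1}\T_m M$, which is $(k-1)$-coisotropicity. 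I expect the only real obstacle to be the bookkeeping of the multi-index hierarchy of the $\mu$'s in the non-degeneracy step: it is conceptually identical to the pre-symplectic computation but combinatorially heavier, and one must be careful to verify that each $\dd\mu_{\bullet}$-form and each $(k-1)$-form occurring in $i_X\widetilde\omega$ is genuinely linearly independent of the others before concluding that the corresponding component of $X$ vanishes. The foliated chart, the definition of $\vartheta^P$ and the closedness of $\widetilde\omega$ are all routine.
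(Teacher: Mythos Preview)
Your proposal is correct and follows exactly the paper's proof: the same bundle ${\Lambda^{k-1}}^\perp_R(M)$, tautological form $\vartheta^P$, definition $\widetilde\omega=\tau^*\omega+\dd\vartheta^P$, the layered non-degeneracy check (including the observation that it holds on the whole bundle, not just near the zero section), and the identical coisotropicity argument by successively specialising the $W_j$'s. One minor slip: ${\Lambda^{k-1}}^\perp_R(M)$ consists of $(k-1)$-forms carrying at least one \emph{vertical} leg (a $P^A$ factor), not a horizontal one --- your fibre coordinates $\mu_{a_1\cdots a_j A_{j+1}\cdots A_{k-1}}$ with $j\le k-2$ already encode this correctly, so the error is purely terminological.
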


\begin{remark}
Note that in this pre-multisymplectic case, the zero-section condition is not necessary to prove the non-degeneracy of $\widetilde{\omega}$.
\end{remark}

\begin{remark}
Interestingly, the same co-symplectic thickening described in \cref{Subsubsec: Existence co-symplectic} can be found by directly working on the pre-multisymplectic manifold $(M,\, \omega \wedge \eta)$.
In this case, if one considers the whole ${\Lambda^2}^\perp_R(M)$, with the system of adapted local coordinates
\be
\left\{\, t,\, q^a,\,p_a,\, z^A,\, \mu_{AB},\, \mu_{Aa},\, \mu_A^a,\, \mu_{At},\, \mu_{ta},\, \mu_{t}^a \,\right\} \,,
\ee
one gets the following multisymplectic structure  
\be \label{Eq: multisymplectic 3form}
\widetilde{\omega \wedge \eta} \,=\, \tau^* \omega \wedge \eta + \dd \vartheta^P
\ee
for 
\be
\vartheta^P \,=\, \mu_{AB} P^A \wedge P^B + \mu_{Aa} P^A \wedge \dd q^a + \mu_A^a P^A \wedge \dd p_a + \mu_{At} P^A \wedge \dd t + \mu_{ta} \dd t \wedge \dd q^a + \mu_{t}^a \dd t \wedge \dd p_a \,,
\ee
following \cref{Thm: multisymplectic thickening}.
Even if it is non-degenerate, the $3$-form \eqref{Eq: multisymplectic 3form} can not be associated to a cosymplectic structure because it does not have, in general, the structure
\be
\widetilde{\omega \wedge \eta} \,=\, \widetilde{\omega} \wedge \widetilde{\eta} \,.
\ee
For this reason, one may restrict to the intersection
\be \label{Eq: intersection ideal eta}
{\Lambda^2}^\perp_R(M) \cap \mathcal{I}(\eta) \,,
\ee
where $\mathcal{I}(\eta)$ is the differential ideal generated by the differential $1$-form $\eta$.
The intersection \eqref{Eq: intersection ideal eta} reads the subbundle of $\Lambda^2(M)$ consisting of differential $2$-forms having only components along $\eta$ and $P^A$.
Denote it by $\widetilde{M} \,=\, {\Lambda^2}^\perp_R(M) \cap \mathcal{I}(\eta)$.
A system of adapted local coordinates on it can be written as
\be
\left\{\, t,\, q^a,\, p_a,\, z^A,\, \mu_{tA} \,\right\} \,,
\ee
and the tautological form here reads
\be \label{Eq: tautological form intersection cosymplectic}
\widetilde{\vartheta}^P \,=\, \mu_{tA} \tau^*\eta \wedge P^A \,.
\ee
The multisymplectic structure defined on $\widetilde{M}$ out of \eqref{Eq: tautological form intersection cosymplectic} is
\be \label{Eq: cosymplectic structure tickening}
\widetilde{\omega \wedge \eta} \,=\, \tau^* \omega \wedge \eta - \dd (\mu_{tA} P^A) \wedge \tau^*\eta \,=\, (\tau^* \omega - \dd (\mu_{tA} P^A)) \wedge \tau^* \eta \,.
\ee
Following the general theory presented in \cref{Thm: multisymplectic thickening}, $\widetilde{M}$ is a multisymplectic manifold and $M$ is a $2$-coisotropic submanifold of $\widetilde{M}$.
Furthermore, \eqref{Eq: cosymplectic structure tickening} now has the structure of a $2$-form associated with a cosymplectic structure.
In particular, it reads
\be
\widetilde{\omega \wedge \eta} \,=\, \widetilde{\omega} \wedge \widetilde{\eta} \,, 
\ee
for the cosymplectic structure
\begin{align}
\widetilde{\omega} \,&=\, \tau^* \omega + \dd \vartheta^P \,, \\
\widetilde{\eta} \,&=\, \tau^* \eta \,,
\end{align}
where $\vartheta^P$ reads the $2$-form \eqref{Eq: thetaP cosymplectic}, which coincides with the cosymplectic structure obtained in \cref{Thm: cosymplectic tickening}.
\end{remark}

\subsubsection{Uniqueness}

Also in this case, given the generality of multisymplectic structures, uniqueness can not be proven in general. What we offer is a general tool that allows us to characterize multisymplectic coisotropic embeddings, when the geometry of the multisymplectic manifold is \textit{fixed}. Again, we give two examples, one where topological uniqueness is found (here we also give sufficient conditions for these to be neighborhood equivalent), and one where we do not find uniqueness.

\noindent The main technique that we will use is the following to prove neighborhood equivalence is the following:

\begin{lemma} \label{thm:Extensions_of_forms}
Let $\mathfrak{i} \colon M \rightarrow \widetilde M$ be an embedding and $\widetilde \omega_1, \widetilde \omega_2\in \Omega^{k}(\widetilde M)$ two closed forms such that $\widetilde \omega_1 = \widetilde \omega_2$ on $\T \widetilde M |_M$. Denote by $\widetilde \omega_t:= \widetilde \omega_1 + t (\widetilde \omega_2 - \widetilde \omega_1)$. Suppose there exists a complete vector field $\Delta$ on a neighborhood of $M$ that vanishes on $M$ such that
\begin{itemize}
    \item For every $x$ in said neighborhood, $$\lim_{t \to - \infty} \psi_t^{\Delta}(x)  \in M.$$
    \item It satisfies $\Lie_\Delta i_{\Delta} \widetilde \omega \in \bigcap_{t \in [0, 1]} \Im \flat_t$, where $\widetilde \omega = \widetilde \omega_2 - \widetilde \omega_1$
\end{itemize}
Then, the pairs $(M, \widetilde M, \widetilde \omega_1)$ and $(M, \widetilde M, \widetilde \omega_2)$ are neighborhood equivalent.
\end{lemma}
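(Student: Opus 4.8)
The plan is to use a Moser-type deformation argument, building the required isotopy from a primitive of $\widetilde\omega_2-\widetilde\omega_1$ that is produced by the flow of $\Delta$, in the spirit of the Relative Poincar\'e Lemma (\cref{thm:Relative_Poincaré_Lemma}). First I would observe that since $\widetilde\omega_1$ and $\widetilde\omega_2$ agree on $\T\widetilde M|_M$ and both are closed, the difference $\widetilde\omega = \widetilde\omega_2-\widetilde\omega_1$ is a closed $k$-form vanishing on $M$ in the strong sense that $i^\ast_{\T\widetilde M|_M}\widetilde\omega=0$. Using the flow $\psi_t^\Delta$ of $\Delta$ together with the hypothesis $\lim_{t\to-\infty}\psi_t^\Delta(x)\in M$, I would write, on the chosen neighborhood $U$ of $M$,
\[
\widetilde\omega \;=\; \dd\theta\,, \qquad \theta \;:=\; -\int_{-\infty}^{0} (\psi_t^\Delta)^\ast\, i_\Delta\widetilde\omega \,\dd t\,,
\]
where convergence of the integral follows from $\widetilde\omega$ vanishing on $M$ (so $i_\Delta\widetilde\omega$ decays along the backward flow toward $M$) and $\Delta$ vanishing on $M$; the identity $\widetilde\omega=\dd\theta$ is the usual computation $\frac{\dd}{\dd t}(\psi_t^\Delta)^\ast i_\Delta\widetilde\omega = (\psi_t^\Delta)^\ast \Lie_\Delta i_\Delta\widetilde\omega = \dd(\psi_t^\Delta)^\ast i_\Delta\widetilde\omega$ combined with $\widetilde\omega$ closed and Cartan's formula. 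Moreover $\theta$ vanishes on $M$ because $\Delta$ does.

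Next I would set up Moser's trick with $\widetilde\omega_t = \widetilde\omega_1 + t\widetilde\omega$ and look for a time-dependent vector field $X_t$ vanishing on $M$ with
\[
\widetilde\omega_2 - \widetilde\omega_1 + \Lie_{X_t}\widetilde\omega_t \;=\; \widetilde\omega + \dd\, i_{X_t}\widetilde\omega_t \;=\; 0\,,
\]
using that $\widetilde\omega_t$ is closed, so it suffices to solve $i_{X_t}\widetilde\omega_t = -\theta$. This is where the second hypothesis enters: the map $\flat_t\colon X\mapsto i_X\widetilde\omega_t$ need not be injective or surjective, so solvability of $i_{X_t}\widetilde\omega_t=-\theta$ is not automatic. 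The point of requiring $\Lie_\Delta i_\Delta\widetilde\omega \in \bigcap_{t\in[0,1]}\Im\flat_t$ is precisely that it forces $\theta$ (an integral of pullbacks of $i_\Delta\widetilde\omega$, whose Lie derivative along $\Delta$ is the displayed quantity) to lie fiberwise in $\Im\flat_t$ for every $t$, hence $-\theta$ is in the image of $\flat_t$ at each point and each time, so a (not necessarily unique) solution $X_t$ exists; one then chooses a smooth such selection. Since $\theta$ vanishes on $M$, any solution $X_t$ can be taken to vanish on $M$ as well.

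Finally, shrinking $U$ if necessary so that $X_t$ is complete on it for $t\in[0,1]$ (possible since $X_t$ vanishes on $M$), the time-one flow $\psi_1$ of $X_t$ is defined on a neighborhood of $M$, restricts to the identity on $M$, and satisfies $\psi_1^\ast\widetilde\omega_2 = \widetilde\omega_1$ by \cref{thm:Moser_trick}. This exhibits $(M,\widetilde M,\widetilde\omega_1)$ and $(M,\widetilde M,\widetilde\omega_2)$ as neighborhood equivalent, where neighborhood equivalence is understood in the evident multisymplectic analogue of the definition given in the pre-symplectic case. The main obstacle, and the reason for the somewhat unusual hypotheses, is exactly the failure of $\flat_t$ to be bijective in the pre-multisymplectic/multisymplectic setting: unlike in the symplectic case one cannot simply invert $\widetilde\omega_t$ to get $X_t$, so one must both (i) produce a primitive $\theta$ compatible with the images of all the $\flat_t$ simultaneously, which is what the condition on $\Lie_\Delta i_\Delta\widetilde\omega$ guarantees via the explicit homotopy formula, and (ii) ensure the chosen $X_t$ depends smoothly on $(t,x)$ despite the non-uniqueness; the first is the genuinely delicate point, the second is routine once a smooth right inverse of $\flat_t$ is fixed on the relevant subbundle.
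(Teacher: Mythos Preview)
Your approach is essentially the paper's: build a primitive $\theta$ of $\widetilde\omega_2-\widetilde\omega_1$ via the homotopy operator associated with $\Delta$, then run Moser's trick and use the hypothesis $\Lie_\Delta i_\Delta\widetilde\omega\in\bigcap_t\Im\flat_t$ to ensure the integrand of $\theta$ (hence $\theta$ itself) lies in the image of each $\flat_t$, so that $i_{X_t}\widetilde\omega_t=-\theta$ is solvable. The only difference is cosmetic: the paper reparametrizes the flow of $\Delta$ as the scalar multiplication $\psi_s(u)=s\cdot u$ on a tubular neighborhood (so the integral runs over $s\in[0,1]$), whereas you integrate the actual flow over $t\in(-\infty,0]$; these are related by $s=e^t$.

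Two small slips to fix. First, with your sign convention $\theta=-\int_{-\infty}^0(\psi_t^\Delta)^\ast i_\Delta\widetilde\omega\,\dd t$ one gets $\dd\theta=-\widetilde\omega$, not $\widetilde\omega$; drop the leading minus. Second, your displayed chain of equalities is garbled: the correct step is $\dd i_\Delta\widetilde\omega=\Lie_\Delta\widetilde\omega$ (Cartan plus closedness), so that $\dd\,(\psi_t^\Delta)^\ast i_\Delta\widetilde\omega=(\psi_t^\Delta)^\ast\Lie_\Delta\widetilde\omega=\frac{\dd}{\dd t}(\psi_t^\Delta)^\ast\widetilde\omega$; the quantity $\Lie_\Delta i_\Delta\widetilde\omega$ enters only later, when you argue that the \emph{integrand} $(\psi_t^\Delta)^\ast i_\Delta\widetilde\omega$ stays in $\Im\flat_t$.
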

\begin{proof} We just apply Moser's trick. Indeed, by the relative Poincaré Lemma (\cref{thm:Relative_Poincaré_Lemma}), and by taking a tubular neighborhood such that $\Delta$ is the induced Liouville vector field, we have that there exists a $(k-1)$-form $\theta$ vanishing on $M$ such that 
\[
\widetilde \omega_2 - \widetilde \omega_1 = \dd \theta\,,
\]
where $\theta$ is given by
\[
\theta := \int_0^1 \psi^\ast_t i_{\Delta_t} \left( \widetilde \omega_2 - \widetilde\omega_1\right) \dd t\,.
\]
Here, $\psi_t$ is multiplication by $t$ and $\Delta_t = \frac{\dd \psi_t}{\dd t}$. Then, in order to apply Moser's trick we need to guarantee the existence of a vector field $X_t$ such that 
\[
i_{X_t} \widetilde \omega_t = - \theta\,.
\]
This, in turn, will follow if we can show that for each $s$ there exists a smooth choice of vector field $X_{s,t}$ satisfying 
\begin{equation}
\label{eq:Moser's trick_ condition}
    i_{X_{s,t}} \widetilde \omega_t = -\psi^\ast_t i_{\Delta}(\widetilde \omega_2 - \widetilde \omega_1) = -\psi^\ast_s i_\Delta \widetilde \omega\,,
\end{equation}

since we may define $X_t := \int_0^1 s^k X_{s, t} \dd s$. Eq. \eqref{eq:Moser's trick_ condition} reads as $\psi^\ast_s i_\Delta \widetilde \omega \in \bigcap_{t \in [0,1]} \Im \flat_t$, for every $s$, which is equivalent to $\Lie_{\Delta_t} i_{\Delta} \widetilde \omega \in \bigcap _{t \in [0,1]} \Im \flat_t$ and, since $\Delta_t$ is proportional to $\Delta$, this last conditions follows by hypothesis, showing that we may apply Moser's trick, obtaining neighborhood equivalence.
\end{proof}

\noindent To illustrate the previous ideas, let us prove the uniqueness of a particular type of coisotropic embedding, the one presented in \cref{subsection:multisymplectic_existence}, employing \cref{thm:Extensions_of_forms}.

\noindent More particularly, let $(M, \omega)$ be an arbitrary pre-multisymplectic manifold with kernel of constant rank and let $\mathfrak{i} \colon (M, \omega) \longrightarrow (\widetilde M, \widetilde \omega)$ be a coisotropic embedding into a multisymplectic manifold that has the linear type of the embedding built in \cref{subsection:multisymplectic_existence}. That is, if we choose $\mathcal{H}$ a distribution complementary to $\mathcal{V} = \ker\, \omega$, $\T M = \mathcal{H} \oplus \mathcal{V}$, then for each $x \in M$ there is a multisymplectomorphism 
\[
f_x\colon \T _{\mathfrak{i}(x)} \widetilde M \rightarrow \T_x M \oplus \bigoplus_{j= 1}^{k-1} \left( \Lambda^{k-1-j}\mathcal{H}^\ast \otimes \Lambda^j \mathcal{V}^\ast \right)\,,
\]
that is the identity on $\T_{\mathfrak{i}(x)} M$,
where the multisymplectic form in the latter space is chosen as $\Omega + \omega|_x$, where $\Omega$ is the restriction of the canonical multisymplectic form in $\T_x M \oplus \Lambda^{k-1} (M)$\,. We are now going to find (very mild) conditions on $\widetilde M$ to guarantee that then $\widetilde M$ is locally multisymplectomorphic to the bundle of transversal forms on $M$ so that, in particular, it has fixed topology. We would like to remark that these conditions are local. For the time being, let us fix a distribution $\widetilde{ \mathcal{H}}$ in a neighborhood of $M$ in $\widetilde M$ that is complementary to $\mathfrak{i}_\ast(\T M)$ on $M$, namely,
\[\T \widetilde M |_M = \mathfrak{i}_\ast(\T M) \oplus \widetilde{\mathcal{H}} |_M\,.\]

\begin{proposition}
\label{prop:isomorphism_with_transversal_forms}
Let $\phi\colon \widetilde{\mathcal{H}}|_M\longrightarrow \Lambda^k (M)$ be given by 
$\phi(\widetilde h|_{\mathfrak{i}(x)}) := p \left(\mathfrak{i}^\ast \left(i_{\widetilde h} \Omega|_{\mathfrak{i}(x)}\right)\right)$, where \[
p\colon \Lambda^{k-1} (M) \longrightarrow {\Lambda^{k-1}}^\perp_R (M)
\] 
is the projection onto the transversal forms and $\widetilde h$ is an element of $\widetilde{\mathcal{H}}$. Then, $\phi$ defines a vector bundle isomorphism into ${\Lambda^{k-1}}^\perp_R (M)$. 
\end{proposition}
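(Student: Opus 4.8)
The plan is to reduce the statement to a pointwise computation in the linear model, where it becomes the familiar fact that contracting with the tautological form identifies a fibre of transversal forms with the space of such forms on $M$. First I would note that $\phi$ is a smooth, fibrewise‑linear bundle morphism: the identification $f_x$ and the chosen complement $\widetilde{\mathcal{H}}$ depend smoothly on $x$, while $\mathfrak{i}^\ast$ and the projection $p$ onto ${\Lambda^{k-1}}^\perp_R(M)$ are algebraic operations. Since $\widetilde{\mathcal{H}}|_M$ and ${\Lambda^{k-1}}^\perp_R(M)$ have the same rank $\operatorname{codim}_{\widetilde M} M$ (by the assumed linear type of the embedding, this codimension equals $\dim\bigoplus_{j=1}^{k-1}\Lambda^{k-1-j}\mathcal{H}^\ast\otimes\Lambda^{j}\mathcal{V}^\ast$), it suffices to prove that $\phi_x$ is injective for every $x\in M$.

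Next I would unwind the linear model at a fixed $x$. Using $f_x$, identify $\T_{\mathfrak{i}(x)}\widetilde M$ with $\T_x M\oplus W_x$, where $W_x:=\bigoplus_{j=1}^{k-1}\bigl(\Lambda^{k-1-j}\mathcal{H}^\ast_x\otimes\Lambda^{j}\mathcal{V}^\ast_x\bigr)$; by \cref{remark:bigrading} this $W_x$ is exactly the fibre ${\Lambda^{k-1}}^\perp_R(M)_x$. Under $f_x$ we have $\mathfrak{i}_\ast\T_x M=\T_x M\oplus 0$, so the arbitrary complement $\widetilde{\mathcal{H}}_x$ is the graph of a unique linear map $g_x\colon W_x\to\T_x M$, and $\Omega$ is the restriction of the canonical multisymplectic form, i.e. $\dd$ of the (restricted) tautological $(k-1)$-form $\Theta$ of $\T_x M\oplus W_x$. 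Choosing a basis $\{P^I\}$ of $W_x$ with dual fibre coordinates $\{\alpha_I\}$, one gets $\Theta=\alpha_I P^I$ and hence $\Omega=\dd\alpha_I\wedge P^I$, where each $P^I$, being pulled back from $M$, annihilates the fibre directions.

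Then I would carry out the contraction. For $\widetilde h=g_x(v)+v\in\widetilde{\mathcal{H}}_x$ with $v=v^I P^I\in W_x$, so that $\dd\alpha_I(\widetilde h)=v^I$ and $i_v P^I=0$, the interior product is $i_{\widetilde h}\Omega=v^I P^I-\dd\alpha_I\wedge i_{g_x(v)}P^I$. Pulling back along $\mathfrak{i}$, that is restricting to $\T_x M\oplus 0$, annihilates every term containing $\dd\alpha_I$ since the $\alpha_I$ are fibre coordinates; therefore $\mathfrak{i}^\ast\bigl(i_{\widetilde h}\Omega\bigr)=v^I P^I$, which already lies in ${\Lambda^{k-1}}^\perp_R(M)_x$, so $p$ acts as the identity on it. Consequently, under the identification $W_x={\Lambda^{k-1}}^\perp_R(M)_x$, the map $\phi_x$ is exactly the projection $\widetilde{\mathcal{H}}_x=\operatorname{graph}(g_x)\to W_x$, $g_x(v)+v\mapsto v$, which is a linear isomorphism. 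This gives injectivity (indeed bijectivity) at each point, and together with the first paragraph the proposition follows; a tubular-neighbourhood argument then lets one use $\phi$ to identify a neighbourhood of $M$ in $\widetilde M$ with one in ${\Lambda^{k-1}}^\perp_R(M)$.

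I expect the only delicate points to be bookkeeping rather than conceptual: correctly matching the summand $\bigoplus_{j\ge 1}\Lambda^{k-1-j}\mathcal{H}^\ast\otimes\Lambda^{j}\mathcal{V}^\ast$ of the hypothesis with ${\Lambda^{k-1}}^\perp_R(M)$ via \cref{remark:bigrading}, and keeping in mind that the chosen complement $\widetilde{\mathcal{H}}$ need not be $f_x$-vertical. The latter is the subtlety worth stressing: the non-verticality is encoded precisely in $g_x$, whose contribution to $i_{\widetilde h}\Omega$ consists only of $\dd\alpha_I$-terms that vanish under $\mathfrak{i}^\ast$, so it is invisible to $\phi$ — which is exactly why $\phi$ is an isomorphism regardless of how $\widetilde{\mathcal{H}}$ was selected.
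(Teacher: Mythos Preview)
Your approach is essentially the same as the paper's: reduce to the linear model via $f_x$, write the chosen complement $\widetilde{\mathcal{H}}_x$ as a graph over the fibre $W_x$, and compute that $\phi_x$ is just the projection onto $W_x$. That is exactly what the paper does (it writes the complement as the image of $A\oplus\mathrm{id}$ for a linear $A\colon W_x\to\T_xM$, which is your $g_x$).

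There is, however, one genuine omission. In the linear model the multisymplectic form is not the canonical $\Omega$ alone but $\Omega+\omega|_x$ (this is stated explicitly in the paragraph preceding the proposition). Hence for $\widetilde h=g_x(v)+v$ you get
\[
\mathfrak{i}^\ast\bigl(i_{\widetilde h}(\Omega+\omega|_x)\bigr)=v^I P^I \;+\; i_{g_x(v)}\omega|_x\,,
\]
and the second summand is \emph{not} killed by $\mathfrak{i}^\ast$: it is already a $(k-1)$-form on $\T_xM$. Your sentence ``so $p$ acts as the identity on it'' is therefore not quite right; the projection $p$ is genuinely needed here. The fix is the observation the paper makes: since $\mathcal{V}=\ker\omega$, the form $\omega$ lies in $\Lambda^k\mathcal{H}^\ast$, hence $i_{g_x(v)}\omega\in\Lambda^{k-1}\mathcal{H}^\ast$ is parallel and $p$ annihilates it. Once you add this line, your argument is complete and matches the paper's.
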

\begin{proof} Using the point-wise structure, that is, the existence of the multisymplectomorphism 
\[
f_x \colon  \T _{\mathfrak{i}(x)} \widetilde M \rightarrow \T_x M \oplus \bigoplus_{j= 1}^{k-1} \left( \Lambda^{k-1-j}\mathcal{H}^\ast \otimes \Lambda^j \mathcal{V}^\ast \right)\,,
\]
we only need to prove that the previous map is a bundle isomorphism into the space of transversal forms (with respect to the appropriate almost product structure), for any complementary $\widetilde{\mathcal{H}}$ of $\T_x M$ in the previous space. Now, any complementary $\widetilde{\mathcal{H}}$ is given by the image of $\bigoplus_{j= 1}^{k-1} \left( \Lambda^{k-1-j}\mathcal{H}^\ast \otimes \Lambda^j \mathcal{V}^\ast \right)$ under a map $A \oplus\operatorname{id}$, where
\[
A \colon  \bigoplus_{j= 1}^{k-1} \left( \Lambda^{k-1-j}\mathcal{H}^\ast \otimes \Lambda^j \mathcal{V}^\ast \right) \longrightarrow \T_x M
\]
is an arbitrary linear map. Hence, we only  need to check that the map 
\[
\phi \colon \bigoplus_{j= 1}^{k-1} \left( \Lambda^{k-1-j}\mathcal{H}^\ast \otimes \Lambda^j \mathcal{V}^\ast \right) \longrightarrow {\Lambda^{k-1}}^\perp_R (M) \, \quad :\;\; v \mapsto p \left(\mathfrak{i}^\ast(i_v \Omega + i_{A(v)} \omega ) \right)\,.
\]
defines an isomorphism, for an arbitrary linear map $A$.
Notice, however, that $i_u \omega$ is parallel and that $i_v \Omega$ is transversal so that, $p(i_u \omega) = 0$ and $p (\mathfrak{i}^\ast(i_v \Omega)) = \mathfrak{i}^\ast (i_v \Omega)$ and hence
\[
\phi(v) = \left( i_v \Omega\right)|_{\T_x M}\,
\]
which is easily checked to define a linear isomorphism into the space of transversal forms.
\end{proof}

\noindent \cref{prop:isomorphism_with_transversal_forms}, after employing the tubular neighborhood theorem, shows that any coisotropic embedding of $M$ that has the linear type of embeddings into the space of transversal forms is neighborhood \textit{diffeomorphic} (not necessarily multisymplectomorphic) to 
\[
M \hookrightarrow {\Lambda^{k-1}}^\perp_R (M)\,,
\]
given by the zero section. We now focus on giving conditions for it to be neighborhood \textit{multisymplectomorphic}. As it is reasonable to assume, it has to do with the flatness of the embedding:

\begin{theorem}
\label{thm:coisotropic_uniqueness_theorem}
Assume that $\widetilde{\mathcal{H}}$ can be chosen to be $1$-Lagrangian and integrable, at least in a neighborhood of $M$, which we denote by $\widetilde U$. Further assume that the fibered manifold that the distribution induces $\pi \colon \widetilde U \rightarrow M$ is such that $i_\Delta \widetilde \omega$ is transversal, for every vertical vector field $\Delta$, and for a particular almost product structure. Then, there are two neighborhoods $U$ and $V$ of $M$ in $\widetilde M$ and ${\Lambda^{k-1}}^\perp_R (M)$ ($M$ interpreted as the zero section), respectively, and a multisymplectomorphism
\[\psi\colon U \rightarrow V\,.\]
\end{theorem}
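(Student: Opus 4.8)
The plan is to reduce to the setting of \cref{thm:Extensions_of_forms} and then verify its hypotheses for the two multisymplectic forms in question. First I would use \cref{prop:isomorphism_with_transversal_forms} together with the tubular neighborhood theorem: the isomorphism $\phi\colon \widetilde{\mathcal{H}}|_M \to {\Lambda^{k-1}}^\perp_R(M)$ extends, after possibly shrinking $\widetilde U$, to a diffeomorphism of a neighborhood $U_1$ of $M$ in $\widetilde M$ onto a neighborhood $V_1$ of the zero section in ${\Lambda^{k-1}}^\perp_R(M)$, which is the identity on $M$ and whose differential along $M$ identifies $\T\widetilde M|_M$ with $\T {\Lambda^{k-1}}^\perp_R(M)|_M$. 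Pulling back $\widetilde\omega$ along this diffeomorphism we obtain, on $V_1$, a closed $k$-form $\widetilde\omega_1$ which, by construction of $\phi$ (it was built precisely from $i_{\widetilde h}\Omega$), agrees with the canonical form $\widetilde\omega_2 := \tau^\ast\omega + \dd\vartheta^P$ of \cref{Thm: multisymplectic thickening} \emph{on} $\T {\Lambda^{k-1}}^\perp_R(M)|_M$. Thus we are exactly in the situation of \cref{thm:Extensions_of_forms}: a single ambient manifold (a neighborhood of the zero section), two closed $k$-forms coinciding on the tangent bundle along $M$, and we must produce the interpolating path and the vector field $\Delta$.

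The candidate for $\Delta$ is the fiberwise Liouville (Euler) vector field of the vector bundle ${\Lambda^{k-1}}^\perp_R(M) \to M$; it vanishes on the zero section $M$, is complete on the whole bundle (hence on a neighborhood after the usual rescaling), and its backward flow $\psi^\Delta_t$, being multiplication by $e^t$ in the fibers, satisfies $\lim_{t\to-\infty}\psi^\Delta_t(x) \in M$. This takes care of the first bullet of \cref{thm:Extensions_of_forms}. For the second bullet I must check $\Lie_\Delta i_\Delta \widetilde\omega \in \bigcap_{t\in[0,1]} \Im\flat_t$, where $\widetilde\omega = \widetilde\omega_2 - \widetilde\omega_1$ and $\flat_t$ is the musical map of $\widetilde\omega_t = \widetilde\omega_1 + t(\widetilde\omega_2-\widetilde\omega_1)$. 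Here is where the extra hypotheses enter: assuming $\widetilde{\mathcal{H}}$ is $1$-Lagrangian and integrable realizes $\widetilde U$ as a fibered manifold $\pi\colon \widetilde U \to M$ whose fibers are (locally) the fibers of ${\Lambda^{k-1}}^\perp_R(M)$, so that $\Delta$ is $\pi$-vertical; the assumption that $i_\Delta\widetilde\omega$ is transversal for every vertical $\Delta$ (for a suitable almost product structure) is exactly the statement that $i_\Delta\widetilde\omega$, and then $\Lie_\Delta i_\Delta\widetilde\omega = i_\Delta\dd i_\Delta\widetilde\omega$ (using $\dd\widetilde\omega=0$), lands in the transversal summand ${\Lambda^{k-1}}^\perp_R(M)$ of the bigraded decomposition of \cref{remark:bigrading}. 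The remaining point is that the transversal forms lie in $\Im\flat_t$ for every $t$: this follows because each $\widetilde\omega_t$ is multisymplectic (shrink the neighborhood if necessary so that the convex combination stays non-degenerate — each $\widetilde\omega_t$ has the same restriction to $\T\widetilde M|_M$, which is non-degenerate, so non-degeneracy persists on a neighborhood) and, by the computation in the proof of \cref{Thm: multisymplectic thickening}, the musical map of a form of this canonical type surjects onto the transversal forms through contraction with the $\partial/\partial\mu$ directions; the same local normal-form computation applies to $\widetilde\omega_t$.

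Having verified both hypotheses, \cref{thm:Extensions_of_forms} furnishes a neighborhood equivalence between $(M, V_1, \widetilde\omega_1)$ and $(M, V_1, \widetilde\omega_2)$, i.e.\ a multisymplectomorphism $\psi_0\colon U \to V$ between neighborhoods of the zero section that is the identity on $M$; composing with the diffeomorphism of the first step gives the desired multisymplectomorphism $\psi\colon U \to V$ between a neighborhood of $M$ in $\widetilde M$ and a neighborhood of $M$ in ${\Lambda^{k-1}}^\perp_R(M)$. The main obstacle I anticipate is the bookkeeping in the second bullet of \cref{thm:Extensions_of_forms}: one has to be careful that the \emph{same} almost product structure serves simultaneously to make $i_\Delta\widetilde\omega$ transversal and to make the bigrading of \cref{remark:bigrading} compatible with the normal-form computation identifying $\Im\flat_t$ with (at least) the transversal forms, and that these identifications are uniform in $t\in[0,1]$; getting the horizontal distribution $\widetilde{\mathcal{H}}$ and the almost product structure on $M$ to be mutually adapted is exactly the content of the integrability/$1$-Lagrangian hypotheses, and the proof should make explicit why these hypotheses are precisely what is needed (and, via \cref{prop:isomorphism_with_transversal_forms}, why without them one still gets a diffeomorphism but not necessarily a multisymplectomorphism).
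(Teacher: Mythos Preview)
Your overall strategy matches the paper's: transport the problem to ${\Lambda^{k-1}}^\perp_R(M)$ via \cref{prop:isomorphism_with_transversal_forms} and the tubular neighborhood theorem, then apply \cref{thm:Extensions_of_forms} with the Euler vector field. Two steps, however, are under-justified and are precisely where the paper spends its effort.

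First, you assert that the pulled-back form agrees with the canonical one on $\T{\Lambda^{k-1}}^\perp_R(M)|_M$ ``by construction of $\phi$''. This is not automatic: \cref{prop:isomorphism_with_transversal_forms} only gives a vector bundle isomorphism, with no claim about preserving the multisymplectic structure. The paper carries out an explicit computation of $\phi^\ast\widetilde\omega|_x$ and the last equality in that chain uses that $\widetilde{\mathcal{H}}$ is $1$-Lagrangian (so that terms with two or more $\widetilde h$-entries vanish). You later invoke the $1$-Lagrangian hypothesis only to say it ``realizes $\widetilde U$ as a fibered manifold'', but that role is played by integrability; the $1$-Lagrangian condition is what makes $\phi$ a multisymplectomorphism along $M$, and without it the two forms need not coincide on $\T\widetilde M|_M$, so \cref{thm:Extensions_of_forms} would not even apply.

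Second, you claim that transversality of $i_\Delta\widetilde\omega$ immediately gives transversality of $\Lie_\Delta i_\Delta\widetilde\omega = i_\Delta\dd i_\Delta\widetilde\omega$. This implication is not formal: applying $\dd$ can produce parallel components, and contracting again with $\Delta$ does not obviously kill them. The paper isolates this as a separate lemma (\cref{lemma:transversal_semibasic}), proving that if $i_\Delta\widetilde\omega$ is \emph{semi-basic and} transversal for every vertical $\Delta$, then so is $i_\Delta\dd i_\Delta\widetilde\omega$; the argument expands $\dd(i_\Delta\widetilde\omega)$ via the Koszul formula and uses that $[\Delta,\widetilde h_j]$ is vertical together with the semi-basic property. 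You should state and prove this step rather than fold it into a single sentence. Your surjectivity argument for $\flat_t$ is essentially the paper's (surjective on $M$, open condition, shrink), though the paper restricts $\flat_t$ to vertical vectors and targets the transversal \emph{semi-basic} forms, which is the correct subspace to hit given the lemma above.
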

\begin{proof} First observe that the map defined in \cref{prop:isomorphism_with_transversal_forms} is actually a multisymplectomorphism of vector bundles on $\widetilde M$. Indeed, denoting by $\widetilde \omega$ the form on ${\Lambda^{k-1}}^\perp_R (M)$, for $x \in \T_x M$, $v_1, \dots, v_k \in \T_x M$ and $\widetilde{h}_1, \dots, \widetilde{h}_k \in \widetilde{\mathcal{H}} |_M$ we have
\begin{align*}
    \left(\phi^\ast  \widetilde \omega|_x\right) (v_1 + \widetilde{h}_1, \dots, v_k + \widetilde{h}_k) &= \widetilde\omega|_x \left(v_1 + \phi(\widetilde{h}_1), \dots, v_k + \phi(v_k) \right)\\
    &= \widetilde\omega|_x \left(v_1 + (i_{\widetilde{h}_1} \Omega)|_{\T_x M}, \dots, v_k + (i_{\widetilde{h}_k} \Omega)|_{\T_x M} \right)\\
    &= \sum_{j = 1}^{k} (-1)^{j-1} \left(i_{\widetilde{h}_j} \Omega\right) (v_1, \dots, \widehat{v_j}, \dots, v_k)\\
    &= \sum_{j= 1}^k \Omega(v_1,\dots, \widetilde{h}_j, \dots, v_k)= \Omega(v_1 + \widetilde{h}_1, \dots, v_k + \widetilde{h}_k)\,,
\end{align*}
where in the last equality we have used that $\widetilde{\mathcal{H}}$ is $1$-Lagrangian. Now, using tubular neighborhood theory \cite{abraham} we may choose a diffeomorphism between two neighborhoods $M \subset U \subseteq {\Lambda^{k-1}}^\perp_R (M)$ and $M \subset V \subseteq {\Lambda^{k-1}}^\perp_R (M)$, 
\[
\psi\colon U \longrightarrow V
\]
such that $\psi_\ast |_{M} = \phi$ and such that the fibers of ${\Lambda^{k-1}}^\perp_R (M) \rightarrow M$ are mapped into the leaves of $\widetilde{\mathcal{H}}$. Therefore we have $\psi^\ast \Omega = \widetilde \omega$ on $M$. Let $\Delta$ denote the Euler vector field. It only remains to show that $\Delta$ satisfies the hypotheses of \cref{thm:Extensions_of_forms} for the forms $\widetilde \omega$ and $\psi^\ast \Omega$. The first condition is clear. Regarding the second one, notice that we may write
\begin{align*}
    \pounds_{\Delta} i_\Delta \left( \widetilde \omega - \psi^\ast \Omega\right) &= i_{\Delta} \dd i_{\Delta}\ \widetilde \omega - i_\Delta \dd i_\Delta \psi^\ast \Omega\\
    &= i_{\Delta} \dd i_{\Delta}\ \widetilde \omega - \psi^\ast \left(i_{\psi_\ast\Delta} \dd i_{\psi_\ast\Delta} \Omega\right)\,.
\end{align*}
We will show that each of these forms is transversal, and then that $\flat_t$ takes values in the transversal forms, for a neighborhood small enough. Indeed, the first part follows from the following lemma:

\begin{lemma}
\label{lemma:transversal_semibasic}
Let $\tau\colon \widetilde M \rightarrow M$ be a fibered manifold (surjective submersion) onto $M$ equipped with an almost product structure $\T M = \mathcal{H} \oplus \mathcal{V}$. Suppose $\widetilde \omega \in \Omega^k(\widetilde{M})$ such that $i_\Delta \widetilde \omega$ is a semi-basic transversal form to $\mathcal{H}$, for every vertical vector field. Then, $i_\Delta \dd i_\Delta \widetilde \omega$ is semi-basic and transversal to $\mathcal{H}$ as well.
\end{lemma}
\begin{proof} The fact that $i_\Delta \dd i_\Delta \widetilde \omega$ is semi-basic is clear. Now to show that it is transversal, it is enough to show that $(i_\Delta \dd i_\Delta \widetilde \omega) (\widetilde{h}_1, \dots, \widetilde{h}_k) = 0$, for every $\widetilde{h}_1, \dots, \widetilde{h}_k \in \mathcal{H}$\,. Indeed:
\begin{align*}
    (i_\Delta \dd i_\Delta \widetilde \omega)(\widetilde{h}_1, \dots, \widetilde{h}_k) &= \dd (i_ \Delta \widetilde \omega) (\Delta, \widetilde{h}_1, \dots, \widetilde{h}_k)\\
    &= \Delta \left(\widetilde \omega(\Delta, \widetilde{h}_1, \dots, \widetilde{h}_k) \right) + \sum_{j} (-1)^{j+1} \widetilde \omega (\Delta, [\Delta, \widetilde{h}_j], \widetilde{h}_1, \dots, \widehat{\widetilde{h}_j},\dots, \widetilde{h}_k)\,.
\end{align*}
The first term is null by hypotheses, and the second because $[\Delta, \widetilde{h}_j]$ is vertical, and $i_\Delta \widetilde \omega$ is semi-basic. Hence, we have $(i_\Delta \dd i_\Delta \widetilde \omega) (\widetilde{h}_1, \dots, \widetilde{h}_k) = 0$, which finishes the proof.
\end{proof}

\noindent By \cref{lemma:transversal_semibasic}, the form $\Lie_{\Delta} i_\Delta \left( \widetilde \omega - \psi^\ast \Omega\right)$ is transversal and semi-basic. It only remains to show that $\flat_t$ is surjective onto the space of transversal and semi-basic forms when restricted to vertical vector fields. Indeed, it is clear that on $M$ $\flat_1 = \flat_0$ defines a surjective mapping
\[
\{\text{vertical vectors}\} \longrightarrow {\Lambda^{k-1}}^\perp_R (M)\,.
\]
Furthermore, since $\psi$ maps fibers into leaves of $\widetilde{\mathcal{H}}$ (which is $1$-Lagrangian), in general $\flat_t$ is a map $\flat_t\colon \{\text{vertical vectors}\} \longrightarrow {\Lambda^{k-1}}^\perp_R (M)$. Now, since being surjective is an open condition, there will be a neighborhood of $M$ in ${\Lambda^{k-1}}^\perp_R (M)$ such that $\flat_t$ defines a surjective vector bundle map onto the space of transversal forms and thus, we have 
\[\Lie_\Delta i_\Delta \widetilde \omega \in \bigcap_{t\in[0,1]} \flat_t\,,\]
and, by \cref{thm:Extensions_of_forms}, we conclude that both $U$ and $V$ (after maybe being shrunk further) are neighborhood multisymplectomorphic.
\end{proof}

Now let us study an example where we do not find uniqueness:

\begin{example}
Let $V$ be a vector space and $L \subseteq V$ be a vector subspace. Define $\left(V \oplus \Lambda^k V^\ast, \Omega \right)$ to be the $k$-multisymplectic vector space with \[
\Omega (v_1 + \alpha_1, \dots, v_{k+1} + \alpha_{k+1}) = \sum_{j = 1}^{k+1}(-1)^{j+1} \alpha_j(v_1, \dots, \hat{v}_j, \dots, v_{k+1}),
\] 
and let $\left(L \oplus \Lambda^k V^\ast, \omega \right)$ be the $k$-pre-multisymplectic vector space given by $\omega = \mathfrak{i}^\ast \Omega$, where $\mathfrak{i}$ denotes the inclusion of the previous vector space into the former. Notice that $L \oplus \Lambda^k V^\ast$ defines a $k$-coisotropic subspace, with kernel $K = \{\alpha \in \Lambda^k V^ \ast \colon \alpha |_{L} = 0\} = \left(V \oplus \Lambda^k V^\ast \right)^{\perp_\Omega,\, k}$. Notice that the canonical embedding \[
\phi \colon L \oplus \Lambda^k V^\ast \hookrightarrow V \oplus \Lambda^k V^\ast
\]
is, in fact, a coisotropic embedding. Let us study coisotropic embeddings of a $k$-premultisymplectic manifold $(M, \omega)$ with linear type $(L \oplus \Lambda^k V^\ast, \omega)$ into a $k$-multisymplectic manifold $(\widetilde M, \widetilde \omega)$ with linear type $(V \oplus \Lambda^k V^\ast, \widetilde \omega)$. Furthermore, let us assume that point-wise, the coisotropic embedding is given by $\phi$, that is, we assume that the corresponding coisotropic embedding $\mathfrak{i} \colon M \hookrightarrow \widetilde M$ satisfies that, for each $x \in M$ there is a couple of (pre-)multisymplectic isomorphisms 
\begin{align*}
    f_x\colon \T_xM \longrightarrow L \oplus \Lambda^k V^\ast\,,\\
    g_x\colon \T_{f(x)} \widetilde M \longrightarrow V \oplus \Lambda^k V^\ast \,.
\end{align*}
that make the following diagram commutative:

\[\begin{tikzcd}[cramped]
	{\T_x M} && {\T_{\mathfrak{i}(x)} \widetilde M} \\
	{L \oplus \Lambda^k V^\ast} && {V\oplus \Lambda^k V^\ast}
	\arrow["{\mathfrak{i}_\ast}"{description}, from=1-1, to=1-3]
	\arrow["{f_x}", from=1-1, to=2-1]
	\arrow["{g_x}", from=1-3, to=2-3]
	\arrow["\phi"{description}, from=2-1, to=2-3]
\end{tikzcd}\,,\]
Let us prove that this sort of coisotropic embedding is not unique, by providing a pre-multisymplectic manifold $(M, \omega)$ with local type $(L \oplus  \Lambda^k V^\ast)$ and two different coisotropic embeddings into two different multisymplectic manifolds $(\widetilde M_i, \widetilde \omega_i)$, $i = 1, 2$ with linear type $(V \oplus \Lambda^k V^\ast, \widetilde \omega)$. Indeed, let us perform a general study to characterize, at least point-wise, these sort of embeddings. Let $\mathfrak{i} \colon M \hookrightarrow \widetilde M$ be such a map. Let $\mathcal{W}$ denote the $1$-Lagrangian distribution on $M$ given by $\mathcal{W}|_x = f_x ^{-1}\left(\Lambda^k V^\ast \right)$. It is well known that this distribution does not depend on the chosen multisymplectomorphism $f_x$ (see \cite{mar1988lett.math.phys.}). Let $\mathcal{L}$ be any complementary distribution to $\mathcal{W}$, so that $\T M = \mathcal{L} \oplus \mathcal{W}$ (we may even request $\mathcal{L}$ to be $k$-Lagrangian, but this is not necessary). Let \[
\mathcal{H} := \T \widetilde M \big |_{M} \big / \T M\,.
\]
\begin{lemma}
\label{lemma:lemma_isomorphism}
There is a vector bundle isomorphism 
\[
\phi\colon \mathcal{W} \longrightarrow\bigoplus_{j= 0}^{k-1}\Lambda^j \mathcal{H}^\ast \otimes  \Lambda^{k-1-j} \mathcal{L}^\ast\,.
\]
\end{lemma}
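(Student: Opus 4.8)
The plan is to construct $\phi$ point-wise using the local multisymplectomorphism $f_x \colon \T_x M \to L \oplus \Lambda^k V^\ast$ and then check that the resulting fiberwise map is independent of this choice, so that it glues to a genuine vector bundle isomorphism. First I would observe that, point-wise, $\mathcal{W}|_x \cong \Lambda^k V^\ast$ via $f_x$, while $\mathcal{H}|_x = (V \oplus \Lambda^k V^\ast) / (L \oplus \Lambda^k V^\ast) \cong V / L$ via $g_x$ (the quotient identifies the $\Lambda^k V^\ast$ summand to zero), and $\mathcal{L}|_x \cong L$ via $f_x$ (since $\mathcal{L}$ is complementary to $\mathcal{W}$ and $\mathcal{W}$ corresponds to $\Lambda^k V^\ast$). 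Then the target bundle at $x$ is
\[
\bigoplus_{j=0}^{k-1} \Lambda^j (V/L)^\ast \otimes \Lambda^{k-1-j} L^\ast \cong \left\{\beta \in \Lambda^{k-1} V^\ast \colon \iota_{u_1} \iota_{u_2} \beta = 0 \text{ for all } u_1, u_2 \in L\right\}^{?}
\]
— more precisely, it is the subspace of $\Lambda^{k-1} V^\ast$ spanned by wedges with at most $k-1$ factors but using the splitting $V^\ast = (V/L)^\ast \oplus L^\ast$ with \emph{at most} $k-1$ factors from $L^\ast$, i.e. the ``transversal'' part of $\Lambda^{k-1} V^\ast$ relative to the almost product structure induced by $L \subseteq V$. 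The natural candidate for $\phi$ is then $\alpha \mapsto$ (the class of) $\iota_\alpha \Omega$ restricted appropriately, exactly as in \cref{prop:isomorphism_with_transversal_forms}: for $\alpha \in \Lambda^k V^\ast$, the contraction $\iota_v \Omega$ with $v \in \Lambda^k V^\ast$ lands in the transversal $(k-1)$-forms, and this is a linear isomorphism onto that space.

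The key steps, in order, are: (1) fix the linear-algebraic model, identifying $\mathcal{W}|_x$, $\mathcal{L}|_x$, $\mathcal{H}|_x$ with $\Lambda^k V^\ast$, $L$, $V/L$ respectively; (2) identify the target $\bigoplus_{j=0}^{k-1} \Lambda^j \mathcal{H}^\ast \otimes \Lambda^{k-1-j} \mathcal{L}^\ast$ at $x$ with the space of transversal $(k-1)$-forms on $V$ with respect to the splitting $V = L \oplus (\text{complement})$; (3) define $\phi|_x(w) := [\,\iota_{w} \Omega\,]$, the transversal component of the contraction, and show using the explicit formula for $\Omega$ (and the fact that $\iota_w \Omega$ is already transversal and semi-basic when $w \in \Lambda^k V^\ast$) that this is a linear isomorphism — this is a direct computation essentially identical to the one in the proof of \cref{prop:isomorphism_with_transversal_forms}; (4) check independence of the choice of $f_x$: the distribution $\mathcal{W}$ itself is canonical (this is where the cited fact \cite{mar1988lett.math.phys.} that $\mathcal{W}$ does not depend on the chosen linear multisymplectomorphism is used), and the contraction map $w \mapsto \iota_w \Omega$ is defined intrinsically from $\widetilde\omega$, so the only ambiguity is in the identification of the target, which is absorbed by the choice of complement $\mathcal{H}$ that is part of the data; (5) conclude smoothness — since $f_x$, $g_x$ can be chosen to vary smoothly (or, more robustly, since $\phi$ is expressed intrinsically via $\widetilde\omega$ and the smooth distributions $\mathcal{W}, \mathcal{L}, \mathcal{H}$), the fiberwise isomorphisms assemble into a vector bundle isomorphism.

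I expect the main obstacle to be step (4), the well-definedness and naturality: one must be careful that $\phi$, as written via $\phi(\widetilde h|_{\mathfrak i(x)}) = p(\mathfrak i^\ast(\iota_{\widetilde h}\Omega))$ in the style of \cref{prop:isomorphism_with_transversal_forms}, genuinely depends only on $\widetilde\omega$ and not on the auxiliary linear identifications, and that the two summands it mixes ($\iota$ with a ``parallel'' vector gives a parallel form which projects to zero, $\iota$ with a ``transversal'' vector gives a transversal form which is fixed by $p$) decouple cleanly. This is exactly the computation $p(\iota_u\omega)=0$ and $p(\mathfrak i^\ast(\iota_v\Omega)) = \mathfrak i^\ast(\iota_v\Omega)$ appearing in \cref{prop:isomorphism_with_transversal_forms}, so in practice the proof of \cref{lemma:lemma_isomorphism} reduces to invoking that same argument at the linear level and citing \cite{mar1988lett.math.phys.} for the canonicity of $\mathcal{W}$; the dimension count $\dim \mathcal{W}|_x = \binom{\dim V}{k} = \sum_{j=0}^{k-1}\binom{\dim \mathcal{H}|_x}{j}\binom{\dim\mathcal{L}|_x}{k-1-j}$ then confirms $\phi$ is an isomorphism rather than merely injective. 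A secondary, purely bookkeeping, obstacle is matching conventions so that the index range $j = 0, \dots, k-1$ in the statement corresponds correctly to the transversal (as opposed to parallel) part of $\Lambda^{k-1}$.
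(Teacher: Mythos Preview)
Your approach shares the paper's core idea—define $\phi$ by contraction with the multisymplectic form—but it takes an unnecessarily roundabout path. The paper first identifies the target with $\Lambda^{k-1}(\mathcal{H}\oplus\mathcal{L})^\ast$ via the standard bigrading, fixes any vector-bundle splitting $\T\widetilde M|_M\cong\mathcal{W}\oplus\mathcal{L}\oplus\mathcal{H}$, and then defines $\phi$ \emph{intrinsically} using the actual form $\widetilde\omega$ on $\widetilde M$ (not the model form $\Omega$):
\[
\phi(w)(h_1+l_1,\dots) := \widetilde\omega(w,\, h_1+l_1,\,\dots)\,.
\]
Because $\widetilde\omega$ is globally defined, no choice of $f_x,g_x$ enters the definition and your step~(4)—the well-definedness check you flag as the main obstacle—simply never arises; the local model is invoked only afterwards, to verify that $\phi$ is a fiberwise isomorphism. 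Your route through the linear identifications can be made to work, but it manufactures a naturality problem that the paper's intrinsic definition avoids entirely. Your appeal to \cref{prop:isomorphism_with_transversal_forms} is also somewhat off-target: there the map runs from a complement of $\T M$ inside $\T\widetilde M|_M$ to transversal forms \emph{on $M$}, whereas here it runs from $\mathcal{W}\subseteq\T M$ to forms on $\mathcal{H}\oplus\mathcal{L}$; source and target have swapped roles, so the specific ``$p(\iota_u\omega)=0$'' decoupling argument you invoke does not transfer verbatim.

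One concrete error: your dimension count is wrong as written. By Vandermonde's identity,
\[
\sum_{j=0}^{k-1}\binom{\dim\mathcal{H}|_x}{j}\binom{\dim\mathcal{L}|_x}{k-1-j}=\binom{\dim\mathcal{H}|_x+\dim\mathcal{L}|_x}{k-1}=\binom{\dim V}{k-1},
\]
which is not $\binom{\dim V}{k}=\dim\mathcal{W}|_x$ in general, so it cannot ``confirm $\phi$ is an isomorphism.'' (In fairness, the paper's own index bookkeeping in this lemma is slippery for the same reason—$\widetilde\omega$ has degree $k+1$, so contracting with $w$ leaves a $k$-form—so read the degree of the target with care.)
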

\begin{proof} Notice that 
\[
\bigoplus_{j= 0}^{k-1}\Lambda^j \mathcal{H}^\ast \otimes  \Lambda^{k-1-j} \mathcal{L}^\ast \cong \Lambda^{k-1} \left( \mathcal{H} \oplus \mathcal{L} \right)^\ast\,,
\]
and that we have 
\[
\T\widetilde M|_M \cong \mathcal{W} \oplus \mathcal{L} \oplus \mathcal{H}\,.
\]
Let $\psi \colon \T \widetilde M |_M \rightarrow \mathcal{W} \oplus \mathcal{L} \oplus \mathcal{H}$ denote a vector bundle isomorphism and define 
\[
\phi(w)(h_1+l_1, \dots, h_{k-1} + l_{k-1}) := \widetilde \omega(w, h_1+l_1, \dots, h_{k-1} + l_{k-1})\,.
\]
Given the local model, it is easy to check that this map defines a vector bundle isomorphism.
\end{proof}

\noindent Now, if $\mathcal{L}$ is $k$-Lagrangian, for each pair $(\mathcal{H}, \phi)$ of vector bundle over $M$ and isomorphism as in \cref{lemma:lemma_isomorphism}, we may build an embedding of the previous type such that $\T_{\mathfrak{i}(x)} \widetilde M / \T_{\mathfrak{i}(x)} M \cong \mathcal{H}$, so that if we find two different $\mathcal{H}$ with two linear isomorphisms, we find two different coisotropic embeddings:
\begin{theorem} Let $\mathcal{H} \rightarrow M$ be a vector bundle together with a vector bundle isomorphism 
\[
\phi\colon W \longrightarrow \Lambda^{k-1}\left( \mathcal{H} \oplus \mathcal{L}\right)^\ast\,.
\]
Then, there is a multisymplectic structure on a neighborhood of the zero section with the local type $\mathcal{V} \oplus \Lambda^k \mathcal{V}^\ast$ along $M$ such that $M$ (identified as the zero section) is a $(k-1)$-coisotropic submanifold.
\end{theorem}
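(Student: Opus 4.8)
The plan is to re-run the construction of \cref{Thm: multisymplectic thickening} with two modifications: the ambient bundle ${\Lambda^{k-1}}^\perp_R(M)$ is replaced by a tubular neighbourhood $\widetilde M$ of the zero section of $\mathcal{H}\to M$, and the canonical tautological form is replaced by a version twisted by $\phi$. Write $\mathcal{E}:=\mathcal{H}\oplus\mathcal{L}$. The key observation supplied by the hypothesis is that $\phi$ identifies the characteristic distribution $W=\ker\omega$ with the bundle $\Lambda^{k-1}\mathcal{E}^\ast$; applying the bigrading of \cref{remark:bigrading} to $\mathcal{E}$, this forces $\T\widetilde M|_M=\mathcal{L}\oplus W\oplus\mathcal{H}\cong\mathcal{E}\oplus\Lambda^{k-1}\mathcal{E}^\ast$ to carry, along $M$, exactly the linear structure of the canonical model $\mathcal{V}\oplus\Lambda^k\mathcal{V}^\ast$, and it fixes the normal bundle of $M$ in $\widetilde M$ to be $\mathcal{H}$ by construction.

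Next I would build the form. With $\tau\colon\widetilde M\to M$ the projection, fix locally the Darboux-type coordinates $(x^a,z^A)$ on $M$ used in the proof of \cref{Thm: multisymplectic thickening} (so $\partial/\partial z^A$ span $W$) together with linear fibre coordinates on $\mathcal{H}$, and define a $(k-1)$-form $\vartheta^\phi$ on $\widetilde M$ by transporting through $\phi$ the tautological recipe $\vartheta^P_\alpha(X_1,\dots,X_{k-1})=\alpha(\tau_\ast X_1,\dots,\tau_\ast X_{k-1})$: at a point $h\in\mathcal{H}_x$ one feeds $h$ into one slot of $\phi(\,\cdot\,)$ and the horizontal projections $\tau_\ast X_i$ into its $\mathcal{L}$-slots, antisymmetrising appropriately. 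One checks that these local expressions glue to a global $(k-1)$-form; this needs only that $\phi$ is a genuine vector-bundle isomorphism and that $W=\ker\omega$ is integrable, so that the coordinates above exist. Then set $\widetilde\omega:=\tau^\ast\omega+\dd\vartheta^\phi$, which is closed by construction.

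The remaining verifications follow the pattern of \cref{Thm: multisymplectic thickening}. Non-degeneracy: in the adapted coordinates $i_X\widetilde\omega$ has the same block-triangular shape as in that proof, the leading fibre coefficients being, up to sign, the fibre components of $X$, so $i_X\widetilde\omega=0$ forces $X=0$ on a neighbourhood of the zero section (on all of $\widetilde M$ if the associated almost product structure is flat). Coisotropy: for $X$ in the $(k-1)$-orthogonal of $\T_mM$, testing $\widetilde\omega_m(X,W_1,\dots,W_{k-1})$ against $W_i\in\T_mM$ taken successively vertical and then horizontal annihilates the fibre components of $X$ one block at a time, leaving $X\in\T_mM$, so $M$ is $(k-1)$-coisotropic. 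Local type: restricting $\widetilde\omega$ to $\T\widetilde M|_M\cong\mathcal{E}\oplus\Lambda^{k-1}\mathcal{E}^\ast$ gives, by the definition of $\vartheta^\phi$ and because $\mathcal{L}$ is $k$-Lagrangian, exactly the canonical pairing form, i.e.\ the model $\mathcal{V}\oplus\Lambda^k\mathcal{V}^\ast$.

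The one non-routine point is that twisting the tautological form by $\phi$ does not spoil non-degeneracy or the local model. Since $\phi$ is a fibrewise isomorphism, I would dispatch this by choosing, around each point of $M$, a bundle chart in which $\phi$ is literally the identity map; in such a chart the whole construction coincides verbatim with that of \cref{Thm: multisymplectic thickening}, so both the non-degeneracy and the coisotropy computations carry over unchanged and the claimed pointwise model is immediate. The only care left is the globalisation — gluing the local $\vartheta^\phi$ and confirming that the normal bundle is $\mathcal{H}$ itself rather than some twist of it — which is settled by the naturality of the tautological construction under the global isomorphism $\phi$, exactly as in \cref{lemma:lemma_isomorphism}.
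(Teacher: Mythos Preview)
Your approach is sound but genuinely different from the paper's. The paper does not build a global twisted tautological form at all: it works only along $M$, writing down directly the $k$-form on $\T\mathcal{H}|_M\cong\mathcal{H}\oplus\mathcal{L}\oplus\mathcal{W}$ as
\[
\widetilde\omega(h_1+l_1+w_1,\dots,h_k+l_k+w_k)=\sum_{j}(-1)^{j+1}\phi(w_j)(h_1+l_1,\dots,\widehat{h_j+l_j},\dots,h_k+l_k)\,,
\]
observes that $\mathfrak{i}^\ast\widetilde\omega=\omega$, and then simply asserts that this extends to a closed form on a neighbourhood of the zero section. No explicit $\vartheta^\phi$, no coordinate non-degeneracy check, no step-by-step coisotropy verification; the local type and coisotropy are implicit in the formula itself.

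Your route is more constructive: you globalise first via $\widetilde\omega=\tau^\ast\omega+\dd\vartheta^\phi$ and then verify everything in adapted coordinates, which has the virtue of producing a canonical closed representative and making non-degeneracy and coisotropy explicit. The cost is the extra bookkeeping, and your description of $\vartheta^\phi$ is a bit loose: the phrase ``feed $h$ into one slot of $\phi(\cdot)$'' reads as applying $\phi$ to $h$, whereas $\phi$ eats elements of $W$; what you presumably mean is to insert $h\in\mathcal{H}\subset\mathcal{E}$ into one of the $\mathcal{E}$-slots of $\phi(w)\in\Lambda^{k-1}\mathcal{E}^\ast$. Your fallback (pass to a local trivialisation in which $\phi$ is the identity and quote \cref{Thm: multisymplectic thickening} verbatim) fixes any residual ambiguity and is, in effect, a local version of the paper's one-line argument.
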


\begin{proof} 
We have that 
\[
\T \mathcal{H} |_M \cong \mathcal{H} \oplus \mathcal{L} \oplus \mathcal{W}\,
\]
and we may define a $k$-form by 
\begin{align*}
    \widetilde \omega (h_1+l_1+w_1,\dots, h_k+l_k+w_k) = \sum_{j= 1}^{k} (-1)^{j+1} \phi(w_j)(h_1+ l_1, \dots, h_k + l_k)\,.
\end{align*}
This form clearly satisfies $\mathfrak{i}^\ast \widetilde \omega = \omega$, where $\mathfrak{i} \colon \T M \rightarrow \T \mathcal{H} |_{M}$ denotes the inclusion. Hence, it may be extended to a closed form on a neighborhood of $M$ in $\mathcal{H}$, proving the result.
\end{proof}

\noindent Hence, it is enough to give two different (non-isomorphic) vector bundles that satisfy the hypotheses of the previous theorem. Indeed, let 
\[
M = \mathbb{S}^1 \times \mathbb{R}^3 \times \Lambda^2 (\mathbb{R}^4) \cong \mathbb{S}^1 \times \mathbb{R}^3 \times \mathbb{R}^4 \times \Lambda^2 \mathbb{R}^{4 \ast},
\]
with pre-multisymplectic form given by the pullback of the natural multisymplectic form on $\Lambda^2 (\mathbb{R}^4)$. Since $M$ is parallelizable, making the identificacion $\T M  = M \times \mathbb{R}^{\dim M}$, we clearly have that $W$ is a trivial subbundle of dimension $10$,
\[
W \cong M \times \mathbb{R}^{10}\,,
\]
and that $L$ may be chosen as a trivial subbundle of dimension $4$. Hence, multisymplectic embeddings of the previous type are given by $1$-dimensional vector bundles over $M$, $\mathcal{H} \longrightarrow M$ such that
\[
\Lambda^2\left( \mathcal{L} \oplus \mathcal{H}\right) \cong\Lambda^2\left( \mathbb{R}^{4} \oplus \mathcal{H}\right)  \cong M \otimes \mathbb{R}^{10}\,.
\]
It turns out that $\mathcal{H}$ may be chosen both as the trivial bundle and as the pullback of the Möbius band bundle over $\mathbb{S}^1$, proving that there are \textit{at least} two non-isomorphic ways of embedding $M$ as a coisotropic submanifold.
\end{example}

\begin{example} As a more elementary example where uniqueness is not guaranteed is a coisotropic embedding of a smooth manifold with the zero premultisymplectic form, $M$, into a manifold $\widetilde M$ together with a volume form. Indeed, if $M$ is orientable (res. non-orientable), any orientable (res. non-orientable) line bundle $E \rightarrow M$ yields such an embedding and, in fact, any two embeddings of this kind are represented by certain orientable line bundle, as the coisotropic submanifolds of a manifold together with a volume form are the codimension one submanifolds.
\end{example}

\section{Conclusions and Future work}

We have extended the coisotropic embbeding theorem for pre-symplectic manifolds to other geometric scenarios, that are fundamental to describe time-dependent mechanical systems, Lagrangian systems depending on the action and classical field theories. To do this, we have used a generic methodology, so that we are now in a good position to apply the obtained results in the so-called regularization problem for singular Lagrangian systems. Furthermore, we have studied the uniqueness of coisotropic embeddings in general and found that neighborhood equivalence only holds in the symplectic setting. However, if we relax the notion of equivalence of coisotropic embeddings, we obtain the classification summarized in \cref{table:uniqueness}, where we have added whether coisotropic embeddings in the different geometries considered are unique, where uniqueness is considered up to neighborhood diffeomoprhism, neighborhood diffeomorphisms that preserve the structure on $M$, and neighborhood diffeomorphisms that preserve the structure.

\begin{table}[!ht]
\caption{Summary of uniqueness of embeddings}
\centering
\renewcommand{\arraystretch}{2}
\begin{tabular}{p{3.5cm}|p{3.5cm}|p{3.5cm}|p{3.5cm}|p{3cm}}
  Type of \newline geometry & Topological & Topological and\newline preserves structure on $M$ & Neighborhood \newline equivalent\\
  \cline{1-4}
  Symplectic & Yes & Yes & Yes & \multirow{4}{*}{$\left.\begin{array}{c}\\ \\ \\ \\ \\ \\ \\\end{array}\right\}\,\makecell{\text{Classical} \\ \text{Mechanics}}$}
 \\
  Cosymplectic & Yes & If Reeb vector fields coincide on $M$ & If previous and Reeb vector fields share orbits& \\
  Contact & Yes & If Reeb vector fields coincide on $M$ & If previous and Reeb vector fields share orbits& \\
  Cocontact & Yes & If Reeb vector fields coincide on $M$ pair-wise &If previous and Reeb vector fields share orbits pair-wise& \\
  \cline{1-4}
  $k$-symplectic & Depends on the fixed model & Depends on the fixed model &Depends on the fixed model& \multirow{4}{*}{$\left.\begin{array}{c} \\ \\ \\ \\ \\ \\ \end{array}\right\}\,\makecell{\text{Classical} \\ \text{Field}\\ \text{Theories} }$}
\\
  $k$-cosymplectic & Depends on the fixed model & Depends on the fixed model & Depends on the fixed model& \\
  $k$-contact & Depends on the fixed model & Depends on the fixed model &Depends on the fixed model& \\
  Multisymplectic & Depends on the fixed model & Depends on the fixed model & Depends on the fixed model& \\
\end{tabular}
\label{table:uniqueness}
\end{table}

\noindent So, in an ongoing paper, we will apply the above results in these directions:

\begin{itemize}

\item Consider singular Lagrangian systems defined on $\mathbb{R} \times \T Q$, where $Q$ is the configuration manifolds. A constraint algorithm has been developed in \cite{chinea}, and a classification of these type of Lagrangians can be found in \cite{herminia,ibortmarin2}. Combining both results one could extend the results in \cite{Ibort-Solano}.

\item The regularization problem for singular Lagrangian systems depending on the action relies in two previous results; first, the corresponding constraint algorithm developed in \cite{mdlmlvsingular} and the classification of this type of Lagrangians \cite{inprepar}. The main difference with the previous case and the pre-symplectic case is that we are dealing with a Jacobi bracket and not a Poisson bracket.

\item The coisotropic embedding theorem also received several reformulations over the years. We mention the approach of \textit{V.
Guilleming} and \textit{S. Sternberg} that proved an equivariant (with respect to the action of a Lie group) version of the original theorem (see \cite{guillemin_sternberg}). Our plan is to extend that construction to the rest of the geometric scenarios for mechanics and Classical Field Theories.

\item Finally, singular classical field theories have been considered in many recent papers, and we will only mention a few of them, sending the reader to the actual references in these papers \cite{aitor,arturonarciso,singularfields0,singularfields1}. We should first obtain a classification of Lagrangians on the space of 1-jets of the configuration bundle using the operator introduced by \textit{D. Saunders} \cite{saunders} using a volume form on the space-time manifold (the base of the configuration bundle). Then, our intention is to study the regularization problem on these premises.
    
\end{itemize}

\section*{Acknowledgements}
M. de L., R. I. and P.S. acknowledge financial support from the Spanish Ministry of Science, Innovation and Universities under grants PID2022-137909NB-C21 and the Severo Ochoa Program for Centers of Excellence in R\&D (CEX2023-001347-S).
L.S. acknowledges financial support from Next Generation EU through the project 2022XZSAFN – PRIN2022 CUP: E53D23005970006.
L.S. is a member of the GNSGA (Indam).

\bibliographystyle{plain}

\bibliography{references}

\begin{thebibliography}{10}

\bibitem{abraham}
R.~Abraham and J.~E. Marsden.
\newblock {\em Foundations of mechanics}.
\newblock Benjamin/Cummings Publishing Co., Inc., Advanced Book Program, Reading, MA, second edition, 1978.
\newblock With the assistance of Tudor Ra\c tiu and Richard Cushman.

\bibitem{albert}
C.~Albert.
\newblock Le th\'eor\`eme de r\'eduction de {M}arsden-{W}einstein en g\'eom\'etrie cosymplectique et de contact.
\newblock {\em J. Geom. Phys.}, 6(4):627--649, 1989.

\bibitem{Anderson-Bergmann-Constraints-1951}
J.~L. Anderson and P.~G. Bergmann.
\newblock Constraints in {C}ovariant {F}ield {T}heories.
\newblock {\em Physical Review}, 83(5):1018--1025, 1951.

\bibitem{atiyah}
M.~F. Atiyah.
\newblock {\em {$K$}-theory}.
\newblock Advanced Book Classics. Addison-Wesley Publishing Company, Advanced Book Program, Redwood City, CA, second edition, 1989.
\newblock Notes by D. W. Anderson.

\bibitem{binz}
E.~Binz, J.~\'Sniatycki, and H.~Fischer.
\newblock {\em Geometry of classical fields}, volume 154 of {\em North-Holland Mathematics Studies}.
\newblock North-Holland Publishing Co., Amsterdam, 1988.
\newblock Notas de Matem\'atica, 123. [Mathematical Notes].

\bibitem{bravetti1}
A.~Bravetti.
\newblock Contact {H}amiltonian dynamics: the concept and its use.
\newblock {\em Entropy}, 19(10):Paper No. 535, 12, 2017.

\bibitem{bravetti}
A.~Bravetti.
\newblock Contact geometry and thermodynamics.
\newblock {\em Int. J. Geom. Methods Mod. Phys.}, 16:1940003, 51, 2019.

\bibitem{cantrijn}
F.~Cantrijn, M.~de~Le\'on, and E.~A. Lacomba.
\newblock Gradient vector fields on cosymplectic manifolds.
\newblock {\em J. Phys. A}, 25(1):175--188, 1992.

\bibitem{chinea}
D.~Chinea, M.~de~León, and J.~C. Marrero.
\newblock The constraint algorithm for time-dependent lagrangians.
\newblock {\em J. Math. Phys.}, 35(7):3410–3447, 1994.

\bibitem{Ciaglia-DiCosmo-Ibort-Marmo-Schiavone-Zampini-Symmetry-2022}
F.~M. Ciaglia, F.~{Di Cosmo}, L.~A. Ibort, G.~Marmo, L.~Schiavone, and A.~Zampini.
\newblock {S}ymmetries and {C}ovariant {P}oisson {B}rackets on {P}resymplectic {M}anifolds.
\newblock {\em Symmetry}, 14(70):1--28, 2022.

\bibitem{Ciaglia-DiCosmo-Ibort-Marmo-Schiavone-Zampini-Peierls2-2022}
F.~M. Ciaglia, F.~{Di Cosmo}, L.~A. Ibort, G.~Marmo, L.~Schiavone, and A.~Zampini.
\newblock {T}he geometry of the solution space of first order {H}amiltonian field theories {II}: non-{A}belian gauge theories, 2022.

\bibitem{Ciaglia-DiCosmo-Ibort-Marmo-Schiavone-Zampini-Peierls3-2022}
F.~M. Ciaglia, F.~{Di Cosmo}, L.~A. Ibort, G.~Marmo, L.~Schiavone, and A.~Zampini.
\newblock {T}he geometry of the solution space of first order {H}amiltonian field theories {III}: {P}alatini's formulation of {G}eneral {R}elativity, 2023.

\bibitem{Ciaglia-DiCosmo-Ibort-Marmo-Schiavone-Zampini-Peierls1-2024}
F.~M. Ciaglia, F.~{Di Cosmo}, L.~A. Ibort, G.~Marmo, L.~Schiavone, and A.~Zampini.
\newblock {T}he geometry of the solution space of first order {H}amiltonian field theories {I}: from particle dynamics to {E}lectrodynamics.
\newblock {\em Journal of Geometry and Physics}, 204:105279, 2024.

\bibitem{cocontact}
M.~de~Le\'on, J.~Gaset, X.~Gr\`acia, M.~C. Mu\~noz Lecanda, and X.~Rivas.
\newblock Time-dependent contact mechanics.
\newblock {\em Monatsh. Math.}, 201(4):1149--1183, 2023.

\bibitem{multicontact}
M.~de~Le\'on, J.~Gaset, M.~C. Mu\~noz Lecanda, X.~Rivas, and N.~Rom\'an-Roy.
\newblock Multicontact formulation for non-conservative field theories.
\newblock {\em J. Phys. A}, 56(2):Paper No. 025201, 44, 2023.

\bibitem{inprepar}
M.~de~Le\'on and I.~Guti\'errez.
\newblock Classification and reduction of {L}agrangian systems a.
\newblock {\em In preparation}, 2024.

\bibitem{mlvmdlmml}
M.~de~Le\'on, M.~La\'inz, M.~C. Mu\~noz Lecanda, and N.~Rom\'an-Roy.
\newblock Constrained {L}agrangian dissipative contact dynamics.
\newblock {\em J. Math. Phys.}, 62(12):Paper No. 122902, 23, 2021.

\bibitem{mlvmdl1}
M.~de~Le\'on and M.~Lainz~Valc\'azar.
\newblock Contact {H}amiltonian systems.
\newblock {\em J. Math. Phys.}, 60(10):102902, 18, 2019.

\bibitem{mdlmlvsingular}
M.~de~Le\'on and M.~Lainz-Valc\'azar.
\newblock Singular {L}agrangians and precontact {H}amiltonian systems.
\newblock {\em Int. J. Geom. Methods Mod. Phys.}, 16(10):1950158, 39, 2019.

\bibitem{singularfields0}
M.~de~Le\'on, J.~Marin-Solano, and J.~C. Marrero.
\newblock A geometrical approach to classical field theories: a constraint algorithm for singular theories.
\newblock In {\em New developments in differential geometry ({D}ebrecen, 1994)}, volume 350 of {\em Math. Appl.}, pages 291--312. Kluwer Acad. Publ., Dordrecht, 1996.

\bibitem{singularfields1}
M.~de~Le\'on, J.~Mar\'in-Solano, J.C. Marrero, M.~C. Mu\~noz Lecanda, and N.~Rom\'an-Roy.
\newblock Pre-multisymplectic constraint algorithm for field theories.
\newblock {\em Int. J. Geom. Methods Mod. Phys.}, 2(5):839--871, 2005.

\bibitem{aitor}
M.~de~Le\'on, D.~Mart\'in~de Diego, and A.~Santamar\'ia-Merino.
\newblock Symmetries in classical field theory.
\newblock {\em Int. J. Geom. Methods Mod. Phys.}, 1(5):651--710, 2004.

\bibitem{herminia}
M.~de~Le\'on, M.~H. Mello, and P.~R. Rodrigues.
\newblock Reduction of degenerate nonautonomous {L}agrangians.
\newblock In {\em Mathematical aspects of classical field theory ({S}eattle, {WA}, 1991)}, volume 132 of {\em Contemp. Math.}, pages 275--305. Amer. Math. Soc., Providence, RI, 1992.

\bibitem{dLR2}
M.~de~Le\'on and P.~R. Rodrigues.
\newblock {\em Generalized classical mechanics and field theory}, volume 112 of {\em North-Holland Mathematics Studies}.
\newblock North-Holland Publishing Co., Amsterdam, 1985.
\newblock A geometrical approach of Lagrangian and Hamiltonian formalisms involving higher order derivatives, Notes on Pure Mathematics, 102.

\bibitem{dLR89}
M.~de~Le\'on and P.~R. Rodrigues.
\newblock {\em Methods of differential geometry in analytical mechanics}, volume 158 of {\em North-Holland Mathematics Studies}.
\newblock North-Holland Publishing Co., Amsterdam, 1989.

\bibitem{silvia2}
M.~de~Le\'on, M.~Salgado, and S.~Vilari\~no.
\newblock {\em Methods of differential geometry in classical field theories}.
\newblock World Scientific Publishing Co. Pte. Ltd., Hackensack, NJ, 2016.
\newblock $k$-symplectic and $k$-cosymplectic approaches.

\bibitem{silvia1}
M.~de~Le\'on and S.~Vilari\~no.
\newblock Lagrangian submanifolds in {$k$}-symplectic settings.
\newblock {\em Monatsh. Math.}, 170(3-4):381--404, 2013.

\bibitem{coisored}
M.~de~León and R.~Izquierdo-López.
\newblock Coisotropic reduction in multisymplectic geometry.
\newblock {\em Geometric Mechanics}, 1(3):209--244, 2024.

\bibitem{cosimpl}
M.~de~León and R.~Izquierdo-López.
\newblock A review on coisotropic reduction in symplectic, cosymplectic, contact and co-contact {H}amiltonian systems.
\newblock {\em J. Phys. A: Math. Theor.}, 57(163001 (50pp)), 2024.

\bibitem{dirac}
P.~A.~M. Dirac.
\newblock {\em Lectures on quantum mechanics}, volume~2 of {\em Belfer Graduate School of Science Monographs Series}.
\newblock Belfer Graduate School of Science, New York; produced and distributed by Academic Press, Inc., New York, 1967.
\newblock Second printing of the 1964 original.

\bibitem{arturonarciso}
A.~Echeverria-Enr\'iques, M.~C. Mu\~noz Lecanda, and N.~Rom\'an-Roy.
\newblock Geometry of multisymplectic {H}amiltonian first-order field theories.
\newblock {\em J. Math. Phys.}, 41(11):7402--7444, 2000.

\bibitem{fujimoto}
A.~Fujimoto.
\newblock {\em Theory of {$G$}-structures}, volume Vol. 1 of {\em Publications of the Study Group of Geometry}.
\newblock Okayama University, College of Liberal Arts and Science, Department of Applied Mathematics, Study Group of Geometry, Okayama, english edition, 1972.

\bibitem{fukami}
T.~Fukami.
\newblock Affine connections in almost product manifolds with some structures.
\newblock {\em Tohoku Math. J. (2)}, 11:430--446, 1959.

\bibitem{narciso_nuevo}
J.~Gaset, X.~Gr\`acia, M.~C. Mu\~noz Lecanda, X.~Rivas, and N.~Rom\'an-Roy.
\newblock A contact geometry framework for field theories with dissipation.
\newblock {\em Ann. Physics}, 414:168092, 20, 2020.

\bibitem{chicos}
J.~Gaset, M.~Lainz, A.~Mas, and X.~Rivas.
\newblock The {H}erglotz variational principle for dissipative field theories.
\newblock {\em Geometric Mechanics}, 01(02):153--158, 2024.

\bibitem{godbillon1969geometrie}
C.~Godbillon.
\newblock {\em G{\'e}om{\'e}trie diff{\'e}rentielle et m{\'e}canique analytique}.
\newblock Editions Hermann, Paris, 1969.

\bibitem{gotay}
M.~J. Gotay.
\newblock On coisotropic imbeddings of presymplectic manifolds.
\newblock {\em Proc. Amer. Math. Soc.}, 84(1):111--114, 1982.

\bibitem{gotay1}
M.~J. Gotay, J.~M. Nester, and G.~Hinds.
\newblock Presymplectic manifolds and the {D}irac-{B}ergmann theory of constraints.
\newblock {\em J. Math. Phys.}, 19(11):2388–2399, 1978.

\bibitem{Gotay-Sniatycki-CoisotropicEmbeddings-1981}
M.~J. Gotay and J.~Sniatycki.
\newblock {O}n the quantization of presymplectic dynamical systems via coisotropic imbeddings.
\newblock {\em Communications in Mathematical Physics}, 82(3):377--389, 1981.

\bibitem{GotayMultisymplecticFramework}
M.J. Gotay.
\newblock A multisymplectic framework for classical field theory and the calculus of variations. {I}. {C}ovariant {H}amiltonian formalism.
\newblock In {\em Mechanics, analysis and geometry: 200 years after {L}agrange}, North-Holland Delta Ser., pages 203--235. North-Holland, Amsterdam, 1991.

\bibitem{gimmsy}
M.J. Gotay, J.~Isenberg, J.E. Marsden, and R.~Montgomery.
\newblock Momentum {M}aps and {C}lassical {R}elativistic {F}ields. part {I}: {C}ovariant {F}ield {T}heory, 2004.

\bibitem{GotaySingularLagrangians1}
M.J. Gotay and J.M. Nester.
\newblock Presymplectic {L}agrangian systems. {I}. {T}he constraint algorithm and the equivalence theorem.
\newblock {\em Ann. Inst. H. Poincar\'{e} Sect. A (N.S.)}, 30(2):129--142, 1979.

\bibitem{GotaySingularLagrangians2}
M.J. Gotay and J.M. Nester.
\newblock Presymplectic {L}agrangian systems. {II}. {T}he second-order equation problem.
\newblock {\em Ann. Inst. H. Poincar\'{e} Sect. A (N.S.)}, 32(1):1--13, 1980.

\bibitem{darbouxvarios}
X.~Gr\`acia, J.~de~Lucas, X.~Rivas, and N.~Rom\'an-Roy.
\newblock On {D}arboux theorems for geometric structures induced by closed forms.
\newblock {\em Rev. R. Acad. Cienc. Exactas F\'is. Nat. Ser. A Mat. RACSAM}, 118(3):Paper No. 131, 35, 2024.

\bibitem{ksympsingular}
X.~Gr\`acia, R.~Mart\'in, and N.~Rom\'an-Roy.
\newblock Constraint algorithm for {$k$}-presymplectic {H}amiltonian systems: application to singular field theories.
\newblock {\em Int. J. Geom. Methods Mod. Phys.}, 6(5):851--872, 2009.

\bibitem{kcosympsingular}
X.~Gr\`acia, X.~Rivas, and N.~Rom\'an-Roy.
\newblock Constraint algorithm for singular field theories in the {$k$}-cosymplectic framework.
\newblock {\em J. Geom. Mech.}, 12(1):1--23, 2020.

\bibitem{skinnerrusk}
X.~Gr\`acia, X.~Rivas, and N.~Rom\'an-Roy.
\newblock Skinner-{R}usk formalism for {$k$}-contact systems.
\newblock {\em J. Geom. Phys.}, 172:Paper No. 104429, 24, 2022.

\bibitem{guillemin_sternberg}
V.~Guillemin and Sh. Sternberg.
\newblock {\em Symplectic techniques in physics}.
\newblock Cambridge University Press, Cambridge, second edition, 1990.

\bibitem{Ibort-Solano}
A.~Ibort and J.~Marin-Solano.
\newblock {C}oisotropic regularizations of degenerate {L}agrangians.
\newblock {\em Journal of Mathematical Physics}, 36(10):5522--5539, 1995.

\bibitem{ibortmarin2}
L.~A. Ibort and J.~Mar\'in-Solano.
\newblock A geometric classification of {L}agrangian functions and the reduction of evolution space.
\newblock {\em J. Phys. A}, 25(11):3353--3367, 1992.

\bibitem{karoubi}
M.~Karoubi.
\newblock {\em {$K$}-theory}, volume Band 226 of {\em Grundlehren der Mathematischen Wissenschaften}.
\newblock Springer-Verlag, Berlin-New York, 1978.
\newblock An introduction.

\bibitem{vitagliano}
H.~V. L\^e, Y.~G. Oh, A.~G. Tortorella, and L.~Vitagliano.
\newblock Deformations of coisotropic submanifolds in {J}acobi manifolds.
\newblock {\em J. Symplectic Geom.}, 16(4):1051--1116, 2018.

\bibitem{libermarle}
P.~Libermann and Ch.-M. Marle.
\newblock {\em Symplectic geometry and analytical mechanics}, volume~35 of {\em Mathematics and its Applications}.
\newblock D. Reidel Publishing Co., Dordrecht, 1987.
\newblock Translated from the French by Bertram Eugene Schwarzbach.

\bibitem{marle}
Ch.-M. Marle.
\newblock Sous-vari\'et\'es de rang constant d'une vari\'et\'e{} symplectique.
\newblock In {\em Third {S}chnepfenried geometry conference, {V}ol. 1 ({S}chnepfenried, 1982)}, volume 107-108 of {\em Ast\'erisque}, pages 69--86. Soc. Math. France, Paris, 1983.

\bibitem{mar1988lett.math.phys.}
G.~Martin.
\newblock A {{Darboux}} theorem for multi-symplectic manifolds.
\newblock {\em Lett. Math. Phys.}, 16(2):133--138, 1988.
\newblock \href{https://doi.org/10.1007/BF00402020}{10.1007/BF00402020}.

\bibitem{moser}
J.~Moser.
\newblock On the volume elements on a manifold.
\newblock {\em Trans. Amer. Math. Soc.}, 120:286--294, 1965.

\bibitem{narciso}
N.~Rom\'an-Roy.
\newblock Multisymplectic {L}agrangian and {H}amiltonian formalisms of classical field theories.
\newblock {\em SIGMA Symmetry Integrability Geom. Methods Appl.}, 5:Paper 100, 25, 2009.

\bibitem{cata_k_resto}
N.~Rom\'an-Roy, \'A.~M. Rey, M.~Salgado, and S.~Vilari\~no.
\newblock On the {$k$}-symplectic, {$k$}-cosymplectic and multisymplectic formalisms of classical field theories.
\newblock {\em J. Geom. Mech.}, 3(1):113--137, 2011.

\bibitem{saunders}
D.~J. Saunders.
\newblock {\em The geometry of jet bundles}, volume 142 of {\em London Mathematical Society Lecture Note Series}.
\newblock Cambridge University Press, Cambridge, 1989.

\bibitem{zambon}
F.~Sch\"atz and M.~Zambon.
\newblock Equivalences of coisotropic submanifolds.
\newblock {\em J. Symplectic Geom.}, 15(1):107--149, 2017.

\bibitem{luca2}
L.~Schiavone.
\newblock A coisotropic embedding theorem for pre-multisymplectic manifolds.
\newblock {\em arXiv preprint arXiv:2412.04941}, 2024.

\bibitem{luca1}
L.~Schiavone.
\newblock The coisotropic embedding theorem for pre-symplectic vector spaces: an alternative proof.
\newblock {\em https://doi.org/10.48550/arXiv.2411.18208}, 2024.

\bibitem{Schiavone-InverseProblemImplicit-2024}
L.~Schiavone.
\newblock {T}he inverse problem for a class of implicit differential equations and the coisotropic embedding theorem.
\newblock {\em International Journal of Geometric Methods in Modern Physics}, 21(11):2450195, 2024.

\bibitem{Schiavone-InverseProblemElectrodynamics-2024}
L.~Schiavone.
\newblock {T}he inverse problem within free {E}lectrodynamics and the coisotropic embedding theorem.
\newblock {\em International Journal of Geometric Methods in Modern Physics}, 21(7):2450131, 2024.

\bibitem{stefan}
P.~Stefan.
\newblock Accessible sets, orbits, and foliations with singularities.
\newblock {\em Proc. London Math. Soc. (3)}, 29:699--713, 1974.

\bibitem{sussmann}
H.~J. Sussmann.
\newblock Orbits of families of vector fields and integrability of distributions.
\newblock {\em Trans. Amer. Math. Soc.}, 180:171--188, 1973.

\bibitem{tortorella}
Alfonso~Giuseppe Tortorella.
\newblock Rigidity of integral coisotropic submanifolds of contact manifolds.
\newblock {\em Lett. Math. Phys.}, 108(3):883--896, 2018.

\bibitem{vai1994}
I.~Vaisman.
\newblock {\em Lectures on the {{Geometry}} of {{Poisson Manifolds}}}.
\newblock Birkh{\"a}user Basel, 1994.
\newblock \href{https://doi.org/10.1007/978-3-0348-8495-2}{10.1007/978-3-0348-8495-2}.

\bibitem{walker}
A.~G. Walker.
\newblock Almost-product structures.
\newblock In {\em Proc. {S}ympos. {P}ure {M}ath., {V}ol. {III}}, pages 94--100. Amer. Math. Soc., Providence, RI, 1961.

\bibitem{warner}
F.~W. Warner.
\newblock {\em Foundations of differentiable manifolds and {L}ie groups}, volume~94 of {\em Graduate Texts in Mathematics}.
\newblock Springer-Verlag, New York-Berlin, 1983.
\newblock Corrected reprint of the 1971 edition.

\bibitem{weinstein71symplectic}
A.~Weinstein.
\newblock Symplectic manifolds and their {L}agrangian submanifolds.
\newblock {\em Advances in Math.}, 6:329--346, 1971.

\bibitem{weinstein1977lectures}
A.~Weinstein.
\newblock {\em Lectures on symplectic manifolds}, volume No. 29 of {\em Regional Conference Series in Mathematics}.
\newblock American Mathematical Society, Providence, RI, 1977.
\newblock Expository lectures from the CBMS Regional Conference held at the University of North Carolina, March 8--12, 1976.

\end{thebibliography}

\end{document}